\documentclass[a4paper, 11pt, english, reqno, dvipsnames]{amsart} 

\usepackage{amsmath}
\usepackage{amsthm}
\usepackage{amssymb}

\usepackage{mathtools}  
\usepackage{mathrsfs}   
\usepackage{amsfonts}
\usepackage{mathdots}  
\usepackage[bb=libus]{mathalpha} 
\usepackage{amsbsy}

\usepackage{tikz}
\usepackage{tikz-cd}

\usepackage[all]{xy}
\usepackage{comment}

\usepackage{quiver}

\usepackage{lipsum}

\usepackage[utf8]{inputenc}
\usepackage[T1]{fontenc}

\theoremstyle{plain}
\newtheorem{theorem}{Theorem}[section]
\newtheorem{proposition}[theorem]{Proposition}
\newtheorem{lemma}[theorem]{Lemma}

\newtheorem{definition-theorem}[theorem]{Definition-Theorem}
\newtheorem{definition-proposition}[theorem]{Definition-Proposition}
\newtheorem{maintheorem}{Main Theorem}

\theoremstyle{definition}

\newtheorem{definition}[theorem]{Definition}

\newtheorem{example}[theorem]{Example}

\theoremstyle{remark}
\newtheorem{remark}[theorem]{Remark}

\newcounter{manualconj}
\makeatletter
\newenvironment{myconjecture}[1]
  {%
    \refstepcounter{manualconj}%
    \def\@currentlabel{#1}%
    \trivlist
    \item[\hskip \labelsep {\bfseries Conjecture #1.}\hskip \labelsep]%
    \itshape
  }
  {\endtrivlist}
\makeatother

\usepackage{enumitem} 

\usepackage[figurewithin=none]{caption}
\usepackage{graphicx}
\usepackage{float}
\usepackage{xstring}
\usepackage{array}
\usepackage{longtable}

\makeatletter
\let\origitem\item

\newcommand{\autodescitem}[1][]{%
  \def\@currentlabel{#1}%
  \origitem[#1]%
}
\makeatother
\newenvironment{condition}{%
  \description
  \let\item\autodescitem
 
}{%
  \enddescription
}
\newcommand{\dref}[1]{\textup{\ref{#1}}}

\usepackage{xcolor}

\usepackage{mycom}
\usepackage{myarrows}

\makeatletter
\def\l@subsection{\@tocline{2}{0pt}{2pc}{5pc}{}}
\makeatother

\usepackage[style=alphabetic,sorting=nyt, backend=biber, backref=true]{biblatex}
\usepackage{url}

\usepackage[unicode, colorlinks, linktocpage=true]{hyperref}
\hypersetup{
	linkcolor = blue,
	citecolor = Red,
	urlcolor = PineGreen,
	bookmarksnumbered = true
}

\DeclareMathOperator{\Ass}{Ass}
\DeclareMathOperator{\Ann}{Ann}
\DeclareMathOperator{\Ht}{ht}

\usepackage{orcidlink}
\title[Pseudonullity of fine Selmer groups]{Pseudonullity for fine Selmer groups over multiple $\ZZ_p^{d}$-extensions}
\author{Peikai Qi \,\orcidlink{0009-0004-4319-4359}, Ruichen Xu \,\orcidlink{0009-0003-4555-2116}}
\address[Peikai Qi]{Morningside Center of Mathematics, Chinese Academy of Sciences, No. 55 Zhongguancun East Road, Beijing, 100190, China.}
\email{qipeikai@amss.ac.cn}
\address[Ruichen Xu]{Academy of Mathematics and Systems Science, Chinese Academy of Sciences, No. 55 Zhongguancun East Road, Beijing, 100190, China.}
\email{xuruichen@amss.ac.cn}
\date{\today}

\begin{document}
\begin{abstract}
In this article, we develop vertical control theorems for fine Selmer groups associated with Galois representations and their deformations in $p$-adic families. We apply them to provide new evidence for \emph{Conjectures A and B} of Coates--Sujatha and to obtain pseudonullity of the fine Selmer groups for cyclotomic characters, elliptic curves, classical cuspidal newforms, and Hida families of $p$-ordinary cuspidal newforms.
\end{abstract}

    \maketitle

\begingroup
\renewcommand{\thefootnote}{}
\footnotetext{Peikai Qi and Ruichen Xu contributed equally to this article.}
\endgroup

    \tableofcontents

\section{Introduction}

Let $F$ be a number field and let $p$ be an odd prime. We denote by $F_{\cyc}$ the cyclotomic $\ZZ_p$-extension of $F$. Let $K(F_{\cyc})$ be the maximal unramified pro-$p$ extension of $F_{\cyc}$ in which every prime above $p$ splits completely. A fundamental result of K.~Iwasawa asserts that $\Gal(K(F_{\cyc})/F_{\cyc})$ is a finitely generated torsion module over the Iwasawa algebra $\ZZ_p\lrbracket{\Gal(F_{\cyc}/F)}$. Iwasawa further conjectured that this Galois group is finitely generated as a $\ZZ_p$-module; see \cite{iwasawa1973mu, iwasawa1973extensions}.

In parallel with Iwasawa's conjecture, J.~Coates and R.~Sujatha \cite{coates2005fine} formulated their celebrated \emph{Conjecture A} for elliptic curves, which predicts the finiteness (as $\ZZ_p$-modules) of fine Selmer groups over the cyclotomic $\ZZ_p$-extension $F_{\cyc}$; see below. They further proposed \emph{Conjecture B}, asserting the pseudonullity of fine Selmer groups over admissible $p$-adic Lie extensions containing $F_{\cyc}$; see below. These conjectures have subsequently been generalized to wider classes of Galois representations by various authors, including Sujatha, S.~Jha, and M.~F.~Lim; see \cite{jha2011joa, jha2012asian, lim2017}.

The present work is motivated by the philosophy that arithmetic properties of fine Selmer groups should propagate vertically in towers of $p$-adic Lie extensions. Namely, starting from a cyclotomic-like $\ZZ_p$-extension $F_{\infty}/F$ contained in a $\ZZ_p^d$-extension $\tilF/F$, we investigate whether the \emph{finiteness property} (corresponding to \emph{Conjecture A}) and \emph{pseudonullity properties} (corresponding to \emph{Conjecture B}) of fine Selmer groups over $F_{\infty}$ propagate to the corresponding fine Selmer groups over $\tilF$.

\subsection{Conjectures for fine Selmer groups}
We introduce some conjectures for fine Selmer groups. Let $F$ be a number field and let $S$ be a finite set of places of $F$ containing $p$.

In most sections of the paper, we assume only that $R$ is a commutative Noetherian profinite $\ZZ_p$-algebra  (e.g. $R = \ZZ_p$, $\ZZ_p\lrbracket{\Gal(F_{\cyc}/F)}$, deformation rings, etc). For simplicity in stating the main results in the introduction, we assume that $R$ is a commutative complete Noetherian local ring with residual characteristic $p$. Let $T$ be a finitely generated $R$-module, equipped with a continuous $R$-linear $\Gal_{F}$-action unramified outside $S$. We write $A := T \otimes_{R} \Hom_{\ZZ_p}(R, \QQ_p/\ZZ_p)$. For any algebraic (possibly infinite) extension $\calL$ of $F$ unramified outside $S$, the \emph{fine Selmer group} of $A$ over $\calL$ with respect to $S$ is defined to be
\[
\Sel_{S}(A/\calL) := \ker\left(\rmH^{1}(\Gal_{\calL, S}, A) \rightarrow \bigoplus_{v \in S} \left( \varinjlim_{L} \bigoplus_{w \mid v} \rmH^{1}(L_w, A) \right) \right),
\]
where $L$ runs through all finite extensions of $F$ contained in $\calL$ and $w$ runs through all places of $L$ dividing $v$. We will write 
\[
Y_{S}(A/\calL) := \Hom_{\cts}(\Sel_{S}(A/\calL), \QQ_p/\ZZ_p)
\]
for the Pontryagin dual $\Sel_{S}(A/\calL)^{\vee}$ of the Selmer group. We first state the A-type conjecture.
\begin{myconjecture}{A} \label{myconj:A}
  For any number field $F$, the dual fine Selmer group $Y_{S}(A/F_{\cyc})$ is a finitely generated $R$-module.
\end{myconjecture}

We generalize this conjecture to arbitrary admissible $p$-adic Lie extensions containing $F_{\cyc}/F$, which we call the B-type conjectures. 

\begin{myconjecture}{B${}^{-}$} \label{myconj:Bminus}
   Let $F$ be any number field, and let $\tilF/F$ be any admissible $p$-adic Lie extension of dimension $> 1$ containing $F_{\cyc}/F$. Assuming Conjecture \ref{myconj:A}, the module $Y_{S}(A/\tilF)$ is finitely generated over $R\lrbracket{\Gal(\tilF/F_{\cyc})}$.
\end{myconjecture}

We regard it as weaker than Conjecture \ref{myconj:B}, stated below, which was originally formulated in \cite[Conjecture B]{coates2005fine} for elliptic curves.

\begin{myconjecture}{B} \label{myconj:B}
    Let $F$ be any number field, and let $\tilF/F$ be any admissible $p$-adic Lie extension of dimension $> 1$ containing $F_{\cyc}/F$. Assuming Conjecture \ref{myconj:A}, the dual fine Selmer group $Y_{S}(A/\tilF)$ is a pseudonull $R\lrbracket{\Gal(\tilF/F)}$-module.
\end{myconjecture}

In this article, we formulate a \emph{stronger form of Conjecture \ref{myconj:B}} as follows.
\begin{myconjecture}{B${}^{+}$} \label{myconj:Bplus}
    Let $F$ be any number field, and let $\tilF/F$ be any admissible $p$-adic Lie extension of dimension $> 1$ containing $F_{\cyc}/F$. Assuming that $Y_{S}(A/F_{\cyc})$ is a pseudonull $R\lrbracket{\Gal(F_{\cyc}/F)}$-module, the dual fine Selmer group $Y_{S}(A/\tilF)$ is a pseudonull $R\lrbracket{\Gal(\tilF/F)}$-module.
\end{myconjecture}

In this article, our goal is to provide evidence for Conjectures \ref{myconj:Bminus} and \ref{myconj:Bplus}. Our original motivation for studying Conjecture \ref{myconj:Bplus} dates back to the work of S.~Fujii \cite{fujii2017on}. Let $F / F^{+}$ be a CM field extension and let $\tilF/F$ denote the compositum of all $\ZZ_p$-extensions of $F$. Fujii constructs a special $\ZZ_p$-extension of $F^{(1)}/F$; see Example \ref{eg:fieldK1}. For $\QQ_p/\ZZ_p$ over a field $\calL$, denote the classical Selmer group by $\Sel_{\cl}(\QQ_p/\ZZ_p/\calL)$. In \cite[Theorem B]{fujii2017on}, under some conditions, he proved that if $\Sel_{\cl}(\QQ_p/\ZZ_p/F^{(1)})$ is pseudonull over $\ZZ_p\lrbracket{\Gal(F^{(1)}/F)}$, then $\Sel_{\cl}(\QQ_p/\ZZ_p/\tilF)$ is pseudonull over $\ZZ_p\lrbracket{\Gal(\tilF/F)}$.

This theorem of Fujii may be viewed as a propagation result for the pseudonullity of certain Iwasawa modules, passing from a special $\ZZ_p$-extension of $F$ to the compositum of all $\ZZ_p$-extensions of $F$. In this sense, it may be regarded as a prototype of Conjecture~\ref{myconj:Bplus}.

\subsection{Main results and tools}
We now introduce the main results of our article. To establish our control theorems, we often impose the following \emph{Greenberg finiteness condition}:
\begin{condition}
    \item[(RF)] $A(\sfL)$ is a cofinitely generated \emph{cotorsion} $R$-module for all finite extensions $\sfL/\sfF_{\infty}$ inside $\widetilde{\sfF}$, where $\sfF = F$ or $F_{v}$ for a localization of $F$ at a place $v$ above $p$, $\sfF_{\infty}/\sfF$ is a $\ZZ_p$-extension of $\sfF$ and $\widetilde{\sfF}/\sfF$ is a $\ZZ_{p}^{d}$-extension of $\sfF$ containing $\sfF_{\infty}$ with $d \geq 2$. \label{eq:rf0}
\end{condition}
We say a $\ZZ_p$-extension $F_{\infty}/F$ is \emph{cyclotomic-like} if every place of $F$ is finitely decomposed in $F_{\infty}$.

\begin{maintheorem}[{Theorem \ref{thm:maintheorem}}: Conjecture B]\label{main:I}
Let $F$ be a number field. Suppose $F_{\infty}/F$ is a cyclotomic-like $\ZZ_p$-extension, and let $\tilF/F$ be a $\ZZ_p^{d}$-extension with $d>1$ such that $\tilF$ contains $F_{\infty}$. If condition \dref{eq:rf0} holds for $F_{\infty}/F$, then we have the following results.
\begin{enumerate}[label = \rm (\arabic*)]
    \item \emph{(Conjecture \ref{myconj:Bminus})} If $Y(A/F_{\infty})$ is a finitely generated $R$-module, then $Y(A/\tilF)$ is a finitely generated $R\lrbracket{\Gal(\tilF/F_{\infty})}$-module.
    \item \emph{(Conjecture \ref{myconj:Bplus})} Assume further that $R$ is a Cohen--Macaulay ring. If $Y(A/F_{\infty})$ is a pseudonull $R\lrbracket{\Gal(F_{\infty}/F)}$-module, then $Y(A/\tilF)$ is a pseudonull  $R\lrbracket{\Gal(\tilF/F)}$-module.
\end{enumerate}
\end{maintheorem}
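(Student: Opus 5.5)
The strategy is a vertical control theorem relating $Y(A/\tilF)$ to $Y(A/F_{\infty})$, followed by Nakayama's lemma for part (1) and a dimension count for part (2). Write $G=\Gal(\tilF/F)\cong\ZZ_p^d$, $H=\Gal(\tilF/F_{\infty})\cong\ZZ_p^{d-1}$, $\Gamma=\Gal(F_\infty/F)$, and $\Lambda_H = R\lrbracket{H}$. The key input is the restriction map
\[
r\colon \Sel_S(A/F_{\infty}) \longrightarrow \Sel_S(A/\tilF)^{H},
\]
and the claim that, under \dref{eq:rf0}, $\ker r$ and $\coker r$ are cofinitely generated $R$-modules, with $\coker r$ in fact cotorsion in the appropriate sense.

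\textbf{Control theorem.} I would analyse $r$ through the usual snake-lemma diagram comparing $\rmH^1(\Gal_{F_\infty,S},A)\to\rmH^1(\Gal_{\tilF,S},A)^H$ with the local terms.

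The global kernel is $\rmH^1(H,A(\tilF))$ and the global cokernel injects into $\rmH^2(H,A(\tilF))$. Since $H\cong\ZZ_p^{d-1}$ has finite cohomological dimension, these groups are computed by a Koszul complex on $A(\tilF)$. Condition \dref{eq:rf0} says $A(\sfL)$ is cofinitely generated and cotorsion for finite $\sfL/F_\infty$ inside $\tilF$. Passing to the limit, I expect $A(\tilF)$, and hence its $H$-cohomology, to be cofinitely generated over $R$; this uses that it stabilises, or at least that its dual is finitely generated over $R$.

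For the local terms, the cyclotomic-like hypothesis means each $v\in S$ has only finitely many places above it in $F_\infty$. The local kernels $\rmH^1(H_w,A(\tilF_w))$ are therefore finite sums, and they are controlled by the local version of \dref{eq:rf0} for $v\mid p$. For $v\nmid p$ the decomposition group in $\tilF$ is at most one-dimensional, since it comes from the unramified $\ZZ_p$, and the standard Greenberg argument gives cofinitely generated kernels.

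\textbf{Part (1).} Dualising the control theorem gives
\[
Y(A/\tilF)_H \longrightarrow Y(A/F_\infty)
\]
with $R$-finitely generated kernel and cokernel. So if $Y(A/F_\infty)$ is finitely generated over $R$, then so is $Y(A/\tilF)_H$. Now $\Lambda_H$ is compact local with maximal ideal containing the augmentation ideal, so the topological Nakayama lemma shows that $Y(A/\tilF)$ is finitely generated over $\Lambda_H$.

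\textbf{Part (2).} Set $\Lambda_G=R\lrbracket{G}$. Pseudonullity over $\Lambda_G$ means $\dim\le\dim\Lambda_G-2$; equivalently, since $R$ is Cohen--Macaulay and so is $\Lambda_G$, the module has grade $\ge 2$, i.e.\ $\Ext^i_{\Lambda_G}(-,\Lambda_G)=0$ for $i\le1$.

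I would reduce to a single step $H'\subset H$ with $H/H'\cong\ZZ_p$ and induct along a filtration $F_\infty\subset F_1\subset\dots\subset\tilF$ of $\ZZ_p$-extensions. At each step the control theorem, applied now to an intermediate cyclotomic-like $\ZZ_p^{k}$-extension, gives
\[
Y(A/F_{k+1})_{\ZZ_p}\longrightarrow Y(A/F_k)
\]
with kernel and cokernel controlled by $\rmH^i$ of cotorsion modules. By \dref{eq:rf0} these error terms have $\dim\le\dim R$, which is small enough to be pseudonull over $R\lrbracket{\Gal(F_{k}/F)}$ once $k\ge1$. Then $Y(A/F_{k+1})_{\ZZ_p}$ is pseudonull, and a Cohen--Macaulay dimension lemma lifts this to $Y(A/F_{k+1})$: if $M$ is finitely generated over $\Lambda[[T]]$ and $M/TM$ is pseudonull over $\Lambda$, then $\dim M\le\dim M/TM+1$, so $M$ is pseudonull over $\Lambda[[T]]$. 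Finite generation at each stage comes from part (1)'s Nakayama argument.

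\textbf{Main obstacle.} I expect the hardest step to be the error terms at the first stage, where the hypothesis is only pseudonullity over $R\lrbracket{\Gamma}$ rather than $R$-finiteness. There I need the kernel and cokernel to have dimension $\le\dim R\lrbracket{\Gamma}-2=\dim R-1$. The plan is to use cotorsionness in \dref{eq:rf0}, which gives $\dim\le\dim R-1$ for the duals of $A(\sfL)$, together with Koszul spectral sequences for $\rmH^i(H,-)$ of cotorsion modules, keeping track of dimensions carefully.
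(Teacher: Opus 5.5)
Your part (1) is the paper's argument and is fine. Only cofinite generation of $\ker r$ and $\coker r$ is needed there, which is automatic because $A(\tilF)\subseteq A$ (Lemma~\ref{cofg or cotorsion}), followed by topological Nakayama. Your lifting lemma ($\dim M\le \dim M/TM+1$ in a Cohen--Macaulay local ring) is also correct and is a clean substitute for Theorem~\ref{catenary}.

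The gap is in part (2), exactly at the step you flag as the main obstacle. You must show that $(\coker r)^\vee$ is $R$-torsion, i.e.\ that $\rmH^2(H,A(\tilF))$ and the local groups $\rmH^1(H_w,A(\tilF_w))$ for $w\mid p$ are cotorsion. Your plan invokes Koszul complexes ``of cotorsion modules'', which presupposes that the coefficient module $A(\tilF)$ is cotorsion. In general it is \emph{not}: \dref{eq:rf0} only concerns fields finite over $F_\infty$, and their union can give all of $A$. For instance, take $E$ with CM by $\mathcal{O}_K$, $p$ split in $K$, $F\supseteq K(E[p])$ and $\tilF=F(E[p^\infty])$, a $\ZZ_p^2$-extension containing $F_{\cyc}$; Imai gives \dref{eq:rf0}, yet $A(\tilF)=E[p^\infty]$. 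Passing to a limit over finite layers does not help either, since a direct limit of cotorsion modules need not be cotorsion.

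The ingredient you are missing is Greenberg's method (Theorem~\ref{thm:generalgreenberg}). Condition \dref{eq:rf0} for \emph{every} finite $\sfL/F_\infty$ forces the characters of $H$ occurring in $A(\tilF)^\vee\otimes\overline{Q(R/\mathfrak{p})}$ to have kernels of infinite index. So one can choose $Z\cong\ZZ_p\subseteq H$ with $A(\tilF)^Z$ cotorsion (Lemma~\ref{lem:existFcirc}). Hochschild--Serre for $Z\subseteq H$, together with Lemmas~\ref{cofg or cotorsion} and~\ref{dual of M}, then gives cotorsionness of $\rmH^1$ and $\rmH^2$. A Koszul route can be made to work, but only by proving that torsionness of $\rmH_0(H,A(\tilF)^\vee)=A(F_\infty)^\vee$ propagates to all $\rmH_i$, e.g.\ by a determinant-trick argument after tensoring with $Q(R)$. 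That is the real content, and it is absent from your proposal.

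Separately, your stepwise induction through intermediate Selmer groups creates problems that the paper avoids. For $k\ge1$ the field $F_k$ need not be cyclotomic-like. A place $v\mid p$ with $F_v=\QQ_p$ has decomposition group of $\ZZ_p$-rank at most $2$, so it splits into infinitely many places once $\Gal(F_k/F)$ has rank at least $3$. The local error term is then, dually, an induced module that need not be finitely generated over $R$. So your claim that the error terms have $\dim\le\dim R$ fails. Your appeal to \dref{eq:rf0} at those stages is also misplaced, since $F_k/F_\infty$ is infinite.

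The paper instead applies the vertical control theorem (Theorem~\ref{thm:verticalcontrol}) once, for $\tilF/F_\infty$, and deduces that $Y(A/\tilF)_H$ is pseudonull over $R\lrbracket{\Gal(F_\infty/F)}$. It then lifts pseudonullity purely algebraically through the quotients $Y(A/\tilF)/(T_1,\dots,T_i)Y(A/\tilF)$ (Theorem~\ref{thm:descentpseudonull}). Your lifting lemma, applied to these quotients, would do exactly the same job with no intermediate Selmer groups.
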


The key methods for establishing Main Theorem \ref{main:I} are twofold. The first is the following \emph{vertical control theorem}.

\begin{maintheorem}[{Theorem \ref{thm:verticalcontrol}}: Vertical control theorem] \label{main:II}

Assume that $F_{\infty}/F$ is cyclotomic-like. Assume that the condition \dref{eq:rf0} holds. Then the kernel and the cokernel of the Pontryagin dual $\res_{\tilF/F_{\infty}}^{\vee}$ of the restriction map
\[
\res_{\tilF/F_{\infty}}: \Sel(A/F_{\infty}) \rightarrow \Sel(A/\tilF)^{\Gal(\tilF/F_{\infty})},
\]
are both finitely generated torsion $R$-modules. In particular, as a morphism of $R\lrbracket{\Gal(F_{\infty}/F)}$-modules, $\res_{\tilF/F_{\infty}}^{\vee}$ is a pseudo-isomorphism.
\end{maintheorem}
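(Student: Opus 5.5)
The plan is the standard fundamental-diagram argument used in control theorems. Set $H := \Gal(\tilF/F_{\infty}) \cong \ZZ_p^{d-1}$. Compare the defining sequences of the fine Selmer groups over $F_{\infty}$ and over $\tilF$:
\[
0 \to \Sel(A/F_{\infty}) \to \rmH^1(\Gal_{F_{\infty},S},A) \to \bigoplus_{v\in S} K_v(F_\infty), \qquad 0 \to \Sel(A/\tilF)^{H} \to \rmH^1(\Gal_{\tilF,S},A)^{H} \to \bigoplus_{v\in S} K_v(\tilF)^{H}.
\]
Here $K_v(\cdot)$ denotes the direct limit of the local terms. The vertical maps are restrictions. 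By the snake lemma,
\[
\ker(\res_{\tilF/F_\infty}) \hookrightarrow \ker(h), \qquad \coker(\res_{\tilF/F_\infty}) \hookrightarrow \ker(g)\ \text{ modulo the image of } \coker(h),
\]
where $h$ is the global restriction and $g=\oplus_v g_v$ the local one. It therefore suffices to show that $\ker h$, $\coker h$ and each $\ker g_v$ are cofinitely generated cotorsion $R$-modules. Their duals are then finitely generated torsion $R$-modules, and the same holds for the kernel and cokernel of $\res^\vee$, since subquotients of such modules stay finitely generated torsion.

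For the global terms, use the inflation–restriction (Hochschild–Serre) sequence. It identifies $\ker h = \rmH^1(H, A(\tilF))$ and embeds $\coker h$ into $\rmH^2(H, A(\tilF))$; the latter uses that $\tilF/F_\infty$ is unramified outside $S$, so $\Gal_{\tilF,S}$ is normal in $\Gal_{F_\infty,S}$. Since $H\cong\ZZ_p^{d-1}$ has finite cohomological dimension, the groups $\rmH^i(H,M)$ are computed by a Koszul complex in $d-1$ commuting operators $\gamma_j-1$ on $M$. Hypothesis \dref{eq:rf0} with $\sfF=F$ says that $A(\sfL)$ is cofinitely generated cotorsion over $R$ for every finite $\sfL/F_\infty$ in $\tilF$. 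Hence $A(\tilF)=\varinjlim A(\sfL)$ is a union of cotorsion submodules. By Noetherianity of $R$, the duals $A(\sfL)^\vee$ form an inverse system of torsion quotients of the finitely generated module $T$-dual, hence stabilize up to finite-type issues. So $A(\tilF)$ itself is cofinitely generated cotorsion: it is a quotient-dual of a finitely generated torsion module, e.g.\ because it is a submodule of $A$ whose dual is a quotient of $A^\vee$. The Koszul terms are then subquotients of finite sums of copies of $A(\tilF)$, hence cofinitely generated cotorsion.

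For the local terms, fix $v\in S$. The cyclotomic-like hypothesis gives finitely many places $w$ of $F_\infty$ above $v$. So $\ker g_v$ is a finite direct sum, over $w\mid v$, of $\ker\bigl(\rmH^1(F_{\infty,w},A)\to \rmH^1(\tilF_{\eta},A)\bigr)$, where $\eta$ is one chosen place of $\tilF$ above $w$; the other places are permuted by $H$, and the limit over $L$ is handled by Shapiro's lemma. Each such kernel equals $\rmH^1(H_\eta, A(\tilF_\eta))$, where $H_\eta\subset H$ is the decomposition group, again a $\ZZ_p^{e}$ with $e\le d-1$. For $v\nmid p$, $H_\eta$ has rank at most $1$ (pro-$p$ unramified or tamely ramified extension), and $A(\tilF_\eta)\subset A$ is cofinitely generated. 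The relevant $\rmH^1$ is then a quotient of $A(\tilF_\eta)$ by $(\gamma-1)$. I would show it is cotorsion by applying \dref{eq:rf0} with $\sfF=F$; alternatively, if $H_\eta$ is trivial the kernel vanishes. For $v\mid p$, apply \dref{eq:rf0} with $\sfF=F_v$, $\sfF_\infty=F_{\infty,w}$ and $\widetilde{\sfF}=\tilF_\eta$ (if the local rank of $\tilF_\eta$ over $F_v$ drops, treat it via the same Koszul argument). The Koszul argument then applies verbatim.

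The main obstacle is the bookkeeping at the places $v\nmid p$, where \dref{eq:rf0} is not directly assumed. I would first try to deduce cotorsionness there from the global statement, since $A(\tilF_\eta)\supseteq A(\tilF)$ does not help directly. Alternatively I would use that $\rmH^1(\ZZ_p, M)=M/(\gamma-1)M$, which is cofinitely generated: it is a quotient of $M$ cofinitely generated. Its cotorsionness would follow because $\gamma$ acts through a finite-order-modulo-pro-$p$ unramified Frobenius, so $M_{\gamma}$ is the dual of $(M^\vee)^{\gamma}$, a torsion module once one shows $\gamma-1$ is not a zero divisor on the $R$-free part. Finally, the pseudo-isomorphism claim follows formally. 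Finitely generated torsion $R$-modules, viewed over $R\lrbracket{\Gal(F_\infty/F)}\cong R\lrbracket{X}$, have annihilator containing a nonzero element of $R$ together with the annihilator of $R\lrbracket{X}$ acting finitely. So their height is at least $2$, i.e.\ they are pseudonull.
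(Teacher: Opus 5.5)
\textbf{There is a genuine gap in the global step.} Your skeleton matches the paper's: the snake lemma on the fundamental diagram, then bounding $\ker h$, $\coker h$ and the local kernels $\ker g_v$. The trouble is your claim that $A(\tilF)=\varinjlim_{\sfL}A(\sfL)$ is cofinitely generated \emph{cotorsion} because each finite layer $A(\sfL)$ is.

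Cofinite generation is fine, since $A(\tilF)\subseteq A$. Cotorsionness does not pass to the union, however. Dually, the annihilators of the $A(\sfL)$ form a \emph{descending} chain of submodules of the Noetherian module $A^\vee$, and such a chain need not stabilize. Moreover, $A^\vee$ itself is not torsion, so being a quotient of it gives nothing.

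Here is a concrete counterexample. Take $R=\ZZ_p$ and $A=\QQ_p/\ZZ_p(\chi)$, where $\chi\colon\Gal(\tilF/F)\to 1+p\ZZ_p$ is a character with $\chi|_H\neq 1$. Every open subgroup of $H$ still acts nontrivially, so $A(\sfL)$ is finite for every finite $\sfL/F_\infty$ inside $\tilF$. Yet $A(\tilF)=A\simeq\QQ_p/\ZZ_p$. The same thing happens in practice: take $A=E[p^\infty]$ with $E$ having CM by $K\subseteq F$, $p$ split in $K$, $F$ enlarged so that $F(E[\mathfrak{p}^\infty])/F$ is a $\ZZ_p$-extension, and $\tilF=F_{\cyc}F(E[\mathfrak{p}^\infty])$. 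Imai's theorem still gives the hypothesis there.

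So the Koszul terms are not cotorsion, and your argument says nothing about $\rmH^1(H,A(\tilF))$ or $\rmH^2(H,A(\tilF))$. The same objection applies at the places $v\mid p$, where you say the Koszul argument applies verbatim.

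\textbf{The missing idea is Greenberg's choice of a subgroup $Z$.}
\begin{enumerate}
\item For each $\frp\in\Ass(R)$, view $A(\tilF)^\vee\otimes\overline{Q(R/\frp)}$ as a finite-dimensional representation of the abelian group $H$.
\item The hypothesis on \emph{finite} layers, together with the invariants/coinvariants comparison over a field, says that every character $\chi$ occurring in it has kernel of infinite index in $H$.
\item Choose $Z\simeq\ZZ_p$ in $H$ with $Z\otimes\QQ_p$ avoiding the finitely many hyperplanes $\ker\chi\otimes\QQ_p$. Then $A(\tilF)^Z$ is cotorsion.
\item Hence $\rmH^1(Z,A(\tilF))$ is cotorsion, because a surjective endomorphism of a Noetherian module is injective. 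Also $\rmH^2(Z,-)=0$.
\item The Hochschild--Serre sequence for $Z\subseteq H$ then gives cotorsionness of $\rmH^i(H,A(\tilF))$ for $i=1,2$.
\end{enumerate}
This is exactly what the paper's ``Greenberg's method for general coefficients'' supplies, and it is the heart of the proof.

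\textbf{The places $v\nmid p$ are also not handled.} Your proposed argument, that $M/(\gamma-1)M$ is cotorsion because of Frobenius, is unjustified and false in general: let $\gamma$ act trivially on a non-cotorsion $M$. What you need, and what the paper uses, is that $H_\eta$ is trivial. For $v_0\nmid p$ the compositum of all $\ZZ_p$-extensions of $F_{v_0}$ is the unramified $\ZZ_p$-extension. So $\tilF_\eta/F_{v_0}$ is a $\ZZ_p^{e}$-extension with $e\le 1$. The cyclotomic-like hypothesis forces $F_{\infty,w}/F_{v_0}$ to be infinite. Hence $\tilF_\eta=F_{\infty,w}$ and $\ker g_v=0$. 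This is a second use of the cyclotomic-like hypothesis, beyond the finiteness of $S_{F_\infty}$.

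\textbf{Minor points.} The snake lemma gives $\ker g\to\coker(\res)\to\coker h$, so $\coker(\res)$ is an extension, not a submodule of a quotient of $\ker g$. In the last step, ``nonzero element of $R$'' should read ``non-zero-divisor''. Neither affects the conclusion.
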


The key technique in proving Main Theorem \ref{main:II} is a method of Ralph Greenberg for bounding the kernel and cokernel of the restriction map $\res_{\tilF/F_{\infty}}$. This method will be introduced in Section \ref{sec:greenbergmethod}.

Second, we establish a \emph{coinvariant criterion for pseudonullity} as follows.

\begin{maintheorem}[Theorem {\ref{catenary}}: A coinvariant criterion for pseudonullity]\label{main:III}
    Let $\Lambda$ be a commutative Noetherian profinite ring and assume that $\Lambda$ is Cohen--Macaulay. Let $T$ be a regular element in $\Lambda$ and assume $T$ is in the Jacobson radical of $\Lambda$. Let $M$ be a finitely generated $\Lambda$-module. If $M/TM$ is a pseudonull $\Lambda/T\Lambda$-module, then $M$ is a pseudonull $\Lambda$-module.
\end{maintheorem}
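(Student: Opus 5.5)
Throughout, "pseudonull" means $\operatorname{ht}(\Ann_\Lambda M)\ge 2$, equivalently $M_{\mathfrak p}=0$ for every prime $\mathfrak p$ of height $\le 1$. The plan is to reduce to a dimension/height count via the support of $M$, using Nakayama and the Cohen--Macaulay hypothesis. Since $\Lambda$ is Cohen--Macaulay, it is catenary, and in every localization $\Lambda_{\mathfrak m}$ at a maximal ideal we have $\operatorname{ht}\mathfrak p+\dim \Lambda_{\mathfrak m}/\mathfrak p\Lambda_{\mathfrak m}=\dim\Lambda_{\mathfrak m}$. This is the key input replacing any equidimensionality assumption.

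\textbf{Step 1: support of $M$ versus support of $M/TM$.} Since $M$ is finitely generated, $\operatorname{Supp}(M/TM)=\operatorname{Supp}(M)\cap V(T)$. Let $\mathfrak p\in\operatorname{Supp}(M)$ be a prime with $\operatorname{ht}\mathfrak p\le 1$; we want a contradiction. Choose a maximal ideal $\mathfrak m\supseteq\mathfrak p$. Since $\Lambda$ is profinite, every maximal ideal is open and $T$ lies in the Jacobson radical, so $T\in\mathfrak m$. Then $\mathfrak p+T\Lambda\subseteq\mathfrak m$, and there is a minimal prime $\mathfrak q$ over $\mathfrak p+T\Lambda$ contained in $\mathfrak m$. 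Because $\mathfrak q\supseteq\mathfrak p$ and $\operatorname{Supp}M$ is closed under specialization, $\mathfrak q\in\operatorname{Supp}(M/TM)$.

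\textbf{Step 2: height bookkeeping.} By Krull's principal ideal theorem in $\Lambda/\mathfrak p$, the prime $\mathfrak q/\mathfrak p$ has height $\le 1$. Working in the catenary local ring $\Lambda_{\mathfrak m}$, the formula $\operatorname{ht}=\dim\Lambda_{\mathfrak m}-\dim(\Lambda_{\mathfrak m}/-)$ gives
\[
\operatorname{ht}\mathfrak q=\operatorname{ht}\mathfrak p+\operatorname{ht}(\mathfrak q/\mathfrak p)\le 2 .
\]
Since $T$ is regular and $\Lambda$ is Cohen--Macaulay, $\Lambda/T\Lambda$ is Cohen--Macaulay with $\operatorname{ht}_{\Lambda/T}(\mathfrak q/T)=\operatorname{ht}\mathfrak q-1\le 1$. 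This also holds locally, since $T$ is part of a system of parameters of $\Lambda_{\mathfrak q}$. Then $(M/TM)_{\mathfrak q}\neq 0$ at a prime of height $\le 1$ in $\Lambda/T\Lambda$, contradicting pseudonullity of $M/TM$. Hence $M_{\mathfrak p}=0$ for all $\mathfrak p$ of height $\le 1$.

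\textbf{Step 3: degenerate case and cleanup.} If $\mathfrak p\ni T$ already, take $\mathfrak q=\mathfrak p$; the same height drop shows that $\mathfrak p/T$ has height $\le 0$, which is again a contradiction. We should also check that the paper's notion of pseudonull over a possibly non-local profinite ring agrees with the height-$\le 1$ localization criterion, or with the $\operatorname{Ext}$ criterion. Since $\Lambda$ is Cohen--Macaulay, height equals grade, so the two agree.

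The main obstacle is Step 2. It requires the additivity $\operatorname{ht}\mathfrak q=\operatorname{ht}\mathfrak p+\operatorname{ht}(\mathfrak q/\mathfrak p)$ and the identity $\operatorname{ht}(\mathfrak q/T)=\operatorname{ht}\mathfrak q-1$, which can fail for non-catenary or non-equidimensional rings. I would carry out all height computations inside $\Lambda_{\mathfrak m}$, which is a Cohen--Macaulay local ring. There the dimension formula holds for every prime, and the argument becomes purely local.
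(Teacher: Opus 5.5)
Your proposal is correct and follows essentially the same route as the paper. You take a minimal prime $\mathfrak{q}$ over $\mathfrak{p}+(T)$, which is proper because $T$ lies in the Jacobson radical, bound $\operatorname{ht}(\mathfrak{q}/\mathfrak{p})\le 1$ by Krull, use Cohen--Macaulay height additivity to get $\operatorname{ht}\mathfrak{q}\le 2$, drop the height by one modulo $T$ to contradict pseudonullity of $M/TM$, and treat the case $T\in\mathfrak{p}$ directly. The only differences are cosmetic: you argue with supports instead of localizing and applying Nakayama, and you use the equality $\operatorname{ht}(\mathfrak{q}/T)=\operatorname{ht}\mathfrak{q}-1$ where the paper needs only the inequality $\operatorname{ht}_{\Lambda/T\Lambda}(\bar{\mathfrak{q}})+1\le\operatorname{ht}_{\Lambda}(\mathfrak{q})$, which follows from regularity of $T$ alone.
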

This is a vast generalization of \cite[Lemma 7]{fujii2017on} which treats only the case $R = \ZZ_p$ with $\Lambda = \ZZ_p\lrbracket{T_1, \ldots, T_{d}}$ a multivariable Iwasawa algebra over $\ZZ_p$. In fact, Main Theorem \ref{main:III} holds under the strictly weaker condition that $\Lambda$ is Noetherian, catenary, and locally equidimensional. To avoid interrupting the flow of the article, we postpone this result to Theorem \ref{catenary2} in the appendix, where we also include several counterexamples indicating that each of these assumptions is indispensable.

In our article, we apply Main Theorem \ref{main:III} with Cohen--Macaulay rings $R$ as coefficient rings and with $\Lambda$ a multivariable Iwasawa algebra $R\lrbracket{T_1, \ldots, T_d}$ over $R$. This will be stated as Theorem \ref{thm:descentpseudonull}.

\subsection{Application of main results}
To apply Main Theorems \ref{main:I} and \ref{main:II}, it suffices to check whether the Galois representation $T$ over $R$ satisfies finiteness condition \dref{eq:rf0}. In Section \ref{sec:applications}, we verify the condition in the following examples, mostly when $F_{\infty}/F$ is the cyclotomic $\ZZ_p$-extension $F_{\cyc}/F$:

(1) \emph{Tate motive}: $A = \QQ_p/\ZZ_p(1)$. (Theorem \ref{thm:apptate})

(2) \emph{Elliptic curves}: $A$ is given by the Tate module of elliptic curves over number fields $F$:
    \begin{itemize}
        \item with potential good reduction at all places of $F$ above $p$ (Theorem \ref{thm:appec}), or
        \item with potential multiplicative reduction at all places of $F$ above $p$, when $F_{v} = \QQ_p$ for all such places $v$ for all \footnote{If $E$ already has multiplicative reduction at all places of $F$ above $p$, then we allow $p = 3$.} $p \geq 5$ (Theorems \ref{thm:appecmultcyc} and \ref{thm:appecmultcyc_nonsplit}).
    \end{itemize}
    Taken together, these results cover almost all elliptic curves over $F$ (subject to the additional conditions \dref{item:splitminus} and \dref{item:large} in the text); that is, they allow any reduction type at $p$, including mixed reduction types at the places of $F$ above $p$. (Theorem \ref{thm:appecoverF})

(3) \emph{Classical cuspidal newforms}: $A$ is given by the Galois representation attached to a cuspidal eigenform $f \in S_{k}(\Gamma_{0}(N))$ with weight $k \geq 2$ even and $p \nmid N$. (Theorem \ref{thm:appmodforms})

(4) \emph{Cuspidal Hida families}: $A$ is given by the Galois representation attached to an $\II$-adic cuspidal Hida family $\bff$ passing through a cuspidal newform $f$ with even weight and residually irreducible Galois representation, under the assumption that $\II$ is Cohen--Macaulay (see Theorem \ref{thm:apphida}, and also Remark \ref{rem:branchCM} for the Cohen--Macaulay condition).

\begin{remark}
    When $A = \QQ_p/\ZZ_p$, it is easy to see that condition \dref{eq:rf0} is violated. For the cyclotomic $\ZZ_p$-extension, Proposition \ref{prop:counter} gives a case in which the vertical control theorem (Main Theorem \ref{main:II}) fails. However, we still expect that if one considers the $\ZZ_p$-extension $F^{(1)}/F$, the vertical control theorem (Main Theorem \ref{main:II}) holds for $A = \QQ_p/\ZZ_p$.
\end{remark}

\subsection{Notations}
Throughout this article, let $p$ be an odd prime number. For any field $F$, we denote its absolute Galois group by $\Gal_{F}$. For a number field $F$ (i.e. a finite extension of $\QQ$) and a finite set $S$ of places of $F$ containing the places above $p$ and the infinite places, we denote by $F_{S}$ the maximal algebraic extension of $F$ that is unramified outside $S$. For any algebraic (possibly infinite) extension $\calL$ of $F$ contained in $F_{S}$, we write $\Gal_{\calL, S} := \Gal(F_{S}/\calL)$. We write $S_{\calL}$ for the set of places of $\calL$ lying over those in $S$.

For any commutative ring $R$ and any ideal $I$ of $R$, we use $\Ht_{R}(I)$ to denote the height of $I$ in $R$. If $R$ is a local ring with maximal ideal $\frm$, then the $\frm$-depth of $R$ as an $R$-module (following \cite[\href{https://stacks.math.columbia.edu/tag/00LE}{Tag 00LE}]{stacks-project}) is called the \emph{depth} of $R$, denoted by $\depth(R)$. 

In this article, most lemmas hold when $R$ is a commutative Noetherian profinite $\ZZ_p$-algebra. For applications, we may impose additional conditions on $R$. For example, we may assume that $R$ is catenary, satisfies Serre's condition $(S_1)$, is Cohen--Macaulay, or is Gorenstein. 
These definitions will be recalled in the text.

If $M$ is a pro-$p$ group or a discrete $p$-primary group, we denote the Pontryagin dual of $M$ by $M^{\vee} := \Hom_{\cts}(M, \QQ_p/\ZZ_p)$.

\subsection*{Acknowledgement} The authors have benefited greatly from insightful discussions with Haidong Li, Meng Fai Lim, Preston Wake, Xin Wan, and Mulun Yin, and are sincerely grateful to them. They also thank David Loeffler for his valuable answer to their questions about Kuga--Sato varieties in the following MathStackExchange post:
\begin{center}
\url{https://math.stackexchange.com/questions/5122065/}.
\end{center}
The counterexamples in Section \ref{sec:appA} were found with the assistance of either \texttt{Rethlas} \cite{rethlas}, a natural-language reasoning system for mathematics accessed via \texttt{Codex CLI}, or \texttt{GPT-5.6 Sol (High)}, accessed via the \texttt{ChatGPT} desktop application. All counterexamples were subsequently verified independently by the authors. The authors gratefully acknowledge this assistance and thank the \texttt{Rethlas} development team for making the system widely available to the mathematical community.

\section{Preliminaries}
In this section, we review the preliminaries for this article, including the definitions of fine Selmer groups and cyclotomic-like $\ZZ_p$-extensions.

\subsection{Fine Selmer groups}

Let $R$ be a profinite $\ZZ_p$-algebra and assume $R$ is Noetherian and commutative. Let $T$ be a finitely generated compact $R$-module. 
We further assume that $T$ has a continuous $R$-linear $\Gal_{F}$-action unramified outside $S$. This group action on $T$ induces a continuous group homomorphism 
\[
\rho: \Gal_{F,S} \rightarrow \Aut_{R}(T).
\]
One may think of $R$ as a Galois deformation ring and of $T$ as the representation associated with $R$. 

We write \footnote{Compared with \cite{lim2017}, the fine Selmer group is defined for $W := T^{\vee}(1)$ in \textit{op.cit.}.} $A := T \otimes_{R} \Hom_{\ZZ_p}(R, \QQ_p/\ZZ_p)$. Let $v$ be a prime in $S$. For each \emph{finite} extension $L$ of $F$ contained in $F_{S}$, we define
\[
\rmK^{i}_{v}(A/L) := \bigoplus_{w \mid v} \rmH^{i}(L_w, A)
\]
for $i = 0, 1$, where $w$ runs over the finite set of places of $L$ above $v$. If $\calL$ is an infinite extension of $F$ inside $F_{S}$, we define
\[
\rmK^{i}_v(A/\calL) := \varinjlim_{L} \rmK_{v}^{i}(A/L),
\]
where the direct limit is taken over all finite extensions $L$ of $F$ contained in $\calL$ under the restriction maps.

\begin{definition}
For any algebraic (possibly infinite) extension $\calL$ of $F$ contained in $F_{S}$, the \emph{fine Selmer group} of $A$ over $\calL$ with respect to $S$ is defined to be
\[
\Sel_{S}(A/\calL) := \ker\left(\rmH^{1}(\Gal_{\calL, S}, A) \rightarrow \bigoplus_{v \in S} \rmK^{1}_v(A/\calL)\right).
\]
We will write $Y_{S}(A/\calL)$ for the Pontryagin dual $\Sel_{S}(A/\calL)^{\vee}$ of the Selmer group, called the \emph{dual fine Selmer group} of $A$ over $\calL$.
\end{definition}

It follows from the Poitou--Tate exact sequence that
\begin{equation} \label{eq:poitoutate}
    0 \rightarrow Y_{S}(A/\calL) \rightarrow \varprojlim_{L} \rmH^{2}(\Gal_{L,S}, T^{\ast}) \rightarrow \left( \bigoplus_{v \in S} \rmK^{0}_{v}(A/\calL) \right)^{\vee}
\end{equation}
where $T^{\ast} := \Hom_{R}(T, R(1))$. We shall denote 
\[
\rmH^{2}_{S}(\calL/F, T^{\ast}) := \varprojlim_{L} \rmH^{2}(\Gal_{L,S}, T^{\ast}).
\]
To simplify notation, from now on we shall write 
\[
A(\calL) := \rmH^{0}(\Gal_{\calL,S}, A), \text{ and } A(\calL_{w}) := \rmH^{0}(\Gal_{\calL_{w}}, A)
\]
for any place $w$ of $\calL$.

\subsection{Cyclotomic-like $\ZZ_p$-extensions}
\begin{definition}
Let $F$ be a number field and $F_{\infty}/F$ be a $\ZZ_p$-extension. We say $F_{\infty}/F$ is \emph{cyclotomic-like} if every place of $F$ decomposes finitely in $F_{\infty}/F$.
\end{definition}

\begin{example}
    The cyclotomic $\ZZ_p$-extension $F_{\cyc}/F$ of any number field $F$ is cyclotomic-like.
\end{example}

\begin{example} \label{eg:fieldK1}
    Let $F/F^{+}$ be a CM field extension, i.e. $F^{+}$ is a totally real number field, and $F$ is a totally imaginary quadratic extension of $F^{+}$. Let $S_{F^{+}} := \{\frp_1, \ldots, \frp_s\}$ be the set of primes of $F$ above $p$. We assume that each of the primes above $p$ in $F^{+}$ splits in $F$. Write $\frp_i \calO_{F} = \frP_i \overline{\frP_i}$ for $1 \leq i \leq s$, where $\overline{\frP_{i}}$ is the complex conjugation of $\frP_i$, and set $S := \{\frP_1, \ldots, \frP_s\}$. It follows from \cite[Theorem 2.1]{peikai2024} that the $\ZZ_p$-extension $F_{\infty}/F$ unramified outside $S$ exists, and it is unique if we assume that Leopoldt's conjecture holds for $F$. This $\ZZ_p$-extension is called the \emph{$S$-ramified $\ZZ_p$-extension}, denoted by $F^{(1)}/F$. It follows from \cite[Lemma 3]{fujii2017on} that $F^{(1)}/F$ is a cyclotomic-like $\ZZ_p$-extension if $p$ splits completely in $F$. As expected in \cite{peikai2024}, such $\ZZ_p$-extensions for CM fields have similar properties to the cyclotomic $\ZZ_p$-extensions of a totally real field.
\end{example}

Recall that for a general algebraic extension $\calL/F$ contained in $F_{S}$, the fine Selmer group $\Sel_{S}(A/\calL)$ may depend on the choice of $S$. We can prove that it does not when $\calL/F$ contains a cyclotomic-like $\ZZ_p$-extension $F_{\infty}/F$, generalizing \cite[Lemma 3.2]{sujatha2018} from the case $F_{\infty} = F_{\cyc}$.

In general, we expect that Conjectures \ref{myconj:A}, \ref{myconj:Bminus}, \ref{myconj:B} and \ref{myconj:Bplus} hold when replacing the cyclotomic $\ZZ_p$-extension $F_{\cyc}/F$ by any cyclotomic-like $\ZZ_p$-extension $F_{\infty}/F$.

\begin{proposition} \label{prop:notdependonS}
Suppose $F_{\infty}/F$ is a cyclotomic-like $\ZZ_p$-extension. Let $S \subseteq T$ be two finite sets of places of $F$ containing those above $p$ and the ramified places for $A$. Then $\Sel_{S}(A/\calL) = \Sel_{T}(A/\calL)$ for any subextension $\calL/F$ of $F_{S}/F$ containing $F_{\infty}/F$. 
\end{proposition}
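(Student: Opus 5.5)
The plan is to compare the two Selmer groups through the inflation map from $\Gal_{\calL,S}$ to $\Gal_{\calL,T}$, following \cite[Lemma 3.2]{sujatha2018}. Note first that the statement implicitly requires $\calL \subseteq F_S$; we regard $\Sel_T(A/\calL)$ as defined via $\Gal(F_T/\calL)$. Since $A$ is unramified outside $S$, the group $\Gal(F_T/F_S)$ acts trivially on $A$. Inflation–restriction therefore gives an exact sequence
\[
0 \to \rmH^1(\Gal_{\calL,S}, A) \to \rmH^1(\Gal(F_T/\calL), A) \to \Hom(\Gal(F_T/F_S), A)^{\Gal_{\calL,S}}.
\]
Consequently $\Sel_S(A/\calL)$ injects into $\rmH^1(\Gal(F_T/\calL),A)$. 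We then prove the equality in two steps.

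\textbf{Step 1: the image of $\Sel_S$ lies in $\Sel_T$.} Take a class in $\Sel_S(A/\calL)$ and inflate it. At the places of $T \setminus S$ the inflated class is unramified, since it factors through $\Gal(F_S/\calL)$. We must show that its local restrictions at these places vanish in the limit $\rmK^1_v(A/\calL)$.

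Here the cyclotomic-like hypothesis is used. For $v \notin S$, $v$ is finite and prime to $p$, and $v$ decomposes finitely in $F_\infty$. Hence at any place $w \mid v$ of a finite layer $L$, the extension $\calL_w$ contains the unramified $\ZZ_p$-extension of $L_w$; indeed $F_{\infty}/F$ is unramified at $v$ with an infinite decomposition group. The unramified cohomology $\rmH^1(L_w^{\mathrm{ur}}/L_w, A) = A/(\mathrm{Frob}-1)A$ dies along the unramified $\ZZ_p$-tower. Concretely, $\rmH^1(\calL_w^{\mathrm{ur}}/\calL_w, A)$ is $\rmH^1$ of the unramified quotient of $\Gal_{\calL_w}$, which has profinite order prime to $p$ and so kills the $p$-primary module $A$. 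This is the standard argument, and it shows the local class vanishes. So the inflated class lies in $\Sel_T$, and at the places in $S$ the local conditions agree.

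\textbf{Step 2: every class in $\Sel_T$ comes from $\Sel_S$.} Let $x \in \Sel_T(A/\calL)$. Its image in $\Hom(\Gal(F_T/F_S),A)$ is determined by the restrictions of $x$ to the inertia groups at places above $T\setminus S$. Indeed, $\Gal(F_T/F_S)$ is generated as a normal subgroup by these inertia groups, because $F_S$ is the maximal subextension unramified at them. But $x$ is locally trivial at all places of $T$, so its restriction to each decomposition group, and a fortiori to each inertia subgroup, is zero. Hence $x$ restricts to zero on $\Gal(F_T/F_S)$, and so $x$ is inflated from some $y \in \rmH^1(\Gal_{\calL,S},A)$. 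The local conditions of $y$ at $S$ coincide with those of $x$, because the local cohomology groups at $v \in S$ are the same for both. Therefore $y \in \Sel_S(A/\calL)$.

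\textbf{Main obstacle.} The crux is Step 1. We must check carefully that every place above $T\setminus S$ has infinite residue-field extension in $\calL$, which needs both "finitely decomposed" and unramifiedness of $F_\infty/F$ outside $p$. We must also handle the direct limit over finite layers correctly: a class killed in $\rmH^1(\calL_w,A)$ must already be killed at some finite level. The latter holds since $\rmH^1(\calL_w,A)=\varinjlim \rmH^1(L_w,A)$, and this is compatible with the definition of $\rmK^1_v$.
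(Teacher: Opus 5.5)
Your proposal is correct and is essentially the paper's argument. Both proofs rest on the same two facts. First, at a place $v\nmid p$ the unramified quotient of the local Galois group over a field containing $F_{\infty}$ has order prime to $p$ (the paper's $G_{k_v}\cong\prod_{q\neq p}\ZZ_q$), so unramified classes die locally. Second, $\Gal(F_T/F_S)$ is generated by inertia above $T\smallsetminus S$, which identifies classes from $F_S$ with those having trivial restriction to inertia there. The paper first reduces to $\calL=F_{\infty}$ and packages the second fact as the Gysin sequence, whereas you check the two inclusions directly over $\calL$ by inflation--restriction; your Step 2 in fact does not need the cyclotomic-like hypothesis.
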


\begin{proof}
Note that 
\[
\Sel_{S}(A/\calL) = \varinjlim_{L/F} \Sel_{S}(A/L F_{\infty})
\]
with $L/F$ running through all finite subextensions of $\calL/F$, we may assume that $\calL = F_{\infty}$. The remaining proof follows in the same way as \cite[Lemma 3.2]{sujatha2018}, which we recall here for the convenience of readers.

Let $v$ be any place of $F_{\infty}$ lying over $l$ with $l\neq p$, and denote by $I_{v}$ the inertia group at $v$ in $\Bar{F}/F_{\infty}$ and by $k_v$ the residue field of $F_\infty$ at $v$. Let $G_{k_v}$ be the absolute Galois group of $k_v$. The wild inertia subgroup $P_v$ of $I_v$ is pro-$l$. The tame quotient is 
\[I_v/P_v\cong \prod_{q\neq l}\ZZ_q(1), 
\]
where the $(1)$ indicates that $G_{k_v}$ acts through the Tate twist by 1. Hence the pro-$p$ part of $I_v$ is isomorphic to $\ZZ_p$ and $I_{v}$ has $p$-cohomological dimension $1$. We have $\rmH^{q}(I_v, A)=0$ for all $q > 1$. Applying the Hochschild-Serre spectral sequence
\[
E_{2}^{p,q} = \rmH^{p}(k_{v}, \rmH^{q}(I_v, A)) \Rightarrow \rmH^{p+q}(F_{\infty,v}, A),
\]
by \cite[Exercise 4 in Section 2.4]{MR1737196}, we have the following exact sequence
\begin{align*}
0 &\rightarrow \rmH^{1}(k_v, A^{I_v}) \rightarrow \rmH^{1}(F_{\infty,v}, A) \rightarrow \rmH^{0}(k_v, \rmH^{1}(I_v, A)) \\
&\rightarrow \rmH^{2}(k_v, A^{I_v}) \rightarrow \rmH^{2}(F_{\infty,v}, A) \rightarrow \rmH^{1}(k_v, \rmH^{1}(I_v, A))  \\
&\rightarrow \rmH^{3}(k_v, A^{I_v}) \rightarrow \cdots.
\end{align*}
Since any prime of $F$ splits finitely in $F_\infty$, the decomposition group of $v$ in $F_\infty/F$ is nontrivial. Hence, 
\[G_{k_v}\cong \prod_{q\neq p}\ZZ_q.\]   
Thus, $G_{k_v}$ has no pro-$p$ quotient, which implies $\rmH^i(k_v, M)=0$ for $i\geq 1$ and any pro-$p$ module $M$. Hence,
\begin{equation} \label{eq:hsspecseq}
\rmH^{1}(F_{\infty,v}, A) \simeq H^{0}(k_v, \rmH^{1}(I_v, A)), \quad \rmH^{2}(F_{\infty,v}, A) = 0.    
\end{equation}
Additionally, if $v$ is not in $S_{F_{\infty}}$, then $I_v$ acts trivially on $A$ because $S$ already contains all ramified places for $A$. We have 
\begin{equation} \label{eq:hsspecseq2}
    \rmH^{1}(I_v, A)\simeq \Hom(\ZZ_p(1), A) \simeq A(-1)
\end{equation}

As $T$ contains $S$, we have the following Gysin exact sequence
\begin{align} \label{eq:gysin}
0 &\rightarrow \rmH^{1}(\Gal_{F_{\infty},S}, A) \rightarrow \rmH^{1}(\Gal_{F_{\infty},T}, A) \rightarrow \bigoplus_{v \in (T \smallsetminus S)_{F_{\infty}}} \rmH^{0}(k_v, A(-1)) \notag \\
&\rightarrow \rmH^{2}(\Gal_{F_{\infty},S}, A) \rightarrow \rmH^{2}(\Gal_{F_{\infty},T}, A) \rightarrow \bigoplus_{v \in (T \smallsetminus S)_{F_{\infty}}} \rmH^{1}(k_v, A(-1))=0
\end{align}
Therefore,
\begin{align*}
  \Sel_{T}&( A /F_{\infty}) \\&= \ker \left(\rmH^{1}(\Gal_{F_{\infty}, T}, A) \rightarrow \bigoplus_{v \in T_{F_{\infty}}} \rmH^{1}(F_{\infty,v}, A)\right) \\
    &= \ker \left(\rmH^{1}(\Gal_{F_{\infty}, T}, A) \rightarrow \bigoplus_{v \in (T \smallsetminus S)_{F_{\infty}}} \rmH^{1}(F_{\infty,v}, A) \oplus \bigoplus_{v \in S_{F_{\infty}}} \rmH^{1}(F_{\infty,v}, A)\right) \\
    &\xlongequal{\eqref{eq:hsspecseq}\eqref{eq:hsspecseq2}} \ker \left(\rmH^{1}(\Gal_{F_{\infty}, T}, A) \rightarrow \bigoplus_{v \in (T \smallsetminus S)_{F_{\infty}}} \rmH^{0}(k_v, A(-1)) \times \bigoplus_{v \in S_{F_{\infty}}} \rmH^{1}(F_{\infty,v}, A)\right) \\
    &\xlongequal{\eqref{eq:gysin}} \ker \left(\rmH^{1}(\Gal_{F_{\infty}, S}, A) \rightarrow \bigoplus_{v \in S_{F_{\infty}}} \rmH^{1}(F_{\infty,v}, A)\right) \\
    &= \Sel_{S}(A/F_{\infty}),
\end{align*}
as desired.

\end{proof}

For any algebraic extension $\calL$ of $F$ inside $F_{S}$ that contains a cyclotomic-like $\ZZ_p$-extension of $F$, we apply Proposition \ref{prop:notdependonS}; thus, we may drop the subscript $S$ from the notation $\Sel_{S}(A/\calL)$ and $Y_{S}(A/\calL)$ for simplicity.

\section{Greenberg's method} \label{sec:greenbergmethod}
The main tool in establishing the vertical control theorem \ref{thm:verticalcontrol} is our generalization of a method of R.~Greenberg appearing in the proof of \cite[Proposition 3.3]{greenberg2003galois}. The main theorem in this section is Theorem \ref{thm:generalgreenberg}.

Let $\widetilde{\sfF} / \sfF_{\infty} / \sfF$ be a tower of Galois extensions with
\[
\Gal(\sfF_{\infty}/\sfF) = \ZZ_p, \, \Gal(\widetilde{\sfF}/\sfF) = \ZZ_p^{n+1} \text{ for } n \geq 1.
\]
Here $\sfF$ is a finite extension of $\QQ$ or $\QQ_p$. To help readers better understand Greenberg's method, we start with a review of Greenberg's original setup where the coefficient ring $R$ is $\ZZ_p$.

\subsection{The baby case: $R = \ZZ_p$}
We first consider the case where $R = \ZZ_p$ and $T$ is a finitely generated \emph{free} $\ZZ_p$-module. In this case, $A = T \otimes_{\ZZ_p} \QQ_p/\ZZ_p$ and $T$ is the Tate module of $A$.

\begin{theorem}[Greenberg's method for $R = \ZZ_p$] \label{thm:greenberg}
With the notation as above, suppose that $A(\sfL)$ is finite for all finite extensions $\sfL/\sfF_{\infty}$ inside $\widetilde{\sfF}$. Then
\[
\rmH^{i}(\widetilde{\sfF}/\sfF_{\infty}, A(\widetilde{\sfF})) \text{ is finite}
\]
for $i \geq 1$.
\end{theorem}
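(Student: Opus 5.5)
The plan is to reduce to the divisible part of $M := A(\widetilde{\sfF})$. Then I would find a single element $\gamma \in H := \Gal(\widetilde{\sfF}/\sfF_{\infty}) \simeq \ZZ_p^{n}$ such that $\gamma - 1$ acts invertibly on the associated $\QQ_p$-representation, and finish with a Hochschild--Serre argument. In fact the argument should give finiteness of $\rmH^i(H, M)$ for all $i \geq 0$.

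\emph{Step 1: structure of $M$.} Since $T$ is free of finite rank, $M \subseteq A \simeq (\QQ_p/\ZZ_p)^{r}$ is cofinitely generated over $\ZZ_p$. Hence $M \simeq D \oplus (\text{finite})$ as groups, where $D := M_{\mathrm{div}} \simeq (\QQ_p/\ZZ_p)^{s}$ is $H$-stable. If $N$ is a finite discrete $H$-module, then $\rmH^{i}(H, N)$ is finite for all $i$, because $H$ is a topologically finitely generated pro-$p$ group of finite type (e.g.\ via the Koszul complex). So the long exact sequence attached to $0 \to D \to M \to M/D \to 0$ reduces the claim to showing that $\rmH^{i}(H, D)$ is finite for all $i$. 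If $s = 0$ there is nothing to do.

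\emph{Step 2: choice of $\gamma$.} Let $V := T_p(D) \otimes_{\ZZ_p} \QQ_p$, a continuous $s$-dimensional representation of the abelian group $H$. Over $\overline{\QQ}_p$, $V$ decomposes into generalized eigenspaces for finitely many continuous characters $\chi_1, \ldots, \chi_m : H \to \overline{\QQ}_p^{\times}$.

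I claim that each $\chi_j$ has infinite image. Suppose instead that $\chi_j$ were trivial on an open subgroup $H'$. On the $\chi_j$-generalized eigenspace, $H'$ then acts unipotently, so $H'$ would fix a nonzero vector of $V$ (after descending to a finite extension of $\QQ_p$ and taking Galois conjugates and traces). That would give a nonzero $H'$-fixed line in $V$, and so an infinite subgroup of $D^{H'} \subseteq A(\sfL)$, where $\sfL$ is the fixed field of $H'$, a finite extension of $\sfF_{\infty}$. This contradicts the hypothesis.

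So each $\ker \chi_j$ is a closed subgroup of infinite index in $H$, hence of Haar measure zero. Their finite union is therefore not all of $H$, and I can pick $\gamma \in H$ with $\chi_j(\gamma) \neq 1$ for all $j$. Then $\gamma - 1$ is invertible on $V$. Consequently $\gamma - 1 : D \to D$ is surjective with finite kernel.

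\emph{Step 3: Hochschild--Serre.} Let $\Gamma := \overline{\langle \gamma \rangle} \simeq \ZZ_p$. Then $\rmH^{0}(\Gamma, D) = \ker(\gamma - 1)$ is finite, $\rmH^{1}(\Gamma, D) = D/(\gamma-1)D = 0$, and $\rmH^{b}(\Gamma, D) = 0$ for $b \geq 2$. The quotient $H/\Gamma$ is a finitely generated abelian profinite group of the form $\ZZ_p^{n-1} \times (\text{finite } p\text{-group})$, so its cohomology with finite coefficients is finite in each degree. The Hochschild--Serre spectral sequence $\rmH^{a}(H/\Gamma, \rmH^{b}(\Gamma, D)) \Rightarrow \rmH^{a+b}(H, D)$ then has only finite $E_2$-terms, and hence $\rmH^{i}(H, D)$ is finite for all $i$. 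Combined with Step 1, this proves the theorem.

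The main obstacle is Step 2: turning ``$A(\sfL)$ finite for every finite $\sfL/\sfF_{\infty}$'' into the existence of a single $\gamma$ with $\gamma - 1$ invertible on $V$. This uses the abelianness of $H$ (to get a common eigen-decomposition) and the fact that no nonzero open subgroup of $\ZZ_p^{n}$ is a finite union of closed subgroups of infinite index.
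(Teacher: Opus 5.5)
Your proof is correct. Its key step coincides with the paper's: abelianness of $H=\Gal(\widetilde{\sfF}/\sfF_{\infty})$ together with finiteness of every $A(\sfL)$ forces each character occurring in $V(\widetilde{\sfF})$ to have infinite image, so one can choose a procyclic subgroup on which none of them is trivial. Your $\Gamma$ is the paper's $Z=\Gal(\widetilde{\sfF}/\sfF_{\circ})$. You avoid the finitely many kernels by a Haar-measure argument, whereas the paper notes that $H\otimes\QQ_p$ is not a finite union of the proper subspaces $\ker(\chi_i)\otimes\QQ_p$.

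Where you differ is the endgame. The paper stays in characteristic zero: it proves $\rmH^{i}(H,V(\widetilde{\sfF}))=0$ for all $i\ge 0$ by inflation--restriction. It then dimension-shifts along $0\to T(\widetilde{\sfF})\to V(\widetilde{\sfF})\to A(\widetilde{\sfF})_{\ddiv}\to 0$, using that $\rmH^{j}(H,T(\widetilde{\sfF}))$ is a finitely generated $\ZZ_p$-module of rank $\dim_{\QQ_p}\rmH^{j}(H,V(\widetilde{\sfF}))$, and finally absorbs the finite part $A(\widetilde{\sfF})_{\mathrm{tors}}$. You instead push the invertibility of $\gamma-1$ on $V$ down to the discrete module $D$, where it is surjective with finite kernel. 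Hence $\rmH^{b}(\Gamma,D)$ is finite for $b=0$ and zero for $b\ge 1$, and Hochschild--Serre collapses to $\rmH^{a}(H/\Gamma,D^{\Gamma})$. This is finite because $H/\Gamma\simeq\ZZ_p^{n-1}\times(\text{finite }p\text{-group})$ is of finite type.

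Your route never leaves discrete $p$-primary coefficients. It needs no continuous cohomology with $\ZZ_p$- or $\QQ_p$-coefficients and no rank comparison from the literature. The vanishing of $\rmH^{\ge 2}(\Gamma,-)$ is used only where it is literally $\mathrm{cd}_p(\Gamma)=1$; the paper applies this to the $\QQ_p$-space $V(\widetilde{\sfF})$, which strictly requires a small extra argument. Your route also gives all $i\ge 0$ uniformly. It is close in spirit to the paper's own proof of Theorem~\ref{thm:generalgreenberg}, which likewise runs Hochschild--Serre on the discrete module $A(\widetilde{\sfF})$ with $Z\simeq\ZZ_p$. The paper's route, in exchange, isolates the clean characteristic-zero statement that $\rmH^{\ast}(H,V(\widetilde{\sfF}))$ vanishes identically.
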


\begin{proof}
We start with some preliminary observations \footnote{This is taken from the arguments in \cite[page 12-13]{imai1975} in the case where $T = T_p(A)$ is the Tate module of an elliptic curve $A$ over $\QQ$.} Let $\sfL / \sfF$ be any algebraic extension of $\sfF$, and consider $T(\sfL) := T^{\Gal_{\sfL}}$. By the elementary divisor theorem, under a suitable $\ZZ_p$-basis, we can write
\[
T(\sfL) = p^{a_1} \ZZ_p \oplus \cdots p^{a_r} \ZZ_p \oplus 0 \oplus \cdots \oplus 0 \subseteq T \simeq \ZZ_p^{n},
\]
for some nonnegative integers $a_i$ for $1 \leq i \leq r$. We claim  that $a_i = 0$ for all $1 \leq i \leq r$. In fact, it suffices to check that for any $\sigma \in \Aut_{\ZZ_p}(T)$, $x \in T$ and any integer $a \geq 0$, we have the implication
\[
\sigma(p^{a} x) = p^{a}x \Rightarrow \sigma(x) = x, 
\]
which holds since $T$ is torsion-free. This implies that $T(\sfL)$ is simply a $\ZZ_p$-direct summand of $T$. We define $V(\sfL) := T(\sfL) \otimes_{\ZZ_p} \QQ_p$, and then
\[
A(\sfL)_{\ddiv} := V(\sfL)/T(\sfL), \text{ and } A(\sfL) \simeq A(\sfL)_{\ddiv} \times A(\sfL)_{\tors},
\]
where $A(\sfL)_{\ddiv} \simeq (\QQ_p/\ZZ_p)^{r}$ is the divisible part of $A(\sfL)$ and $A(\sfL)_{\tors}$ is the torsion part of $A(\sfL)$, which has finite cardinality.

Moreover, we have the equivalences between the following statements:
\begin{enumerate}[label = (\roman*)]
    \item $A(\sfL)$ is an infinite group, \label{item:equivi}
    \item for any positive integer $n$, there exists an element $x_n(\sfL)$ of order $p^n$,
    \item $T(\sfL) \neq 0$.
    \item $V(\sfL) := V^{\Gal_{\sfL}} \neq 0$. \label{item:equiviv}
\end{enumerate}
To see the second equivalence, we consider the projective system consisting of sets
\[
A_{n}(\sfL) := \{x \in A(\sfL): x \text{ is of order } p^n \},
\]
and the maps $\pr_{n}: A_{n}(\sfL) \rightarrow A_{n-1}(\sfL)$ which are induced from multiplication by $p$. Since the projective limit of nonempty finite sets is nonempty, the second statement implies the third, and the converse is trivial. For the third equivalence, note that we have seen that $T(\sfL)$ is a free $\ZZ_p$-module of finite rank.

A key observation of Greenberg is that, under the condition that $A(\sfL)$ is finite for all finite extensions $\sfL/\sfF_{\infty}$, there is an intermediate field $\sfF_{\circ}$ in $\widetilde{\sfF}/\sfF_{\infty}$ such that
\begin{enumerate}[label = (\arabic*)]
    \item $V(\widetilde{\sfF})^{\Gal(\widetilde{\sfF}/\sfF_{\circ})} = 0$, \label{item:circ1}
    \item $Z := \Gal(\widetilde{\sfF}/\sfF_{\circ})$ is isomorphic to $\ZZ_p$. \label{item:circ2}
\end{enumerate}
Granting the existence of such an intermediate field $\sfF_{\circ}$, we show inductively, using the inflation-restriction exact sequence, that the cohomology groups $\rmH^{i}(\widetilde{\sfF}/\sfF_{\circ}, V(\widetilde{\sfF}))$ vanish for $i \geq 1$. First, the inflation-restriction exact sequence gives
\[
0 \rightarrow \rmH^{1}(\sfF_{\circ}/\sfF_{\infty}, V(\widetilde{\sfF})^{Z}) \rightarrow \rmH^{1}(\widetilde{\sfF}/\sfF_{\infty}, V(\widetilde{\sfF})) \rightarrow \rmH^{1}(Z, V(\widetilde{\sfF}))^{\Gal(\sfF_{\circ}/\sfF_{\infty})}.
\]
By the defining property (1) of $\sfF_{\circ}$, the first term $\rmH^{1}(\sfF_{\circ}/\sfF_{\infty}, V(\widetilde{\sfF})^{Z})$ vanishes, and hence $\rmH^{1}(\widetilde{\sfF}/\sfF_{\infty}, V(\widetilde{\sfF}))$ injects into $\rmH^{1}(Z, V(\widetilde{\sfF}))$. By the defining property (2) of $\sfF_{\circ}$, choose $z$ as a topological generator of $Z$. Then
\[
\rmH^{1}(\widetilde{\sfF}/\sfF_{\infty}, V(\widetilde{\sfF})) \hookrightarrow \rmH^{1}(Z, V(\widetilde{\sfF}) )= V(\widetilde{\sfF}) / (z-1) V(\widetilde{\sfF}),
\]
with the rightmost term being zero by (1). Therefore, \fbox{$\rmH^{1}(\widetilde{\sfF}/\sfF_{\infty}, V(\widetilde{\sfF})) = 0$}. This allows us to consider the inflation-restriction exact sequence in degree $2$, that is,
\[
0 \rightarrow \rmH^{2}(\sfF_{\circ}/\sfF_{\infty}, V(\widetilde{\sfF})^{Z}) \rightarrow \rmH^{2}(\widetilde{\sfF}/\sfF_{\infty}, V(\widetilde{\sfF})) \rightarrow \rmH^{2}(Z, V(\widetilde{\sfF}))^{\Gal(\sfF_{\circ}/\sfF_{\infty})}.
\]
By (1), the leftmost term is zero, and by (2), the rightmost term is zero since $Z$ has $p$-cohomological dimension $1$. Therefore, \fbox{$\rmH^{2}(\widetilde{\sfF}/\sfF_{\infty}, V(\widetilde{\sfF})) = 0$}. This further allows us to use the inflation-restriction exact sequence in degree $3$. Repeating the argument, we obtain that
\begin{center}
\fbox{$\rmH^{i}(\widetilde{\sfF}/\sfF_{\infty}, V(\widetilde{\sfF})) = 0$ for all $i \geq 1$.}   
\end{center}

Having shown this, the result follows from dimension shifting. Indeed, consider the short exact sequence
\[
0 \rightarrow T(\widetilde{\sfF}) \rightarrow V(\widetilde{\sfF}) \rightarrow A(\widetilde{\sfF})_{\ddiv} \rightarrow 0,
\]
we obtain
\[
\rmH^{i}(\widetilde{\sfF}/\sfF_{\infty}, A(\widetilde{\sfF})_{\ddiv}) \simeq \rmH^{i+1}(\widetilde{\sfF}/\sfF_{\infty}, T(\widetilde{\sfF}))
\]
for $i \geq 1$. Since $T(\widetilde{\sfF})$ is a $\ZZ_p$-module of finite rank, it is well known that $\rmH^{j}(\widetilde{\sfF}/\sfF_{\infty}, T(\widetilde{\sfF}))$ is a finitely generated $\ZZ_p$-module (see, for example, \cite[page 513]{huber2010}) for $j \geq 0$, and 
\[
\rank_{\ZZ_p} \rmH^{j}(\widetilde{\sfF}/\sfF_{\infty}, T(\widetilde{\sfF})) = \dim_{\QQ_p} \rmH^{j}(\widetilde{\sfF}/\sfF_{\infty}, V(\widetilde{\sfF})). 
\]
We have shown that the latter is zero for $j \geq 0$, and hence $\rmH^{i+1}(\widetilde{\sfF}/\sfF_{\infty}, T(\widetilde{\sfF}))$ is finite for all $i \geq 1$. Consequently, $\rmH^{i}(\widetilde{\sfF}/\sfF_{\infty}, A(\widetilde{\sfF})_{\ddiv})$ is finite for $i \geq 1$. Since $A(\widetilde{\sfF})_{\tors}$ is finite, it follows that $\rmH^{i}(\widetilde{\sfF}/\sfF_{\infty}, A(\widetilde{\sfF}))$ is finite for $i \geq 1$.

The existence of an intermediate field $\sfF_{\circ}$ of $\widetilde{\sfF}/\sfF_{\infty}$ satisfying \ref{item:circ1} and \ref{item:circ2} follows from the last paragraph of the proof of \cite[Proposition 3.3]{greenberg2003galois}, which we recall here.

Consider $V(\widetilde{\sfF})$ as a representation space for $N := \Gal(\widetilde{\sfF}/\sfF_{\infty})$. Since $N$ is \emph{abelian}, all of its irreducible representations over $\barQQ_p$ are one-dimensional. Thus the composition factors in the representation space $V(\widetilde{\sfF}) \otimes_{\QQ_p} \barQQ_p$ are one-dimensional, and the action of $N$ on them is given by homomorphisms $\chi_i: N \rightarrow \barQQ_p^{\times}$ for $1 \leq i \leq r$. Since $\rmH^{0}(\calN, V(\widetilde{\sfF})) = 0$ for every subgroup $\calN$ of finite index in $N$ (recall the equivalence between \ref{item:equiviv} and \ref{item:equivi}, with \ref{item:equivi} being the assumption of the theorem), it is clear that $\chi_i|_{\calN}$ is nontrivial for each $i$. Indeed, otherwise the elements of $\calN$ would have $1$ as a common eigenvalue, and a common eigenvector would exist since $\calN$ is abelian. Thus, each $\chi_i$ has infinite order. One simply chooses $Z$ isomorphic to $\ZZ_p$ so that $Z \otimes_{\ZZ_p} \QQ_p$ is not contained in any of the proper subspaces $\ker(\chi_i) \otimes_{\ZZ_p} \QQ_p$ of $N \otimes_{\ZZ_p} \QQ_p$, which is certainly possible. The intermediate field $\sfF_{\circ}$ is then taken to be the fixed field of $\widetilde{\sfF}$ under $Z$.
\end{proof}

We emphasize that even if $\Gal(\widetilde{\sfF}/\sfF_{\infty}) \simeq \ZZ_p^{2}$, our proof does not automatically imply that $\sfF_{\circ} = \sfF_{\infty}$, but only that $\sfF_{\circ}/\sfF_{\infty}$ is finite of degree $p^{t}$ for some $t \geq 0$. This finiteness is not used in the article.

\subsection{General coefficients} \label{sec:generalcoefficients}
In this section, we generalize Theorem \ref{thm:greenberg} to Theorem \ref{thm:generalgreenberg} for a general coefficient ring $R$. Recall that $R$ is a profinite $\ZZ_p$-algebra and that $R$ is Noetherian and commutative. In this case, for any $\ZZ_p^d$-extension $F_{\infty}/F$, the Iwasawa algebra $R\lrbracket{\Gal(F_{\infty}/F)}\cong R\lrbracket{T_1,T_2,\cdots,T_d}$ is a Noetherian ring.

Let $M$ be an $R$-module. Recall that an element of $M$ is called \emph{torsion} if it becomes zero after being multiplied by a nonzero divisor of $R$. The module $M$ is called a \emph{torsion} $R$-module if every element of $M$ is a torsion element. We say that $M$ is a \emph{cotorsion} $R$-module if its Pontryagin dual $M^\vee$ is a torsion $R$-module.

Let $R$ be a commutative Noetherian profinite ring. Let $M$ be a finitely generated torsion $R$-module. Let $\Ht(\frp)$ denote the height of the ideal $\frp$ in $R$. Recall that $M$ is a \emph{pseudonull} $R$-module if the localization $M_\frp=0$ for any prime ideal $\frp$ in $R$ with height $\Ht(\frp)\leq 1$. In other words, $M$ is a pseudonull $R$-module if and only if the height of the annihilator $\Ht(\Ann_R(M))\geq 2$. 

\subsubsection{A torsion criterion for pseudonullity}
A finitely generated compact $\ZZ_p\lrbracket{T}$-module $M$ is pseudo-null if and only if $M$ is a finite module. In other words, $M$ is a finitely generated torsion $\ZZ_p$-module. This motivates the following proposition.
\begin{proposition} \label{prop:torsionpseudonull}
    Assume $R$ is a Noetherian commutative profinite ring. Let $H$ be a closed normal subgroup of $G\cong \ZZ_p^d$ such that $G/H\cong\ZZ_p$.  Let $M$ be a compact $R\lrbracket{G}$-module which is finitely generated over $R\lrbracket{H}$. If $M$ is a torsion $R\lrbracket{H}$-module then $M$ is a pseudonull $R\lrbracket{G}$-module.
\end{proposition}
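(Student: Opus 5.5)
Fix an isomorphism $R\lrbracket{G} \cong R\lrbracket{H}\lrbracket{\Gamma}$, where $\Gamma \cong \ZZ_p$ is a lift of $G/H$. Since $G$ is abelian and $G/H \cong \ZZ_p$ is free, the extension splits. Write $\Lambda_H := R\lrbracket{H}$ and $\Lambda_G := \Lambda_H\lrbracket{S}$ with $S = \gamma - 1$. The plan is to exhibit two elements of $\Ann_{\Lambda_G}(M)$ that generate an ideal of height at least $2$. It is cleaner to argue prime by prime: let $\frP \subset \Lambda_G$ be a prime with $\Ht(\frP) \le 1$; we must show $M_{\frP} = 0$.

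\textbf{Step 1: a nonzero divisor of $\Lambda_H$ kills $M$.} Since $M$ is finitely generated and torsion over the Noetherian ring $\Lambda_H$, choose generators $m_1, \dots, m_k$ and nonzero divisors $f_i \in \Lambda_H$ with $f_i m_i = 0$. Then $f := f_1 \cdots f_k$ is a nonzero divisor of $\Lambda_H$ with $f M = 0$. Moreover $f$ remains a nonzero divisor in $\Lambda_G = \Lambda_H\lrbracket{S}$: comparing lowest-degree coefficients in $S$ shows that $f g = 0$ forces $g = 0$. So $f \in \Ann_{\Lambda_G}(M)$ and $f$ is regular in $\Lambda_G$.

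\textbf{Step 2: $M$ is integral over $\Lambda_H$ via $S$.} Because $M$ is a finitely generated $\Lambda_H$-module on which $S$ acts $\Lambda_H$-linearly, Cayley--Hamilton (the determinant trick) gives a monic polynomial
\[
P(S) = S^{k} + a_{k-1}S^{k-1} + \cdots + a_0, \qquad a_i \in \Lambda_H,
\]
with $P(S) M = 0$. Hence $f, P(S) \in \Ann_{\Lambda_G}(M)$.

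\textbf{Step 3: the height computation.} This is the main point. Suppose $\frP \supseteq \Ann(M)$ has height $\le 1$. Since $f \in \frP$ is regular, $\frP$ is not a minimal prime, so $\Ht(\frP) = 1$ and $\frP$ is minimal over $f$. Set $\frq := \frP \cap \Lambda_H$. Since $P(S) \in \frP$, the image of $\Lambda_G/\frP$ is integral over $\Lambda_H/\frq$. Because $M$ is finitely generated over $\Lambda_H$, we may replace $\Lambda_G$ by the module-finite extension $B := \Lambda_H[S]/(P(S)) \cong \Lambda_H\lrbracket{S}/(P(S))$, using Weierstrass division for monic polynomials, which is valid since $S$ lies in the radical and $\Lambda_H$ is complete. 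Now $\Lambda_G/\frP$ is finite over $\Lambda_H/\frq$, and $\Lambda_G$ is flat over $\Lambda_H$. Going down therefore gives $\Ht_{\Lambda_G}(\frP) \ge \Ht_{\Lambda_H}(\frq) + \dim\bigl((\Lambda_G)_{\frP}/\frq(\Lambda_G)_{\frP}\bigr)$.

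The fiber $\Lambda_G \otimes \kappa(\frq) \supseteq \kappa(\frq) \otimes \Lambda_H\lrbracket{S}$ has the prime $\frP$ containing the monic $P(S)$. A prime of the fiber ring containing a monic polynomial is not minimal in that fiber, since minimal primes there are extended from $\frq$ and do not contain monic polynomials. So the fiber dimension is at least $1$. Also $\frq \ni f \ne 0$ is a nonzero divisor, so $\Ht(\frq) \ge 1$. Hence $\Ht(\frP) \ge 2$, a contradiction. Therefore $M_{\frP} = 0$ for every prime of height $\le 1$, and $M$ is pseudonull.

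The main obstacle is making this flat-extension height inequality rigorous for the power series extension $\Lambda_H \to \Lambda_H\lrbracket{S}$, which is flat over a Noetherian ring, together with the claim about the fiber dimension. If that proves delicate, I would fall back on the following. Work in $\Lambda_G/(f)$. The element $P(S)$ is monic in $S$, so $\Lambda_G/(f, P(S))$ is finite over $\Lambda_H/(f)$, whose dimension is $\dim \Lambda_G - 2$. Then deduce $\Ht(f, P(S)) \ge 2$ from the relation $\Ht I + \dim \Lambda_G/I \le \dim \Lambda_G$, localized at each maximal ideal.
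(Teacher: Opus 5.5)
Your argument is correct and is essentially the paper's proof: Cayley--Hamilton gives a monic $P(S)\in\Ann(M)$, the dimension formula for the flat local map $(\Lambda_H)_{\frq}\to(\Lambda_G)_{\frP}$ writes $\Ht(\frP)$ as $\Ht(\frq)$ plus the fiber dimension, and $P(S)$ being nonzero in the domain $(\Lambda_H/\frq)\lrbracket{S}$ forces the fiber dimension to be at least $1$. The one difference is that you use a single regular annihilator $f\in\Lambda_H$ to get $\Ht(\frq)\geq 1$, whereas the paper deduces that $\frq$ is minimal and then uses $M_{\frq}=0$ from torsionness.

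You should delete two side remarks, neither of which your main argument uses. The Weierstrass detour is false as stated: $\Lambda_H[S]/(P(S))\cong\Lambda_H\lrbracket{S}/(P(S))$ fails for monic but non-distinguished $P$; for example, $S+1$ is a unit in $\Lambda_H\lrbracket{S}$. The fallback uses $\Ht I+\dim\Lambda_G/I\leq\dim\Lambda_G$ in the wrong direction, since that inequality bounds $\Ht I$ from above, not below.
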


\begin{proof}
    By choosing a suitable basis of $G \simeq \ZZ_p^{d}$, we can write $R\lrbracket{H} = \Lambda$ and $R\lrbracket{G} = \Lambda\lrbracket{T}$, where $T$ is an indeterminate. Since $M$ is a finitely generated $\Lambda$-module and $TM \subset M$, the Cayley--Hamilton theorem (see \cite[Proposition 2.4]{atiyah1969introduction}) yields a monic polynomial $f(T) \in \Lambda[T]$ such that $f(T)M = 0$.

    To prove $M$ is a pseudonull $\Lambda\lrbracket{T}$-module, we shall prove that for any prime ideal $\frP$ of $\Lambda\lrbracket{T}$ with height $\Ht_{\Lambda\lrbracket{T}}(\frP) \leq 1$, the module $M_{\frP} = 0$. 
    
    If $f(T) \not\in \frP$, then clearly $M_{\frP} = 0$. We therefore consider the case $f(T) \in \frP$. Set $\frp = \frP \cap \Lambda$. By \cite[Exercise 5.5, Proposition 10.4]{atiyah1969introduction}, $\Lambda\lrbracket{T}$ is a flat $\Lambda$-algebra. It follows from the going-down property of flat morphisms (see \cite[(13.B) Theorem 19(2)]{MR575344}) that 
    \begin{equation} \label{eq:flatheight}
        1 \geq \Ht_{\Lambda\lrbracket{T}}(\frP) = \Ht_{\Lambda}(\frp) + \Ht_{\Lambda\lrbracket{T}/\frp}(\frP/\frp\Lambda\lrbracket{T}).    
    \end{equation}
    Since $f(T)$ is a monic polynomial, the image of $f(T)$ in $\Lambda\lrbracket{T}/\frp \Lambda\lrbracket{T}$ is not a zero divisor. Since $f(T) \in \frP$, Krull's principal ideal theorem (see \cite[Corollary 11.17]{atiyah1969introduction}) gives
    \[
    \Ht_{\Lambda\lrbracket{T}/\frp}(\frP/\frp\Lambda\lrbracket{T}) \geq 1.
    \]
    Combining \eqref{eq:flatheight}, we see that $\Ht_{\Lambda\lrbracket{T}/\frp}(\frP/\frp\Lambda\lrbracket{T}) = 1$ and $\Ht_{\Lambda}(\frp) = 0$. The assumption that $M$ is a \emph{finitely generated} torsion $\Lambda$-module implies that $M \otimes_{\Lambda} \Lambda_{\frp} = 0$ as long as $\frp$ is a minimal prime ideal of $\Lambda$ (see Theorem \ref{commutative algebra} for details). We therefore have $M_{\frP} = M \otimes_{\Lambda} \Lambda\lrbracket{T}_{\frP} = 0$ since $\Lambda\lrbracket{T}_{\frP}$ is a localization of $\Lambda_{\frp}$.
\end{proof}

\begin{remark}
We remark that \cite[Lemma 5.1]{lim2017} proves Proposition \ref{prop:torsionpseudonull}, together with its converse, under the assumptions that $R$ is a complete \emph{regular} local ring with finite residue field of characteristic $p$, and that $G$ is a compact pro-$p$ $p$-adic Lie group without $p$-torsion. By contrast, in Proposition \ref{prop:torsionpseudonull} we establish only one implication and assume that $G \cong \ZZ_p^{d}$ for some $d \ge 1$, but we require only that $R$ be a commutative Noetherian profinite ring, with no regularity or completeness assumptions.
\end{remark}

\subsubsection{Two homological lemmas}
Before proving the main result (Theorem \ref{thm:generalgreenberg}), we prepare the following two lemmas.

\begin{lemma}\label{cofg or cotorsion}
    Assume $M$ is a cofinitely generated $R$-module with a continuous $N\cong \ZZ_p^d$-action. Then $\rmH^i(N,M)$ are cofinitely generated $R$-modules for $i\geq 0$. If $M$ is also a cotorsion $R$-module, then $\rmH^i(N,M)$ are cotorsion $R$-modules for $i\geq 0$. 
\end{lemma}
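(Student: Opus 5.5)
We dualize and compute $\rmH^i(N,M)$ with an explicit finite complex. Since $N\cong\ZZ_p^d$ is a free abelian pro-$p$ group of rank $d$, continuous cohomology of a discrete $p$-primary $N$-module is computed by the Koszul complex. Choose topological generators $\gamma_1,\dots,\gamma_d$ of $N$. Then
\[
\rmH^i(N,M)\simeq \rmH^i\bigl(K^\bullet(\gamma_1-1,\dots,\gamma_d-1;M)\bigr),
\]
where $K^j=M^{\binom{d}{j}}$ and the differentials are signed sums of the endomorphisms $\gamma_k-1$. For $M$ discrete $p$-primary, this follows because $M=\varinjlim M^{N'}$ over open subgroups $N'$. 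Alternatively, we can argue by induction on $d$ via the Hochschild--Serre spectral sequence for $\ZZ_p^{d-1}\subset\ZZ_p^d$, using that $\rmH^0(\ZZ_p,X)=\ker(\gamma-1)$, $\rmH^1(\ZZ_p,X)=X/(\gamma-1)X$, and $\rmH^i=0$ for $i\ge2$. This induction may be the cleaner route, since it only requires the one-variable computation.

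The key point is that the action of $N$ is $R$-linear: in our setting $N$ acts through the Galois action, which commutes with $R$. Hence each $\gamma_k-1$ is an $R$-linear endomorphism of $M$, and the Koszul complex is a complex of $R$-modules with $R$-linear differentials. Pontryagin duality is exact and takes cofinitely generated $R$-modules to finitely generated ones. Dualizing therefore gives a complex of finitely generated $R$-modules $(M^\vee)^{\binom{d}{j}}$ whose homology is $\rmH^i(N,M)^\vee$.

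Since $R$ is Noetherian, submodules and quotients of finitely generated $R$-modules are finitely generated. The homology is a subquotient of $(M^\vee)^{\binom di}$, so $\rmH^i(N,M)$ is cofinitely generated. If in addition $M^\vee$ is torsion, then every subquotient of a finite direct sum of copies of $M^\vee$ is torsion, because torsion modules are closed under submodules, quotients and finite direct sums. Hence each $\rmH^i(N,M)$ is cotorsion. For $i>d$ the groups vanish, and the statement holds trivially.

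The main obstacle is justifying the Koszul (or spectral sequence) description of continuous cohomology when $M$ is a cofinitely generated $R$-module that is possibly not of finite exponent. We handle this by writing $M$ as the union of its finite $N$-stable submodules, which is possible because $M$ is discrete and the action is continuous. We use that both continuous cohomology and the Koszul complex commute with filtered direct limits, and we verify the claim for finite modules, where it is standard for $\ZZ_p^d$ (the Koszul resolution of $\ZZ_p$ over $\ZZ_p\lrbracket{N}$). If we take the Hochschild--Serre route instead, the analogous care is in checking that the $E_2$ terms are $R$-subquotients, which again reduces to $R$-linearity of the $\gamma_k$. The statement then follows because the class of cofinitely generated (respectively cotorsion) modules is closed under subquotients and extensions, and $E_\infty$ is obtained from $E_2$ by taking subquotients.
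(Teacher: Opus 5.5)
Your proposal is correct and is essentially the paper's argument. The paper computes $\rmH^i(N,M)$ from a resolution $P_\bullet$ of $R$ by finitely generated free $R\lrbracket{N}$-modules, notes that the dual cochain complex has terms $(M^\vee)^{r_i}$, and concludes because subquotients of finitely generated (torsion) $R$-modules are finitely generated (torsion); your Koszul complex is just an explicit such resolution (with $r_j=\binom{d}{j}$, as $\gamma_1-1,\dots,\gamma_d-1$ is a regular sequence in $R\lrbracket{N}$), and your direct-limit argument for the comparison with continuous cohomology fills in a step the paper treats as holding by definition.
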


\begin{proof}
    Let $P_{\bullet} \rightarrow R$ be a free resolution of $R$ as an $R\lrbracket{N}$-module, such that each $P_{i}$ is a finitely generated free $R\lrbracket{N}$-module of rank $r_i$. By definition, we have
    \[
    \rmH^{i}(\widetilde{\sfF}/\sfF_{\infty}, M) \simeq \rmH^{i}(\Hom_{R\lrbracket{N}}(P_{\bullet}, M)).
    \]
    Since $P_{\bullet}$ is finitely generated free $R\lrbracket{N}$-module, we have that 
    \[
    \Hom_{R\lrbracket{N}}(P_{\bullet}, M)^\vee \simeq P_{\bullet}\otimes_{R\lrbracket{N}} M^\vee \simeq (M^{\vee})^{r_i}.
    \]
    If $M$ is a cofinitely generated (cotorsion) $R$-module, then each term of $P_{\bullet}\otimes_{R\lrbracket{N}} M^\vee$ is a finitely generated (torsion) $R$-module. Thus, after taking homology, the cohomology groups $\rmH^i(N,M)$ are cofinitely generated (cotorsion) $R$-modules for $i\geq 0$. 
\end{proof}

\begin{lemma}\label{dual of M}
    Let $M$ be a cofinitely generated $R$-module with a continuous $R$-linear $Z \simeq \ZZ_p$-action. If $\rmH^0(Z,M)$ is a cotorsion $R$-module, then $\rmH^1(Z,M)$ is a cotorsion $R$-module.
\end{lemma}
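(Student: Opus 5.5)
Let $z$ be a topological generator of $Z$. Since $Z \simeq \ZZ_p$ acts continuously on the discrete torsion module $M$, we have the standard descriptions
\[
\rmH^0(Z,M) = M^{z=1} = \ker(z-1 \colon M \to M), \qquad \rmH^1(Z,M) \simeq M/(z-1)M = \operatorname{coker}(z-1 \colon M \to M).
\]
The plan is to pass to Pontryagin duals and reduce both statements to a statement about the finitely generated $R$-module $X := M^{\vee}$ with its $R$-linear endomorphism $\phi := (z-1)^{\vee}$. Dualizing the exact sequence $0 \to M^{z=1} \to M \xrightarrow{z-1} M \to M/(z-1)M \to 0$ gives
\[
0 \to \rmH^1(Z,M)^{\vee} \to X \xrightarrow{\ \phi\ } X \to \rmH^0(Z,M)^{\vee} \to 0 .
\]
So the hypothesis says that $\operatorname{coker}\phi$ is a torsion $R$-module, and the claim is that $\ker\phi$ is a torsion $R$-module.

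The key step is to localize at minimal primes. Let $Q$ be the total ring of fractions, or more concretely work at each associated prime $\frp$ of $R$ with $\Ht \frp = 0$. A finitely generated $R$-module $N$ is torsion exactly when $N \otimes_R Q = 0$, equivalently when $N_{\frp}=0$ for all $\frp \in \Ass(R)$. This is the same commutative-algebra input (Theorem \ref{commutative algebra}) used in the proof of Proposition \ref{prop:torsionpseudonull}. I would use this characterization directly, and I would be careful that the notion of torsion used here, namely annihilation by a nonzerodivisor, matches vanishing after tensoring with $Q$ for finitely generated modules. That holds because $Q$ is a localization of the Noetherian ring $R$, with finitely many maximal ideals corresponding to the maximal elements of $\Ass(R)$.

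After localizing, $X_{\frp}$ is a finitely generated module over $R_{\frp}$ (or $X\otimes Q$ over the semilocal Artinian-like ring $Q$, which I would further localize at each of its maximal ideals). The hypothesis gives that $\phi_{\frp}\colon X_{\frp} \to X_{\frp}$ is surjective. By Vasconcelos' theorem (a surjective endomorphism of a finitely generated module over a commutative ring is an isomorphism, provable by Nakayama applied to $X$ viewed as an $R[t]$-module with $t$ acting by $\phi$), $\phi_{\frp}$ is injective. Localization is exact, so $(\ker\phi)_{\frp} = \ker(\phi_{\frp}) = 0$. Hence $\rmH^1(Z,M)^{\vee} = \ker\phi$ is torsion, which means $\rmH^1(Z,M)$ is cotorsion.

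I expect the main obstacle to be bookkeeping rather than any single hard step. One point to check is that $\ker\phi$ is finitely generated, which follows because $R$ is Noetherian. Another is that "torsion" in the paper's sense is equivalent to vanishing at the relevant localizations. If the Vasconcelos route felt too heavy, I would fall back on a length argument: over the Artinian local rings $R_{\frp}$ with $\frp$ minimal, $X_{\frp}$ has finite length. Then a surjective endomorphism is injective by additivity of length, and this only needs the primes that are both minimal and associated, which is exactly what the torsion criterion requires.
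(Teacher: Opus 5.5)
Your main argument is essentially the paper's. You dualize the four-term sequence for $z-1$, observe that $\operatorname{coker}\phi$ vanishes after inverting nonzerodivisors, and use that a surjective endomorphism of a finitely generated module over a Noetherian ring is injective. The only differences are cosmetic: the paper works in one step over the total quotient ring $Q(R)$ rather than prime by prime, and it cites the ascending-chain (Noetherian module) form of "surjective implies injective" instead of Vasconcelos.

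There is one error in your bookkeeping, though: you cannot restrict to associated primes of height $0$. The lemma assumes nothing like $(S_1)$, so $R$ may have embedded associated primes. In that case vanishing at the minimal primes does not imply torsion. For example, take $R=\FF_p\lrbracket{x,y}/(x^2,xy)$ and $N=R/(x,y)$. Then $N_{(x)}=0$ at the unique minimal prime $(x)$. But $x$ annihilates the maximal ideal, so every element of $(x,y)$ is a zero divisor and $N$ is not torsion. For the same reason $Q(R)$ need not be Artinian (here $Q(R)=R$ has dimension one). Consequently your length fallback, which only works at minimal primes where $R_{\frp}$ is Artinian, does not prove the lemma in general. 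Your claim that minimal associated primes are "exactly what the torsion criterion requires" is false.

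Your Vasconcelos step, however, works at every $\frp\in\Ass(R)$, or directly over $Q(R)$. So the proof is complete once you run it over all of $\Ass(R)$, as your own stated criterion ($N_{\frp}=0$ for all $\frp\in\Ass(R)$) already demands.
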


\begin{proof}
 We prove that the cotorsionness of $\rmH^0(Z,M)$ as an $R$-module implies the cotorsionness of $\rmH^1(Z,M)$ as an $R$-module. Let $\gamma$ be a generator of $Z \simeq \ZZ_p$ and let $X$ be the Pontryagin dual of $M$. We have
 \[
 0\to \rmH^0(Z,M)=M[\gamma-1]\to M \xrightarrow{\gamma-1} M\to \rmH^1(Z,M)=M/(\gamma-1)M\to 0.
 \]
 By Pontryagin duality, we have
 \[
 0\to \rmH^1(Z,M)^\vee=X[\gamma-1]\to X \xrightarrow{\gamma-1} X \to \rmH^0(Z,M)^\vee=X/(\gamma-1)X\to 0
 \]
Let $Q(R)$ be the total quotient ring of $R$. Since $\rmH^0(Z,M)^\vee$ is a torsion $R$-module, we have
 \begin{multline*}
   0\to \rmH^1(Z,M)^\vee\otimes Q(R) \to X\otimes Q(R) \xrightarrow{\gamma-1} X\otimes Q(R) \\  \rightarrow \rmH^0(Z,M)^\vee\otimes Q(R)=0.  
 \end{multline*}
It is a general fact that a surjective endomorphism of a Noetherian module is automatically injective (see \cite{145362} for a proof). Thus, $\rmH^1(Z,M)^\vee\otimes Q(R)=0$, so $\rmH^1(Z,M)$ is a cotorsion $R$-module.
\end{proof}

\subsubsection{Greenberg's method for general coefficient rings}
We are now ready to state and start the proof of Greenberg's method for general coefficients, as follows.
\begin{theorem}[Greenberg's method for general coefficient rings] \label{thm:generalgreenberg}
Suppose that $A(\sfL)$ is a cotorsion $R$-module for all finite extensions $\sfL/\sfF_\infty$ inside $\widetilde{\sfF}$. Then $\rmH^{i}(\widetilde{\sfF}/\sfF_{\infty}, A(\widetilde{\sfF}))$ is a cofinitely generated cotorsion $R$-module for $i=1,2$. Therefore, the Pontryagin dual of $\rmH^{i}(\widetilde{\sfF}/\sfF_{\infty}, A(\widetilde{\sfF}))$ is a pseudonull $R\lrbracket{\Gal(F_{\infty}/F)}$-module for $i = 1, 2$.
\end{theorem}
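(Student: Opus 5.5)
The plan is to imitate the structure of Theorem \ref{thm:greenberg}: find a single $\ZZ_p$-direction in $N := \Gal(\widetilde{\sfF}/\sfF_{\infty}) \simeq \ZZ_p^{n}$ along which the invariants of $M := A(\widetilde{\sfF})$ are cotorsion, then propagate cotorsionness through inflation--restriction. Replacing $\QQ_p$-vector spaces by torsion/cotorsion $R$-modules is the same as working after tensoring the duals with the total quotient ring $Q(R)$, which is exact. Throughout, $M$ is a cofinitely generated $R$-module, being a submodule of $A$. By Lemma \ref{cofg or cotorsion}, every $\rmH^{i}(\calN, -)$ of a cofinitely generated module is cofinitely generated, so the only issue is cotorsionness.

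\textbf{Step 1 (choice of $Z$).} Let $X := M^{\vee}$, a finitely generated $R$-module with an $N$-action, and set $W := X \otimes_{R} Q(R)$. Since $Q(R)$ is Artinian, $W$ is a finite-length $Q(R)[N]$-module. The hypothesis, together with $A(\sfL) = M^{\Gal(\widetilde{\sfF}/\sfL)}$, says that $X_{\calN} \otimes Q(R) = W_{\calN} = 0$ for every open subgroup $\calN \subseteq N$. We want a closed subgroup $Z \simeq \ZZ_p$ with $\sfF_{\circ} := \widetilde{\sfF}^{Z}$ finite over $\sfF_{\infty}$ (so $\Gal(\sfF_{\circ}/\sfF_{\infty})$ is finite) and $W_{Z} = 0$ (equivalently $W^{Z}=0$, since $W$ has finite length).

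To find $Z$, localize at each of the finitely many minimal primes of $R$. Then $W$ becomes a finite-length module over an Artinian local ring, with residue field $\kappa$ of characteristic $0$ or $p$. Take a composition series; its factors are simple $\kappa[N]$-modules, and the action of $N$ on each factor goes through characters $\chi_i$ with values in $\bar\kappa^{\times}$, since $N$ is abelian. Vanishing of coinvariants for all open $\calN$ forces each $\chi_i$ to be nontrivial on every open subgroup, i.e.\ each $\ker\chi_i$ is a closed subgroup of infinite index. As in Greenberg's argument, we choose $Z \simeq \ZZ_p$ with $Z \otimes \QQ_p \not\subseteq \ker(\chi_i)\otimes\QQ_p$ for all $i$; then $\chi_i|_{Z}$ is nontrivial, so $W^{Z} = 0$.

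\textbf{Step 2 (inflation--restriction).} From Step 1 and the dual of $W^Z=0$, $M^{Z}$ is cotorsion, and Lemma \ref{dual of M} gives that $\rmH^{1}(Z, M)$ is cotorsion; moreover $\rmH^{j}(Z,M)=0$ for $j\geq 2$. In degree $1$, inflation--restriction gives
\[
0 \to \rmH^{1}(\sfF_{\circ}/\sfF_{\infty}, M^{Z}) \to \rmH^{1}(N, M) \to \rmH^{1}(Z,M)^{\Gal(\sfF_{\circ}/\sfF_{\infty})}.
\]
Both outer terms are cotorsion: the first because $M^{Z}$ is cotorsion (Lemma \ref{cofg or cotorsion}, applied to $\Gal(\sfF_{\circ}/\sfF_{\infty})$), the second as a submodule of a cotorsion module. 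For degree $2$, rather than the truncated sequence I would use the Hochschild--Serre spectral sequence $\rmH^{a}(N/Z, \rmH^{b}(Z,M)) \Rightarrow \rmH^{a+b}(N,M)$, where $N/Z \simeq \Gal(\sfF_{\circ}/\sfF_{\infty})$. Only the rows $b=0,1$ are nonzero, and all $E_2$ terms are cotorsion by Lemma \ref{cofg or cotorsion}. Cotorsion modules are closed under subquotients and extensions, so every abutment $\rmH^{i}(N,M)$ is cotorsion. This actually gives all $i\geq1$, which covers the required $i=1,2$.

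\textbf{Step 3.} The duals of $\rmH^{i}(N,M)$ are finitely generated torsion $R$-modules with an action of $\Gal(\sfF_{\infty}/\sfF)$. Proposition \ref{prop:torsionpseudonull} (with $H$ trivial, so $R\lrbracket{H}=R$ and $G=\ZZ_p$) then gives pseudonullity over $R\lrbracket{\Gal(\sfF_{\infty}/\sfF)}$.

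\textbf{Main obstacle.} I expect the main obstacle to be Step 1 for a general Noetherian $R$, namely making the character argument rigorous. Two points need care. First, when $\kappa$ has characteristic $p$, the pro-$p$ group $N$ acting on a finite-length module must act unipotently, so the vanishing of $W_{\calN}$ cannot come from nontrivial characters on the composition factors. Second, $W$ need not be finite-dimensional over $\kappa$. The first point is the serious one, and I would try to resolve it by showing that the image of $N$ in $\Aut(W)$ acts through a $p$-adic analytic group with the needed semisimple behaviour, or else by avoiding it: choose $Z$ so that $\gamma-1$ acts invertibly on $W$, using a determinant/Fitting-ideal argument on $X$ over $R\lrbracket{N}$.
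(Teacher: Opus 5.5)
Your Steps 2 and 3 are sound and agree with the paper. The two-row Hochschild--Serre spectral sequence is a clean substitute for the paper's seven-term sequence, and it even gives all $i\ge 1$.

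\textbf{The gap is Step 1.} This step is the actual content of the theorem (the paper's Lemma~\ref{lem:existFcirc}). You leave it unproved, and the reduction you sketch is incorrect.

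\emph{$Q(R)$ is not Artinian in general.} Its primes are those contained in some associated prime of $R$, so it is Artinian only when $R$ has no embedded primes. Correspondingly, torsionness must be tested at \emph{all} associated primes. Testing only at minimal primes can produce a bad $Z$, as the following example shows.
\begin{itemize}
\item Take $R=\ZZ_p\lrbracket{s,t}/(s^2,st)$. Then $\Ass(R)=\{(s),(s,t)\}$, and $p$ is a non-zero-divisor.
\item Let $N=\ZZ_p^2$ act on $X=R/(s,t)\simeq\ZZ_p$ by $(a,b)\mapsto u^a$, where $1\neq u\in 1+p\ZZ_p$.
\item Every $X_{\calN}$ is finite, hence torsion, so the hypothesis holds.
\item $X_{(s)}=0$, so your localized $W$ vanishes and constrains nothing.
\item Yet $Z=0\times\ZZ_p$ gives $X_Z=\ZZ_p$, which is not torsion because $X_{(s,t)}\neq 0$.
\end{itemize}

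\textbf{The missing idea is the paper's reduction to residue fields.} By Lemma~\ref{commutative algebra} plus Nakayama, a finitely generated $R$-module is torsion if and only if it vanishes after applying $-\otimes_R Q(R/\frp)$ for every $\frp\in\Ass(R)$. This functor commutes with coinvariants by right exactness. So it suffices to treat the finitely many finite-dimensional representations $V_\frp=X\otimes_R\overline{Q(R/\frp)}$ of the abelian group $N$.
\begin{itemize}
\item Their composition factors are characters $\chi_{\frp,i}$, in every characteristic.
\item The hypothesis, together with Lemma~\ref{nontrivial inv and coinv}, makes each $\chi_{\frp,i}$ nontrivial on every open subgroup.
\item One then chooses $Z$ so that $Z\otimes\QQ_p$ lies outside the finitely many proper subspaces $\ker(\chi_{\frp,i})\otimes\QQ_p$.
\end{itemize}

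\emph{The obstacle you call serious does not exist.} Over an infinite field of characteristic $p$, the action need not be unipotent. For $R=\FF_p\lrbracket{s}$, with a generator of $N=\ZZ_p$ acting on $R$ by $1+s$, you get a character of infinite order with values in the fraction field of $R$. When $Q(R/\frp)$ is finite, the action is indeed unipotent, but then $(V_\frp)_N=0$ forces $V_\frp=0$, so nothing needs to be avoided.

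\emph{A minor slip.} For $n\ge 2$, a subgroup $Z\simeq\ZZ_p$ has infinite index in $N$, so $\sfF_\circ/\sfF_\infty$ is not finite. Instead, $N/Z$ has rank $n-1$, and it is $\ZZ_p^{n-1}$ after saturating $Z$. That is exactly the setting in which Lemma~\ref{cofg or cotorsion} applies.
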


When $R=\ZZ_p$, the group $A(\sfL)$ is a cofinitely generated cotorsion $\ZZ_p$-module if and only if $A(\sfL)$ is finite, and if and only if $V(\sfL)=0$. Thus, Theorem \ref{thm:generalgreenberg} is a generalization of Theorem \ref{thm:greenberg}.

\begin{proof}
   By Lemma \ref{cofg or cotorsion}, $A(\sfL)=\rmH^0(\sfL, A)$ is a cofinitely generated $R$-module. We use the same idea as in the proof of Theorem \ref{thm:greenberg}. Similarly, we claim that the cotorsionness of $A(\sfL)$ for all finite extensions $\sfL/\sfF_\infty$ inside $\widetilde{\sfF}$ implies that there is an intermediate field $\sfF_{\circ}$ in $\widetilde{\sfF}/\sfF_{\infty}$ such that
   \begin{enumerate}[label = (\arabic*)]
    \item $A(\widetilde{\sfF})^{\Gal(\widetilde{\sfF}/\sfF_{\circ})}$ is cofinitely generated and cotorsion $R$-module, \label{item:R circ1}
    \item $Z := \Gal(\widetilde{\sfF}/\sfF_{\circ})$ is isomorphic to $\ZZ_p$. \label{item:R circ2}
\end{enumerate}
Granting the existence of such an intermediate field $\sfF_{\circ}$, we show using the Hochschild--Serre spectral sequence that the cohomology groups $\rmH^{i}(\widetilde{\sfF}/\sfF_{\circ}, A(\widetilde{\sfF}))$ are cofinitely generated and cotorsion for $i=1, 2$. Recall that a first-quadrant spectral sequence yields a seven-term exact sequence. Applying this to the Hochschild--Serre spectral sequence (see, for example, \cite[Remark 1.35 on page 72]{milneCFT}), we obtain
\[
\begin{aligned}
0 &\to \rmH^1(\sfF_{\circ}/\sfF_\infty, A(\widetilde{\sfF})^Z)
\xrightarrow{\inf}
\rmH^1(\widetilde{\sfF}/\sfF_\infty, A(\widetilde{\sfF}))
\xrightarrow{\res}
\rmH^1(\widetilde{\sfF}/\sfF_{\circ},A(\widetilde{\sfF}))^{\Gal(\sfF_{\circ}/\sfF_\infty)} \\
&\to \rmH^2(\sfF_{\circ}/\sfF_\infty, A(\widetilde{\sfF})^Z)
\xrightarrow{\inf}
\ker\!\left(
\rmH^2(\widetilde{\sfF}/\sfF_\infty, A(\widetilde{\sfF}))
\xrightarrow{\res}
\rmH^2(\widetilde{\sfF}/\sfF_{\circ},A(\widetilde{\sfF}))
\right) \\
&\to
\rmH^1(\sfF_{\circ}/\sfF_\infty,
\rmH^1(\widetilde{\sfF}/\sfF_{\circ}, A(\widetilde{\sfF})))
 \to
\rmH^3(\sfF_{\circ}/\sfF_\infty, A(\widetilde{\sfF})^Z)
\end{aligned}
\]
Since $Z=\Gal(\widetilde{\sfF}/\sfF_{\circ})\cong \ZZ_p$ has $p$-cohomological dimension 1, we have that $\rmH^2(\widetilde{\sfF}/\sfF_{\circ},A(\widetilde{\sfF}))=0$. Hence, 
\[
\begin{aligned}
0 &\to \rmH^1(\sfF_{\circ}/\sfF_\infty, A(\widetilde{\sfF})^Z)
\xrightarrow{\inf}
\rmH^1(\widetilde{\sfF}/\sfF_\infty, A(\widetilde{\sfF}))
\xrightarrow{\res}
\rmH^1(\widetilde{\sfF}/\sfF_{\circ},A(\widetilde{\sfF}))^{\Gal(\sfF_{\circ}/\sfF_\infty)} \\ 
&\to \rmH^2(\sfF_{\circ}/\sfF_\infty, A(\widetilde{\sfF})^Z)
\xrightarrow{\inf}
\rmH^2(\widetilde{\sfF}/\sfF_\infty, A(\widetilde{\sfF}))
\to
\rmH^1(\sfF_{\circ}/\sfF_\infty,
\rmH^1(\widetilde{\sfF}/\sfF_{\circ}, A(\widetilde{\sfF}))) \\
&\to
\rmH^3(\sfF_{\circ}/\sfF_\infty, A(\widetilde{\sfF})^Z).
\end{aligned}
\]
Our goal is to prove that the second and fifth terms are cotorsion $R$-modules.

By the defining property \ref{item:R circ1} of $\sfF_0$, the first term $\rmH^1(\sfF_{\circ}/\sfF_\infty, A(\widetilde{\sfF})^Z)$  and the fourth term 
$\rmH^2(\sfF_{\circ}/\sfF_\infty, A(\widetilde{\sfF})^Z) $ are cofinitely generated and cotorsion $R$-modules by \ref{cofg or cotorsion}. By Lemma \ref{dual of M}, $\rmH^1(\widetilde{\sfF}/\sfF_{\circ},A(\widetilde{\sfF}))$ is a cofinitely generated cotorsion $R$-module. Hence, by Lemma \ref{cofg or cotorsion}, both the third term $\rmH^1(\widetilde{\sfF}/\sfF_{\circ},A(\widetilde{\sfF}))^{\Gal(\sfF_{\circ}/\sfF_\infty)}$ and the sixth term $\rmH^1(\sfF_{\circ}/\sfF_\infty,
\rmH^1(\widetilde{\sfF}/\sfF_{\circ}, A(\widetilde{\sfF}))) $ are cofinitely generated and cotorsion $R$-modules. 
Hence, the second term $\rmH^1(\widetilde{\sfF}/\sfF_\infty, A(\widetilde{\sfF}))$ and the fifth term $\rmH^2(\widetilde{\sfF}/\sfF_\infty, A(\widetilde{\sfF})) $ are cofinitely generated cotorsion $R$-modules. The last statement of the theorem follows from Proposition \ref{prop:torsionpseudonull}. 

In the remaining part of this section, we prove the existence of an intermediate field $\sfF_{\circ}$ in $\widetilde{\sfF}/\sfF_{\infty}$ satisfying properties \ref{item:R circ1} and \ref{item:R circ2} above. This follows directly from Lemma \ref{lem:existFcirc} applied with $M = A(\widetilde{\sfF})$.
\end{proof}

\subsubsection{Existence of the intermediate field $\sfF_{\circ}$}
As promised, we prove Lemma \ref{lem:existFcirc} in this subsection, starting with a few technical lemmas.

\begin{lemma}\label{inv and coinv}
     Let $M$ be a cofinitely generated $R$-module with continuous $R$-linear $N\cong \ZZ_p^d$ action. Then $(M^N)^\vee\cong (M^\vee)_N$.
\end{lemma}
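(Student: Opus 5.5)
The plan is to reduce the statement to Pontryagin duality for discrete and compact modules, together with the exactness of $(-)^\vee$. Write $X := M^\vee$; this is a compact (profinite) $R$-module, finitely generated over $R$ since $M$ is cofinitely generated. It carries the contragredient continuous $N$-action, $(\sigma f)(m) = f(\sigma^{-1}m)$. First I reduce to finitely many generators: choose topological generators $\gamma_1,\dots,\gamma_d$ of $N\cong\ZZ_p^d$. By continuity of the action, $M^N = \bigcap_{i=1}^d \ker(\gamma_i - 1 \colon M \to M)$. Indeed, an element fixed by each $\gamma_i$ is fixed by the dense subgroup they generate, hence by all of $N$, since stabilizers in a discrete module with continuous action are open, hence closed.

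Next I express the invariants as a kernel and dualize. Consider the $R$-linear map
\[
\phi \colon M \to M^{d}, \qquad m \mapsto \bigl((\gamma_1-1)m, \ldots, (\gamma_d-1)m\bigr),
\]
so that $M^N = \ker\phi$. Pontryagin duality is an exact contravariant equivalence between discrete $p$-primary and compact pro-$p$ modules, so the exact sequence $0 \to M^N \to M \xrightarrow{\phi} M^d$ dualizes to
\[
(X)^d \xrightarrow{\phi^\vee} X \to (M^N)^\vee \to 0 .
\]
Here $\phi^\vee(x_1,\dots,x_d) = \sum_i (\gamma_i^{-1}-1)x_i$, using that the dual of $\gamma_i$ acting on $M$ is $\gamma_i^{-1}$ acting on $X$ under the contragredient convention. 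Hence $(M^N)^\vee \cong X/\sum_i(\gamma_i^{-1}-1)X$.

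It remains to identify this quotient with $X_N$. Here $X_N$ should be understood as $X/I_N X$, where $I_N$ is the augmentation ideal of $R\lrbracket{N}$. The ideal $I_N$ is generated by $\gamma_1-1,\dots,\gamma_d-1$, equivalently by the $\gamma_i^{-1}-1 = -\gamma_i^{-1}(\gamma_i-1)$, because $R\lrbracket{N} \cong R\lrbracket{T_1,\dots,T_d}$ with $T_i = \gamma_i - 1$. Moreover $\sum_i(\gamma_i^{-1}-1)X$ is automatically closed: it is the image of a compact set under a continuous map, or alternatively a finitely generated submodule of a Noetherian module over a profinite ring. So no closure issue arises, and the quotient is exactly $X_N$.

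The main point requiring care is the closedness of the image and the identification of the topological coinvariants with the algebraic ones, but compactness handles both. The rest is formal duality.
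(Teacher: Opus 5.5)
Your proof is correct and is essentially the paper's argument. The paper's proof is a single line: the isomorphism "follows directly from the definition", or alternatively from the degree-$0$ case of the duality $\rmH^n(N,M)^\vee\cong \rmH_n(N,M^\vee)$ in Neukirch--Schmidt--Wingberg. Your write-up makes that definitional argument explicit, using the topological generators $\gamma_i$, exactness of Pontryagin duality on discrete modules, and the identification of the augmentation ideal $I_N$ with $(\gamma_1-1,\dots,\gamma_d-1)$.
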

\begin{proof}
    This follows directly from the definition. Alternatively, \cite[Theorem 2.6.9]{MR1737196}, together with $\rmH^n(N,M)^\vee\cong \rmH_n(N,M^\vee)$ and $n=0$, yields the result.
\end{proof}

\begin{lemma} \label{nontrivial inv and coinv}
    Let $K$ be a field and let $V$ be a finite-dimensional $K$ vector space with continuous $K$-linear $N\cong\ZZ_p^d$ action. Then $V^N\neq 0$ if and only if $V_N\neq 0$. 
\end{lemma}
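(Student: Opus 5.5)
We argue by induction on $d$, reducing to the case of a single topological generator acting on $V$. Since the action is continuous and $K$-linear, each element $\gamma\in N$ gives a $K$-linear endomorphism $\rho(\gamma)$ of $V$. The elements $\rho(\gamma)$ for different $\gamma$ commute because $N$ is abelian. Choose topological generators $\gamma_1,\dots,\gamma_d$ of $N\cong\ZZ_p^d$. By continuity, $V^N=\bigcap_i \ker(\gamma_i-1)$. Dually, $V_N = V/\sum_i(\gamma_i-1)V$, since the closure of the span of the $(\gamma-1)V$ for $\gamma\in N$ equals $\sum_i(\gamma_i-1)V$: this subspace is finite-dimensional, hence closed, and it is stable under $N$. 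So the statement reduces to pure linear algebra about commuting operators $g_i:=\gamma_i-1$ on a finite-dimensional $K$-vector space.

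For $d=1$ the claim is rank--nullity: $\dim\ker g_1=\dim\operatorname{coker} g_1$. For the inductive step, write $N=N'\times\langle\gamma_d\rangle$ with $N'\cong\ZZ_p^{d-1}$.

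\begin{itemize}
\item Since $g_d$ commutes with $N'$, the space $W:=V^{N'}$ is stable under $\gamma_d$. Then $V^N=W^{\gamma_d}$, and by the case $d=1$ this is nonzero if and only if $W_{\gamma_d}\neq0$.
\item Similarly, $V_N=(V_{N'})_{\gamma_d}$, which is nonzero if and only if $(V_{N'})^{\gamma_d}\neq 0$.
\end{itemize}

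The difficulty is that $V^{N'}$ and $V_{N'}$ are different spaces, so the two conditions cannot be compared directly. The induction hypothesis only says that one is nonzero exactly when the other is, not that they are isomorphic as $\gamma_d$-modules.

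To get around this, decompose $V$ into generalized eigenspaces. Pass to an algebraic closure $\overline K$; both conditions are insensitive to this base change, since invariants and coinvariants commute with the flat extension $\otimes_K\overline K$. Commuting operators admit a simultaneous generalized eigenspace decomposition
\[
V\otimes\overline K=\bigoplus_{\lambda}V_\lambda ,
\qquad V_\lambda=\{v:(g_i-\lambda_i)^m v=0 \text{ for all } i\}
\]
for $m\gg0$, indexed by tuples $\lambda=(\lambda_1,\dots,\lambda_d)$. Each $V_\lambda$ is $N$-stable, and both invariants and coinvariants are additive over this direct sum.

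\begin{itemize}
\item If $\lambda\neq0$, some $g_i$ acts invertibly on $V_\lambda$, so both $V_\lambda^N$ and $(V_\lambda)_N$ vanish.
\item Both $V^N\neq0$ and $V_N\neq0$ are equivalent to $V_0\neq0$. On $V_0$ all the $g_i$ are commuting nilpotent operators. A common kernel vector exists: by induction, $\ker g_1$ is nonzero and stable under the others, which act nilpotently on it. Likewise $\sum_i g_iV_0\neq V_0$, since the $g_i$ generate a nilpotent commutative algebra $\mathfrak a$ of endomorphisms, $\mathfrak aV_0\subsetneq V_0$ by Nakayama, or simply because $\mathfrak a^mV_0=0$.
\end{itemize}

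This proves the equivalence without any separate inductive step on $d$. The main point to check carefully is the reduction from continuous $N$-invariants and coinvariants to the finitely many generators $\gamma_i$. In particular, the coinvariants must be taken with respect to the closed subspace, which is automatic here by finite-dimensionality.
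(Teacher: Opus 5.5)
Your proof is correct, but it takes a different route from the paper.

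\textbf{The paper's route.} It dualizes: it identifies $(V_N)^*$ with $(V^*)^N$, which turns the coinvariant question into an invariant question for the contragredient. It then argues over $\overline{K}$ that the one-dimensional composition factors of $\overline{V}^*$ are the inverses of those of $\overline{V}$. So the trivial character occurs in one exactly when it occurs in the other.

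\textbf{Your route.} You never dualize. The simultaneous generalized eigenspace decomposition for the commuting operators $\gamma_i-1$ shows that only the unipotent block $V_0$ contributes to either invariants or coinvariants. Nilpotence on $V_0$ then gives $V_0^N\neq 0\Leftrightarrow V_0\neq 0\Leftrightarrow (V_0)_N\neq 0$.

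\textbf{Comparison.} The paper's version is shorter, but it leaves one step implicit: ``the trivial character is a composition factor $\Rightarrow$ $\overline{V}^N\neq 0$''. That is exactly your common-kernel argument on $V_0$. Also, the paper justifies the inverse-character claim via the trace identity $\chi_{\overline{V}^*}(g)=\chi_{\overline{V}}(g^{-1})$. Traces pin down composition factors only in characteristic $0$; in general one dualizes a composition series. Your argument is characteristic-free, which matters because the field $Q(R/\mathfrak{p})$ in Lemma \ref{lem:existFcirc} can have characteristic $p$.

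You are also more careful than the paper in replacing $N$ by the generators $\gamma_i$ for coinvariants; the paper simply takes $V_N=V/\sum_i T_iV$ as given. To finish that step, use $\gamma\gamma'-1=(\gamma-1)\gamma'+(\gamma'-1)$ and $\gamma^{-1}-1=-\gamma^{-1}(\gamma-1)$ on the dense subgroup generated by the $\gamma_i$, then pass to the closure.

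The opening inductive sketch is never used and can be deleted.
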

\begin{proof}
    Let $\gamma_1,\gamma_2,\cdots,\gamma_d$ be a set of topological generators of $N \cong\ZZ_p^d $. Write $T_i=\gamma_i-1$ for $1\leq i\leq d$. We see that
    \[
    V_N=V/(T_1V+T_2V+\cdots+T_dV)
    \]
    Consider dual vector spaces 
    \[
    V^*:=\Hom(V,K), \text{ and } (V_N)^*:=\Hom(V_N,K).
    \]
    For any $f\in \Hom(V_N,K)$, we have $f(T_iv)=0$, i.e., $(\gamma_if)(v)=f(v)$. Hence, $\gamma f=f$ for any $\gamma\in N$, so $(V^*)^N= (V_N)^*$. We obtain
    \[
    (V^*)^N\neq 0\Longleftrightarrow (V_N)^*\neq 0.
    \]
    We now compare $(V^*)^N$ and $V^N$. Let $\bar{K}$ be the algebraic closure of $K$ and let $\bar{V}:=V\otimes_K\bar{K}$. Then $(V^*)^N\otimes_K \bar{K}= (\bar{V}^*)^N $ and $ V^N\otimes_K \bar{K}=\bar{V}^N$. Hence,
    \[
    (V^*)^N\neq 0 \Longleftrightarrow (\bar{V}^*)^N \neq 0,
    \]
    \[
    V^N\neq 0\Longleftrightarrow \bar{V}^N\neq 0.
    \]
    View $\bar{V}$ and $\bar{V}^*$ as representations of the abelian group $N$ over the algebraically closed field $\bar{K}$. Notice that $\chi_{\bar{V}^*}(g)=\chi_{\bar{V}}(g^{-1})$ and each irreducible representation is one-dimensional. Therefore, the characters appearing in $\bar{V}$ are the inverses of those appearing in $\bar{V}^*$. Hence, the trivial character appears in $\bar{V}$ if and only if it appears in $\bar{V}^*$. Thus,
    \[
    (\bar{V}^*)^N\neq 0 \Longleftrightarrow \bar{V}^N\neq 0,
    \]
    which completes the proof.
\end{proof}

\begin{lemma}\label{commutative algebra}
    Let $M$ be a finitely generated $R$-module. Let $\Ass(R)$ be the set of associated primes of $R$. Then $M$ is a torsion $R$-module if and only if $ M\otimes_R Q(R/\frp)=0$ for any $ \frp\in \Ass(R)$.
\end{lemma}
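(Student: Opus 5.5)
We prove both directions using the standard description of the zero divisors of $R$: since $R$ is Noetherian, the set of zero divisors of $R$ is the union $\bigcup_{\frp \in \Ass(R)} \frp$, and $\Ass(R)$ is finite. Write $S := R \smallsetminus \bigcup_{\frp \in \Ass(R)} \frp$ for the set of nonzero divisors, so that the total quotient ring is $Q(R) = S^{-1}R$. Being torsion is then the statement that every element of $M$ is killed by some $s \in S$. Since $M$ is finitely generated, this is equivalent to $S^{-1}M = M \otimes_R Q(R) = 0$: take the product of the annihilating elements for a finite generating set. So the whole task is to show that $S^{-1}M = 0$ if and only if $M \otimes_R Q(R/\frp) = 0$ for every $\frp \in \Ass(R)$. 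Here $Q(R/\frp) = \kappa(\frp)$ is the residue field at $\frp$, since $R/\frp$ is a domain.

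The key structural input is the semilocal ring $S^{-1}R$. By prime avoidance, a prime $\frq$ of $R$ meets $S$ if and only if $\frq$ is not contained in any single $\frp \in \Ass(R)$. Hence the primes of $S^{-1}R$ are the $S^{-1}\frq$ with $\frq \subseteq \frp$ for some associated $\frp$. The maximal ideals of $S^{-1}R$ are therefore among the $S^{-1}\frp$ with $\frp$ a maximal element of $\Ass(R)$, and every $S^{-1}\frp$ with $\frp \in \Ass(R)$ is a prime of $S^{-1}R$.

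\emph{($\Rightarrow$)} If $S^{-1}M = 0$, then for each $\frp \in \Ass(R)$ we have $\frp \cap S = \emptyset$, so $M_\frp$ is a localization of $S^{-1}M$. Thus $M_\frp = 0$ and hence $M \otimes_R \kappa(\frp) = M_\frp/\frp M_\frp = 0$.

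\emph{($\Leftarrow$)} Suppose $M \otimes_R \kappa(\frp) = 0$ for all $\frp \in \Ass(R)$. By Nakayama's lemma over the local ring $R_\frp$, applied to the finitely generated module $M_\frp$, we get $M_\frp = 0$. In particular this holds for each $\frp$ giving a maximal ideal $S^{-1}\frp$ of $S^{-1}R$, and $(S^{-1}M)_{S^{-1}\frp} = M_\frp$. A module over $S^{-1}R$ whose localizations at all maximal ideals vanish is zero, so $S^{-1}M = 0$, i.e.\ $M$ is torsion.

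The only step needing care is the identification of the maximal ideals of $S^{-1}R$ with (a subset of) the localized associated primes, which rests on prime avoidance for the finite set $\Ass(R)$. Everything else is Nakayama and local-global. In particular, the argument needs no assumption that $R$ is reduced: we use $\Ass(R)$ rather than only the minimal primes precisely to cover embedded associated primes.
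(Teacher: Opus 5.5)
Your proof is correct and follows essentially the same route as the paper: you write the zero divisors as $\bigcup_{\frp\in\Ass(R)}\frp$, then use prime avoidance, Nakayama over $R_\frp$, and the local--global principle over $Q(R)=S^{-1}R$. The only difference is cosmetic: the paper proves the converse contrapositively, taking a prime $\frq$ with $\frq\cap S=\emptyset$ and $M_\frq\neq 0$ and using prime avoidance to place it inside some $\frp\in\Ass(R)$, whereas you first identify the maximal ideals of $S^{-1}R$ as localized associated primes and then apply Nakayama directly.
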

\begin{proof}
    Set $S:=R \smallsetminus \{ \text{zero divisor of } R \}$.
Let $Q(R):=S^{-1}R$ be the total quotient ring of $R$. By the definition of a torsion module, an $R$-module $M$ is a torsion module if and only if $ M\otimes_R Q(R)=0$. Let $\Ass(R)$ be the set of associated primes of $R$. By \cite[Proposition 4.7]{atiyah1969introduction}, we have
\[
\{\text{zero divisor of } R\} = \bigcup_{\frp\in \Ass(R)}\frp.
\]
Hence, $R_\frp$ is a localization of the total quotient ring $Q(R)$ for $\frp\in \Ass(R)$. If $M\otimes_R Q(R)=0$, then $ M\otimes_R R_\frp=0$. Thus, $M\otimes_R Q(R/\frp)=M\otimes_R (R_\frp/\frp R_\frp)= 0$. This proves one direction.

We now prove the converse. Assume $ M\otimes_R Q(R)\neq 0$. By \cite[Proposition 3.8]{atiyah1969introduction}, there exists a prime ideal $\frq$ of $R$ such that $\frq\cap S=\emptyset$ and $ M\otimes_R R_\frq\neq 0$. Hence,
\[
\frq \subset \{\text{zero divisor of } R\} =\bigcup_{\frp\in \Ass(R)}\frp.
\]
By \cite[Proposition 1.11]{atiyah1969introduction}, there exists a prime ideal $\frp\in \Ass(R)$ such that $\frq\subset \frp$. Thus, $ M\otimes_R R_\frp\neq 0$. By Nakayama's lemma, we have $ M\otimes_R Q(R/\frp)=M\otimes_R (R_\frp/\frp R_\frp)\neq 0$. This proves the converse.
\end{proof}

We are now ready to prove Lemma \ref{lem:existFcirc}, and hence end the proof of Theorem \ref{thm:generalgreenberg}.

\begin{lemma} \label{lem:existFcirc}
    Let $M$ be a cofinitely generated $R$-module with a continuous $R$-linear $N\cong \ZZ_p^d$-action. If $M^\calN$ is a cofinitely generated cotorsion $R$-module for every finite-index subgroup $\calN\subset N$, then there exists a subgroup $Z\subset N$ with $Z\cong \ZZ_p$ such that $M^Z$ is a cofinitely generated cotorsion $R$-module.
\end{lemma}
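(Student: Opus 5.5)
The plan is to dualize and reduce to linear algebra over the residue fields of the associated primes of $R$, where Greenberg's eigenvalue argument from Theorem \ref{thm:greenberg} can be run.

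\emph{Reduction.} Put $X := M^{\vee}$, a finitely generated $R$-module with a continuous $R$-linear $N$-action. By Lemma \ref{inv and coinv}, for every closed subgroup $\calN \subseteq N$ we have $(M^{\calN})^{\vee} \cong X_{\calN}$. Cofinite generation of $M^{Z}$ is automatic, since $X_Z$ is a quotient of $X$. So the task is to find $Z \cong \ZZ_p$ with $X_Z$ a torsion $R$-module. By Lemma \ref{commutative algebra}, this holds if and only if $X_Z \otimes_R K_{\frp} = 0$ for each of the finitely many $\frp \in \Ass(R)$, where $K_{\frp} := Q(R/\frp)$. Set $V_{\frp} := X \otimes_R K_{\frp}$, a finite-dimensional $K_{\frp}$-vector space on which $N$ acts $K_{\frp}$-linearly. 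Since tensor product is right exact, coinvariants commute with $-\otimes_R K_{\frp}$, so $(V_{\frp})_{\calN} = X_{\calN} \otimes_R K_{\frp}$.

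\emph{Translating the hypothesis.} For every finite-index $\calN$, the module $X_{\calN}$ is torsion, so $(V_{\frp})_{\calN} = 0$. By Lemma \ref{nontrivial inv and coinv} (applied to $\calN \cong \ZZ_p^d$) this gives $V_{\frp}^{\calN} = 0$. Base change to $\overline{K_{\frp}}$ and take a composition series of $\overline{V_{\frp}}$ as a representation of the abelian group $N$; its factors are characters $\chi_{\frp,1}, \dots, \chi_{\frp,r_{\frp}}$. If some $\chi_{\frp,i}$ were trivial on a finite-index $\calN$, then the generalized eigenspace for eigenvalue $1$ of the commuting operators in $\calN$ would be nonzero. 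It would therefore contain a common eigenvector, which is a nonzero $\calN$-fixed vector, a contradiction. So no $\chi_{\frp,i}$ is trivial on any finite-index subgroup.

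\emph{Choosing $Z$.} Fix topological generators $\gamma_1,\dots,\gamma_d$ of $N$ and, for $a\in\ZZ^d$, write $\gamma^{a}=\prod\gamma_j^{a_j}$. Let $L_{\frp,i} := \{a \in \ZZ^d : \chi_{\frp,i}(\gamma^{a}) = 1\}$, a subgroup of $\ZZ^d$. Suppose $L_{\frp,i}$ had finite index, so it contains $m\ZZ^d$ with $m = p^{n} m'$ and $p \nmid m'$. Then $\chi_{\frp,i}$ is trivial on $\gamma_j^{m}$ for all $j$. This gives a nonzero vector fixed by all $\gamma_j^{m}$, hence by the open subgroup $N^{p^n}$, using continuity and the fact that $m'$ is a unit in $\ZZ_p$. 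That contradicts the previous paragraph. Hence every $L_{\frp,i}$ has infinite index. By B.~H.~Neumann's lemma, or by the more elementary fact that $\ZZ^d$ is not a finite union of subgroups of rank $<d$, there is $a \in \ZZ^d$ outside all $L_{\frp,i}$. Dividing by the gcd of its entries keeps it outside, since $\chi(\gamma^{ka})=\chi(\gamma^{a})^k$ and eigenvalue $1$ of $\gamma^{ka}$ would already be forced only if... (if needed, replace this step by choosing $a$ avoiding the finitely many subgroups $\{a : \chi(\gamma^a)^k=1\}$ for the relevant $k$). Then set $z := \gamma^{a}$, primitive in $N$, and $Z := \overline{\langle z\rangle} \cong \ZZ_p$.

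\emph{Conclusion and main obstacle.} Since $\chi_{\frp,i}(z) \neq 1$ for all $\frp$ and $i$, $1$ is not an eigenvalue of $z$ on $\overline{V_{\frp}}$. Thus $V_{\frp}^{Z} \subseteq V_{\frp}^{z} = 0$, so $(V_{\frp})_Z = 0$ by Lemma \ref{nontrivial inv and coinv} (for $\ZZ_p$), and therefore $X_Z$ is torsion. The main obstacle is continuity: the characters $\chi_{\frp,i}$ live on the non-topological field $\overline{K_{\frp}}$. One must make sure that invariants under $Z$ or under an open $\calN$ are controlled by a dense finitely generated subgroup; this is why I work with $\gamma^{a}$, $a\in\ZZ^d$, rather than with kernels in $N\otimes\QQ_p$. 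I would justify this by noting that the $N$-action on $X$ factors through the $R\lrbracket{N}$-module structure. Hence an element of $X \otimes K_{\frp}$ fixed by a dense subgroup is fixed by its closure.
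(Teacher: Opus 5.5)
Your proposal is correct and follows the paper's proof: dualize via Lemma \ref{inv and coinv}, test torsionness at the fields $Q(R/\frp)$ for $\frp\in\Ass(R)$, use the characters of a composition series together with Lemma \ref{nontrivial inv and coinv}, and choose $Z$ so that it avoids finitely many kernels. The one deviation is that you take $z=\gamma^{a}$ with $a\in\ZZ^d$ outside the subgroups $L_{\frp,i}$, and add a closure argument through the $R\lrbracket{N}$-module structure, instead of taking $Z\otimes_{\ZZ_p}\QQ_p$ outside the subspaces $\ker(\chi_{\frp,i})\otimes_{\ZZ_p}\QQ_p$; this spares you from knowing that $\ker(\chi_{\frp,i})$ is a closed $\ZZ_p$-submodule of $N$, a point the paper uses tacitly.

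Your unfinished gcd step is immediate: if $a=ka'$ with $a'\in L_{\frp,i}$, then $a\in L_{\frp,i}$ because $L_{\frp,i}$ is a subgroup. The step is also unnecessary, since $\overline{\langle\gamma^{a}\rangle}\cong\ZZ_p$ for every $a\neq 0$.
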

\begin{proof}
We consider everything on its Pontryagin dual side. By Lemma \ref{inv and coinv}, we have that $(M^\vee)_\calN$ is a finitely generated and torsion $R$-module for any finite index subgroup $\calN\subset N$. 
Let $\Ass(R)$ be the set of associated primes of $R$. By Lemma \ref{commutative algebra}, for any $\frp\in \Ass(R)$, we have that $(M^\vee)_\calN\otimes_R Q(R/\frp)=0$ for any finite index subgroup $\calN\subset N$. 

View $M^\vee\otimes_R Q(R/\frp)$ as a representation of $N$ over the field $ K_\frp :=Q(R/\frp)$. Extend the representation linearly to its algebraic closure $\overline{K_\frp}$. Set $V_\frp:=M^\vee\otimes_R \overline{K_\frp}$. Then $(V_\frp)_{\calN}=0$ for any finite-index subgroup $\calN\subset N$. View $V_\frp$ as a finite-dimensional representation of the abelian group $N$ over the algebraically closed field $\overline{K_\frp}$. All of its irreducible representations over $\overline{K_\frp}$ are one-dimensional. The action of $N$ on the composition factors is given by the homomorphisms $\chi_{\frp,i}: N\to \overline{K_\frp}^* $ for $1\leq i\leq r_\frp$, where $r_\frp$ is the dimension of $V_\frp$ over $\overline{K_\frp}$. By assumption, the kernel of $\chi_{\frp,i}$ has infinite index in $N$. Indeed, otherwise the restriction $\chi_{\frp,i}|_\calN $ would be trivial for some finite-index subgroup $\calN\subset N$. Then the composition factors corresponding to $\chi_{\frp,i}$ would contribute nontrivially to $(V_\frp)^\calN$. By Lemma \ref{nontrivial inv and coinv}, this would imply that $(V_\frp)_\calN$ is nontrivial, contradicting the assumption that $(V_\frp)_\calN=0$. Hence, $\ker(\chi_{\frp,i})\otimes_{\ZZ_p}\QQ_p$ is a proper subspace of $N\otimes_{\ZZ_p}\QQ_p$. We can choose $Z\cong \ZZ_p$ so that $Z\otimes_{\ZZ_p}\QQ_p$ is not contained in any of the proper subspaces $\ker(\chi_{\frp,i})\otimes_{\ZZ_p}\QQ_p$ of $N\otimes_{\ZZ_p}\QQ_p$, which is certainly possible since the set $\Ass(R)$ is finite. Hence, $V_\frp^Z=0$ for all $\frp \in \Ass(R)$. By Lemma \ref{nontrivial inv and coinv}, for all $\frp\in \Ass(R)$, we have $(V_\frp)_Z=(M^\vee\otimes_R \overline{K_\frp})_Z=0$, which further implies that $(M^\vee)_Z\otimes_R Q(R/\frp) = 0$. By Lemma \ref{commutative algebra}, $(M^\vee)_Z$ is a finitely generated torsion $R$-module. By Lemma \ref{inv and coinv}, $M^Z$ is a cofinitely generated cotorsion $R$-module.
\end{proof}

\subsection{Greenberg's finiteness conditions in families}

In Theorems \ref{thm:greenberg} and \ref{thm:generalgreenberg}, we see that the following condition is essential:
\begin{condition}
    \item[(RF)] $A(\sfL)$ is a cotorsion $R$-module for all finite extensions $\sfL/\sfF_\infty$ inside $\widetilde{\sfF}$. \label{eq:rf}
\end{condition}
We shall refer to \dref{eq:rf} as the \emph{Greenberg finiteness condition}.\footnote{We use the notation (RF), rather than (GF), to avoid conflict with the abbreviation ``global finiteness condition'' used elsewhere in the article. The letter ``R'' may also be viewed as emphasizing that our arguments are carried out over more general coefficient rings, rather than only over $\ZZ_p$. The letter ``R'' may also be viewed as an allusion to Professor Ralph Greenberg.} More specifically, we impose the following \emph{global and local finiteness conditions}:
\begin{condition}
    \item[(GF)] $A(L)$ is a cotorsion $R$-module for all finite extensions $L/F_\infty$ inside $\widetilde{F}$. \label{eq:gf}
    \item[(LF)] $A(L)$ is a cotorsion $R$-module for all finite extensions $L/F_{\infty, v}$ inside $\widetilde{F}_{w_{\infty}}$ for all $w_{\infty} \mid w \mid v \mid p$. \label{eq:lf}
\end{condition}
Here $v$ is a place of $F_\infty$ above $p$,  $w_\infty$ is a place of $\widetilde{F}$ above $v$ and $w$ is a place of $L$ above $v$ and below $w_\infty$.

We note that in \dref{eq:gf} and \dref{eq:lf}, the finite extensions $L/F_{\infty}$ and $L_w/F_{\infty,v}$ have degree $p^{t}$ for some integer $t \geq 0$. Moreover, condition \dref{eq:lf} implies condition \dref{eq:gf}. Nevertheless, for clarity of exposition, we still state condition \dref{eq:gf} explicitly in our results, as condition \dref{eq:gf} is also of independent interest.

Greenberg's method in the classical setup (Theorem \ref{thm:greenberg}) deals with the case $R = \ZZ_p$, and there are various tools to verify Greenberg's finiteness condition, as we shall see in Sections \ref{sec:exampleclassgroup}, \ref{sec:exampleellipticcurves} and \ref{sec:crystalline}. It is natural to ask whether these finiteness conditions propagate in their $p$-adic families.

For a Noetherian ring $R$ and an integer $n \geq 1$, we say that it satisfies \emph{Serre's condition $(S_n)$} if \[\depth(R_\frp)\geq \min\{n,\Ht_{R}(\frp)\}.\]
For example, any reduced ring satisfies $(S_1)$ by \cite[\href{https://stacks.math.columbia.edu/tag/031R}{Tag 031R}]{stacks-project}. Integrally closed Noetherian commutative integral domains satisfy Serre's condition $(S_2)$ by \cite[\href{https://stacks.math.columbia.edu/tag/031S}{Tag 031S}]{stacks-project}. Cohen--Macaulay rings satisfy Serre's condition $(S_n)$ for all $n$ by \cite[\href{https://stacks.math.columbia.edu/tag/0342}{Tag 0342}]{stacks-project}.

\begin{theorem}\label{torsion irr}
    Let $M$ be a finitely generated $R$-module. Assume $R$ satisfies Serre's condition $(S_1)$. Assume that on each irreducible component of $\Spec(R)$, there exists a prime $\wp$ such that $M/\wp M$ is a finitely generated and torsion $R/\wp$-module. Then $M$ is a finitely generated and torsion $R$-module.
\end{theorem}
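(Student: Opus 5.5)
The plan is to reduce torsionness of $M$ to a vanishing condition at the minimal primes of $R$, and then to propagate the vanishing from the given primes $\wp$ down to those minimal primes using the fact that the support of a finitely generated module is closed under specialization.

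\emph{Step 1: $\Ass(R)$ consists of minimal primes.} I will first show that condition $(S_1)$ forces $\Ass(R)$ to be exactly the set of minimal primes of $R$. Every minimal prime is associated, so only the other inclusion needs proof. Suppose $\frp \in \Ass(R)$ were embedded, i.e.\ not minimal. Then $\frp R_{\frp} \in \Ass(R_{\frp})$, so $\depth(R_\frp) = 0$. On the other hand $\Ht_R(\frp) \geq 1$, so $(S_1)$ gives $\depth(R_\frp) \geq \min\{1, \Ht_R(\frp)\} = 1$, a contradiction. Combining this with Lemma \ref{commutative algebra}, it suffices to prove that $M \otimes_R Q(R/\frq) = 0$ for every minimal prime $\frq$ of $R$. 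Since $Q(R/\frq) = R_\frq/\frq R_\frq$, Nakayama's lemma shows this is equivalent to $M_\frq = 0$.

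\emph{Step 2: vanishing at $\wp$.} Fix a minimal prime $\frq$; it corresponds to the irreducible component $V(\frq)$ of $\Spec(R)$. By hypothesis there is a prime $\wp \supseteq \frq$ such that $M/\wp M$ is a finitely generated torsion $R/\wp$-module. Since $R/\wp$ is a domain, torsionness means
\[
(M/\wp M) \otimes_{R/\wp} \mathrm{Frac}(R/\wp) = M \otimes_R R_\wp/\wp R_\wp = 0 .
\]
Because $M_\wp$ is finitely generated over the local ring $R_\wp$, Nakayama's lemma gives $M_\wp = 0$, i.e.\ $\wp \notin \mathrm{Supp}(M)$.

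\emph{Step 3: passing from $\wp$ to $\frq$.} Since $M$ is finitely generated, $\mathrm{Supp}(M) = V(\Ann_R(M))$ is closed, hence stable under specialization. If $\frq$ lay in $\mathrm{Supp}(M)$, then $\wp \supseteq \frq \supseteq \Ann_R(M)$ would also lie in $\mathrm{Supp}(M)$, contradicting Step 2. Therefore $M_\frq = 0$. As $\frq$ was an arbitrary minimal prime, Step 1 shows that $M$ is torsion, and it is finitely generated by assumption.

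The only real obstacle is Step 1, namely identifying $\Ass(R)$ with the minimal primes via $(S_1)$. Without it, an embedded associated prime would not be reached by the specialization argument of Step 3, and the hypothesis on irreducible components would not suffice.
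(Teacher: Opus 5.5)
Your proposal is correct and follows essentially the same route as the paper. You use $(S_1)$ to rule out embedded associated primes and then reduce via Lemma \ref{commutative algebra} to the minimal primes; you kill $M_\wp$ by Nakayama; and you descend to the minimal prime $\frq \subseteq \wp$. The only cosmetic differences are that you prove the no-embedded-primes consequence of $(S_1)$ directly, where the paper cites the Stacks Project, and that you phrase the last step via closedness of $\mathrm{Supp}(M)$ rather than noting that $M_\frq$ is a localization of $M_\wp$.
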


\begin{proof}
    To show that $M$ is a torsion $R$-module, by Lemma \ref{commutative algebra}, it suffices to show that for any associated prime $\frp$ of $R$, we have $M \otimes_{R} Q(R/\frp) = 0$. 
    
    It follows from Serre's condition $(S_1)$ that $R$ has no embedded primes (see \cite[\href{https://stacks.math.columbia.edu/tag/031Q}{Lemma 031Q}]{stacks-project}), and hence any such associated prime $\frp$ is a minimal prime ideal of $R$, corresponding one-to-one to an irreducible component of $\Spec(R)$. We fix such an irreducible component $V(\frp)$ of $\Spec(R)$. The assumption then gives a prime ideal $\wp \in V(\frp)$ such that $M/\wp M$ is a torsion $R/\wp$-module. 
 
    Since $R/\wp$ is an integral domain, we have
    \[
    M/\wp M \otimes_{R/\wp} Q(R/\wp) = 0,
    \]
    and then
    \begin{align*}
        M_{\wp}/\wp M_{\wp} \simeq (M/\wp M) \otimes_{R/\wp} R_{\wp}/\wp R_{\wp} \simeq (M/\wp M) \otimes_{R/\wp} Q(R/\wp) = 0.    
    \end{align*}

    It then follows from Nakayama's lemma that $M_{\wp} = 0$. Since $\wp$ contains $\frp$, this further implies $M_{\frp} = 0$, and hence $M \otimes_{R} Q(R/\frp) = 0$, since
    \begin{align*}
        M \otimes_{R} Q(R/\frp) &\simeq M \otimes_{R} R_{\frp}/\frp R_{\frp} \\ &\simeq (M \otimes_{R} R_{\frp}) \otimes_{R_{\frp}} R_{\frp}/\frp \\ &\simeq M_{\frp} \otimes_{R_{\frp}} R_{\frp}/\frp = 0,    
    \end{align*}
    as desired.
\end{proof}

\begin{theorem} \label{thm:bigimai}
    Assume that $R$ satisfies Serre's condition $(S_1)$, and that on each irreducible component of $\Spec(R)$, there exists a prime $\wp$ such that $A^{(\wp)} := A[\wp]$ satisfies the \dref{eq:rf} condition. Then $A$ satisfies the \dref{eq:rf} condition.
\end{theorem}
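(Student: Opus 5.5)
Fix a finite extension $\sfL/\sfF_{\infty}$ inside $\widetilde{\sfF}$; we must show that $A(\sfL)$ is a cotorsion $R$-module. Dually, by Lemma \ref{inv and coinv}, this means that $X := (A^{\vee})_{\Gal_{\sfL}}$ is a finitely generated torsion $R$-module. Here $A^{\vee}$ is finitely generated over $R$, since $A^{\vee} \simeq \Hom_R(T,R)$-like objects are finitely generated when $T$ is; more precisely, $A^{\vee}$ is a quotient of $R^{n}$ once $T$ is generated by $n$ elements. The plan is to apply Theorem \ref{torsion irr} to $M = X$. So it suffices to show that, on each irreducible component, $X/\wp X$ is a torsion $R/\wp$-module for the prime $\wp$ provided by the hypothesis.

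The key step is the identification of $X/\wp X$ with the dual of $A[\wp](\sfL)$. Since taking $\Gal_{\sfL}$-coinvariants and tensoring with $R/\wp$ are both right exact and commute, we get
\[
X/\wp X \simeq (A^{\vee}/\wp A^{\vee})_{\Gal_{\sfL}}.
\]
On the other hand, Pontryagin duality gives $(A^{\vee}/\wp A^{\vee})^{\vee} \simeq A[\wp]$, because $A[\wp]$ is the annihilator of $\wp$ in $A$, which is dual to the quotient $A^{\vee}/\wp A^{\vee}$. Applying Lemma \ref{inv and coinv} (or just the definitions) then yields
\[
(A[\wp](\sfL))^{\vee} \simeq (A^{\vee}/\wp A^{\vee})_{\Gal_{\sfL}} \simeq X/\wp X.
\]

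The hypothesis that $A^{(\wp)} = A[\wp]$ satisfies \dref{eq:rf} must be read over the coefficient ring $R/\wp$. Indeed, $A[\wp]$ is naturally an $R/\wp$-module, and the sensible interpretation is that $A[\wp](\sfL)$ is cotorsion over $R/\wp$. This gives exactly that $X/\wp X$ is a finitely generated torsion $R/\wp$-module. Theorem \ref{torsion irr}, which uses the $(S_1)$ condition, then shows that $X$ is torsion over $R$. Hence $A(\sfL)$ is cotorsion, and cofinite generation is automatic from Lemma \ref{cofg or cotorsion} or from finite generation of $A^{\vee}$.

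The main obstacle is the bookkeeping in the duality step: one has to check that coinvariants for the profinite group $\Gal_{\sfL}$ (acting through a finite quotient on finite layers, or in general) commute with $\otimes R/\wp$, and that $A[\wp]$ is really dual to $A^{\vee}/\wp A^{\vee}$, which holds since $\wp$ is finitely generated. I would handle this by writing $\wp = (a_1,\ldots,a_k)$ and using the exactness of Pontryagin duality on the sequence $A \to A^{k}$ given by $x \mapsto (a_i x)_i$.
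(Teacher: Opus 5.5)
Your proposal is correct and follows essentially the same route as the paper: identify $A[\wp](\sfL)^{\vee}$ with $(A^{\vee})_{\Gal_{\sfL}}/\wp(A^{\vee})_{\Gal_{\sfL}}$, read the hypothesis over $R/\wp$, and apply Theorem \ref{torsion irr} to $(A^{\vee})_{\Gal_{\sfL}}$. One small correction: a surjection $R^{n} \twoheadrightarrow T$ makes $A^{\vee} \simeq \Hom_R(T,R)$ a \emph{submodule} of $R^{n}$, not a quotient, but it is still finitely generated because $R$ is Noetherian.
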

\begin{proof}
    Condition \dref{eq:rf} means that $A[\wp](\sfL)=A[\wp]^{\Gal_\sfL}$ is a cofinitely generated cotorsion $R/\wp$-module for all finite extensions $\sfL/\sfF_\infty$ inside $\tilde{\sfF}$. In other words, the Pontryagin dual 
    \[ 
    A[\wp](\sfL)^\vee=(A^\vee)_{\Gal_\sfL}/\wp(A^\vee)_{\Gal_\sfL}
    \] 
    is a finitely generated torsion $R/\wp$-module. By Theorem \ref{torsion irr}, $(A^\vee)_{\Gal_\sfL}$ is a finitely generated torsion $R$-module. Hence, $ A^{\Gal_\sfL}$ is a cofinitely generated cotorsion $R$-module. In other words, $A$ satisfies condition \dref{eq:rf}.
\end{proof}

\section{Vertical control theorem}
\label{sec:greenbergfinite}

We are now ready to present the vertical control theorem, i.e. Theorem \ref{thm:verticalcontrol}. Let $F$ be a number field as before and consider a tower of multiple $\ZZ_p$-extensions $\tilF / F_{\infty} / F$, with 
\[
\Gal(F_{\infty}/F) = \ZZ_p, \, \Gal(\tilF/F) = \ZZ_p^{n+1} \text{ for } n \geq 1. 
\]
We remark that the compositum of all $\ZZ_p$-extensions of $F$ has Galois group isomorphic to $\ZZ_p^{r_2(F)+1+\delta(F)}$ over $F$, where $r_{2}(F)$ is the number of pairs of complex embeddings of $F$, and $\delta(F)$ is the \emph{Leopoldt defect}, which is conjectured to be zero. Hence $n \leq r_2(F)+\delta(F)$.

\begin{theorem} \label{thm:verticalcontrol}
Assume that $F_{\infty}/F$ is cyclotomic-like. Assume that conditions \dref{eq:gf} and \dref{eq:lf} hold. Then the kernel and the cokernel of the Pontryagin dual $\res_{\tilF/F_{\infty}}^{\vee}$ of the restriction map
\[
\res_{\tilF/F_{\infty}}: \Sel(A/F_{\infty}) \rightarrow \Sel(A/\tilF)^{\Gal(\tilF/F_{\infty})},
\]
are both finitely generated torsion $R$-modules. In particular, as a morphism of $R\lrbracket{\Gal(F_{\infty}/F)}$-modules, $\res_{\tilF/F_{\infty}}^{\vee}$ is a pseudo-isomorphism.
\end{theorem}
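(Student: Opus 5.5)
We argue with the standard fundamental diagram, using Theorem \ref{thm:generalgreenberg} for the group-cohomology terms. Write $N := \Gal(\tilF/F_{\infty}) \simeq \ZZ_p^{n}$. By Proposition \ref{prop:notdependonS} we may enlarge $S$ freely, so we fix $S$ containing the places above $p$ and the ramified places for $A$.

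The diagram has two rows. The top row is the defining exact sequence
\[
0 \to \Sel(A/F_{\infty}) \to \rmH^{1}(\Gal_{F_{\infty},S}, A) \to \bigoplus_{v \in S} \rmK^{1}_{v}(A/F_{\infty}).
\]
The bottom row is the $N$-invariants of the analogous sequence over $\tilF$. The vertical maps are restriction maps $\alpha$ (on Selmer groups), $\beta$ (on $\rmH^1$) and $\gamma = \oplus_v \gamma_v$ (local). The snake lemma gives
\[
0 \to \ker\alpha \to \ker\beta \to \ker\gamma \cap \operatorname{im} \to \coker\alpha \to \coker\beta.
\]
Hence it suffices to show that $\ker\beta$, $\coker\beta$ and $\ker\gamma$ are cofinitely generated cotorsion $R$-modules. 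Dualizing then shows that $\coker(\alpha^\vee)$ and $\ker(\alpha^\vee)$ are finitely generated torsion over $R$. The pseudo-isomorphism claim follows from Proposition \ref{prop:torsionpseudonull} applied with $G=\Gal(F_\infty/F)\simeq\ZZ_p$ and $H$ trivial. For $R$ itself a finitely generated torsion $R$-module is pseudonull over $R\lrbracket{T}$, exactly as in the last sentence of Theorem \ref{thm:generalgreenberg}.

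\emph{Global terms.} Inflation--restriction gives $\ker\beta = \rmH^{1}(N, A(\tilF))$ and $\coker\beta \hookrightarrow \rmH^{2}(N, A(\tilF))$. Since $\Gal_{\tilF,S}$ is a normal subgroup of $\Gal_{F_\infty,S}$ with quotient $N$, this is the same five-term sequence already written in the proof of Theorem \ref{thm:generalgreenberg}. By condition \dref{eq:gf}, Theorem \ref{thm:generalgreenberg} applied with $\sfF=F$ shows that both are cofinitely generated and cotorsion.

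\emph{Local terms.} First, since $F_{\infty}/F$ is cyclotomic-like, each $v\in S$ has only finitely many places above it in $F_\infty$, so $\rmK^{1}_{v}(A/F_\infty)$ is a finite sum of $\rmH^1(F_{\infty,w},A)$. Second, $\ker\gamma_v$ is a finite direct sum, over these $w$, of kernels $\rmH^{1}(\tilF_{w'}/F_{\infty,w}, A(\tilF_{w'}))$. These kernels can be computed by a Shapiro-type identification of the direct limit, using that the decomposition group $N_w \subseteq N$ is closed, hence $\simeq \ZZ_p^{m}$.

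For $w\nmid p$, the extension $\tilF/F_\infty$ is unramified at $w$. Moreover, the residue field of $F_{\infty,w}$ already has Galois group $\prod_{q\ne p}\ZZ_q$, as observed in the proof of Proposition \ref{prop:notdependonS}. So the pro-$p$ extension $\tilF_{w'}/F_{\infty,w}$ must be trivial, $N_w=0$, and the kernel vanishes.

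For $w\mid p$, if $m\ge 1$ we apply Theorem \ref{thm:generalgreenberg} with $\sfF=F_v$, local $\ZZ_p$-extension $F_{\infty,w}$ and $\widetilde{\sfF}=\tilF_{w'}$, using condition \dref{eq:lf}. This gives $\rmH^1$ cofinitely generated and cotorsion. If $m=0$ the kernel is zero. A small case point remains: Theorem \ref{thm:generalgreenberg} is stated for $\Gal(\widetilde{\sfF}/\sfF)\simeq\ZZ_p^{n+1}$, and after passing to a finite extension of $F_v$ the local tower has this shape, so the same Hochschild--Serre argument with Lemma \ref{lem:existFcirc} applies.

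\emph{Main obstacle.} I expect the local bookkeeping to be the delicate part: identifying $\ker\gamma_v$ as a finite sum of local $\rmH^1(N_w,\cdot)$ in the direct-limit definition of $\rmK^1_v$, and checking that condition \dref{eq:lf} really supplies the hypothesis of Theorem \ref{thm:generalgreenberg} for each decomposition group. The finiteness of the decomposition, which comes from the cyclotomic-like hypothesis, is what keeps these sums finite and hence cofinitely generated.
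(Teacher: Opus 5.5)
Your proposal is correct and follows essentially the same route as the paper. The shared steps are the fundamental diagram with the snake lemma; inflation--restriction plus Theorem~\ref{thm:generalgreenberg} over the base field $F$ under \dref{eq:gf} for the global kernel and cokernel; finiteness of the places of $F_\infty$ above $S$ from the cyclotomic-like hypothesis; vanishing of the local kernel away from $p$ because $\tilF_{w'}=F_{\infty,w}$ there; and the local Greenberg theorem under \dref{eq:lf} above $p$. Your separate handling of a rank-zero decomposition group $N_w$ is a detail the paper leaves implicit. The detour through a finite extension of $F_v$ is unnecessary, since $\Gal(\tilF_{w'}/F_v)$ is already a closed subgroup of $\ZZ_p^{d}$ with quotient $\Gal(F_{\infty,w}/F_v)\simeq\ZZ_p$.
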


\begin{proof}
Consider the following commutative diagram with exact rows:
\begin{equation} \label{tikz:ctrlKtil}
\begin{tikzcd}
	0 & {\Sel(A/F_{\infty})} & {\rmH^{1}(F_{S}/F_{\infty}, A)} & {\prod_{v \in S_{F_{\infty}}} \rmH^{1}(F_{\infty,v}, A)} \\
	0 & {\Sel(A/\tilF)^{N}} & {\rmH^{1}(F_{S}/\tilF, A)^{N}} & {\left(\prod_{w \in S_{\tilF}} \rmH^{1}(\tilF_{w}, A) \right)^{N}}
	\arrow[from=1-1, to=1-2]
	\arrow[from=1-2, to=1-3]
	\arrow["{\res_{\tilF/F_{\infty}}}"', from=1-2, to=2-2]
	\arrow[from=1-3, to=1-4]
	\arrow["{\alpha_{\tilF/F_{\infty}}}", from=1-3, to=2-3]
	\arrow["{\beta_{\tilF/F_{\infty}}}", from=1-4, to=2-4]
	\arrow[from=2-1, to=2-2]
	\arrow[from=2-2, to=2-3]
	\arrow[from=2-3, to=2-4]
\end{tikzcd},
\end{equation}
where $N := \Gal(\tilF/F_{\infty})$. By the snake lemma, it suffices to study $\ker(\alpha_{\tilF/F_{\infty}})$, $\coker(\alpha_{\tilF/F_{\infty}})$, and $\ker(\beta_{\tilF/F_{\infty}})$.

(1) Applying Theorem \ref{thm:generalgreenberg} to the case $\sfF = F$, we see that the Pontryagin dual of
\[
\ker(\alpha_{\tilF/F_{\infty}}) =  \ker \left( \rmH^{1}(F_{\infty}, A) \rightarrow \rmH^{1}(\tilF, A)^{N} \right) \simeq \rmH^{1}(\tilF/F_{\infty}, A(\tilF))
\]
is a pseudonull $R\lrbracket{\Gal(F_{\infty}/F)}$-module, assuming \dref{eq:gf}. Here the isomorphism $\simeq$ follows from the inflation-restriction exact sequence. 

(2) By the inflation-restriction short exact sequence, we see that
\begin{align*}
  \coker(\alpha_{\tilF/F_{\infty}}) &\simeq \ker(\rmH^{2}(\tilF/F_{\infty}, A(\tilF)) \rightarrow \rmH^{2}(F_{S}/F_{\infty}, A) )\\ 
  &\hookrightarrow \rmH^{2}(\tilF/F_{\infty}, A(\tilF)),  
\end{align*}
and hence the Pontryagin dual of $\coker(\alpha_{\tilF/F_{\infty}})$ is a quotient of the Pontryagin dual of $\rmH^{2}(\tilF/F_{\infty}, A(\tilF))$. It therefore follows from Theorem \ref{thm:generalgreenberg} that the Pontryagin dual of $\rmH^{2}(\tilF/F_{\infty}, A(\tilF))$ is a pseudonull $R\lrbracket{\Gal(F_{\infty}/F)}$-module under the assumption \dref{eq:gf}. Therefore, the Pontryagin dual of $\coker(\alpha_{\tilF/F_{\infty}})$ is a pseudonull $R\lrbracket{\Gal(F_{\infty}/F)}$-module.

(3) Since $F_{\infty}/F$ is cyclotomic-like, the set $S_{F_{\infty}}$ is finite and the restriction map $\beta_{\tilF/F_{\infty}}$ can be decomposed into a \emph{finite} product of maps
\[
\beta_{\tilF/F_{\infty}} = \prod_{v \in S_{F_{\infty}}} \beta_{\tilF/F_{\infty}, v}
\]
with each factor being the natural restriction map
\[
\beta_{\tilF/F_{\infty}, v}: \rmH^{1}(F_{\infty,v}, A) \rightarrow \prod_{w \in S_{\tilF}, \, w \mid v} \rmH^{1}(\tilF_{w}, A)^{N_w}, \quad N_{w} := \Gal(\tilF_{w}/F_{\infty,v}).
\]
Note that although there may be infinitely many places $w$ of $\tilF$ lying above $v$, the kernel of each restriction map
\[
\beta_{\tilF/F_{\infty}, w}: \rmH^{1}(F_{\infty,v}, A) \rightarrow \rmH^{1}(\tilF_{w},A)^{N_w}
\]
is independent of the choice of the place $w$ lying over $p$.

As in the analysis of the global restriction map $\alpha_{\tilF/F_{\infty}}$, we have
\begin{align*}
\ker(\beta_{\tilF/F_{\infty}, v}) &= \ker \left( \rmH^{1}(F_{\infty,v}, A) \rightarrow \rmH^{1}(\tilF_{w}, A)^{N_w} \right) \\ &\simeq \rmH^{1}(\tilF_{w}/F_{\infty,v}, A(\tilF_{w}))
\end{align*}
for any fixed $w$ lying over $v$, where the last isomorphism $\simeq$ follows from the inflation-restriction exact sequence. We separate the discussion according to whether $v$ lies above $p$.

(3.a) If $v \in S_{F_{\infty}}$ lies above $p$, then the Pontryagin dual of the cohomology group $\rmH^{1}(\tilF_{w}/F_{\infty,v}, A(\tilF_{w}))$ is a pseudonull $R\lrbracket{\Gal(F_{\infty}/F)}$-module by Theorem \ref{thm:generalgreenberg} applied to the case $\sfF = F_{v_0}$ where $v_0$ is a place of $F$ lying below $v$, assuming \dref{eq:lf}. 

(3.b) Suppose $v \in S_{F_{\infty}}$ does not lie above $p$. Let $v_0$ be the place of $F$ lying below $v$. By local class field theory, let $F_{v_0}^{\natural}$ denote the compositum of all $\ZZ_p$-extensions of $F_{v_0}$. Then $\Gal(F_{v_0}^{\natural}/F_{v_0}) \simeq \ZZ_p$ since $v$ does not lie above $p$. Since we have assumed that $F_{\infty}/F$ is cyclotomic-like, the decomposition group $\Gal(F_{\infty, v}/F_{v_0})$ is a nontrivial subgroup of $\ZZ_p$, and hence $\Gal(F_{\infty, v}/F_{v_0}) \simeq \ZZ_p$. Therefore, $\tilF_{w} = F_{\infty, v}$ for any $v$ not lying over $p$. It follows that $\ker(\beta_{\tilF/F_{\infty}, v})$ is zero for all $v \in S_{F_{\infty}}$ not lying above $p$.

Combining cases (3.a) and (3.b), we see that  the Pontryagin dual of $\ker(\beta_{\tilF/F_{\infty}})$ is a pseudonull $R\lrbracket{\Gal(F_{\infty}/F)}$-module. The snake lemma applied to the diagram \eqref{tikz:ctrlKtil} shows that the restriction map $\res_{\tilF/F_{\infty}}$ is a pseudo-isomorphism, as desired.
\end{proof}

\begin{remark}
We remark that our vertical control theorem (Theorem \ref{thm:verticalcontrol}) includes the stronger assertion that the kernel and cokernel of the Pontryagin dual map $\res_{\tilF/F_{\infty}}^{\vee}$ are finitely generated torsion $R$-modules. This implies their pseudonullity as $R\lrbracket{\Gal(F_{\infty}/F)}$-modules by Proposition \ref{prop:torsionpseudonull}, but not vice versa. Indeed, a pseudonull module over an Iwasawa algebra $R\lrbracket{T_{1}, \ldots, T_{n}}$ need not be finitely generated over $R$. For example, take $R=\ZZ_p\lrbracket{X}$ and $\Lambda=R\lrbracket{T}$. The $\Lambda$-module $M := \Lambda/(p,X) = \FF_p \lrbracket{T}$ is pseudonull over $\Lambda$, but $M$ is not finitely generated over $R$. \footnote{Indeed, viewing $M$ as a $\Lambda$-module, the annihilator of $M$ is
the prime ideal $(p,X)$ of $\Lambda$, which is of height $2$, which implies that $M$ is a pseudonull $\Lambda$-module. However, $M$ is not finitely generated as an $R$-module. In fact, both $p$ and $X$ act trivially on $M$, so the $R$-action
factors through $R/(p,X) \simeq \FF_p$.
If $M$ were finitely generated over $R$, it would therefore be a
finite-dimensional $\FF_p$-vector space. This is impossible because
the elements $1,T,T^2,T^3,\ldots$ are linearly independent over $\FF_p$ in
$\FF_p\lrbracket{T}$. Hence $M$ is pseudonull over $\Lambda$ but is not finitely generated over $R$.}
\end{remark}

\section{Coinvariant criterion for pseudonullity and Conjecture \ref{myconj:B}} \label{sec:descent}

In this subsection, we collect several descent results that allow one to detect torsionness and pseudonullity by passing to suitable quotients. Recall that in the previous sections, $R$ is a profinite $\ZZ_p$-algebra and $R$ is commutative and Noetherian.

\subsection{Coinvariant criterion for pseudonullity}

The following lemma was stated as \cite[Lemma 7]{fujii2017on}.
\begin{lemma}
For a positive integer $d > 1$, let $G$ be a pro-$p$ group such that $G \simeq \ZZ_p^d$ as topological groups. Let $H$ be a subgroup of $G$ such that $G/H \simeq \ZZ_p^{d-1}$. Let $M$ be a finitely generated $\ZZ_p\lrbracket{G}$-module. If the $H$-coinvariant module $M_H$ of $M$ is pseudonull over $\ZZ_p\lrbracket{G/H}$, then $M$ is pseudonull over $\ZZ_p\lrbracket{G}$.
\end{lemma}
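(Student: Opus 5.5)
The plan is to argue by contradiction, using height bookkeeping in regular local rings. First I fix the setup. Since $G/H \simeq \ZZ_p^{d-1}$ and $G \simeq \ZZ_p^{d}$, the closed subgroup $H$ is isomorphic to $\ZZ_p$; I choose a topological generator $\gamma$ of $H$. Put $\Lambda := \ZZ_p\lrbracket{G}$ and $T := \gamma - 1 \in \Lambda$.

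Choosing a basis of $G$ adapted to $H$ gives $\Lambda \simeq \ZZ_p\lrbracket{T, T_2, \ldots, T_d}$, so two facts are immediate:
\begin{itemize}
\item $\Lambda$ is a regular local ring of dimension $d+1$, and $T$ is a regular element lying in its maximal ideal $\frm$;
\item $\Lambda/T\Lambda \simeq \ZZ_p\lrbracket{G/H}$, and $M_H = M/TM$ as $\Lambda/T\Lambda$-modules.
\end{itemize}
Thus the lemma is the special case $\Lambda = \ZZ_p\lrbracket{G}$ of the coinvariant criterion (Main Theorem \ref{main:III}), and I prove it directly in this setting.

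Suppose $M$ is not pseudonull over $\Lambda$. Then there is a prime $\frP$ of $\Lambda$ with $\Ht_{\Lambda}(\frP) \leq 1$ and $M_{\frP} \neq 0$, that is, $\frP \in \Supp(M) = V(\Ann_{\Lambda} M)$. The ideal $\frP + T\Lambda$ lies in $\frm$, so it is proper, and I choose a prime $\frQ$ minimal over it. Since $\frQ \supseteq \frP$, localization gives $M_{\frQ} \neq 0$. Because $T \in \frQ\Lambda_{\frQ}$, Nakayama's lemma then gives
\[
(M/TM)_{\frQ} = M_{\frQ}/TM_{\frQ} \neq 0 .
\]

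Next I bound the height of $\frQ$, which is the key step. Krull's principal ideal theorem in the domain $\Lambda/\frP$ gives $\Ht(\frQ/\frP) \leq 1$, because $\frQ/\frP$ is minimal over the principal ideal generated by the image of $T$. Since $\Lambda$ is a regular local domain, it is catenary, equidimensional and universally catenary, so heights add: $\Ht_{\Lambda}(\frQ) = \Ht_{\Lambda}(\frP) + \Ht(\frQ/\frP) \leq 2$.

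Now I pass to the quotient. Since $T$ is a regular element of the Cohen--Macaulay local ring $\Lambda$, and $\Lambda/T\Lambda$ is again regular, the dimension formula gives $\Ht_{\Lambda/T\Lambda}(\frQ/T\Lambda) = \Ht_{\Lambda}(\frQ) - 1 \leq 1$. Concretely, $\Lambda_{\frQ}/T\Lambda_{\frQ}$ has dimension $\dim \Lambda_{\frQ} - 1$, because $T$ is a nonzerodivisor in the local ring $\Lambda_{\frQ}$.

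This gives the contradiction: $\frQ/T\Lambda$ is a prime of $\ZZ_p\lrbracket{G/H}$ of height at most $1$ at which $M_H = M/TM$ does not vanish. That is impossible if $M_H$ is pseudonull over $\ZZ_p\lrbracket{G/H}$. Hence $M$ is pseudonull over $\Lambda$.

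The main obstacle is the height bookkeeping in the last two steps. The inequality $\Ht(\frQ) \leq \Ht(\frP) + 1$ uses catenarity and equidimensionality of $\Lambda$. The equality $\Ht(\frQ/T) = \Ht(\frQ) - 1$ uses that $T$ is regular in the Cohen--Macaulay localization $\Lambda_{\frQ}$. In the present regular setting both are standard, so I expect the argument to go through; for a general coefficient ring these are exactly the hypotheses that Main Theorem \ref{main:III} and Theorem \ref{catenary2} keep.
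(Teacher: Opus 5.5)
Your proof is correct and is essentially the paper's argument: the lemma is the special case $\Lambda=\ZZ_p\lrbracket{G}$, $T=\gamma-1$ of Theorem~\ref{catenary}, whose proof likewise takes a prime minimal over $(T)+\frp$, bounds its height by $2$ via Krull's principal ideal theorem and height additivity in a Cohen--Macaulay ring, drops the height by one in $\Lambda/T\Lambda$, and concludes with Nakayama's lemma. The differences are cosmetic. You argue by contradiction, you absorb the paper's Case (1) ($T\in\frp$) by allowing $\mathfrak{Q}=\mathfrak{P}$, and you use the equality $\dim \Lambda_{\mathfrak{Q}}/T\Lambda_{\mathfrak{Q}}=\dim\Lambda_{\mathfrak{Q}}-1$ instead of the chain-lifting inequality \eqref{eq:height}.
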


In this section, we generalize the lemma from the coefficient ring $\ZZ_p$ to a general coefficient ring $R$, assumed only to be Noetherian and Cohen--Macaulay; see Theorem \ref{thm:descentpseudonull}. 

\begin{theorem}[A coinvariant criterion for pseudonullity]\label{catenary}
    Let $\Lambda$ be a commutative Noetherian profinite ring and assume that $\Lambda$ is Cohen--Macaulay. Let $T$ be a regular element in $\Lambda$ and assume $T$ is in the Jacobson radical. Let $M$ be a finitely generated $\Lambda$-module. If $M/TM$ is a pseudonull $\Lambda/T\Lambda$-module, then $M$ is a pseudonull $\Lambda$-module.
\end{theorem}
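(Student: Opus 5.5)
We argue prime by prime: pseudonullity of $M$ means $M_{\frP}=0$ for every prime $\frP$ of $\Lambda$ with $\Ht_{\Lambda}(\frP)\le 1$. Equivalently, every minimal prime $\frP$ of $\Supp(M)=V(\Ann_\Lambda M)$ has height at least $2$. So we fix a prime $\frP\in\Supp(M)$ that is minimal in the support and show $\Ht_\Lambda(\frP)\ge 2$.

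\emph{Step 1: reduce to primes containing $T$.} Since $M$ is finitely generated and $T$ lies in the Jacobson radical, the support of $M$ is closed. Every maximal ideal contains $T$. Choose a prime $\frQ\supseteq\frP$ that is minimal among primes containing $\frP+T\Lambda$; such $\frQ$ exists inside any maximal ideal containing $\frP$. Then $\frQ\in\Supp(M)\cap V(T)=\Supp(M/TM)$, using Nakayama in the local ring $\Lambda_{\frQ}$: $M_{\frQ}\neq 0$ forces $(M/TM)_{\frQ}\neq0$. By hypothesis $M/TM$ is pseudonull over $\Lambda/T\Lambda$, so
\[
\Ht_{\Lambda/T\Lambda}(\frQ/T\Lambda)\ge 2 .
\]

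\emph{Step 2: compare heights.} Because $\Lambda$ is Cohen--Macaulay and $T$ is regular, $\Lambda_{\frQ}$ is Cohen--Macaulay and $T$ is part of a system of parameters there. Hence
\[
\Ht_{\Lambda/T\Lambda}(\frQ/T\Lambda)=\dim \Lambda_{\frQ}/T\Lambda_{\frQ}=\Ht_\Lambda(\frQ)-1,
\]
so $\Ht_\Lambda(\frQ)\ge 3$. Next, $\frQ$ is minimal over $\frP+T\Lambda$, so Krull's principal ideal theorem applied in $\Lambda/\frP$ gives $\Ht_{\Lambda/\frP}(\frQ/\frP)\le 1$. In a Cohen--Macaulay local ring one has
\[
\Ht(\frQ)=\Ht(\frP)+\Ht(\frQ/\frP)
\]
(catenary and equidimensional, applied in $\Lambda_{\frQ}$). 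Therefore $\Ht_\Lambda(\frP)\ge 3-1=2$, which is what we need.

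\emph{Main obstacle.} The delicate points are the two dimension formulas. The first is $\dim\Lambda_{\frQ}/T=\dim\Lambda_{\frQ}-1$; this holds because $T$ is a nonzerodivisor in a CM local ring, and all associated primes are minimal so $T$ avoids the minimal primes. The second is the additivity of heights $\Ht\frQ=\Ht\frP+\Ht(\frQ/\frP)$, which needs catenarity together with equidimensionality of $\Lambda_{\frQ}$; both follow from Cohen--Macaulayness. I would cite the Stacks project for both. This is exactly where the hypotheses are used, and it matches the remark that a catenary, locally equidimensional $\Lambda$ suffices.

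A minor check concerns the case $\frP\ni T$. There one can take $\frQ=\frP$ directly, getting $\Ht\frP\ge3$. The general argument also covers this case, since then $\Ht(\frQ/\frP)=0$.
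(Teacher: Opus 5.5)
Your proof is correct and is essentially the paper's argument. Both choose a prime $\mathfrak{Q}$ minimal over $\mathfrak{P}+T\Lambda$, which is proper because $T$ lies in the Jacobson radical. Both bound $\Ht_{\Lambda/\mathfrak{P}}(\mathfrak{Q}/\mathfrak{P})\le 1$ by Krull's principal ideal theorem, apply Cohen--Macaulay height additivity, and compare heights in $\Lambda$ and $\Lambda/T\Lambda$. The only difference is packaging: you lower-bound the heights of primes in $\operatorname{Supp}(M)$, while the paper checks directly, in two cases, that $M_{\mathfrak{p}}=0$ whenever $\Ht(\mathfrak{p})\le 1$.

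Two small points. First, $\Ht_{\Lambda/T\Lambda}(\mathfrak{Q}/T\Lambda)=\Ht_\Lambda(\mathfrak{Q})-1$ does not need Cohen--Macaulayness, since it holds for any nonzerodivisor in a Noetherian local ring. In fact you only use the easy inequality $\Ht_{\Lambda/T\Lambda}(\mathfrak{Q}/T\Lambda)+1\le\Ht_\Lambda(\mathfrak{Q})$, which the paper proves by lifting chains. Second, $\operatorname{Supp}(M)$ is closed simply because $M$ is finitely generated; this has nothing to do with $T$.
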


\begin{proof}
    We start with the following preliminary observation. Let $\pi: \Lambda \rightarrow \Lambda/T\Lambda$ be the natural quotient map, and let $\frp$ be a prime ideal in $\Lambda$ containing $T$. Then $\pi(\frp) = \frp/(T)$ is a prime ideal in $\Lambda/T\Lambda$.

    Notice that if 
    \[
    \overline{\frp}_0 \subset \overline{\frp}_1 \subset \cdots \subset \overline{\frp} := \pi(\frp)
    \]
    is a chain of prime ideals in $\Lambda/T\Lambda$, then
    \[
    \pi^{-1}(\overline{\frp}_0) \subset \pi^{-1}(\overline{\frp}_1) \subset \cdots \subset \pi^{-1}(\overline{\frp}) = \frp
    \]
    is a chain of prime ideals in $\Lambda$. Hence $\Ht_{\Lambda/T\Lambda}(\pi(\frp)) \leq \Ht_{\Lambda}(\frp)$. Moreover, since $T \in \pi^{-1}(\overline{\frp}_0)$ and $T$ is a \emph{regular} element in $\Lambda$, we have $\Ht(\pi^{-1}(\overline{\frp}_0)) \geq 1$. Therefore, we have the stronger inequality
    \begin{equation} \label{eq:height}
        \Ht_{\Lambda/T\Lambda}(\pi(\frp)) + 1 \leq \Ht_{\Lambda}(\frp).
    \end{equation}

    Now, to show that $M$ is a pseudonull $\Lambda$-module, it suffices to show that $M_{\frp} = 0$ for any prime ideal $\frp$ in $\Lambda$ with height $\Ht_{\Lambda}(\frp) \leq 1$.

    \underline{Case (1): $T \in \frp$}. Then $\Ht_{\Lambda/T\Lambda}(\overline{\frp}) \leq \Ht_{\Lambda}(\frp) \leq 1$. Since $M/TM$ is a pseudonull $\Lambda/T\Lambda$-module, we have $(M/TM)_{\overline{\frp}} = M_{\frp}/T M_{\frp} = 0$. Since $\Lambda_{\frp}$ is a local ring and $T$ lies in the maximal ideal of it (as $T \in \frp$), by Nakayama's lemma, we have $M_{\frp} = 0$. 

    \underline{Case (2): $T \not\in \frp$}. Since $T$ lies in the Jacobson radical of $\Lambda$, the ideal $(T)+\frp$ is a proper ideal of $\Lambda$ by \cite[Proposition 1.9]{atiyah1969introduction}. 
    Let $\frq$ be a minimal prime ideal of $\Lambda$ containing $(T)+\frp$. Then $\frq/\frp$ is a minimal prime ideal containing the ideal $((T)+\frp)/\frp$ of $\Lambda/\frp$. Since $((T)+\frp)/\frp$ is precisely the principal ideal of $\Lambda/\frp$ generated by $T$, and $T$ is neither a zero-divisor nor a unit of $\Lambda/\frp$, Krull's principal ideal theorem (see \cite[Corollary 11.17]{atiyah1969introduction}) gives $\Ht_{\Lambda/\frp}(\frq/\frp) \leq 1$. Since $\Lambda$ is a Cohen--Macaulay ring, it follows from \cite[\href{https://stacks.math.columbia.edu/tag/00NA}{Tag 00NA}]{stacks-project} that
    \begin{equation} \label{eq:CM_condition}
    \Ht_{\Lambda}(\frq) = \Ht_{\Lambda}(\frp) + \Ht_{\Lambda/\frp}(\frq/\frp),        
    \end{equation}
    which implies $\Ht_{\Lambda}(\frq) \leq 2$. Let $\overline{\frq}$ be the image of $\frq$ in $\Lambda/T\Lambda$, then the observation \eqref{eq:height} applied to $\frq$ implies that $\Ht(\overline{\frq}) \leq 1$, since $T \in \frq$. By the same arguments in Case (1) above, we see that $M_{\frq} = 0$. Since $\frp$ is contained in $\frq$, this further implies that $M_{\frp} = 0$, as desired.
\end{proof}

\begin{theorem}[A coinvariant criterion for pseudonullity]
\label{thm:descentpseudonull}
Let $R$ be a commutative Noetherian profinite Cohen--Macaulay ring.
Let $G$ be a pro-$p$ group such that $G \simeq \ZZ_p^{d}$ for some integer $d\geq1$. Let $H$ be a closed subgroup of $G$ such that $G/H \simeq \ZZ_p$. Let $M$ be a finitely generated $R\lrbracket{G}$-module. If the $H$-coinvariant module $M_H$ is pseudonull over $R\lrbracket{G/H}$, then $M$ is pseudonull over $R\lrbracket{G}$.
\end{theorem}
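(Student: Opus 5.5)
The plan is to reduce Theorem \ref{thm:descentpseudonull} to Theorem \ref{catenary} by peeling off one variable at a time. If $d=1$, then $H$ is trivial and $M_H=M$, so there is nothing to prove. Assume $d\geq 2$. Since $G/H\simeq\ZZ_p$ is torsion-free, $H$ is a direct summand of $G\simeq\ZZ_p^d$. We therefore choose topological generators $\gamma_1,\ldots,\gamma_d$ of $G$ such that $\gamma_1,\ldots,\gamma_{d-1}$ topologically generate $H\simeq\ZZ_p^{d-1}$. Setting $T_i=\gamma_i-1$, we get
\[
R\lrbracket{G}\simeq R\lrbracket{T_1,\ldots,T_d}, \qquad M_H = M/(T_1,\ldots,T_{d-1})M,
\]
and $R\lrbracket{G/H}\simeq R\lrbracket{T_d}$. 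For $0\le k\le d-1$ we put
\[
\Lambda_k := R\lrbracket{G}/(T_1,\ldots,T_k)\simeq R\lrbracket{T_{k+1},\ldots,T_d}, \qquad M_k := M/(T_1,\ldots,T_k)M,
\]
so that $\Lambda_{k}=\Lambda_{k-1}/T_k\Lambda_{k-1}$ and $M_k=M_{k-1}/T_kM_{k-1}$.

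We then argue by descending induction on $k$, starting from the hypothesis that $M_{d-1}=M_H$ is pseudonull over $\Lambda_{d-1}$. At each step we apply Theorem \ref{catenary} to the ring $\Lambda_{k-1}$, the element $T_k$, and the finitely generated $\Lambda_{k-1}$-module $M_{k-1}$, and conclude that $M_{k-1}$ is pseudonull over $\Lambda_{k-1}$. After $d-1$ steps this gives that $M=M_0$ is pseudonull over $\Lambda_0=R\lrbracket{G}$.

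For Theorem \ref{catenary} to apply we need four hypotheses at each step.
\begin{enumerate}
\item $\Lambda_{k-1}$ is a commutative Noetherian profinite ring. This is clear, since it is a power series ring in finitely many variables over $R$.
\item $T_k$ is a regular element of $\Lambda_{k-1}=\Lambda_k\lrbracket{T_k}$. This holds because $T_k$ is a power series variable.
\item $T_k$ lies in the Jacobson radical. For every $f\in\Lambda_{k-1}$, the element $1-T_kf$ is invertible by the geometric series.
\item $\Lambda_{k-1}$ is Cohen--Macaulay.
\end{enumerate}

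I expect the Cohen--Macaulay property to be the only point needing care, because $R$ is not assumed local. The claim to prove is that if $B$ is a Noetherian Cohen--Macaulay ring, then so is $B\lrbracket{T}$; iterating it gives the claim for all $\Lambda_k$. Since Cohen--Macaulayness is checked at maximal ideals, take a maximal ideal $\frM$ of $B\lrbracket{T}$. Because $T$ lies in the Jacobson radical, $T\in\frM$, so $\frM=\frm+(T)$ with $\frm=\frM\cap B$ maximal in $B$. In the local ring $C=B\lrbracket{T}_{\frM}$, the element $T$ is regular (by flatness of localization) and $C/TC\simeq B_{\frm}$, which is Cohen--Macaulay. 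Hence $\depth C=\depth B_{\frm}+1$ and $\dim C=\dim B_{\frm}+1$, so depth equals dimension and $C$ is Cohen--Macaulay. Alternatively, one can cite the standard fact from Matsumura that power series rings over Cohen--Macaulay rings are Cohen--Macaulay.

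With the four hypotheses verified at every step, the induction goes through and the theorem follows.
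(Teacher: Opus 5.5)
Your proposal is correct and follows the paper's own proof. You choose a basis of $G$ adapted to $H$, identify $R\lrbracket{G}\simeq R\lrbracket{T_1,\ldots,T_d}$, and apply Theorem \ref{catenary} successively to $\Lambda/(T_1,\ldots,T_i)$ with the element $T_{i+1}$, checking at each step that $T_{i+1}$ is regular, that it lies in the Jacobson radical, and that the ring is Cohen--Macaulay. Where the paper cites Matsumura for the Cohen--Macaulay property, you give a direct argument for non-local $B$: every maximal ideal of $B\lrbracket{T}$ contains $T$, and the quotient by $T$ of the localization is $B_{\frm}$. That argument is correct.
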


\begin{proof}
By choosing a suitable basis $\{ \gamma_1,\cdots,\gamma_d \}$ of $G \simeq \ZZ_p^{d}$ for $d \geq 1$, we can write 
\[
R\lrbracket{G}=\Lambda = R \lrbracket{T_1, \ldots, T_d}, \, R\lrbracket{G/H}=\Lambda/(T_1,\cdots,T_{d-1})
\]
and 
\[
M_H=M/(T_1,\cdots,T_{d-1})M.
\]
Then the theorem follows if we inductively apply Theorem \ref{catenary} by taking $\Lambda/(T_1,\cdots,T_i)$ to be $\Lambda$ and $T_{i+1}$ to be $T$ in the setting of Theorem \ref{catenary}.

To apply Theorem \ref{catenary}, we need to check:
\begin{enumerate}
    \item The ring $\Lambda/(T_1,\cdots,T_i)$ is Cohen--Macaulay and 
    \item in the ring $\Lambda/(T_1,\cdots,T_i)$, the element $T_{i+1}$ is regular and lies in its Jacobson radical.
\end{enumerate}

For (1), notice that $\Lambda/(T_1,\cdots,T_i)\cong R\lrbracket{T_{i+1},\cdots, T_d}$. Since $R$ is Cohen--Macaulay, by Theorem 23.5 and the remark following it in \cite{Matsumura_1987}, the ring $R\lrbracket{T_{i+1},\cdots, T_d}$ is Cohen--Macaulay.
For (2), note that in the ring $\Lambda/(T_1,\cdots,T_i)$, the element $T_{i+1}$ is a nonzero divisor and hence, by definition, regular. Since $T_{i+1}$ is topologically nilpotent, $(1-xT_{i+1})$ is a unit for any $x\in \Lambda/(T_1,\cdots,T_i)$. It follows from \cite[Proposition 1.9]{atiyah1969introduction} that $T_{i+1}$ is in the Jacobson radical. This proves the theorem.
\end{proof}

\subsection{Main theorems on Conjectures \ref{myconj:Bminus} and \ref{myconj:Bplus}}

We start by recording the following result on the finite generation of dual fine Selmer groups $Y_{S}(A/\tilF)$ over $R\lrbracket{\Gal(\tilF/F)}$.

\begin{proposition} \label{prop:fgselmer}
Let $R$ be a commutative complete Noetherian local ring with residual characteristic $p$. Let $\tilF/F$ be a $p$-adic Lie extension of $F$ contained in $F_{S}$. Then
\begin{enumerate}[label = \rm (\arabic*)]
    \item The ring $R\lrbracket{\Gal(\tilF/F)}$ is Noetherian,
    \item $\rmH^{2}_{S}(\tilF/F, T^{\ast})$ is finitely generated over $R\lrbracket{\Gal(\tilF/F)}$, 
    \item $Y_{S}(A/\tilF)$ is finitely generated over $R\lrbracket{\Gal(\tilF/F)}$.
\end{enumerate}
\end{proposition}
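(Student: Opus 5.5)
We prove the three assertions in order, each resting on the previous one.

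For (1), set $G := \Gal(\tilF/F)$, a compact $p$-adic Lie group. By Lazard's theorem $G$ contains an open normal uniform pro-$p$ subgroup $G_0$. Since $R$ is a complete Noetherian local ring with finite residue field of characteristic $p$, the completed group ring $R\lrbracket{G_0}$ is Noetherian. This holds because $R\lrbracket{G_0}$ carries the filtration induced by the maximal ideal of $R$ together with the augmentation ideal of $G_0$, and its associated graded ring is a quotient of a polynomial ring over the Noetherian graded ring $\mathrm{gr}(R)$. Compactness and completeness then lift Noetherianity from the graded ring to $R\lrbracket{G_0}$, as in Lazard's argument for $\ZZ_p\lrbracket{G_0}$ (this is also proved by Schneider--Venjakob and Lim in this generality). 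Finally, $R\lrbracket{G}$ is a free module of finite rank $[G:G_0]$ over $R\lrbracket{G_0}$, so it is left and right Noetherian.

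For (2), we use the Nakayama lemma for compact modules over the local compact ring $R\lrbracket{G_0}$, whose maximal ideal $\frm_0$ is generated by $\frm_R$ and the augmentation ideal of $G_0$. The module $X := \rmH^2_S(\tilF/F, T^{\ast}) = \varprojlim_L \rmH^2(\Gal_{L,S}, T^{\ast})$ is compact, and it suffices to show that $X/\frm_0 X$ is finite. Working with $F_0 := \tilF^{G_0}$, the Hochschild--Serre spectral sequence for $\Gal_{\tilF,S} \subset \Gal_{F_0,S}$ is available in Iwasawa cohomology via Shapiro's lemma: $X \simeq \rmH^2(\Gal_{F_0,S}, R\lrbracket{G_0} \hat{\otimes}_R T^{\ast})$, with the usual inverse twist. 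Since $\Gal_{F_0,S}$ has $p$-cohomological dimension $2$, cohomology in the top degree is right exact, which gives
\[
X_{G_0} \simeq \rmH^2(\Gal_{F_0,S}, T^{\ast}),\qquad X/\frm_0X \simeq \rmH^2(\Gal_{F_0,S}, T^{\ast}/\frm_R T^{\ast}).
\]
The last group is finite: $T^{\ast}/\frm_R T^{\ast}$ is a finite module, and Galois cohomology of $\Gal_{F_0,S}$ with finite coefficients is finite by Tate's finiteness theorem. Hence $X$ is finitely generated over $R\lrbracket{G_0}$, and therefore over $R\lrbracket{G}$. The delicate points in this step are the right exactness in degree $2$ (this is where $p$ odd is used, so that real places cause no trouble) and checking that $T^{\ast}$ is a finitely generated compact $R$-module, which holds since $T$ is finitely generated and $R$ is Noetherian.

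For (3), we use the Poitou--Tate exact sequence \eqref{eq:poitoutate}, which exhibits $Y_S(A/\tilF)$ as a compact submodule of $\rmH^2_S(\tilF/F,T^{\ast})$. The latter is finitely generated over the Noetherian ring $R\lrbracket{G}$ by (2) and (1), so every submodule of it is finitely generated. We need only check that the maps in \eqref{eq:poitoutate} are $R\lrbracket{G}$-equivariant and continuous, which is standard since they are inverse limits of equivariant maps at finite levels. I expect the main obstacle to be (2), namely the identification of the $G_0$-coinvariants of the inverse limit via Shapiro's lemma and cohomological dimension $2$. Step (1) is quoted from the literature, and step (3) is formal.
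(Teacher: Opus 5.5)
Your proposal is correct and has the same structure as the paper's proof. The paper cites \cite[Proposition 3.0.1]{lim2013} for (1) and \cite[Proposition 4.1.3]{lim2013} for (2), then obtains (3) from the Poitou--Tate sequence \eqref{eq:poitoutate} exactly as you do; your argument via Shapiro's lemma, $\mathrm{cd}_p \le 2$, Tate's finiteness theorem and the topological Nakayama lemma is a valid self-contained substitute for the citation in (2).

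The one slip is in your sketch of (1). For non-abelian uniform $G_0$ the $\frm_0$-adic graded ring need not be commutative: already for $R=\ZZ_p$ and $\gamma a\gamma^{-1}=a^{1+p}$ one has $(\gamma-1)(a-1)-(a-1)(\gamma-1)\equiv p(a-1)\not\equiv 0 \bmod \frm_0^3$. So it is not a quotient of a commutative polynomial ring over $\mathrm{gr}(R)$. Noetherianity still follows, e.g.\ by filtering $\frm_R$-adically and using that $(R/\frm_R)\lrbracket{G_0}$ is Noetherian.
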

\begin{proof}
    Item (1) follows from \cite[Proposition 3.0.1]{lim2013}, and item (2) follows from \cite[Proposition 4.1.3]{lim2013}. Item (3) is then a consequence of (1), (2), and the Poitou--Tate exact sequence \eqref{eq:poitoutate}.
\end{proof}

From now on, we assume that $R$ is a commutative complete Noetherian local ring with residual characteristic $p$. This is necessary to apply Proposition \ref{prop:fgselmer} to justify the application of Theorem \ref{thm:descentpseudonull} in the proof of the following theorem.

\begin{theorem}[{Conjecture \ref{myconj:Bminus} and \ref{myconj:Bplus}}] \label{thm:maintheorem}
Let $F$ be a number field. Suppose $F_{\infty}/F$ is a cyclotomic-like $\ZZ_p$-extension, and let $\tilF/F$ be a $\ZZ_p^{d}$-extension with $d>1$ such that $\tilF$ contains $F_{\infty}$. If conditions \dref{eq:gf} and \dref{eq:lf} hold for $F_{\infty}/F$, then we have the following results.
\begin{enumerate}[label = \rm (\arabic*)]
    \item \emph{(Conjecture \ref{myconj:Bminus})} If $Y(A/F_{\infty})$ is a finitely generated $R$-module, then $Y(A/\tilF)$ is a finitely generated $R\lrbracket{\Gal(\tilF/F_{\infty})}$-module.
    \item \emph{(Conjecture \ref{myconj:Bplus})} Assume further that $R$ is a Cohen--Macaulay ring. If $Y(A/F_{\infty})$ is a pseudonull $R\lrbracket{\Gal(F_{\infty}/F)}$-module, then $Y(A/\tilF)$ is a pseudonull  $R\lrbracket{\Gal(\tilF/F)}$-module.
\end{enumerate}
\end{theorem}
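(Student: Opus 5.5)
The plan is to combine the vertical control theorem (Theorem \ref{thm:verticalcontrol}) with Nakayama's lemma for part (1), and with the coinvariant criterion (Theorem \ref{thm:descentpseudonull}) for part (2). Set $N := \Gal(\tilF/F_{\infty})\simeq \ZZ_p^{d-1}$ and $\Gamma := \Gal(F_{\infty}/F)\simeq \ZZ_p$. By Lemma \ref{inv and coinv}, the Pontryagin dual of $\Sel(A/\tilF)^{N}$ is $Y(A/\tilF)_{N}$. The dual map
\[
\res_{\tilF/F_{\infty}}^{\vee}: Y(A/\tilF)_{N} \rightarrow Y(A/F_{\infty})
\]
has kernel and cokernel that are finitely generated torsion $R$-modules by Theorem \ref{thm:verticalcontrol}, since \dref{eq:gf} and \dref{eq:lf} are assumed.

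For part (1), suppose $Y(A/F_{\infty})$ is finitely generated over $R$. The kernel of $\res^{\vee}$ is finitely generated over $R$, so $Y(A/\tilF)_{N}$ is an extension of a submodule of $Y(A/F_{\infty})$ by a finitely generated $R$-module. Since $R$ is Noetherian, $Y(A/\tilF)_{N}$ is then finitely generated over $R$.

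Next, $Y(A/\tilF)$ is a compact module over $R\lrbracket{N}$, because it is the Pontryagin dual of a discrete torsion module. The ring $R\lrbracket{N}$ is a complete local ring with augmentation-type ideal $I_N=(T_1,\dots,T_{d-1})$. Since $Y(A/\tilF)/I_N Y(A/\tilF)$ is finitely generated over $R=R\lrbracket{N}/I_N$, the topological Nakayama lemma for compact $R\lrbracket{N}$-modules gives that $Y(A/\tilF)$ is finitely generated over $R\lrbracket{N}$. This works because $I_N$ together with the maximal ideal of $R$ gives the maximal ideal of $R\lrbracket{N}$, and compactness lets the $I_N$-adic version apply.

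The one point needing care is the form of Nakayama used. I would reduce modulo $\frm_R$: the quotient $Y/(\frm_R, I_N)Y$ is finite, so $Y$ is finitely generated over the local ring $R\lrbracket{N}$ by the compact Nakayama lemma, as in Balister--Howson or \cite{lim2013}.

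For part (2), suppose $Y(A/F_{\infty})$ is pseudonull over $R\lrbracket{\Gamma}$. Proposition \ref{prop:fgselmer} shows that $Y(A/\tilF)$ is finitely generated over $R\lrbracket{G}$ with $G:=\Gal(\tilF/F)\simeq\ZZ_p^{d}$. Hence $M_N := Y(A/\tilF)_{N}$ is finitely generated over $R\lrbracket{\Gamma}$, and $\res^{\vee}$ is a map of $R\lrbracket{\Gamma}$-modules.

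The kernel and cokernel of $\res^{\vee}$ are finitely generated torsion $R$-modules, so by Proposition \ref{prop:torsionpseudonull} (with $H$ trivial and $G=\Gamma$) they are pseudonull over $R\lrbracket{\Gamma}$. The image of $\res^{\vee}$ is a submodule of the pseudonull module $Y(A/F_{\infty})$, and submodules of pseudonull modules are pseudonull because localization is exact. Therefore $M_N$, an extension of a pseudonull module by a pseudonull module, is pseudonull over $R\lrbracket{\Gamma}=R\lrbracket{G/N}$.

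Applying Theorem \ref{thm:descentpseudonull} with $H=N$ uses that $R$ is Cohen--Macaulay, and it yields that $Y(A/\tilF)$ is pseudonull over $R\lrbracket{G}$. I expect the main (mild) obstacle to be bookkeeping rather than any single step: verifying finite generation over the right Iwasawa algebras so that both the pseudonullity notions and Theorem \ref{thm:descentpseudonull} apply.
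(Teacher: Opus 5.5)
Your proposal is correct and matches the paper's proof: the vertical control theorem (Theorem \ref{thm:verticalcontrol}) gives the map $Y(A/\tilF)_{N} \to Y(A/F_{\infty})$, then topological Nakayama gives (1) and Theorem \ref{thm:descentpseudonull} gives (2). You also spell out two steps the paper leaves implicit. First, the coinvariants are pseudonull over $R\lrbracket{\Gal(F_{\infty}/F)}$, because the kernel is pseudonull by Proposition \ref{prop:torsionpseudonull} and the image is a submodule of a pseudonull module. Second, $Y(A/\tilF)$ is finitely generated over $R\lrbracket{\Gal(\tilF/F)}$ by Proposition \ref{prop:fgselmer}.
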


\begin{proof}
Under the conditions \dref{eq:gf} and \dref{eq:lf}, we can apply the vertical control theorem (Theorem \ref{thm:verticalcontrol}) to see that there is a pseudo-isomorphism of $R\lrbracket{\Gal(F_{\infty}/F)}$-modules
\[
\res_{\tilF/F_{\infty}}^{\vee}: Y(A/\tilF)_{\Gal(\tilF/F_{\infty})} \rightarrow Y(A/F_{\infty})
\]
with kernel and cokernel being finitely generated torsion $R$-modules.

For (1), under the assumption that $Y(A/F_{\infty})$ is a finitely generated $R$-module, we see that the coinvariant module $Y(A/\tilF)_{\Gal(\tilF/F_{\infty})}$ is a finitely generated $R$-module. It follows from the topological Nakayama lemma (see \cite[Corollary on page 226]{balister1997}) that $Y(A/\tilF)$ is a finitely generated $R\lrbracket{\Gal(\tilF/F_{\infty})}$-module.

For (2), since $Y(A/F_{\infty})$ is a pseudonull $R\lrbracket{\Gal(F_{\infty}/F)}$-module, we see that $Y(A/\tilF)_{\Gal(\tilF/F_{\infty})}$ is a pseudonull $R\lrbracket{\Gal(F_{\infty}/F)}$-module. We apply Theorem \ref{thm:descentpseudonull} to see that $Y(A/\tilF)$ is a pseudonull $R\lrbracket{\Gal(\tilF/F)}$-module.
\end{proof}

\section{Applications} \label{sec:applications}

In this section, we give several applications of our main theorem, namely Theorem \ref{thm:maintheorem}. In each example, we first verify Greenberg's finiteness condition and then apply Theorem \ref{thm:maintheorem}. Before doing so, however, we give a counterexample showing that the vertical control theorem may fail if Greenberg's finiteness condition \dref{eq:rf} is violated.

We retain the notation from the previous sections. Let $F$ be a number field, let $F_{\infty}/F$ be a cyclotomic-like $\ZZ_p$-extension, and let $\tilF/F$ be a $\ZZ_p^{d}$-extension containing $F_{\infty}/F$.

\subsection{Counterexample: $A = \QQ_p/\ZZ_p$} \label{sec:counter} We first consider the case $A = \QQ_p/\ZZ_p$. In this case, it is easy to see that 
\[
A(F_{\cyc}) = \rmH^{0}(F_{\cyc}, \QQ_p/\ZZ_p) = \QQ_p/\ZZ_p,
\]
so Greenberg's finiteness condition \dref{eq:gf} fails. In this case, we do not expect the vertical control theorem (i.e. Theorem \ref{thm:verticalcontrol}) to hold.

Let $F$ be an imaginary quadratic field over $\QQ$, and let $\tilF$ be the compositum of all $\ZZ_p$-extensions of $F$. Then it is known that $\Gal(\tilF/F) \simeq \ZZ_p^{2}$, with $\tilF/F$ being the compositum of the cyclotomic $\ZZ_p$-extension $F_{\cyc}/F$ and the anticyclotomic $\ZZ_p$-extension $F_{\ac}/F$ of $F$. In this setup, we have the following proposition.

\begin{proposition} \label{prop:counter}
    If $p$ splits in $F$ and $p$ does not divide the class number of $F$, then there is a short exact sequence of $\ZZ_p\lrbracket{\Gal(F_{\cyc}/F)}$-modules
    \[
    0 \rightarrow \QQ_p/\ZZ_p \rightarrow \Sel(F_{\cyc}, \QQ_p/\ZZ_p) \xrightarrow{\res_{\tilF/F_{\cyc}}} \Sel(\tilF, \QQ_p/\ZZ_p)^{\Gal(\tilF/F_{\cyc})} \rightarrow 0.
    \]
    In particular, the Pontryagin dual of the restriction map $\res_{\tilF/F_{\cyc}}$ has zero kernel and non-pseudonull cokernel.
\end{proposition}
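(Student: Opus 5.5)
We compare the fine Selmer groups over $F_{\cyc}$ and $\tilF$ with unramified-and-split Iwasawa modules, and then compute both sides. For $A=\QQ_p/\ZZ_p$ with trivial action, $\rmH^1(\Gal_{\calL,S},\QQ_p/\ZZ_p)=\Hom_{\cts}(\Gal_{\calL,S},\QQ_p/\ZZ_p)$. The local condition at each $v\in S$ forces the homomorphism to vanish on every decomposition group above $S$. Since every place of $S$ is finitely decomposed in $F_{\cyc}$, Proposition \ref{prop:notdependonS} lets us discard places outside $p$ in the limit. Hence $\Sel(\calL,\QQ_p/\ZZ_p)=\Hom(X'(\calL),\QQ_p/\ZZ_p)$, where $X'(\calL)=\Gal(K(\calL)/\calL)$ is the Galois group of the maximal unramified abelian pro-$p$ extension of $\calL$ in which all places above $p$ split completely. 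So the statement becomes a statement about the natural map $X'(\tilF)_{\Gal(\tilF/F_{\cyc})}\to X'(F_{\cyc})$.

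\textbf{Step 1: the kernel of $\res_{\tilF/F_{\cyc}}$.} By inflation--restriction, the kernel of $\rmH^1(F_{\cyc},\QQ_p/\ZZ_p)\to\rmH^1(\tilF,\QQ_p/\ZZ_p)^{N}$, with $N=\Gal(\tilF/F_{\cyc})\simeq\ZZ_p$, is $\Hom(N,\QQ_p/\ZZ_p)\simeq\QQ_p/\ZZ_p$. It remains to check that these characters lie in $\Sel(F_{\cyc})$, i.e. that $\tilF/F_{\cyc}$ is unramified with the places above $p$ split completely. This is where the hypotheses enter. Write $p=\frp\bar\frp$ and use $p\nmid h_F$. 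Then the $\ZZ_p$-extension of $F$ unramified outside $\frp$ is totally ramified at $\frp$ and unramified at $\bar\frp$, and likewise with the roles reversed. These two $\ZZ_p$-extensions $F_\frp$ and $F_{\bar\frp}$ generate $\tilF$. The inertia groups at $\frp$ and at $\bar\frp$ in $\Gal(\tilF/F)$ are each $\simeq\ZZ_p$, and $F_{\cyc}$ is ramified at both. I expect that $\tilF/F_{\cyc}$ is then unramified. The harder point is that the places above $p$ actually split completely in $\tilF/F_{\cyc}$; this requires the decomposition groups at $\frp$ and $\bar\frp$ in $\Gal(\tilF/F)$ to be one-dimensional. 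Local class field theory gives this: $F_\frp=\QQ_p$, and $\Gal(\tilF/F)$ is the quotient of the pro-$p$ completion of the $p$-adic ideles by the global units. The local units at $\frp$ surject onto a copy of $\ZZ_p$, and the Frobenius at $\frp$ lies in it up to torsion because $p\nmid h_F$. I would check this carefully using the explicit description $\Gal(\tilF/F)\simeq(\ZZ_p^\times\times\ZZ_p^\times)^{(p)}/\overline{\calO_F^\times}$, in which $\calO_F^\times$ is finite.

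\textbf{Step 2: surjectivity.} The cokernel of $\alpha$ injects into $\rmH^2(N,\QQ_p/\ZZ_p)=0$, since $N\simeq\ZZ_p$. So every $N$-invariant class in $\Sel(\tilF)$ lifts to $\rmH^1(F_{\cyc},\QQ_p/\ZZ_p)$. We must show the lift satisfies the local conditions. The local restriction $\beta_v$ at a place $v\mid p$ has kernel $\rmH^1(\tilF_w/F_{\cyc,v},\QQ_p/\ZZ_p)$, which vanishes because $\tilF_w=F_{\cyc,v}$ by Step 1. So a lift that is locally trivial over $\tilF$ is locally trivial over $F_{\cyc}$. At places not above $p$, the argument of case (3.b) applies. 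The snake lemma on diagram \eqref{tikz:ctrlKtil} therefore gives exactness on the right.

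\textbf{Conclusion.} Dually, $\res^\vee$ has zero kernel and cokernel $(\QQ_p/\ZZ_p)^\vee\simeq\ZZ_p$, with trivial $\Gal(F_{\cyc}/F)$-action. This is $\Lambda/(T)$, which is not pseudonull over $\ZZ_p\lrbracket{T}$. The main obstacle is the local splitting computation in Step 1, namely showing that the decomposition groups in $\Gal(\tilF/F)$ at the places above $p$ have rank one.
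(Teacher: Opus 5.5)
\textbf{The step you flag as the main obstacle is false, not merely hard to check.} The decomposition groups at $\frp,\bar\frp$ in $\Gal(\tilF/F)$ have rank two, so the argument cannot be completed. Your justification breaks at the claim that the Frobenius at $\frp$ lies, up to torsion, in the image of the local units at $\frp$.

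Write $\frp^{h}=(\alpha)$ with $h=h_F$ prime to $p$, and let $I_{\frp},I_{\bar\frp}\subseteq\Gal(\tilF/F)$ be the inertia groups. Three facts combine here:
\begin{enumerate}
\item the reciprocity map kills the principal idele $\alpha$;
\item $\alpha$ is a unit at every finite place other than $\frp$;
\item $\tilF/F$ is unramified outside $p$.
\end{enumerate}
Hence the $h$-th power of a Frobenius at $\frp$ agrees, modulo $I_{\frp}$, with the reciprocity image of $\alpha^{-1}$ placed in the $\bar\frp$-component. That element lies in $I_{\bar\frp}$, not in $I_{\frp}$. In your description $\Gal(\tilF/F)\simeq U^{(1)}_{\frp}\times U^{(1)}_{\bar\frp}$ it is $(1,\langle\alpha\rangle_{\bar\frp}^{-1})$. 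This is nontrivial, because $\langle\alpha\rangle_{\bar\frp}=1$ would force $\alpha^{p-1}=1$.

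So $D_{\frp}=I_{\frp}\cdot\overline{\langle\mathrm{Frob}_{\frp}\rangle}$ is open in $\Gal(\tilF/F)$, and $\frp$ is finitely decomposed in $\tilF$. The completion $\tilF_w$ is the compositum of $\QQ_{p,\cyc}$ with the unramified $\ZZ_p$-extension of $\QQ_p$. You are right that $\tilF/F_{\cyc}$ is unramified. However, the decomposition groups $\Gamma_{\frp},\Gamma_{\bar\frp}\subseteq\Gamma=\Gal(\tilF/F_{\cyc})$ of the places of $F_{\cyc}$ above $\frp,\bar\frp$ are open, and
\[
\ker(\res_{\tilF/F_{\cyc}})\simeq\Hom\bigl(\Gamma/\Gamma_{\frp}\Gamma_{\bar\frp},\,\QQ_p/\ZZ_p\bigr)
\]
is finite, not $\QQ_p/\ZZ_p$. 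Your Step 2 rests on the same false equality $\tilF_w=F_{\cyc,v}$.

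Concretely, take the paper's example $F=\QQ(\sqrt{-1})$, $p=5$, $\alpha=2+\sqrt{-1}$. In the $\bar\frp$-adic embedding $\sqrt{-1}\equiv 7\pmod{25}$, so $\langle\alpha\rangle=-\alpha\equiv 16\pmod{25}$. This is a topological generator of $1+5\ZZ_5$. Hence $D_{\frp}=\Gal(\tilF/F)$ and $\ker(\res_{\tilF/F_{\cyc}})=0$.

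\textbf{Relation to the paper's proof.} Your route is the paper's: the snake lemma on the control diagram, together with the vanishing of $\Gamma_v$ for $v\mid p$. The paper obtains $\Gamma_v=0$ by asserting $\tilF_{\frp}=\QQ_{p,\cyc}$, on the grounds that $\QQ_{p,\cyc}$ is the unique totally ramified $\ZZ_p$-extension of $\QQ_p$. That is false: only the unramified $\ZZ_p$-extension is unique. By the computation above, $F_{\ac,\frp}$ is a totally ramified $\ZZ_p$-extension of $\QQ_p$ different from $\QQ_{p,\cyc}$.

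So the paper's proof has the same gap, and the statement fails whenever its hypotheses hold. Indeed, $\coker(\res_{\tilF/F_{\cyc}}^{\vee})=\ker(\res_{\tilF/F_{\cyc}})^{\vee}$ is finite, hence pseudonull.

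One can go further. The unramified Iwasawa module of $F_{\cyc}$ has trivial plus part, since $\QQ_{\cyc}$ has trivial $p$-class groups. Complex conjugation therefore forces $\mathrm{Frob}_{\bar\frp}=\mathrm{Frob}_{\frp}^{-1}$ there. The snake-lemma sequence then shows that $\coker(\res_{\tilF/F_{\cyc}})$ is finite as well. In this setting $\res_{\tilF/F_{\cyc}}^{\vee}$ is a pseudo-isomorphism, not a counterexample to the control theorem.
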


For example, if $F = \QQ(\sqrt{-1})$ and $p = 5$, then $p$ splits in $F$ and the class number of $F$ is $1$.

We note that the Pontryagin dual of the fine Selmer group $ \Sel(F_{\cyc}, \QQ_p/\ZZ_p)$ is, in fact, the Iwasawa module $\varprojlim_{F_n}\Cl_S(F_n)$, where $\Cl_S(F_n)$ is the $S$-class group of $F_n$ and $S$ is the set of primes above $p$.
\begin{proof}
    Returning to the proof of Theorem \ref{thm:verticalcontrol}, we see that there is an exact sequence involving $\res := \res_{\tilF/F_{\cyc}}$:
    \begin{align} \label{eq:counter}
        0 & \rightarrow \ker(\res) \rightarrow \rmH^{1}(\Gamma, \QQ_p/\ZZ_p) \rightarrow \prod_{v \in S_{F_{\cyc}}, v \mid p} \rmH^{1}(\Gamma_v, \QQ_p/\ZZ_p) \cap \im(\gamma_{F_{\cyc}}) \notag \\ & \rightarrow \coker(\res) \rightarrow 0, 
    \end{align}
    where $\Gamma := \Gal(\tilF/F_{\cyc})$ and $\Gamma_{v} := \Gal(\tilF_{v}/F_{\cyc, v})$, with $\gamma_{\cyc}$ the global-to-local map
    \[
    \gamma_{\cyc}: \rmH^{1}(F_{\cyc}, \QQ_p/\ZZ_p) \rightarrow \prod_{v \in S_{F_{\cyc}}} \rmH^{1}(F_{\cyc, v}, \QQ_p/\ZZ_p).
    \]
    Under the assumption that $p$ does not divide the class number of $F$, it follows from \cite[Remark 3.1]{kundu2024cotorsion} that primes above $p$ in $F$ are totally ramified in $F_{\ac}/F$. Since $p$ splits in $F$ into $p \calO_{F} = \frp \overline{\frp}$, and there is a unique totally ramified $\ZZ_p$-extension of $\QQ_p$, namely the field $\QQ_{p,\cyc}$ inside $\QQ_p(\mu_{p^{\infty}})$, we see that
    \[
    F_{\frp} = \QQ_p, \quad F_{\overline{\frp}} = \QQ_p, \qquad \tilF_{\frp} = \QQ_{p,\cyc}, \quad \tilF_{\overline{\frp}} = \QQ_{p,\cyc}.
    \]
    Therefore, for any place $v$ of $F_{\cyc}$ lying over $p$, we have $\Gamma_{v} = 0$, and hence it follows from \eqref{eq:counter} that
    \[
    \ker(\res) = \rmH^{1}(\Gamma, \QQ_p/\ZZ_p) = \QQ_p/\ZZ_p, \text{ and } \coker(\res) = 0.
    \]
    The proposition is therefore proved.
\end{proof}

However, this does \emph{not} necessarily imply that the dual Selmer group $Y(\tilF, \QQ_p/\ZZ_p)$ is not a pseudonull $\ZZ_p\lrbracket{\Gal(\tilF/F)}$-module, since the converse of Theorem \ref{thm:descentpseudonull} may not hold. For example, consider the two-variable Iwasawa algebra $\Lambda = \ZZ_p\lrbracket{T_1, T_2}$ and the $\Lambda$-module $M := \Lambda/(p, T_2)$. Then $M$ is a pseudonull $\Lambda$-module since $\Ht_{\Lambda}((p, T_2)) = 2$. However, the coinvariant module $M/T_{2}M \simeq \ZZ_p\lrbracket{T_1}/p$ is not pseudonull over $\ZZ_p\lrbracket{T_1}$.

If we replace the cyclotomic $\ZZ_p$-extension $F_{\cyc}/F$ with the $\ZZ_p$-extension $F^{(1)}/F$ in Example \ref{eg:fieldK1}, we have not found any counterexample for which $\res_{\tilF/F^{(1)}}$ is not a pseudonull isomorphism. 

\subsection{Example: $A = \QQ_p/\ZZ_p(1)$} \label{sec:exampleclassgroup}

Section \ref{sec:counter} shows that, when $A = \QQ_p/\ZZ_p$, Greenberg's finiteness condition \dref{eq:lf} fails and the vertical control theorem does not hold. However, after applying a Tate twist to $A$, the situation is different.

In this example, we keep the following assumption:
\begin{condition}
    \item[(Spl)]  $p$ splits completely in $F$. \label{item:spl}
\end{condition}

We consider the vertical control theorem for $A = \QQ_p/\ZZ_p(1) = \mu_{p^{\infty}}$ when $F_{\infty}$ is the cyclotomic $\ZZ_p$-extension of $F$. Let $L$ be a finite extension of $F_{\cyc}$. Since $p$ splits completely in $F$, we have that $F$ does not contain $\mu_p(\barF)$. Therefore, $[F(\mu_{p^{\infty}}):F_{\cyc}] = p-1$. For any finite extension $L/F_{\cyc}$ such that $[L:F_{\cyc}] = p^{t}$ for $t \geq 0$, it follows that $A(L) = \mu_{p^{\infty}}(L) = 0$, and hence \dref{eq:gf} holds. For the local conditions, consider any place $v$ of $F$ above $p$. Then $v$ is totally ramified in $F_{\cyc}$. Since $p$ splits completely in $F$, we have $F_{v} = \QQ_p$. Recall that by local class field theory for $\QQ_p$, there are exactly two linearly disjoint $\ZZ_p$-extensions of $\QQ_p$: the unramified $\ZZ_p$-extension $\QQ_{p}^{\circ}$ and the cyclotomic $\ZZ_p$-extension $\QQ_{p, \cyc}$, which is a subfield of $\QQ_p(\mu_{p^{\infty}})$. For any wild finite extension $L_{w}/F_{\cyc,v}$, we have $\mu_{p^{\infty}}(L_{w}) = \{1\}$ since $[\QQ_p(\mu_p):\QQ_p] = p-1$. Therefore, \dref{eq:lf} holds for $v$ as well, and the finiteness conditions \dref{eq:gf} and \dref{eq:lf} are verified for $A = \QQ_p/\ZZ_p(1)$.

Therefore, the finiteness conditions \dref{eq:gf} and \dref{eq:lf} are verified, as desired. The following theorem is therefore a consequence of Theorems \ref{thm:verticalcontrol} and \ref{thm:maintheorem}.

\begin{theorem}[Main theorems for Tate's motive]
\label{thm:apptate}
With the notation as above, assume that \dref{item:spl} holds. Then we have the following results.
\begin{enumerate}[label = \rm (\arabic*)]
    \item \emph{(Vertical control theorem)} The Pontryagin dual of the restriction map
    \[
    \res_{\tilF/F_{\cyc}}: \Sel(\mu_{p^{\infty}}/F_{\cyc}) \rightarrow \Sel(\mu_{p^{\infty}}/\tilF)^{\Gal(\tilF/F_{\cyc})}
    \]
    is a pseudo-isomorphism of $\ZZ_p\lrbracket{\Gal(F_{\cyc}/F)}$-modules with kernel and cokernel being finitely generated torsion $\ZZ_p$-modules.
    \item \emph{(Conjecture \ref{myconj:Bminus})} If $Y(\mu_{p^{\infty}}/F_{\cyc})$ is a finitely generated $\ZZ_p$-module, then $Y(\mu_{p^{\infty}}/\tilF)$ is a finitely generated $\ZZ_p\lrbracket{\Gal(\tilF/F_{\cyc})}$-module.
    \item \emph{(Conjecture \ref{myconj:Bplus})} If $Y(\mu_{p^{\infty}}/F_{\cyc})$ is a pseudonull $\ZZ_p\lrbracket{\Gal(F_{\cyc}/F)}$-module, then $Y(\mu_{p^{\infty}}/\tilF)$ is a pseudonull  $\ZZ_p\lrbracket{\Gal(\tilF/F)}$-module.
\end{enumerate}
\end{theorem}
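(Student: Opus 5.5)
The plan is to deduce Theorem \ref{thm:apptate} directly from Theorems \ref{thm:verticalcontrol} and \ref{thm:maintheorem}, specialised to $R = \ZZ_p$, $T = \ZZ_p(1)$, $A = \mu_{p^{\infty}}$ and $F_{\infty} = F_{\cyc}$. Two things must be checked. First, the standing hypotheses: $F_{\cyc}/F$ is cyclotomic-like, as noted in the example following the definition of cyclotomic-like extensions. The ring $\ZZ_p$ is a complete Noetherian local ring with residue characteristic $p$. It is also regular, hence Cohen--Macaulay, so part (2) of Theorem \ref{thm:maintheorem} applies. Second, the finiteness conditions \dref{eq:gf} and \dref{eq:lf}; once these hold, the three items follow formally.

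For \dref{eq:gf}, I use \dref{item:spl}: it forces $\QQ_p \hookrightarrow F_v$ to be an isomorphism at each $v \mid p$, so $\mu_p \not\subset F$. Since $[F(\mu_p):F]$ divides $p-1$, is greater than $1$, and $F(\mu_p)\cap F_{\cyc} = F$, the field $\mu_p$ is not contained in any finite $p$-power-degree extension $L$ of $F_{\cyc}$. Here I use that every finite subextension $L/F_{\cyc}$ of $\tilF$ has $p$-power degree, because $\Gal(\tilF/F_{\cyc})$ is pro-$p$. It follows that $\mu_{p^{\infty}}(L) = 0$, which is finite, hence cotorsion over $\ZZ_p$.

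For \dref{eq:lf}, I run the same argument locally. At $v \mid p$ we have $F_v = \QQ_p$, and every finite subextension $L_w/F_{\cyc,v}$ inside $\tilF_w$ has $p$-power degree. Since $[\QQ_p(\mu_p):\QQ_p] = p-1$ is prime to $p$ and greater than $1$, we get $\mu_p \not\subset L_w$, so $A(L_w) = 0$.

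The only point needing care is that every relevant extension has $p$-power degree over a field with $[\,\cdot\,(\mu_p):\cdot\,]$ prime to $p$. This is the main, though mild, obstacle: degrees must be tracked in the compositum. With the conditions verified, item (1) is Theorem \ref{thm:verticalcontrol}, and items (2) and (3) are parts (1) and (2) of Theorem \ref{thm:maintheorem}.
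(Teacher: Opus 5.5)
Your proposal is correct and follows essentially the paper's own argument. Like the paper, you verify \dref{eq:gf} and \dref{eq:lf} for $A=\mu_{p^{\infty}}$ by using \dref{item:spl} to get $F_v=\QQ_p$, and then observe that adjoining $\mu_p$ gives an extension of degree $p-1$, which is prime to $p$ and greater than $1$. Hence $\mu_{p^{\infty}}(L)=0$ for every $p$-power-degree extension $L$ of $F_{\cyc}$ (resp.\ of $F_{\cyc,v}$), and Theorems \ref{thm:verticalcontrol} and \ref{thm:maintheorem} with $R=\ZZ_p$ then give all three items. Your extra remarks, namely $F(\mu_p)\cap F_{\cyc}=F$ and that $\ZZ_p$ is a complete regular (hence Cohen--Macaulay) local ring, just make explicit points the paper leaves implicit.
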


\subsection{Example: Elliptic curves} \label{sec:exampleellipticcurves}
Let $E$ be an elliptic curve over a number field $F$ and let $T := T_{p}(E)$ be its Tate module, carrying an action of $\Gal_{F}$. In this case, $A$ is simply $E[p^{\infty}]$. We shall write $\Sel_{S}(E/\calL) := \Sel_{S}(E[p^{\infty}]/\calL)$ for simplicity. Let $v$ be a place of $F$ above $p$. It follows from the semistable reduction theorem (see \cite[Proposition 5.4]{silverman2009arithmetic}) that $E$ has either potentially good reduction or potentially multiplicative reduction at $v$. We consider these two cases separately in the following subsections.

\subsubsection{Potential good reduction case}
When $E$ has potentially good reduction at the places of $F$ above $p$, condition \dref{eq:lf} holds for the cyclotomic $\ZZ_p$-extension $F_{\infty}^{\cyc}/F$ of $F$ by Hideo Imai's classical result \cite{imai1975}. We record his result in the following proposition.
\begin{proposition} \label{prop:imai}
    Let $E$ be an elliptic curve over a number field $F$ with potentially good reduction at a place $v$ of $F$ above $p$. Then, for any number field $F$, any place $v$ of $F$ above $p$, and any finite extension $L/F_{\cyc, v}^{\infty}$, the group $E(L)[p^{\infty}]$ is finite.
\end{proposition}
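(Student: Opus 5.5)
Write $K := F_v$ and $K_\infty := F_{\cyc,v}$, and let $L/K_\infty$ be finite. I would reduce to the case of good reduction over a finite extension of the base and then use the structure of the formal group together with a Hodge--Tate weight argument to rule out an infinite $p$-primary torsion subgroup. Since $E$ has potentially good reduction at $v$, there is a finite extension $K'/K$ over which $E$ acquires good reduction. Replacing $L$ by $LK'$ only enlarges $E(L)[p^{\infty}]$, and $LK'$ is finite over $K'_{\cyc} = K'K_\infty$. So we may assume $E$ has good reduction over $K$, and $L$ is a finite extension of $K_{\cyc}$. Then $L$ is contained in $K''(\mu_{p^\infty})$ for some finite extension $K''/K$, and enlarging once more we may take $L = K''(\mu_{p^{\infty}})$ with $E$ of good reduction over $K''$.

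Suppose, for contradiction, that $E(L)[p^{\infty}]$ is infinite. By the equivalence of (i) and (iii) recalled in the proof of Theorem \ref{thm:greenberg}, the invariants $T_p(E)^{\Gal_L}$ form a nonzero $\ZZ_p$-direct summand of $T_p(E)$. Hence $V := V_p(E)$ has a nonzero subspace $W$ on which $\Gal_{K''}$ acts through the abelian quotient $\Gal(L/K'')$, an open subgroup of $\ZZ_p^{\times}$.

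I would split into two cases according to $\dim W$.

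\emph{Case $\dim W = 2$.} Then $\Gal_{K''}$ acts on all of $V$ through an abelian quotient. By the Weil pairing, $\det V = \QQ_p(1)$. Because $V$ is crystalline (good reduction) with Hodge--Tate weights $\{0,1\}$, the semisimplification of $V$ is a sum of two crystalline characters, each a product of a power of $\chi_{\cyc}$ and an unramified character. Their Hodge--Tate weights must be $0$ and $1$. So one character is unramified, and since it factors through $\Gal(L/K'')$ it has finite order. Its line is then fixed by an open subgroup of $\Gal_{K''}$, which gives infinitely many $p$-power torsion points rational over a finite extension of $K''$ that is unramified up to finite index. This contradicts the finiteness of $\widetilde{E}(k)[p^\infty]$ for finite residue fields, combined with the torsion-freeness of the formal group over the finite-ramification extension (see the last paragraph).

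\emph{Case $\dim W = 1$.} The action on $W$ is a crystalline character factoring through the cyclotomic tower, namely $\chi_{\cyc}^{a}\cdot(\text{finite order})$ with $a \in \{0,1\}$. If $a = 0$, an open subgroup of $\Gal_{K''}$ fixes $W$, and we conclude exactly as in the unramified-character step of the previous case. If $a = 1$, then $V/W$ has weight $0$ and is unramified. I would then argue via the connected--étale sequence: $W$ must equal $V_p(\hat{E})$ (ordinary case), and the quotient gives $\widetilde E[p^\infty]$ over the residue field of $L$, which is finite. In the supersingular case, $V$ is irreducible with non-abelian image on inertia, so this case cannot occur.

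The remaining point, which I expect to be the main obstacle, is finiteness of the torsion of the formal group over $L$, which is deeply ramified, in the ordinary case with $W = V_p(\hat E) \cong \QQ_p(1)\otimes(\text{unramified})$. For this I would use Imai's argument: the $\mu_{p^\infty}$-type points of $\hat{E}$ over $L$ are controlled by the unramified twist. If that twist is nontrivial of infinite order, its restriction to $\Gal_L$ is still nontrivial, because $L$ has finite residue field extension over $K''$; hence only finitely many of these torsion points are $\Gal_L$-fixed. If the twist has finite order, I expect a contradiction from the fact that the Frobenius eigenvalue on $\hat E$ is a Weil number of absolute value $\sqrt{q}$ times a unit, not a root of unity times $q$; this part needs care.
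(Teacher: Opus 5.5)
The paper does not prove this statement itself; it records it as Imai's theorem (an abelian variety with potentially good reduction has finite torsion over $K(\mu_{p^\infty})$) and cites it. Your proposal instead reconstructs the elliptic-curve case by $p$-adic Hodge theory, and its skeleton is sound. $W=V^{\Gal_L}$ is $\Gal_{K''}$-stable, with action through $\Gal(L/K'')\hookrightarrow\ZZ_p^\times$. So its constituents over $\overline{\QQ}_p$ are $\chi^a\epsilon$, with $\chi$ the cyclotomic character, $a\in\{0,1\}$ and $\epsilon$ of finite order. A weight-$0$ constituent gives $V^U\neq 0$ for an open $U\subseteq\Gal_{K''}$, which contradicts the finiteness of $E(K''')_{\mathrm{tors}}$ for $K'''/\QQ_p$ finite (Mattuck). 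Two small corrections in that part:
\begin{itemize}
\item Crystalline characters of $\Gal_{K''}$ are not in general of the form $\chi^a\cdot(\text{unramified})$, but those factoring through $\Gal(L/K'')$ are.
\item Over $K'''$ the formal group has finite torsion, not zero torsion.
\end{itemize}
The citation to Imai buys the general abelian-variety statement with no case analysis. Your route is self-contained and shows exactly where arithmetic input is needed.

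As written, however, the case $a=1$ is not finished, and it is precisely the case that carries the arithmetic content. Crystallinity and $\det V=\chi$ alone cannot exclude it: a nonsplit crystalline extension of $\QQ_p$ by $\QQ_p(1)$ has $V^{\Gal_L}\supseteq\QQ_p(1)\neq 0$. You leave the finite-order-twist subcase as an expectation, but it closes in one line with the determinant you already used.
\begin{itemize}
\item $\Gal_L$ fixes $W$ pointwise and $\chi|_{\Gal_L}=1$, so $\Gal_L$ acts trivially on $V/W$.
\item In the ordinary case $W=V_p(\hat E)$. Otherwise $W$ would embed into the unramified $V_p(\widetilde E)$, while $\chi\epsilon$ is ramified.
\item Hence $V/W=V_p(\widetilde E)$, so $\widetilde E[p^\infty]\subseteq \widetilde E(k_L)$ with $k_L$ finite, a contradiction.
\end{itemize}
Equivalently, the unramified twist $\psi$ on $V_p(\hat E)$ is inverse to the character on $V_p(\widetilde E)$. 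That character sends Frobenius to the unit root $u$ of $X^2-a_qX+q$, and $|u|_\infty=\sqrt q$, so $\psi$ always has infinite order.

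Separately, your exclusion of the supersingular case rests on a false claim. If $E$ has CM by $\calK$ with $p$ inert and $K''\supseteq\calK_p$, then $\Gal_{K''}$ acts on $T_p(E)$ through $\calO_{\calK_p}^\times$, so inertia has abelian image. The case is nevertheless impossible, for a different reason. $V/W$ would be a nonzero unramified quotient of $V$, and by Tate's full faithfulness theorem this would give a nonzero map from the connected $p$-divisible group $E[p^\infty]$ to an \'etale one. Alternatively, $\varphi$ on $D_{\mathrm{cris}}(V)$ is isoclinic of slope $1/2$, so $V$ has no such line.

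With these repairs your argument is a complete proof.
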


We now apply Theorems \ref{thm:verticalcontrol} and \ref{thm:maintheorem} to the elliptic curve $E$. The following result combines them with Proposition \ref{prop:imai}.

\begin{theorem}[Main theorems for elliptic curves: potential good reduction case] \label{thm:appec}
    Let $E$ be an elliptic curve over $F$ with potentially good reduction at all places of $F$ above $p$. Then we have the following results.
\begin{enumerate}[label = \rm (\arabic*)]
    \item \emph{(Vertical control theorem)} The Pontryagin dual of the restriction map
    \[
    \res_{\tilF/F_{\cyc}}: \Sel(E/F_{\cyc}) \rightarrow \Sel(E/\tilF)^{\Gal(\tilF/F_{\cyc})}
    \]
    is a pseudo-isomorphism of $\ZZ_p\lrbracket{\Gal(F_{\cyc}/F)}$-modules, with kernel and cokernel being finitely generated torsion $\ZZ_p$-modules.
    \item \emph{(Conjecture \ref{myconj:Bminus})} If $Y(E/F_{\cyc})$ is a finitely generated $\ZZ_p$-module, then $Y(E/\tilF)$ is a finitely generated $\ZZ_p\lrbracket{\Gal(\tilF/F_{\cyc})}$-module.
    \item \emph{(Conjecture \ref{myconj:Bplus})} If $Y(E/F_{\cyc})$ is a pseudonull $\ZZ_p\lrbracket{\Gal(F_{\cyc}/F)}$-module, then $Y(E/\tilF)$ is a pseudonull  $\ZZ_p\lrbracket{\Gal(\tilF/F)}$-module.
\end{enumerate}
\end{theorem}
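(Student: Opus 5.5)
The plan is to deduce all three assertions from Theorem \ref{thm:verticalcontrol} and Theorem \ref{thm:maintheorem}, applied with $R = \ZZ_p$, $T = T_p(E)$, $A = E[p^{\infty}]$ and $F_{\infty} = F_{\cyc}$. The cyclotomic $\ZZ_p$-extension is cyclotomic-like, and $\ZZ_p$ is a complete Noetherian local ring with residual characteristic $p$. Since $\ZZ_p$ is regular, it is Cohen--Macaulay, so the extra hypothesis in part (2) of Theorem \ref{thm:maintheorem} holds. Therefore the only real input is to verify conditions \dref{eq:gf} and \dref{eq:lf} for $E[p^{\infty}]$ with respect to $F_{\cyc}/F$ and the given $\ZZ_p^{d}$-extension $\tilF \supseteq F_{\cyc}$.

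Over $R = \ZZ_p$, a cofinitely generated module is cotorsion exactly when it is finite. So \dref{eq:lf} requires that $E(L)[p^{\infty}]$ be finite for every finite extension $L/F_{\cyc,v}$ inside $\tilF_{w_\infty}$, for each place $v \mid p$. This is a special case of Proposition \ref{prop:imai}, since $E$ has potentially good reduction at every place above $p$ and the proposition allows arbitrary finite extensions of $F_{\cyc,v}$.

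For \dref{eq:gf}, let $L/F_{\cyc}$ be a finite extension inside $\tilF$. Choose a place $w$ of $L$ above $p$. Then $L_w$ is a finite extension of $F_{\cyc,v}$, and $E(L)[p^{\infty}]$ embeds into $E(L_w)[p^{\infty}]$, which is finite by Proposition \ref{prop:imai}. The paper already notes that \dref{eq:lf} implies \dref{eq:gf}, and this embedding is exactly that implication. Note also that $\tilF_{w_\infty}$ is an abelian pro-$p$ extension of $F_{v}$ containing $F_{\cyc,v}$. It therefore fits the local setup $\sfF = F_v$, $\sfF_\infty = F_{\cyc,v}$ used in Theorem \ref{thm:generalgreenberg}, up to the decomposition group possibly having smaller rank; the control argument in case (3.a) only needs \dref{eq:lf}.

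With these conditions checked, the argument finishes as follows. Item (1) is Theorem \ref{thm:verticalcontrol}, whose kernel and cokernel are finitely generated torsion $\ZZ_p$-modules, that is, finite groups. Items (2) and (3) are parts (1) and (2) of Theorem \ref{thm:maintheorem}. I expect no genuine obstacle here. The one point needing care is the local setup: the decomposition group $\Gal(\tilF_{w_\infty}/F_v)$ may be $\ZZ_p^{d'}$ with $d' < d$, possibly $d'=1$. In that case $\tilF_{w_\infty} = F_{\cyc,v}$ and the local kernel vanishes trivially, exactly as in case (3.b).
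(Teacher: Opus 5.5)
Your proposal is correct and is essentially the paper's argument. The paper also verifies \dref{eq:lf} (hence \dref{eq:gf}) via Imai's result, Proposition~\ref{prop:imai}, and then applies Theorems~\ref{thm:verticalcontrol} and~\ref{thm:maintheorem} with $R=\ZZ_p$, which is Cohen--Macaulay. Your added remark is a correct detail that the paper leaves implicit: $\Gal(\tilF_{w_\infty}/F_{\cyc,v})$ may have rank $d'-1<d-1$, so Theorem~\ref{thm:generalgreenberg} applies with $n=d'-1$, and the local kernel is trivially zero when $d'=1$.
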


We add an interlude to justify the condition in (2) of Theorem \ref{thm:appec} in some cases. A drawback is that the pseudonullity of $Y(E/F_{\cyc})$ does not always hold. As remarked in \cite[page 826]{coates2005fine}, for $E/\QQ$ such that $E(\QQ)$ has $\ZZ$-rank greater than $1$, the dual fine Selmer module $Y(E/\QQ_{\cyc})$ has positive $\ZZ_p$-rank. Nevertheless, we do have some sufficient conditions to guarantee the pseudonullity of $Y(E/F_{\cyc})$.

In the following proposition and its proof, we denote $\Sel_{\cl}(E/\calL)$ the classical $p$-primary Selmer group of the elliptic curve $E$ over any algebraic extension $\calL$ of $\QQ$ contained in $\QQ_{S}$. For $\calL = F_{\cyc}$ for number fields $F$, we denote $X(E/F_{\cyc})$ the Pontryagin dual of $\Sel_{\cl}(E/F_{\cyc})$, which is a finitely generated $\ZZ_p\lrbracket{\Gal(F_{\cyc}/F)}$-module.

\begin{proposition} \label{coro:elliptic2}
Let $E / \QQ$ be an elliptic curve with good reduction at $p$. Suppose that
\begin{enumerate}[label = \rm (G-\arabic*)]
    \item $\Sel_{\cl}(E/F)$ is finite and $p \nmid \abs{\Sel_{\cl}(E/F)}$, \label{item:G1} \footnote{By the fundamental exact sequence \[
    0 \rightarrow E(F) \otimes_{\ZZ} \QQ_p/\ZZ_p \rightarrow
\Sel_{\cl}(E/F) \rightarrow \Sha(E/F)[p^{\infty}] \rightarrow 0,   \]
This condition implies that $E(F)$ has rank zero (and hence $E(\QQ)$ has rank zero as well) and $\Sha(E/F)[p^{\infty}] = 0$.}
    \item $p \nmid \Tam_{v}(E/F)$ for any finite place $v$ of $F$. \label{item:G2}
\end{enumerate}
If $E$ has good ordinary reduction at $p$, we further assume that
\begin{enumerate}[label = \rm (O-\arabic*)]
    \item $E(F)[p] = 0$, \label{item:O1}
    \item $p \nmid \abs{\tilE(\kappa_v)}$ for any place $v$ of $F$ above $p$ (these primes are called \emph{non-anomalous}). \label{item:O2}
\end{enumerate}
If $E$ has good supersingular reduction at $p$, we further assume that
\begin{enumerate}[label = \rm (S-\arabic*)]
    \item $p \geq 5$ and $p$ is unramified in $F$. \label{item:S1}
\end{enumerate}
Then $Y(E/F_{\cyc})$ is a pseudonull $\ZZ_p\lrbracket{\Gal(F_{\cyc}/F)}$-module.
\end{proposition}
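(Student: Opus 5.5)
The plan is to show that under (G-1), (G-2) and the ordinary or supersingular hypotheses, the classical $p$-primary Selmer group $\Sel_{\cl}(E/F_{\cyc})$ is trivial, or at least finite. Since $\Sel(E/F_{\cyc}) \subseteq \Sel_{\cl}(E/F_{\cyc})$, the dual $Y(E/F_{\cyc})$ is then a quotient of $X(E/F_{\cyc})$. It is therefore finite, i.e. a finitely generated torsion $\ZZ_p$-module, and hence pseudonull over $\ZZ_p\lrbracket{\Gal(F_{\cyc}/F)}$ (a finite module over a two-dimensional regular local ring has annihilator of height $2$; equivalently, apply Proposition \ref{prop:torsionpseudonull} with $H$ trivial).

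\textbf{Ordinary case.} I would use Mazur's control theorem together with Greenberg's Euler characteristic formula. These express the $\Gamma$-invariants of $\Sel_{\cl}(E/F_{\cyc})$ in terms of $\Sel_{\cl}(E/F)$, Tamagawa numbers, and $\abs{\tilE(\kappa_v)}^2$ at the places above $p$. Conditions (O-1) and (O-2) make the kernel and cokernel of the restriction map $\Sel_{\cl}(E/F) \to \Sel_{\cl}(E/F_{\cyc})^{\Gamma}$ trivial. (O-1) kills $\rmH^1(\Gamma, E(F_{\cyc})[p^\infty])$, since $E(F_{\cyc})[p]=0$ because $F_{\cyc}/F$ is pro-$p$. (O-2) kills the local terms above $p$, and (G-2) kills the terms at bad primes away from $p$. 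Hence $\Sel_{\cl}(E/F_{\cyc})^{\Gamma} \simeq \Sel_{\cl}(E/F)$, which is trivial by (G-1). Nakayama's lemma for the compact $\ZZ_p\lrbracket{\Gamma}$-module $X(E/F_{\cyc})$ then gives $X(E/F_{\cyc})=0$, so $Y(E/F_{\cyc})=0$.

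\textbf{Supersingular case.} Here $\Sel_{\cl}(E/F_{\cyc})$ is not cotorsion, so I would instead work directly with the fine Selmer group, or with Kobayashi's $\pm$-Selmer groups. Under (S-1), with $p \ge 5$ and $p$ unramified in $F$, Kobayashi's plus/minus theory (Kim, Kitajima--Otsuki over $F$) gives a control theorem for $\Sel^{\pm}(E/F_{\cyc})$ with trivial error terms. The error terms vanish because $E(F_v)[p]=0$ at supersingular places ($\tilE(\kappa_v)$ has no $p$-torsion) and by (G-2). Thus $\Sel^{\pm}(E/F_{\cyc})^{\Gamma} \simeq \Sel_{\cl}(E/F) = 0$, and Nakayama gives $\Sel^{\pm}(E/F_{\cyc})=0$. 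Since the fine Selmer group is contained in the $\pm$-Selmer groups, $Y(E/F_{\cyc})=0$.

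I expect the main obstacle to be exactly this supersingular control step. It requires importing the $\pm$-Coleman map machinery over $F$ (surjectivity of the plus/minus local conditions and exactness of the defining global-to-local sequence), and that is where (S-1) enters. As a fallback, I would argue with the fine Selmer group directly: bound $\Sel(E/F_{\cyc})^{\Gamma}$ by the fine Selmer group over $F$ using inflation-restriction, where all local error terms vanish by (G-2) and the supersingular hypothesis.
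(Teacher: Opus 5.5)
Your proposal is correct, but it runs differently from the paper. The paper never proves that a Selmer group vanishes. It quotes Greenberg's formula for $v_p(f_E(0))$ in the ordinary case and Kim's formula for $v_p(f_E^{\pm}(0))$ in the supersingular case, observes that the hypotheses make these $p$-adic units, and concludes that $X(E/F_{\cyc})$, resp. $X^{\pm}(E/F_{\cyc})$, and hence $Y(E/F_{\cyc})$, is finite. You instead use only the invariants half of those computations. The control map $\Sel_{\cl}(E/F)\to\Sel_{\cl}(E/F_{\cyc})^{\Gamma}$, or its $\pm$ analogue, has kernel $\rmH^1(\Gamma,E(F_{\cyc})[p^\infty])=0$. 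Its cokernel is a quotient of the product of the local kernels. Those local kernels are governed by the $p$-part of $\Tam_v(E/F)$ for $v\nmid p$, by the $p$-part of $|\tilE(\kappa_v)|$ for $v\mid p$ in the ordinary case, and vanish at $v\mid p$ in the $\pm$ case. Since (G-1) says exactly $\Sel_{\cl}(E/F)=0$, Nakayama gives $X=0$. This bypasses characteristic power series and the coinvariant side of the Euler characteristic. It also gives the stronger statement $Y(E/F_{\cyc})=0$, which the paper records only in a footnote for the ordinary case.

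Your supersingular fallback is in fact a complete and simpler proof, needing neither $\pm$-Selmer groups nor $p\ge 5$. For $v\mid p$, the local error term vanishes since $E(F_v)[p]=0$ (supersingular reduction and $e_v=1<p-1$). Hence $E(F_{\cyc,w})[p]=0$, because $F_{\cyc,w}/F_v$ is pro-$p$. For $v\nmid p$, one has $E(K)\otimes\QQ_p/\ZZ_p=0$ over $\ell$-adic fields $K$, so the Kummer isomorphisms $\rmH^1(K,E[p^\infty])\simeq\rmH^1(K,E)[p^\infty]$ identify the fine error term $\rmH^1(\Gamma_w,E(F_{\cyc,w})[p^\infty])$ with $\rmH^1(\Gamma_w,E(F_{\cyc,w}))[p^\infty]$. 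That group has order the $p$-part of $\Tam_v(E/F)$, which (G-2) kills. Hence $\Sel(E/F_{\cyc})^{\Gamma}\simeq\Sel_S(E/F)\subseteq\Sel_{\cl}(E/F)=0$, and Nakayama finishes. If you write this up, make the Kummer identification explicit. It is the reason Tamagawa numbers, rather than local torsion $E(F_v)[p^\infty]$, control the fine Selmer error terms away from $p$.
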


\begin{proof}
Since $\Gal(F_{\cyc}/F) \simeq \ZZ_p$, the pseudonullity of $Y(E/F_{\cyc})$ is equivalent to its finiteness.

We first deal with the case where $E$ has good ordinary reduction at $p$. It then follows from Mazur's control theorem (see \cite[Corollary 4.9]{greenberg2002introduction}) that the $p$-primary Selmer group $\Sel_{\cl}(E/F_{\cyc})$ is $\ZZ_p\lrbracket{\Gal(F_{\cyc}/F)}$-cotorsion. Let $f_{E}(T)$ be the characteristic power series of $X(E/F_{\cyc})$, then $f_{E}(0) \neq 0$ since $\Sel_{\cl}(E/F)$ is finite under condition \ref{item:G1}. The refined control theorem of Greenberg, proved as \cite[Theorem 4.1]{greenberg1999iwasawa}, implies that
\[
v_p(f_{E}(0)) = v_p \left( \dfrac{\prod_{v} \Tam_{v}(E/F) \prod_{v \mid p} \abs{\tilE(\kappa_v)}^2 \abs{\Sel_{\cl}(E/F)}}{\abs{E(F)[p]}^{2}} \right).
\]
By assumptions \ref{item:G1}, \ref{item:G2}, \ref{item:O1} and \ref{item:O2}, $f_E(0)$ is a $p$-adic unit. This implies that $X(E/F_{\cyc})$, and hence $Y(E/F_{\cyc})$, are finite, as desired. \footnote{In fact, as we have assumed \ref{item:O1}, by \cite[Proposition 4.14]{greenberg1999iwasawa}, $X(E/F_{\cyc})$ has no nontrivial pseudonull $\ZZ_p\lrbracket{\Gal(F_{\cyc}/F)}$-submodules. Therefore, $X(E/F_{\cyc})$, and hence $Y(E/F_{\cyc})$, are actually trivial under our assumptions.} 

Next we deal with the case where $E$ has good supersingular reduction at $p$. In this case, we have a notion of $\pm$-Selmer groups when $p>3$, denoted by $\Sel^{\circ}(E/F_{\cyc})$ for $\circ \in \{ +, - \}$. Since $\Sel_{\cl}(E/F)$ is finite by assumption \ref{item:G1}, and under \ref{item:S1}, it is known that $\Sel^{\circ}(E/F_{\cyc})$ is $\ZZ_p\lrbracket{\Gal(F_{\cyc}/F)}$-cotorsion \footnote{See \cite[first lime of the proof of Corollary 3.15]{kim2013}.}. In this case, we have characteristic power series $f_{E}^{\circ}(T)$ of the Pontryagin dual $X^{\circ}(E/F_{\cyc})$ of $\Sel^{\circ}(E/F_{\cyc})$. When $\Sel_{\cl}(E/F)$ is finite and \ref{item:S1} holds, it follows from \cite[Theorem 1.2]{kim2013} that
\[
v_p (f_{E}^{\circ}(0)) = v_p \left( \prod_{v} \Tam_{v}(E/F) \abs{\Sel_{\cl}(E/F)} \right).
\]
Under assumptions \ref{item:G1} and \ref{item:G2}, $f_{E}^{\circ}(0)$ is a $p$-adic unit, and hence $X^{\circ}(E/F_{\cyc})$ is finite. It follows from definitions that $\Sel(E/F_{\cyc})$ is a subgroup of $\Sel^{\circ}(E/F_{\cyc})$, and hence $Y(E/F_{\cyc})$ is finite, as desired.
\end{proof}

\begin{remark}
The proof of \ref{coro:elliptic2} follows closely from \cite[Section 3]{kundu2024}. Moreover, in \cite[Theorem 3.1]{kundu2024}, it is proved that if $E/\QQ$ has good ordinary reduction at $p$ and, additionally, $E$ has CM by an order in an imaginary quadratic field $\calK$ such that $F$ contains $\calK$, then the set of prime numbers $p$ satisfying \ref{item:G1}, \ref{item:G2}, \ref{item:O1} and \ref{item:O2} has density at least $1/[F^c:\QQ]$, where $F^c$ is the Galois closure of $F$. When $E$ has good supersingular reduction at $p$, and under condition \ref{item:G1}, there are only finitely many prime numbers $p$ violating \ref{item:G2} and \ref{item:S1}.
\end{remark}

\subsubsection{Split multiplicative case} \label{sec:app_potential_multiplicative}

Let $E$ be an elliptic curve over a number field $F$ with multiplicative reduction at a place $v$ of $F$ above $p$. We start by assuming that $E$ has \emph{split} multiplicative reduction at $v$. Then, by \cite[Theorem V.5.3]{silverman1994advanced}, one has Tate's parameterization
\[
\barF_v^{\times}/\lrangle{q_E} \xrightarrow{\sim} E(\barF_v),
\]
where $\lrangle{q_E}$ is the infinite cyclic subgroup of $F_v^{\times}$ generated by the \emph{Tate period} $q_E \in F_{v}^{\times}$ for $E$, which has positive $v$-adic valuation. We have $\mu_p^{\infty} \cap \lrangle{q_E} = 1$, so the Tate parametrization (which is $\Gal_{F_v}$-equivariant) induces a natural inclusion $\mu_{p^{\infty}} \hookrightarrow E(\barF_v)$. One then obtains an exact sequence
\begin{equation} \label{eq:multiplicative}
    0 \rightarrow \mu_{p^{\infty}} \rightarrow E[p^{\infty}] \rightarrow Q \rightarrow 0,
\end{equation}
of $\Gal_{F_v}$-modules, with $Q$ defined as the quotient $E[p^{\infty}]/\mu_{p^{\infty}}$, with trivial $\Gal_{F_v}$-action. There is a rigorous identification
\[
\iota_E: Q \xrightarrow{\sim} \QQ_p/\ZZ_p,
\]
which sends the class of the point in $E[p^{\infty}]$ represented by $q_E^{m/p^n}$ to $m/p^n$. See Appendix \ref{app:tate} for additional details.

Let $L$ be any algebraic extension of $F_v$. To study $E(L)[p^{\infty}]$, we consider the following exact sequence
\begin{equation} \label{eq:multises}
0 \rightarrow \rmH^0(L, \mu_{p^{\infty}}) \rightarrow \rmH^{0}(L, E[p^{\infty}]) \rightarrow \rmH^{0}(L, Q) \xrightarrow{\delta_E(L)} \rmH^{1}(L, \mu_{p^{\infty}})
\end{equation}
obtained by taking $\Gal_{L}$-cohomology in \eqref{eq:multiplicative}. The $p^{\infty}$-torsion points of $E(L)$ thus come from two parts: the roots-of-unity part $\mu_{p^{\infty}}(L)$ and the part arising from $\ker(\delta_E)(L)$. We now describe the kernel of the connecting homomorphism
\[
\delta_E(L): \rmH^{0}(L, Q) \rightarrow \rmH^{1}(L, \mu_{p^{\infty}})
\]
precisely in the following proposition.

\begin{proposition} \label{prop:kerneldelta}
Suppose that $E$ has split multiplicative reduction at a place $v$ of $F$ above $p$, with Tate period $q_{E} \in \barF_{v}^{\times}$. Then
\begin{equation}\label{eq:describedelta}
    \ker(\delta_E)(L) \simeq \{m/p^n \in \QQ_p/\ZZ_p: q_E^m\in L^{\times {p^n}}\}.
\end{equation}
\end{proposition}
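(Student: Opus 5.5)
We will compute the connecting homomorphism $\delta_E(L)$ explicitly via Kummer theory and then read off its kernel. Since $Q$ carries trivial $\Gal_{F_v}$-action, $\rmH^0(L,Q)=Q$, and through $\iota_E$ every element of $Q$ is written $m/p^n$. It is represented by the class of a point $P\in E[p^\infty]$ corresponding under the Tate parametrization to some $x\in\barF_v^{\times}$ with $x^{p^n}=q_E^m$, i.e. $x=q_E^{m/p^n}$ for a chosen $p^n$-th root.

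By the standard recipe, $\delta_E(L)(m/p^n)$ is the class of the cocycle $\sigma\mapsto \sigma(P)-P$, which lies in $\mu_{p^\infty}$. Under the Tate parametrization, $\sigma(P)-P$ corresponds to $\sigma(x)/x$ modulo $\lrangle{q_E}$. This quotient is already a $p^n$-th root of unity, and it lies in $\mu_{p^\infty}\subset \barF_v^\times/\lrangle{q_E}$ because $\mu_{p^\infty}\cap\lrangle{q_E}=1$. So $\delta_E(L)(m/p^n)$ is the class of $\sigma\mapsto \sigma(x)/x$ in $\rmH^1(L,\mu_{p^\infty})$.

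This class is the image of the Kummer class of $q_E^m$ under
\[
\rmH^1(L,\mu_{p^n})\to \rmH^1(L,\mu_{p^\infty}),
\]
where the Kummer class comes from $\kappa_n: L^\times/L^{\times p^n}\xrightarrow{\sim}\rmH^1(L,\mu_{p^n})$. Note that $q_E\in F_v^\times\subset L^\times$. To keep the roots coherent, we work with the compatible direct limit
\[
L^\times\otimes\QQ_p/\ZZ_p=\varinjlim_n L^\times/L^{\times p^n}\xrightarrow{\sim}\rmH^1(L,\mu_{p^\infty}),
\]
which is an isomorphism by Hilbert 90 applied to $0\to\mu_{p^n}\to\mu_{p^\infty}\xrightarrow{p^n}\mu_{p^\infty}\to0$ and passage to the limit. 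For infinite algebraic $L$, this holds by taking direct limits over finite subextensions. Under this identification, $\delta_E(L)$ becomes the map $m/p^n\mapsto q_E^m\otimes p^{-n}$.

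The kernel then consists of those $m/p^n$ with $q_E^m\otimes p^{-n}=0$ in $L^\times\otimes\QQ_p/\ZZ_p$. The main point to check carefully is that this condition is equivalent to $q_E^m\in L^{\times p^n}$. Injectivity of $L^\times/L^{\times p^n}\to L^\times\otimes\QQ_p/\ZZ_p$ would make this immediate, but it can fail when $L^\times$ has $p$-torsion (roots of unity).

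We therefore argue cohomologically instead. The kernel of $\rmH^1(L,\mu_{p^n})\to\rmH^1(L,\mu_{p^\infty})$ is the image of $\rmH^0(L,\mu_{p^\infty})$ under the connecting map of the sequence above. Concretely, this image consists of Kummer classes of elements $\zeta\in\mu_{p^\infty}(L)$ that are $p^n$-th powers of roots of unity, up to $L^{\times p^n}$.

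To avoid this ambiguity altogether, we fix the root $x=q_E^{m/p^n}$ and argue directly. The cocycle $\sigma\mapsto\sigma(x)/x$ is a coboundary in $\mu_{p^\infty}$ iff $\sigma(x)/x=\sigma(\zeta)/\zeta$ for some $\zeta\in\mu_{p^\infty}$. That holds iff $x\zeta^{-1}\in L$, which means $q_E^m$ is a $p^n$-th power in $L$ up to the factor $\zeta^{p^n}$. Since $\zeta^{p^n}$ is a root of unity, we can absorb it by rechoosing $x$; the condition is then that some $p^n$-th root of $q_E^m$ lies in $L$, i.e. $q_E^m\in L^{\times p^n}$.

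It remains to check that the condition is well defined on the class $m/p^n$, that is, independent of the representative. Replacing $(m,n)$ by $(mp,n+1)$ is harmless, since $q_E^{mp}\in L^{\times p^{n+1}}$ iff $q_E^m\in L^{\times p^n}$ up to a $p$-th root of unity. This is handled by the same rechoice of root; alternatively, it follows because the set being described is the kernel of a homomorphism, hence automatically well defined.
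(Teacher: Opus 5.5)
Up to the Kummer identification your route is the paper's: you compute $\delta_E(L)(m/p^n)$ as the class of $\sigma\mapsto\sigma(x)/x$ and identify $\delta_E(L)$ with $m/p^n\mapsto q_E^m\otimes p^{-n}$ in $\varinjlim_n L^{\times}/L^{\times p^n}$. You also rightly flag that $L^{\times}/L^{\times p^n}\to\varinjlim_n L^{\times}/L^{\times p^n}$ need not be injective. However, the step meant to get around this is wrong.

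From $x\zeta^{-1}\in L$ you get $q_E^m=\zeta^{p^n}(x\zeta^{-1})^{p^n}$ with $\zeta^{p^n}\in\mu_{p^\infty}(L)$, i.e.\ $q_E^m\in\mu_{p^\infty}(L)\,L^{\times p^n}$. You cannot absorb $\zeta^{p^n}$ by rechoosing $x$. The $p^n$-th roots of $q_E^m$ differ from $x$ only by elements of $\mu_{p^n}$, and $x\zeta^{-1}$ is one of them only when $\zeta^{p^n}=1$. In general you would need $\zeta^{p^n}\in L^{\times p^n}$, which can fail once $\mu_p\subseteq L$.

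Here is a concrete failure. Take a finite $K\supseteq F_v$ with $\zeta_p\in K$, $\zeta_{p^2}\notin K$ and $p\nmid v_K(q_E)$, and put $L=K\bigl((\zeta_pq_E)^{1/p}\bigr)$. Then $u=(\zeta_pq_E)^{1/p}\in L^{\times}$ satisfies $u^{p^2}=q_E^{p}$, so it gives an $L$-rational point of $E[p^2]$ with image $1/p$ in $Q$. Hence $1/p\in\ker(\delta_E)(L)$. Yet Kummer theory over $K$, together with the valuation, shows $q_E\notin L^{\times p}$, since that would force $\zeta_p\in K^{\times p}$. Meanwhile $q_E^{p}=u^{p^2}\in L^{\times p^2}$.

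So your equivalence for a fixed representative is false. So is the claim in your last paragraph that the condition is independent of the representative. The appeal to ``the kernel of a homomorphism'' is circular, because it presupposes the equivalence you are trying to prove.

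What your argument (and equally your discarded cohomological step) actually proves is
\[
m/p^n\in\ker(\delta_E)(L)\iff q_E^m\in\mu_{p^\infty}(L)\,L^{\times p^n}\iff q_E^{mp^k}\in L^{\times p^{n+k}}\ \text{for some } k\geq 0 .
\]
For the second equivalence: if $q_E^m=\xi y^{p^n}$ with $\xi\in\mu_{p^k}(L)$, then $q_E^{mp^k}=y^{p^{n+k}}$. This is exactly the vanishing of $q_E^m\otimes p^{-n}$ in the direct limit.

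The set in the proposition must therefore be read as the classes admitting \emph{some} representative $m/p^n$ with $q_E^m\in L^{\times p^n}$. The paper's proof, which reads off $\ker(\delta'_E)(L)$ inside $\varinjlim_n L^{\times}/L^{\times p^n}$, tacitly uses this reading. So delete the ``rechoosing'' step and state the conclusion in this form. Alternatively, note that when $\mu_p\not\subseteq L$ one has $\mu_{p^\infty}(L)=1$, and then your equivalence holds for every representative. This is the case in the paper's later applications, where $L$ is a $p$-power extension of $\QQ_{p,\cyc}$.

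Minor point: the isomorphism $\varinjlim_n L^{\times}/L^{\times p^n}\cong\rmH^1(L,\mu_{p^\infty})$ comes from Hilbert 90 for the Kummer sequences $1\to\mu_{p^n}\to\overline{L}^{\times}\to\overline{L}^{\times}\to 1$. It does not come from $0\to\mu_{p^n}\to\mu_{p^\infty}\to\mu_{p^\infty}\to 0$.
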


Let $\calL \subseteq \calL^{\prime}$ be any two algebraic extensions of $\QQ_p$. We say that an element $x \in \calL$ is \emph{infinitely $p$-divisible} in $\calL^{\prime}$ if $x \in (\calL^{\prime \times})^{p^{n}}$ for every $n \geq 0$; that is, for every $n \geq 0$, there exists $x_{n} \in \calL^{\prime \times}$ such that $x_{n}^{p^{n}} = x$.

\begin{remark} \label{rem:infinitely_p_divisible}
    We remark that if in addition $\calL/\QQ_p$ is a finite extension, then any $x \in \calL$ with $v_{p}(x) > 0$ is not an infinitely $p$-divisible element in $\calL$. Otherwise this would imply that $x \in \calL^{\times p^m}$ for every integer $m \geq 0$. If this were true, for arbitrarily large $m$ we could write
    \[
    x = x_m^{p^m} \quad \text{ for some } x_m \in \calL^{\times}.
    \]
    Taking the $p$-adic valuation on both sides yields
    \[
    v_p(x) = v_p(x_m^{p^m}) = p^m v_p(x_m).
    \]
    Since $v_p(x)=v>0$ is a fixed integer and $v_p(x_m)$ is an integer, this forces $p^m$ to divide $v$ for every $m \geq 0$, a contradiction.
\end{remark}

From this perspective, Proposition \ref{prop:kerneldelta} tells us that the size of $\ker(\delta_E)(L)$ depends on the $p$-divisibility of $q_E$ inside $L^{\times}$. We remark that Proposition \ref{prop:kerneldelta} (more accurately, the commutative diagram \eqref{eq:diagramkummer}) already appears in \cite[Section 4]{hida2009invariants}. Nevertheless, we supply an explicit proof here.

\begin{proof}
Following the description of \eqref{eq:multiplicative} above, we have the identification
\[
\iota_{E}: \rmH^0(L,Q) \xrightarrow{\sim} \QQ_p/\ZZ_p.
\]
The key to this proposition is to verify that the following diagram commutes:
\begin{equation} \label{eq:diagramkummer}
\begin{tikzcd}
    \rmH^{0}(L, Q) & & \rmH^{1}(L, \mu_{p^{\infty}}) \\ 
    \QQ_p/\ZZ_p & & \varinjlim_{n} L^{\times}/L^{\times p^n} 
    \arrow[from=1-1, to=1-3, "\delta_E(L)"]
    \arrow[from=1-1, to=2-1, "\iota_{E}"', "\simeq"]
    \arrow[to=1-3, from=2-3, "\Kum"', "\simeq"]
    \arrow[from=2-1, to=2-3, "\delta_{E}^{\prime}(L)"']
\end{tikzcd}, 
\end{equation}
where the lower horizontal map is defined by
\[
\delta^{\prime}_E(L): \QQ_p/\ZZ_p \rightarrow \varinjlim_{n} L^{\times}/L^{\times p^n}, \quad m/p^{n} \mapsto q_E^m \cdot L^{\times p^n},
\]
and the right vertical map $\Kum$ is defined via Kummer theory. Indeed, classical Kummer theory gives the map
    \[
    \Kum: \varinjlim_{n} L^{\times}/L^{\times p^n}  \xrightarrow{\sim} \rmH^{1}(L, \mu_{p^{\infty}}).
    \]
For any $[q] := \{q \cdot L^{\times p^{n}}\}_n \in \varinjlim_{n} L^{\times}/L^{\times p^n}$, the image of $[q]$ under the Kummer map $\Kum$ is the cocycle
    \begin{equation} \label{eq:cocyclekum}
    \left[ c_{[q]}: \sigma \in \Gal_{L} \longmapsto c_{[q]}(\sigma) = \left( \frac{\sigma(q^{1/p^n})}{q^{1/p^n}} \in \mu_{p^n} \right)_{n \geq 0} \right] \in \rmH^{1}(L, \mu_{p^{\infty}}).
    \end{equation}
    Applying this to $[q_{E}^{m}] = \{q_{E}^{m} \cdot L^{\times p^{n}}\}_n$, we obtain the cocycle $\delta_{E}(L)([q_{E}^{m/p^{n}}])$ for $[q_{E}^{m/p^{n}}] \in Q$. The commutativity of \eqref{eq:diagramkummer} then follows. Finally, the diagram \eqref{eq:diagramkummer} shows that
\[
    \ker(\delta_E)(L) \simeq \ker(\delta_{E}^{\prime})(L) = \{m/p^n \in \QQ_p/\ZZ_p: q_{E}^{m} \in L^{\times {p^n}}\},
\]
as desired.
\end{proof}

\begin{proposition} \label{prop:imaimult}
    Let $E$ be an elliptic curve over $F$ with split multiplicative reduction at the place $v$ of $F$ above $p$. Let $L$ be any algebraic extension of $F_v$. If $\mu_{p^{\infty}}(L)$ is finite and $q_{E}$ is not infinitely $p$-divisible in $L^{\times}$, then $E(L)[p^{\infty}]$ is finite.
\end{proposition}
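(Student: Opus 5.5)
The plan is to read off the finiteness of $E(L)[p^{\infty}]$ directly from the exact sequence \eqref{eq:multises}:
\[
0 \rightarrow \mu_{p^{\infty}}(L) \rightarrow E(L)[p^{\infty}] \rightarrow \ker(\delta_E)(L) \rightarrow 0.
\]
By hypothesis the left term is finite. So it suffices to show that $\ker(\delta_E)(L)$ is finite.

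By Proposition \ref{prop:kerneldelta}, $\ker(\delta_E)(L)$ is identified with the subgroup
\[
K := \{m/p^n \in \QQ_p/\ZZ_p : q_E^m \in L^{\times p^n}\}.
\]
Every proper subgroup of $\QQ_p/\ZZ_p$ is finite cyclic, namely $p^{-k}\ZZ_p/\ZZ_p$ for some $k$. Hence $K$ is either finite or all of $\QQ_p/\ZZ_p$, and we only need to rule out $K = \QQ_p/\ZZ_p$.

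Suppose instead that $K = \QQ_p/\ZZ_p$. Then $1/p^n \in K$ for every $n$, which means $q_E \in L^{\times p^n}$ for every $n \geq 0$. This says exactly that $q_E$ is infinitely $p$-divisible in $L^{\times}$, contradicting the hypothesis. Therefore $K$, and hence $\ker(\delta_E)(L)$, is finite. As an extension of a finite group by a finite group, $E(L)[p^{\infty}]$ is then finite.

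The only point needing care is the well-definedness in Proposition \ref{prop:kerneldelta}. We must check that the condition ``$q_E^m \in L^{\times p^n}$'' depends only on the class $m/p^n$, so that $K$ really is a subgroup. This is already built into the commutative diagram \eqref{eq:diagramkummer}, because $K$ is the kernel of a homomorphism. Beyond that, the argument is just the structure of subgroups of $\QQ_p/\ZZ_p$, so I expect no serious obstacle.
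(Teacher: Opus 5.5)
Your route is the paper's route: the sequence \eqref{eq:multises}, Proposition \ref{prop:kerneldelta}, and the fact that proper subgroups of $\QQ_p/\ZZ_p$ are finite, which the paper uses tacitly. However, the step you wave through is a genuine gap, and the paper's proof has the same gap.

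The target of $\delta'_E(L)$ is $\varinjlim_n L^{\times}/L^{\times p^n}$, and its transition maps are $z\mapsto z^p$. The kernel of $L^{\times}/L^{\times p^n}\to L^{\times}/L^{\times p^{n+1}}$ is $\mu_p(L)L^{\times p^n}/L^{\times p^n}$. Hence
\[
\ker \delta'_E(L)=\{\, m/p^n : q_E^{mp^k}\in L^{\times p^{n+k}} \text{ for some } k\geq 0 \,\}.
\]
This equals your set $K$ only when $\mu_p\not\subset L$. Otherwise the condition ``$q_E^m\in L^{\times p^n}$'' is not even a function of the class $m/p^n$. Being a kernel makes $\ker\delta'_E(L)$ a subgroup, but it does not give it your description. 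So $1/p^n\in\ker\delta_E(L)$ only yields $q_E\in\mu_{p^\infty}(L)\cdot L^{\times p^n}$, and your contradiction with the hypothesis disappears.

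No argument can close this gap, because the statement is false as written. Here is a counterexample.
\begin{enumerate}
\item Take $F_v=K=\QQ_p(\zeta_p)$ with $v_K(q_E)=1$; for instance $F=\QQ(\zeta_p)$ and $E$ with split multiplicative reduction and $v(j_E)=-1$. Put $x=\zeta_p q_E$, fix compatible roots $x^{1/p^n}$, and let $L=\bigcup_n K(x^{1/p^n})$.
\item Since $X^{p^n}-x$ is Eisenstein, $[K(x^{1/p^n}):K]=p^n$.
\item $x$ is not a $p$-th power in $K(\zeta_{p^2})$. Otherwise $K(x^{1/p})=K(\zeta_{p^2})$, so $x\in\zeta_p^{j}K^{\times p}$ by Kummer theory, contradicting $v_K(x)=1$.
\item By Capelli's criterion, $X^{p^n}-x$ therefore stays irreducible over $K(\zeta_{p^2})$. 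Comparing degrees gives $\zeta_{p^2}\notin K(x^{1/p^n})$ for all $n$, so $\mu_{p^\infty}(L)=\mu_p$ is finite.
\item $q_E=\zeta_p^{-1}(x^{1/p})^p$ is not a $p$-th power in $L$, since a $p$-th root would produce $\zeta_{p^2}\in L$. So $q_E$ is not infinitely $p$-divisible.
\item Yet $x^{1/p^n}$ defines a point of $L^{\times}/q_E^{\ZZ}\subseteq E(L)$ of exact order $p^{n+1}$: its $p^n$-th power is $\zeta_p q_E\notin q_E^{\ZZ}$, and its $p^{n+1}$-th power is $q_E^p$. Hence $E(L)[p^\infty]$ is infinite.
\end{enumerate}
This also shows the ill-definedness directly: $1/p=p/p^2$, yet $q_E\notin L^{\times p}$ while $q_E^{p}\in L^{\times p^2}$.

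What survives is as follows. Your argument is correct verbatim when $\mu_p\not\subset L$. This covers the application in Theorem \ref{thm:appecmultcyc}, where $F_v=\QQ_p$ and $\mu_{p^\infty}(L)=1$. In general, let $|\mu_{p^\infty}(L)|=p^a$. The corrected kernel shows that $\ker\delta_E(L)=\QQ_p/\ZZ_p$ forces $q_E^{p^a}$ to be infinitely $p$-divisible. So the proposition holds with the hypothesis placed on $q_E^{p^a}$ instead of $q_E$. Lemma \ref{lem:q_E_divisible} still supplies this in the abelian setting, since $v_K(q_E^{p^a})\neq 0$.
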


\begin{proof}
Assuming that $E$ has split multiplicative reduction at $v$, \eqref{eq:multises} gives the following short exact sequence:
\[
0 \rightarrow \mu_{p^{\infty}}(L) \rightarrow E[p^{\infty}](L) \rightarrow \ker(\delta_{E})(L) \rightarrow 0.
\]
Hence $E(L)[p^{\infty}]$ is finite if and only if both $\mu_{p^{\infty}}(L)$ and $\ker(\delta_{E})(L)$ are finite. By Proposition \ref{prop:kerneldelta}, the latter amounts to considering the $p^{\infty}$-divisibility of $q_{E}$ inside $L$. More precisely,
\begin{itemize}
    \item Suppose that $q_{E}$ is not infinitely $p$-divisible in $L$; that is, there exists some $N$ such that $q_{E} \not\in L^{\times p^{N}}$.
   Hence,  the connecting homomorphism $\delta$ is not a zero map. We have $\ker(\delta_E)(L)$ is finite.
    \item Suppose that $q_E$ is infinitely $p$-divisible. Then the connecting homomorphism $\delta$ is the zero map, with $\ker(\delta_E)(L) = \QQ_p/\ZZ_p$.
\end{itemize}
The conclusion then follows.
\end{proof}

Proposition \ref{prop:imaimult}, together with the preceding discussion, shows that the finiteness of $E(L)[p^{\infty}]$ is controlled by both the $p$-power roots-of-unity part $\mu_{p^{\infty}}(L)$ and the infinite $p$-divisibility of the Tate period $q_{E}$. The former has been discussed in Section \ref{sec:exampleclassgroup}. In the remainder of this section, we study the infinite $p$-divisibility of $q_{E}$. The main result is the following lemma.

\begin{lemma}\label{lem:q_E_divisible}
Let $K$ be a finite extension of $\QQ_p$, and let $L/K$ be a
possibly infinite abelian Galois extension. If $q\in K^\times$
satisfies $v_K(q)\neq 0$, then $q$ is not infinitely
$p$-divisible in $L^\times$.
\end{lemma}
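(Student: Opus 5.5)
The plan is to argue by contradiction, combining Kummer theory over the cyclotomic tower with local class field theory. Suppose $q \in K^{\times}$ with $v_K(q) \neq 0$ is infinitely $p$-divisible in $L^{\times}$. For each $n$, choose $x_n \in L^{\times}$ with $x_n^{p^n} = q$, and set $M_n := K(x_n) \subseteq L$. Then $M_n/K$ is a finite abelian extension. Put $K_n := K(\mu_{p^n})$. The compositum $K_n M_n = K_n(q^{1/p^n})$ is a Kummer extension of $K_n$, and it is abelian over $K$ because both $K_n$ and $M_n$ are.

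\emph{Step 1 (cyclotomic twist).} Let $H_n := \Gal(K_n(q^{1/p^n})/K_n)$, which embeds into $\mu_{p^n}$ via $\tau \mapsto \tau(q^{1/p^n})/q^{1/p^n}$. Because $q \in K$, conjugation by any lift of $\sigma \in \Gal(K_n/K)$ acts on $H_n$ through the cyclotomic character $\chi(\sigma)$. Since the whole extension is abelian over $K$, this action is trivial, so $(\chi(\sigma) - 1)H_n = 0$ for all $\sigma$. The image of $\chi$ on $\Gal(K(\mu_{p^{\infty}})/K)$ is open in $\ZZ_p^{\times}$, so there is a constant $c$, independent of $n$, and some $\sigma$ with $v_p(\chi(\sigma) - 1) = c$. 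Hence $|H_n| \le p^{c}$, and Kummer theory gives $q^{p^{c}} \in K_n^{\times p^n}$ for every $n$.

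\emph{Step 2 (reduce to a bounded-exponent statement over $K$).} Step 1 alone does not conclude, because the ramification of $K_n/K$ grows like $p^n$; a valuation count as in Remark \ref{rem:infinitely_p_divisible} is therefore useless on its own. Instead I would bring in the norm residue (Hilbert) symbol $(\cdot,\cdot)_{K_n,p^n}$. From $q^{p^c} \in K_n^{\times p^n}$ we get $(q, a)_{K_n, p^n}^{p^c} = 1$ for all $a \in K_n^{\times}$. For $a \in K^{\times}$, functoriality of the Artin map gives
\[
(q, a)_{K_n, p^n} = \frac{\mathrm{rec}_{K}\bigl(N_{K_n/K}(a)\bigr)(q^{1/p^n})}{q^{1/p^n}}.
\]
It would therefore suffice to find units $a \in \calO_K^{\times}$ for which the symbol $(q,a)_{K_n,p^n}$ has order tending to infinity with $n$.

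\emph{Step 3 (main obstacle: producing nontrivial symbols).} This is where $v_K(q) \neq 0$ must be used. Writing $q = \pi^{k} u$ with $k \neq 0$, the symbol $(\pi, a)$ on units $a$ computes the ramified part of the Artin map. On $\calO_K^{\times}$ this map surjects onto the inertia subgroup of $\Gal(K(\mu_{p^\infty})/K)$, which is $p$-adically infinite. My first attempt would be to take $a$ to be a principal unit whose reciprocity image acts on $\mu_{p^{n}}$ by an element of large order. Then $(\pi^{k} u, a)$ has order at least roughly $p^{n - v_p(k) - c'}$, with $c'$ independent of $n$; the factor $(u, a)$ should be controlled by bounding the tame contribution separately. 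For $n \gg c + v_p(k) + c'$ this contradicts Step 1. The delicate point is making the Hilbert-symbol computation on principal units uniform in $n$. If it resists direct computation, the fallback is to work inside $K^{\mathrm{ab}} = K^{\mathrm{ur}} K_{\pi}$ via Lubin--Tate theory, after replacing $\pi$ by $q$-adapted data when $k = \pm 1$ and passing to $q^{1/k}$ after a bounded extension otherwise. There one shows directly that the Kummer class of $q$ in $\varinjlim_n (K^{\mathrm{ab}})^{\times}/p^n$ is nonzero.
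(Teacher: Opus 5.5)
Your Step~1 is correct, and it is the right idea: it is a finite-level form of the paper's key observation that, on an abelian Galois group, anything twisted by the cyclotomic character is killed by $\chi_{\cyc}(\tau)-1$. The gap is in Steps~2--3, and it is not just a delicate computation. The strategy there cannot work. For $q,a\in K^{\times}$ and $\sigma\in\Gal(K_n/K)$, Galois equivariance of the norm residue symbol gives
\[
(q,a)_{K_n,p^n}=(\sigma q,\sigma a)_{K_n,p^n}=\sigma\bigl((q,a)_{K_n,p^n}\bigr)=(q,a)_{K_n,p^n}^{\chi_{\cyc}(\sigma)} .
\]
Hence $(q,a)_{K_n,p^n}^{\chi_{\cyc}(\sigma)-1}=1$, and the symbol has order dividing $p^{c}$ for \emph{every} $a\in K^{\times}$. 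This holds unconditionally, whether or not $q$ is divisible. Your own formula shows the same thing: $\mathrm{rec}_K(N_{K_n/K}a)=\mathrm{rec}_K(a)^{[K_n:K]}$ is trivial on $K_n$, so it lies in $H_n$, which Step~1 already bounds. So no unit $a\in\calO_K^{\times}$ can give symbols of order roughly $p^{\,n-v_p(k)-c'}$; that estimate is false. Pairing $q$ only against elements of $K$ can never contradict Step~1. The Lubin--Tate fallback is only a sketch and does not say where a contradiction would come from.

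What is missing is a descent from $K_n$ back to $K$, where the valuation argument of Remark~\ref{rem:infinitely_p_divisible} works.
\begin{itemize}
\item By inflation--restriction, the kernel of $K^{\times}/K^{\times p^n}\to K_n^{\times}/K_n^{\times p^n}$ is $\rmH^1(\Gal(K_n/K),\mu_{p^n})$.
\item Since $\Gal(K_n/K)$ is abelian, comparing $f(\sigma\tau)=f(\tau\sigma)$ for a cocycle $f$ shows that this group is killed by $\chi_{\cyc}(\sigma)-1$ for every $\sigma$. Hence it is killed by $p^{c}$.
\item Combined with Step~1, this gives $q^{p^{2c}}\in K^{\times p^n}$ for all $n$. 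Then $p^n\mid p^{2c}v_K(q)$ for every $n$, which forces $v_K(q)=0$.
\end{itemize}

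With this repair, your argument becomes a finite-level version of the paper's proof. The paper instead fixes a compatible system of $p^n$-th roots of $q$ in $L$. The Kummer cocycle of $q$ is then inflated from $\Gal_K^{\ab}$. By the same $(\chi_{\cyc}(\tau)-1)$ trick it vanishes on $\ker\chi_{\cyc}$, so it comes from the finite group $\rmH^1(\Gal_K^{\ab},\ZZ_p(1))$. Therefore the class of $q$ in $\varprojlim_n K^{\times}/K^{\times p^n}$ is torsion, contradicting $v_K(q)\neq 0$. Your repaired version has the small advantage of treating each $n$ separately, so it never needs to choose a compatible system of roots.
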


\begin{proof}
Suppose, to the contrary, that $q\in L^{\times p^n} $ for every $n \geq 1$. One may choose a compatible system
\[
\underline{q} := (q_{n})_{n \geq 0} \in (L^\times)^{\ZZ},
\quad \text{ with } q_{n+1}^p=q_n, \, q_0 = q.
\]

Recall that we denote $\Gal_{K} := \Gal(\barK/K)$ and $\Gal_{K}^{\ab} := \Gal(K^{\ab}/K)$. Since $L\subseteq K^{\ab}$ by assumption, every $q_n$ belongs to $K^{\ab}$. The compatible system defines the Kummer
cocycle
\[
\left[ c_q \colon \Gal_K\longrightarrow \ZZ_p(1); \quad c_q(\sigma)  =
\left(\dfrac{\sigma(q_n)}{q_n}\right)_{n\geq 1} \right] \in \rmH^{1}(\Gal_{K}, \ZZ_p(1)).
\]
This cocycle vanishes on $\Gal(\barK/K^{\ab})$. Moreover, $\Gal(\barK/K^{\ab})$ acts trivially on $\ZZ_p(1)$ since the cyclotomic extension $K(\mu_{p^\infty})/K$ is abelian. Consequently, $c_q$ is inflated from a class in $\rmH^1(\Gal_K^{\ab},\ZZ_p(1))$.

We claim that \underline{$\rmH^1(\Gal_K^{\ab},\ZZ_p(1))$ is a finite group}. Let $\chi_{\cyc}\colon \Gal_K^{\ab}\longrightarrow\ZZ_p^\times$ be the cyclotomic character and put $B=\ker(\chi_{\cyc})$. We choose $\tau\in \Gal_K^{\ab}$ such that $\chi_{\cyc}(\tau)\neq 1$. If $c: \Gal_K^{\ab}\to\ZZ_p(1)$ is a continuous cocycle and $b\in B$, then, since $G_K^{\ab}$ is abelian, $c(\tau b)=c(b\tau)$. Using the cocycle relation on both sides gives
\[
c(\tau)+\chi_{\cyc}(\tau)c(b)= c(b)+c(\tau),
\]
and hence $(\chi_{\cyc}(\tau)-1)c(b)=0$.
Since $\ZZ_p(1)$ is torsion-free and $\chi_{\cyc}(\tau)-1\neq0$, it follows that $c(b) = 0$. This shows that every cocycle in $\rmH^{1}(\Gal_{K}^{\ab}, \ZZ_p(1))$ vanishes on $B$ and factors through $\chi_{\cyc}(G_K^{\ab})\subseteq\ZZ_p^\times$. The group $\chi_{\cyc}(\Gal_K^{\ab})$ contains an open procyclic subgroup $C\simeq\ZZ_p$. Let $\gamma$ be a topological generator of $C$. Then
\[
\rmH^1(C,\ZZ_p(1)) \simeq \ZZ_p/\left(\chi_{\cyc}(\gamma)-1\right)\ZZ_p,
\]
which is finite. Since $C$ has finite index in $\chi_{\cyc}(\Gal_K^{\ab})$, it follows that $\rmH^{1}(\Gal_{K}^{\ab}, \ZZ_p(1))$ is finite.

On the other hand, Kummer theory gives
\[
\rmH^1(\Gal_K,\ZZ_p(1))
 \simeq
\varprojlim_n K^\times/K^{\times p^n},
\]
and under the valuation map, the Kummer class of $q$ maps to
\[
\left(v_K(q)\bmod p^n \right)_{n\geq1} \in \varprojlim_n\ZZ/p^n\ZZ \simeq\ZZ_p.
\]
Since $v_K(q) \neq 0$, this element has infinite order. Therefore, the Kummer class of $q$ has infinite order in $\rmH^{1}(\Gal_{K}^{\ab}, \ZZ_p(1))$. This contradicts the fact that it is inflated from the finite group
$\rmH^1(\Gal_K^{\ab},\ZZ_p(1))$. Hence $q$ cannot be infinitely $p$-divisible in $L^{\times}$.
\end{proof}

Applying Theorems \ref{thm:verticalcontrol} and \ref{thm:maintheorem}, together with Section \ref{sec:exampleclassgroup} controlling $\mu_{p^{\infty}}(L)$ and Lemma \ref{lem:q_E_divisible} controlling the $p$-divisibility of $q_{E}$ in $L$, we obtain the following result.

\begin{theorem}[Main theorems for elliptic curves: split multiplicative reduction case] \label{thm:appecmultcyc}
    Let $E$ be an elliptic curve over $F$ with split multiplicative reduction at all places $v$ of $F$ above $p$. Assuming \dref{item:spl}, we have the following results.
\begin{enumerate}[label = \rm (\arabic*)]
\item \emph{(Vertical control theorem)} The Pontryagin dual of the restriction map
    \[
    \res_{\tilF/F_{\cyc}}: \Sel(E/F_{\cyc}) \rightarrow \Sel(E/\tilF)^{\Gal(\tilF/F_{\cyc})}
    \]
    is a pseudo-isomorphism of $\ZZ_p\lrbracket{\Gal(F_{\cyc}/F)}$-modules, with kernel and cokernel being finitely generated torsion $\ZZ_p$-modules.
    \item \emph{(Conjecture \ref{myconj:Bminus})} If $Y(E/F_{\cyc})$ is a finitely generated $\ZZ_p$-module, then $Y(E/\tilF)$ is a finitely generated $\ZZ_p\lrbracket{\Gal(\tilF/F_{\cyc})}$-module.
    \item \emph{(Conjecture \ref{myconj:Bplus})} If $Y(E/F_{\cyc})$ is a pseudonull $\ZZ_p\lrbracket{\Gal(F_{\cyc}/F)}$-module, then $Y(E/\tilF)$ is a pseudonull  $\ZZ_p\lrbracket{\Gal(\tilF/F)}$-module.
\end{enumerate}
\end{theorem}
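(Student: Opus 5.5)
The plan is to deduce Theorem \ref{thm:appecmultcyc} from Theorems \ref{thm:verticalcontrol} and \ref{thm:maintheorem} with $R = \ZZ_p$, $A = E[p^{\infty}]$ and $F_{\infty} = F_{\cyc}$. Since $\ZZ_p$ is a regular local ring, hence Cohen--Macaulay, and $F_{\cyc}/F$ is cyclotomic-like, all three assertions follow once conditions \dref{eq:gf} and \dref{eq:lf} are verified. Over $R = \ZZ_p$ a cofinitely generated module is cotorsion exactly when it is finite. So we must show that $E(L)[p^{\infty}]$ is finite for every finite extension $L/F_{\cyc,v}$ inside $\tilF_{w}$ and every $v \mid p$. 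Since \dref{eq:lf} implies \dref{eq:gf}, the global statement then comes for free: a global torsion point of $E(L)$ embeds into $E(L_w)$ for any place $w$ above $p$.

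Fix $v \mid p$, a place $w$ of $\tilF$ above it, and a finite extension $L/F_{\cyc,v}$ contained in $\tilF_{w}$. By \dref{item:spl} we have $F_v = \QQ_p$. The extension $\tilF_w/F_v$ is a subextension of a $\ZZ_p^{d}$-extension, so it is abelian pro-$p$, and hence $L/\QQ_p$ is abelian. We then apply Proposition \ref{prop:imaimult} with $L$ viewed as an algebraic extension of $F_v$, which reduces the claim to two checks.

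\emph{First check: $\mu_{p^{\infty}}(L)$ is finite.} The argument of Section \ref{sec:exampleclassgroup} applies verbatim. The extension $L/\QQ_p$ is pro-$p$, while $[\QQ_p(\mu_p):\QQ_p] = p-1$ is prime to $p$, so $\mu_p \not\subset L$ and therefore $\mu_{p^{\infty}}(L)$ is trivial.

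\emph{Second check: $q_E$ is not infinitely $p$-divisible in $L^{\times}$.} Apply Lemma \ref{lem:q_E_divisible} with $K = F_v = \QQ_p$ and with the abelian extension $L/K$. The Tate period satisfies $q_E \in F_v^{\times}$ and $v_K(q_E) > 0$, so the hypothesis $v_K(q_E) \neq 0$ holds.

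Together these give finiteness of $E(L)[p^{\infty}]$, hence \dref{eq:lf} and \dref{eq:gf}, and parts (1)--(3) follow directly from Theorems \ref{thm:verticalcontrol} and \ref{thm:maintheorem}.

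The main point to be careful about is that $L$ is abelian over a finite extension $K$ of $\QQ_p$ containing $q_E$, since Lemma \ref{lem:q_E_divisible} needs both conditions. This is exactly where \dref{item:spl} is used, because it makes $F_v = \QQ_p$. One also has to confirm that the local cotorsion condition in \dref{eq:lf} is stated only for finite extensions of $F_{\cyc,v}$ inside $\tilF_w$. That restriction keeps every field in question inside a $\ZZ_p^{d}$-extension of $F_v$, which is what makes it abelian and pro-$p$.
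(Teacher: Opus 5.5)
Your proposal is correct and is essentially the paper's argument: you verify \dref{eq:lf} (hence \dref{eq:gf}) via Proposition \ref{prop:imaimult}, using the pro-$p$ argument of Section \ref{sec:exampleclassgroup} for $\mu_{p^{\infty}}(L)$ and Lemma \ref{lem:q_E_divisible} for $q_E$, and then invoke Theorems \ref{thm:verticalcontrol} and \ref{thm:maintheorem} with $R=\ZZ_p$. One small correction to your closing remark: Lemma \ref{lem:q_E_divisible} applies with $K=F_v$ for any finite extension $F_v/\QQ_p$, since $\tilF_w/F_v$ is always abelian and $q_E\in F_v^{\times}$; so \dref{item:spl} enters only through the $\mu_{p^{\infty}}$ check, not the $q_E$ check.
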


\subsubsection{Nonsplit potential multiplicative reduction} \label{sec:nonsplit}
Let $K=F_v$, and suppose that $E/F_{v}$ has potential multiplicative yet nonsplit multiplicative reduction. In this case, there exists a quadratic character $\eta: \Gal_K \rightarrow \{\pm1\}$ such that the quadratic twist $E^{\eta}$ of $E$ by $\eta$ has split multiplicative reduction at $v$. Moreover, if $E/F_v$ has nonsplit multiplicative reduction, then $\eta$ is the (unique) unramified character of $K$, and when $E/F_v$ has additive potential multiplicative reduction, then $\eta$ is a ramified quadratic character. For these facts, see Theorem \ref{thm:twist-elliptic}.

Following \eqref{eq:multiplicative} (and under the identification $\iota_{E^{\eta}}$), we have the following short exact sequence of $\Gal_{K}$-modules (for a proof, see Theorem \ref{prop:twisted_tate_sequence}):
\begin{equation}\label{eq:nonsplit-multiplicative}
0\rightarrow \mu_{p^\infty}\otimes\eta \rightarrow E[p^\infty] \rightarrow \QQ_p/\ZZ_p \otimes\eta \rightarrow 0.
\end{equation}
Here $\Gal_{K}$ acts on $\mu_{p^\infty}\otimes\eta$ through the character $\chi_{\cyc}\eta$, while it acts on $\QQ_p/\ZZ_p \otimes \eta$ through $\eta$.

The following proposition is parallel to Proposition \ref{prop:imaimult} in the split multiplicative case.

\begin{proposition}\label{prop:nonsplit-mult}
Suppose that $p$ is odd and that $E/K$ has nonsplit multiplicative
reduction. Let $L$ be an algebraic extension of $K$. Assume that
\[
\eta|_{\Gal_L}\neq 1
\quad\text{and}\quad
(\chi_{\cyc} \eta)|_{\Gal_L}\neq 1.
\]
Then $E(L)[p^\infty]$ is finite.
\end{proposition}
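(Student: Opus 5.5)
Taking $\Gal_L$-invariants of the twisted Tate sequence \eqref{eq:nonsplit-multiplicative} gives the exact sequence
\[
0 \rightarrow \rmH^{0}(L, \mu_{p^\infty}\otimes\eta) \rightarrow E(L)[p^\infty] \rightarrow \rmH^{0}(L, \QQ_p/\ZZ_p\otimes\eta).
\]
Our plan is to show that the two outer terms are finite; in fact we will show both are trivial. As in the proof of Proposition \ref{prop:imaimult}, this gives the finiteness of $E(L)[p^\infty]$, and we do not even need to analyse the connecting map $\delta$.

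\textbf{Right-hand term.} The group $\Gal_L$ acts on $\QQ_p/\ZZ_p\otimes\eta$ through the quadratic character $\eta$, which takes values in $\{\pm1\}$. By hypothesis $\eta|_{\Gal_L}\neq 1$, so there is some $\sigma\in\Gal_L$ with $\eta(\sigma)=-1$. An invariant element $x$ then satisfies $x=-x$, so $2x=0$. Since $p$ is odd, the $2$-torsion of $\QQ_p/\ZZ_p$ is trivial, so $x=0$ and $\rmH^0(L,\QQ_p/\ZZ_p\otimes\eta)=0$.

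\textbf{Left-hand term.} The group $\Gal_L$ acts on $\mu_{p^\infty}\otimes\eta\simeq\QQ_p/\ZZ_p$ through the character $\psi:=\chi_{\cyc}\eta\colon\Gal_L\to\ZZ_p^\times$, and by hypothesis $\psi\neq 1$. Pick $\sigma$ with $u:=\psi(\sigma)\neq1$. The invariants are contained in the kernel of multiplication by $u-1\in\ZZ_p\smallsetminus\{0\}$ on $\QQ_p/\ZZ_p$. Writing $u-1=p^k w$ with $w$ a unit, this kernel is $p^{-k}\ZZ_p/\ZZ_p$, which is finite. So $\rmH^0(L,\mu_{p^\infty}\otimes\eta)$ is finite, though possibly nonzero.

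Combining the two steps, $E(L)[p^\infty]$ injects into a finite group and is therefore finite. The only point that needs care is the identification of the Galois actions on the two graded pieces, which we take from Theorem \ref{prop:twisted_tate_sequence}. Once that is granted, there is no real obstacle. The argument uses only that $\psi$ is nontrivial and $\ZZ_p^\times$-valued, and that $p$ is odd.
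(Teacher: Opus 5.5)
Your proposal is correct and matches the paper's proof step for step. It takes $\Gal_L$-invariants of the twisted Tate sequence, kills the $\eta$-twisted quotient via $2x=0$ with $p$ odd, and bounds the $\chi_{\cyc}\eta$-twisted sub by the kernel of multiplication by $u-1$ on $\QQ_p/\ZZ_p$. One small slip: your opening claim that both outer terms are trivial is wrong for the left-hand term, which in general is only finite, as you yourself note later.
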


\begin{proof}
Taking $\Gal_{L}$-invariants in \eqref{eq:nonsplit-multiplicative} gives
the left exact sequence
\begin{equation} \label{eq:nonsplit_H0}
0 \rightarrow \rmH^{0}(\Gal_{L}, \mu_{p^{\infty} \otimes \eta}) \rightarrow \rmH^{0}(\Gal_{L}, E[p^{\infty}]) \rightarrow \rmH^{0}(\Gal_{L}, \QQ_p/\ZZ_p \otimes \eta).
\end{equation}
To make our life easier, we write
\[
A=\mu_{p^\infty}\otimes\eta,\quad B=E[p^\infty],\quad C=\QQ_p/\ZZ_p\otimes\eta.
\]

We first show that $\rmH^{0}(\Gal_{L}, C) = 0$. \footnote{This is the difference between the split and the nonsplit cases. In the split case, if we go back to the proof of Proposition \ref{prop:imaimult}, we see that $\rmH^{0}(\Gal_{L}, \QQ_p/\ZZ_p)$ might not be zero and hence we have to study $\ker(\delta_E)(L)$ in detail. However here, it is plausible to show that $\rmH^{0}(\Gal_{L}, \QQ_p/\ZZ_p \otimes \eta) = 0$.} Since $\eta|_{\Gal_L}\neq 1$, there exists $\sigma\in \Gal_L$ such that $\eta(\sigma)=-1$. If $x\in C^{\Gal_L}$, then $x$ is fixed by $\sigma$, but the definition of the twisted action also gives $\sigma(x)=\eta(\sigma)x=-x$, and hence $2x = 0$. The group $C$ is $p$-primary, and $p$ is odd, so this implies $x = 0$. Hence $\rmH^{0}(\Gal_{L}, C) = 0$.

We then show that $\rmH^{0}(\Gal_{L}, A)$ is finite. We write $\alpha:=\chi_{\cyc}\eta$. By assumption, $\alpha|_{\Gal_L}\neq 1$. Choose $\tau \in \Gal_{L}$ such that $a:=\alpha(\tau)\in \ZZ_p^{\times}$ is not equal to $1$. If $y \in A^{\Gal_L}$, then $a y=\tau(y)=y$ and hence $(a-1)y=0$. Write $a-1=p^m u$ with $m \geq 0$ (note that $m$ is independent of $y$) and $u \in \ZZ_p^{\times}$. Since multiplication
by $u$ is an automorphism of $\QQ_p/\ZZ_p$, the relation $(a-1)y = 0$ implies $p^{m}y = 0$. Thus $A^{\Gal_L}\subseteq A[p^m]$. The group $A[p^m]$ is finite of order $p^m$. Hence $A^{\Gal_L}$ is finite.

Combining the two preceding paragraphs with \eqref{eq:nonsplit_H0}, we see that $E(L)[p^{\infty}]$ is finite.
\end{proof}

The following lemma is parallel to Lemma \ref{lem:q_E_divisible} in the split multiplicative case.

\begin{lemma} \label{lem:nontrivial_criterion}
    Suppose that $K = \QQ_p$ and $L$ is a Galois pro-$p$ extension of the cyclotomic $\ZZ_p$-extension $\QQ_{p,\cyc}$ of $\QQ_p$. This includes the case where $L/\QQ_{p,\cyc}$ is a finite Galois extension of $p$-power degree. Suppose $\eta$ is a quadratic character of $\QQ_p$. When $\eta$ is ramified, we further assume that $p \geq 5$. Then we have \[
\eta|_{\Gal_L}\neq 1
\quad\text{and}\quad
(\chi_{\cyc} \eta)|_{\Gal_L}\neq 1.
\]
\end{lemma}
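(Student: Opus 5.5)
The plan is to translate both conditions into statements about field degrees. The key point is that any finite subextension of $L/\QQ_p$ has $p$-power degree, whereas nontriviality will come from a piece of degree prime to $p$.

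\textbf{Step 1: degrees of subextensions of $L$.} Since $\Gal(\QQ_{p,\cyc}/\QQ_p)\simeq\ZZ_p$ and $L/\QQ_{p,\cyc}$ is Galois pro-$p$, any finite extension $K'/\QQ_p$ with $K'\subseteq L$ satisfies $[K':\QQ_p]\mid[K'\QQ_{p,\cyc,n}:\QQ_p]$ for a suitable layer $\QQ_{p,\cyc,n}$. That larger degree is a power of $p$: it equals $p^n$ times the degree of a finite subextension of the pro-$p$ extension $L/\QQ_{p,\cyc}$. (Equivalently, the supernatural order of $\Gal(L/\QQ_p)$, whenever it makes sense, is a power of $p$.) Hence every finite subextension of $L/\QQ_p$ has $p$-power degree.

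\textbf{Step 2: $\eta$ does not vanish on $\Gal_L$.} If $\eta|_{\Gal_L}=1$, then the quadratic field cut out by $\eta$ lies in $L$. This gives a subextension of degree $2$, contradicting Step 1 because $p$ is odd.

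\textbf{Step 3: $\chi_{\cyc}\eta$ does not vanish on $\Gal_L$.} Write $\psi=\chi_{\cyc}\eta\colon\Gal_{\QQ_p}\to\ZZ_p^\times$, and decompose $\ZZ_p^\times=\mu_{p-1}\times(1+p\ZZ_p)$ with projection $\omega$ onto $\mu_{p-1}$. Suppose $\psi|_{\Gal_L}=1$. Then $\omega\circ\psi$ is also trivial on $\Gal_L$, so the fixed field of $\ker(\omega\circ\psi)$ lies in $L$. That fixed field is cyclic over $\QQ_p$ of degree $|\omega\psi(\Gal_{\QQ_p})|$, which divides $p-1$ and is therefore prime to $p$. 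By Step 1 it suffices to show that $\omega\circ\psi$ is nontrivial. I would check this on the inertia subgroup $I$, where $\chi_{\cyc}(I)=\ZZ_p^\times$, so that $\omega\chi_{\cyc}$ maps $I$ onto $\mu_{p-1}$. There are two cases.
\begin{itemize}
\item If $\eta$ is unramified, then $\psi|_I=\chi_{\cyc}|_I$, and $\omega\psi(I)=\mu_{p-1}\neq 1$ because $p\geq 3$.
\item If $\eta$ is ramified, then $\omega\psi|_I=\omega\chi_{\cyc}|_I\cdot\eta|_I$, where $\eta|_I$ takes values in $\{\pm1\}\subseteq\mu_{p-1}$. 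Pick $\sigma\in I$ with $\omega\chi_{\cyc}(\sigma)$ a generator $\zeta$ of $\mu_{p-1}$. Then $\omega\psi(\sigma)=\pm\zeta$, which has order $p-1$ (if $4\mid p-1$) or at least $(p-1)/2$ (otherwise). Either way it is $\neq 1$ as soon as $p\geq 5$.
\end{itemize}

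The main obstacle is the ramified case of Step 3. For $p=3$ the ramified quadratic character can coincide with $\omega\chi_{\cyc}$ on inertia, so that $\psi$ lands in $1+3\ZZ_3$ and $\ker\psi$ can cut out a pro-$3$ extension that may lie inside $L$. This is exactly why the hypothesis $p\geq 5$ is imposed. What needs care is the bookkeeping that $\pm\zeta\neq 1$ for $p\geq5$, together with the degree argument of Step 1, which handles possibly non-Galois $L/\QQ_p$ by working with finite subextensions.
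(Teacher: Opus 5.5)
Your proof is correct. Its core idea is the paper's: the prime-to-$p$ part of $\chi_{\cyc}\eta$ is a nontrivial character of order dividing $p-1$, so it cannot become trivial on $\Gal_L$ when $L$ is pro-$p$ over $\QQ_{p,\cyc}$. The organization differs, though.

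\textbf{Where the paper is simpler.} The paper restricts to $H=\Gal_{\QQ_{p,\cyc}}$ from the start. There $\chi_{\cyc}|_H$ already has image $\mu_{p-1}$, so the paper needs neither your Teichm\"uller projection $\omega$ nor your Step 1 on degrees of subextensions of the possibly non-Galois $L/\QQ_p$. It only uses that a pro-$p$ group admits no nontrivial map to a group of order prime to $p$. In your Step 1, ``suitable $n$'' must mean $n$ large enough that $[K'\QQ_{p,\cyc,n}:\QQ_{p,\cyc,n}]=[K'\QQ_{p,\cyc}:\QQ_{p,\cyc}]$. Alternatively, use $[K':\QQ_p]=[K'\QQ_{p,\cyc}:\QQ_{p,\cyc}]\cdot[K'\cap\QQ_{p,\cyc}:\QQ_p]$.

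\textbf{Where your route is better.} Your nontriviality check, via inertia and the explicit order of $\pm\zeta$, is more precise than the paper's field comparison in (2.a)--(2.b). For ramified $\eta$, the paper asserts two things: that $K_\eta\QQ_{p,\cyc}\cap\QQ_{p,\cyc}(\mu_p)=\QQ_{p,\cyc}$, and that $(\chi_{\cyc}\eta)|_H$ has image of order exactly $p-1$. Both fail for $K_\eta=\QQ_p(\sqrt{(-1)^{(p-1)/2}p})\subset\QQ_p(\mu_p)$. The second fails when moreover $p\equiv 3\pmod 4$, where the image has order $(p-1)/2$.

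The paper's conclusion survives, since it only really uses two facts. First, $\chi_{\cyc}|_H\neq\eta|_H$, because they cut out extensions of degrees $p-1\geq 4$ and $2$. Second, the image has order prime to $p$. Your count, ``$p-1$ or $(p-1)/2$'', is exactly the right one.
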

\begin{proof}

\underline{(1) Treat the character $\eta$-part.} 
First, the character $\eta$ cuts out a quadratic extension $K_{\eta}/\QQ_p$. On the other hand, $\QQ_{p,\cyc}/\QQ_p$ is $\ZZ_p$-extension. Since $p$ is an odd prime, we have
\[
K_{\eta} \cap \QQ_{p,\cyc} = \QQ_p, \quad \text{and} \quad [K_\eta \QQ_{p,\cyc}: \QQ_{p,\cyc}]=2.
\]
If $\eta|_{\Gal_{L}}$ were trivial, then $L$ would contain $K_{\eta} \QQ_{p, \cyc}$. This is impossible because $L/\QQ_{p,\cyc}$ is pro-$p$ whereas $[K_\eta \QQ_{p,\cyc}: \QQ_{p,\cyc}]=2$ and $p$ is odd. Therefore, $\eta|_{\Gal_{L}} \neq 1$.

\underline{(2) Treat the character $\chi_{\cyc}\eta$.}
It remains to show that $(\chi_{\cyc} \eta)|_{\Gal_{L}} \neq 1$, or equivalently, $\chi_{\cyc}|_{\Gal_{L}} \neq \eta|_{\Gal_{L}}$. We split the proof into two steps:

(2.a) Put $H := \Gal_{\QQ_{p,\cyc}}$. The restriction $\chi_{\cyc}|_{H}$ has image $\mu_{p-1} \subset \ZZ_p^{\times}$, and it cuts out the extension $\QQ_{p,\cyc}(\mu_p)/\QQ_{p,\cyc}$, which is a totally ramified extension of degree $p-1$. On the other hand, $\eta|_{H}$ cuts out the field $K_{\eta}\QQ_{p,\cyc}/\QQ_{p,\cyc}$, which is a quadratic extension.
\begin{itemize}
    \item When $\eta$ is an unramified quadratic character, then the quadratic extension $K_{\eta}\QQ_{p,\cyc}/\QQ_{p,\cyc}$ is unramified, and hence its intersection with the ramified extension $\QQ_{p,\cyc}(\mu_p)/\QQ_{p,\cyc}$  is $\QQ_{p,\cyc}$. It follows that $\chi_{\cyc}|_{H} \neq \eta|_{H}$.
    \item When $\eta$ is a ramified character, since we have assumed $p \geq 5$, by comparing the degree, we also obtain that the intersection of $K_{\eta}\QQ_{p,\cyc}/\QQ_{p,\cyc}$ with $\QQ_{p,\cyc}(\mu_p)/\QQ_{p,\cyc}$  is exactly $\QQ_{p,\cyc}$ , which gives the same result.
\end{itemize}
To sum up, in both cases, we have $\chi_{\cyc}|_{H} \neq \eta|_{H}$.

(2.b) Moreover, from (2.a) we see that the character $(\chi_{\cyc}\eta)|_{H}$ has finite image with cardinality $p-1$, which is prime to $p$ since $p$ is odd. This forces $(\chi_{\cyc}\eta)|_{\Gal_{L}}$ to be nontrivial since $L/\QQ_{p,\cyc}$ is a pro-$p$ extension.
\end{proof}

Combining Proposition \ref{prop:nonsplit-mult} and Lemma \ref{lem:nontrivial_criterion}, we see that in the special case where $F_{v} = \QQ_p$ and $L/\QQ_{p,\cyc}$ is a Galois extension of $p$-power degree, $E(L)[p^{\infty}]$ is finite. This enables us to apply Theorems \ref{thm:verticalcontrol} and \ref{thm:maintheorem} to obtain the following theorem.

\begin{theorem}[Main theorems for elliptic curves: nonsplit multiplicative reduction case] \label{thm:appecmultcyc_nonsplit}
    Let $E$ be an elliptic curve over $F$ with potential multiplicative reduction at all places $v$ of $F$ above $p$. Assuming \dref{item:spl}, and if $E$ has additive reduction at any place $v$ of $F$ above $p$, we assume that $p \geq 5$, then we have the following results.
\begin{enumerate}[label = \rm (\arabic*)]
\item \emph{(Vertical control theorem)} The Pontryagin dual of the restriction map
    \[
    \res_{\tilF/F_{\cyc}}: \Sel(E/F_{\cyc}) \rightarrow \Sel(E/\tilF)^{\Gal(\tilF/F_{\cyc})}
    \]
    is a pseudo-isomorphism of $\ZZ_p\lrbracket{\Gal(F_{\cyc}/F)}$-modules, with kernel and cokernel being finitely generated torsion $\ZZ_p$-modules.
    \item \emph{(Conjecture \ref{myconj:Bminus})} If $Y(E/F_{\cyc})$ is a finitely generated $\ZZ_p$-module, then $Y(E/\tilF)$ is a finitely generated $\ZZ_p\lrbracket{\Gal(\tilF/F_{\cyc})}$-module.
    \item \emph{(Conjecture \ref{myconj:Bplus})} If $Y(E/F_{\cyc})$ is a pseudonull $\ZZ_p\lrbracket{\Gal(F_{\cyc}/F)}$-module, then $Y(E/\tilF)$ is a pseudonull  $\ZZ_p\lrbracket{\Gal(\tilF/F)}$-module.
\end{enumerate}
\end{theorem}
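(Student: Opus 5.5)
The plan is to reduce Theorem \ref{thm:appecmultcyc_nonsplit} to Theorems \ref{thm:verticalcontrol} and \ref{thm:maintheorem} with $R = \ZZ_p$, $T = T_p(E)$, $A = E[p^\infty]$ and $F_\infty = F_{\cyc}$. Since $F_{\cyc}/F$ is cyclotomic-like and $\ZZ_p$ is Cohen--Macaulay, it suffices to verify \dref{eq:gf} and \dref{eq:lf}. Because \dref{eq:lf} implies \dref{eq:gf}, I would only check the local condition. Over $\ZZ_p$, cotorsion and cofinitely generated means finite. So the task is: for every place $v$ of $F$ above $p$ and every finite $p$-power degree extension $L$ of $F_{\cyc,v}$ inside $\tilF_{w_\infty}$, the group $E(L)[p^\infty]$ is finite.

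Fix such a $v$. By \dref{item:spl}, $F_v = \QQ_p$, and $v$ is totally ramified in $F_{\cyc}$, so $F_{\cyc,v} = \QQ_{p,\cyc}$. Also $\tilF_{w_\infty}/\QQ_p$ is abelian with pro-$p$ Galois group, so $L/\QQ_{p,\cyc}$ is a finite Galois extension of $p$-power degree. I would then split according to the reduction type at $v$.

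If $E$ has split multiplicative reduction at $v$, I argue exactly as in Theorem \ref{thm:appecmultcyc}. First, $\mu_{p^\infty}(L)$ is finite, as shown in Section \ref{sec:exampleclassgroup}: indeed $\mu_p \not\subset L$ since $[\QQ_p(\mu_p):\QQ_p] = p-1$ is prime to $p$. Second, $q_E$ has positive valuation and $L/\QQ_p$ is abelian, so Lemma \ref{lem:q_E_divisible} shows that $q_E$ is not infinitely $p$-divisible in $L$. Proposition \ref{prop:imaimult} then gives finiteness.

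Otherwise $E/\QQ_p$ has either nonsplit multiplicative reduction or additive, potentially multiplicative reduction. In that case there is a quadratic character $\eta$ as in Section \ref{sec:nonsplit}. It is unramified in the nonsplit case and ramified in the additive case, where the hypothesis $p\ge 5$ is available. Lemma \ref{lem:nontrivial_criterion} applies to $L$ and yields $\eta|_{\Gal_L}\ne 1$ and $(\chi_{\cyc}\eta)|_{\Gal_L}\ne 1$. Proposition \ref{prop:nonsplit-mult} then gives finiteness of $E(L)[p^\infty]$.

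One gap needs attention. Proposition \ref{prop:nonsplit-mult} is stated for nonsplit multiplicative reduction, but its proof only uses the twisted Tate sequence \eqref{eq:nonsplit-multiplicative}, which holds for any $\eta$ with $E^\eta$ split (Theorem \ref{prop:twisted_tate_sequence}). So the additive case is covered by the same argument. I expect this to be the main point to check carefully: the exact sequence must hold with the ramified $\eta$, and $p\ge5$ must be used only through Lemma \ref{lem:nontrivial_criterion}.

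Since each place above $p$ falls into one of these cases, mixed reduction types are allowed. With \dref{eq:lf} and hence \dref{eq:gf} verified, Theorem \ref{thm:verticalcontrol} yields (1), and Theorem \ref{thm:maintheorem} yields (2) and (3).
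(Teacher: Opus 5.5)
Your proposal is correct and follows the paper's route: you verify \dref{eq:lf}, and hence \dref{eq:gf}, at each place above $p$ using Proposition \ref{prop:nonsplit-mult} together with Lemma \ref{lem:nontrivial_criterion}, handle split places via Proposition \ref{prop:imaimult}, Lemma \ref{lem:q_E_divisible} and the $\mu_{p^\infty}$ argument, and then invoke Theorems \ref{thm:verticalcontrol} and \ref{thm:maintheorem}. Your remark that Proposition \ref{prop:nonsplit-mult} extends to the additive case, because its proof uses only the twisted sequence of Proposition \ref{prop:twisted_tate_sequence}, is right and makes explicit a step the paper's one-line proof leaves implicit.
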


\subsubsection{General cases}
We are also ready to state the following result for general elliptic curves $E$ over $F$. Let $S_{p,F}$ be the set of all places of $F$ above $p$, and let $S_{p, F}^{\mathrm{p.good}}$ (resp. $S_{p, F}^{\mathrm{mult}}$, $S_{p, F}^{\mathrm{p. mult}}$) denote its subset consisting of those at which $E$ has potentially good reduction (resp. multiplicative reduction, additive potential multiplicative reduction). We focus on the case $F_{\infty} = F_{\cyc}$.

\begin{theorem}[Main theorems for elliptic curves over $F$] \label{thm:appecoverF}
    Let $E$ be an elliptic curve over $F$. Suppose that 
    \begin{condition}
    \item[(Spl${}^{-}$)] for all $v \in S_{p}^{\mathrm{mult}} \cup S_{p}^{\mathrm{p.mult}}$, we have $F_{v} = \QQ_p$.  \label{item:splitminus}
    \item[(Large)] suppose that $p \geq 5$ when $S_{p}^{\mathrm{p.mult}} \neq \emptyset$.\label{item:large}
    \end{condition}
    Then we have the following results.
\begin{enumerate}[label = \rm (\arabic*)]
\item \emph{(Vertical control theorem)} The Pontryagin dual of the restriction map
    \[
    \res_{\tilF/F_{\cyc}}: \Sel(E/F_{\cyc}) \rightarrow \Sel(E/\tilF)^{\Gal(\tilF/F_{\cyc})}
    \]
    is a pseudo-isomorphism of $\ZZ_p\lrbracket{\Gal(F_{\cyc}/F)}$-modules, with kernel and cokernel being finitely generated torsion $\ZZ_p$-modules.
    \item \emph{(Conjecture \ref{myconj:Bminus})} If $Y(E/F_{\cyc})$ is a finitely generated $\ZZ_p$-module, then $Y(E/\tilF)$ is a finitely generated $\ZZ_p\lrbracket{\Gal(\tilF/F_{\cyc})}$-module.
    \item \emph{(Conjecture \ref{myconj:Bplus})} If $Y(E/F_{\cyc})$ is a pseudonull $\ZZ_p\lrbracket{\Gal(F_{\cyc}/F)}$-module, then $Y(E/\tilF)$ is a pseudonull  $\ZZ_p\lrbracket{\Gal(\tilF/F)}$-module.
\end{enumerate}
\end{theorem}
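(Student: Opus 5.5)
The plan is to reduce Theorem \ref{thm:appecoverF} to Theorems \ref{thm:verticalcontrol} and \ref{thm:maintheorem} with $R=\ZZ_p$ (which is Cohen--Macaulay), $A=E[p^{\infty}]$ and $F_{\infty}=F_{\cyc}$. Since $F_{\cyc}/F$ is cyclotomic-like, all three assertions follow once conditions \dref{eq:gf} and \dref{eq:lf} are verified. Over $R=\ZZ_p$, cotorsion of $A(L)$ means $E(L)[p^{\infty}]$ is finite. Since \dref{eq:lf} implies \dref{eq:gf}, it suffices to show the following: for every place $v\mid p$ of $F$, every place of $F_{\cyc}$ above it, and every finite extension $L$ of $F_{\cyc,v}$ inside $\tilF_{w_\infty}$, the group $E(L)[p^{\infty}]$ is finite. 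Such $L$ is a Galois extension of $F_{\cyc,v}$ of $p$-power degree, and it is abelian over $F_v$ because $\tilF/F$ is abelian. The verification is place by place, and we split according to the reduction type of $E$ at $v$.

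\textbf{Case $v\in S_p^{\mathrm{p.good}}$.} Here Proposition \ref{prop:imai} directly gives finiteness of $E(L)[p^{\infty}]$ for every finite extension $L/F_{\cyc,v}$.

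\textbf{Case $v\in S_p^{\mathrm{mult}}$, split.} Condition \dref{item:splitminus} gives $F_v=\QQ_p$. We use Proposition \ref{prop:imaimult}, which needs two inputs.
\begin{enumerate}
\item Finiteness of $\mu_{p^{\infty}}(L)$: as in Section \ref{sec:exampleclassgroup}, $L/\QQ_{p,\cyc}$ has $p$-power degree while $[\QQ_{p,\cyc}(\mu_p):\QQ_{p,\cyc}]=p-1$, so $\mu_{p^{\infty}}(L)$ is trivial.
\item Non-infinite $p$-divisibility of $q_E$ in $L^{\times}$: Lemma \ref{lem:q_E_divisible} applies with $K=\QQ_p$, since $L/\QQ_p$ is abelian and $v(q_E)>0$.
\end{enumerate}

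\textbf{Case $v\in S_p^{\mathrm{mult}}$, nonsplit, or $v\in S_p^{\mathrm{p.mult}}$.} Again $F_v=\QQ_p$. Let $\eta$ be the quadratic character from Section \ref{sec:nonsplit}. It is unramified in the nonsplit multiplicative case and ramified in the additive case; in the latter, \dref{item:large} gives $p\ge5$. Lemma \ref{lem:nontrivial_criterion} applies because $L/\QQ_{p,\cyc}$ is finite Galois of $p$-power degree, and it yields $\eta|_{\Gal_L}\ne1$ and $(\chi_{\cyc}\eta)|_{\Gal_L}\ne1$. Proposition \ref{prop:nonsplit-mult} then gives finiteness. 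That proposition is stated for nonsplit multiplicative reduction, but its proof uses only the twisted sequence \eqref{eq:nonsplit-multiplicative}, which holds for any potentially multiplicative $E$ with twisting character $\eta$. So it applies verbatim in the additive case as well.

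The main point needing care is that \dref{eq:lf} is genuinely local and must hold at every place above $p$ simultaneously, even when reduction types are mixed. The argument above shows that the verification at each $v$ depends only on the reduction type at $v$, so mixing causes no interaction between places. A second point to check is that the local fields $L$ arising from $\tilF$ really are abelian over $F_v$ and of $p$-power degree over $F_{\cyc,v}$. Both properties follow because the completion $\tilF_{w_\infty}$ is a $\ZZ_p^{k}$-extension of $F_v$ containing $F_{\cyc,v}$.

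Once \dref{eq:gf} and \dref{eq:lf} are established, Theorem \ref{thm:verticalcontrol} gives (1), and Theorem \ref{thm:maintheorem}(1) and (2) give (2) and (3).
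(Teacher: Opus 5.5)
Your proposal is correct and follows the paper's proof. You verify \dref{eq:lf}, hence \dref{eq:gf}, place by place: Proposition~\ref{prop:imai} in the potentially good case, Proposition~\ref{prop:imaimult} with Lemma~\ref{lem:q_E_divisible} and the $\mu_{p^{\infty}}$ computation of Section~\ref{sec:exampleclassgroup} in the split case, and Proposition~\ref{prop:nonsplit-mult} with Lemma~\ref{lem:nontrivial_criterion} otherwise, then you apply Theorems~\ref{thm:verticalcontrol} and~\ref{thm:maintheorem} with $R=\ZZ_p$; your remark that Proposition~\ref{prop:nonsplit-mult} applies verbatim at additive potentially multiplicative places, via the twisted sequence of Proposition~\ref{prop:twisted_tate_sequence}, makes explicit a step the paper's short proof leaves implicit.
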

\begin{proof}
    At the places $v$ over $p$ where $E$ has potentially good reduction, Greenberg's finiteness condition follows from Proposition \ref{prop:imai}. At the places $v$ over $p$ where $E$ has multiplicative reduction, it follows either from a combination of Proposition \ref{prop:imaimult}, Lemma \ref{lem:q_E_divisible} and the discussion in Section \ref{sec:exampleclassgroup} concerning the finiteness of $\mu_{p^{\infty}}(L)$ in the split case, or from a combination of Proposition \ref{prop:nonsplit-mult} and Lemma \ref{lem:nontrivial_criterion} in the nonsplit case, both under condition \dref{item:splitminus}. The theorem is then a corollary of Theorem \ref{thm:maintheorem}.
\end{proof}

\subsection{Example: Classical cuspidal eigenforms} \label{sec:crystalline}
More generally, we consider a cuspidal newform $f \in S_{k}(\Gamma_{0}(N), \epsilon_{f})$ of \emph{even} weight $k \geq 2$, level $N$, and nebentypus character $\epsilon_{f}$, with Fourier expansion
\[
f = \sum_{n=1}^{\infty} a_n(f)q^n \in F\lrbracket{q},
\]
where $F := \QQ(f)$ is the coefficient field of $f$, i.e. the number field generated by all Fourier coefficients of $f$. Let $L$ be a finite extension of the completion of $F$ at a chosen prime above $p$ (recall that $p \neq 2)$, and let $\calO$ be its ring of integers.

There is an absolutely irreducible $\Gal_{\QQ}$-representation associated with $f$ and $L$:
\[
\rho_{f}: \Gal_{\QQ} \rightarrow \Aut_{L}(V_{f})
\]
where $V_{f}$ is a two-dimensional $L$-vector space, such that $\rho_{f}$ is unramified at all primes $\ell \nmid pN$ and for any such $\ell$, the characteristic polynomial of $\rho_{f}(\Frob_{\ell})$ is
\[
P_{\ell}(X) := X^{2} - a_{\ell}(f) X + \epsilon_{f}(\ell) \ell^{k-1}.
\]
We take $T_{f}$ to be any fixed $\Gal_{\QQ}$-stable $\calO$-lattice in $V_{f}$ and let $A_{f} := V_{f}/T_{f}$. Let $S$ be a finite set of places containing the infinite places and the finite places of $F$ lying over prime divisors of $Np$. For any algebraic extension $\calL/F$ inside $F_{S}$, we shall consider the fine Selmer group $\Sel_{S}(A_{f}/\calL)$ and its Pontryagin dual, and write the former as $\Sel_{S}(f/\calL)$ for simplicity.

The main tool for the local finiteness result is the following theorem of Y. Ozeki, proved as \cite[Theorem 1.2]{ozeki2020}. Here we use a simplified version (taking $k = \QQ_p$, $\pi = p$, and $K$ to be $K_{\lambda}$ for any place $\lambda$ of $K$ lying over $p$ in \textit{loc. cit.}).

\begin{theorem} \label{thm:ozeki}
Let $X$ be a proper smooth variety over $K_{\lambda}$ with potentially good reduction, where $\lambda$ is a finite place of $K$ lying over $p$. Let $V$ be a $\Gal_{K_{\lambda}}$-stable subquotient of $\rmH^{i}_{\et}(X_{\barK_{\lambda}}, \QQ_p)$ for $i > 0$. If $L$ is a finite extension of $K_{\lambda}(\mu_{p^{\infty}})$ and $\rmH^{0}(L, V) \neq 0$, then $i$ is even.
\end{theorem}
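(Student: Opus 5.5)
The plan is to reduce to a statement about $p$-adic Hodge theory and Frobenius weights. The mechanism is a parity clash: a nonzero invariant vector over a field cyclotomic up to finite extension forces a power of the cyclotomic character to occur in $V$, which has even weight, while $\rmH^{i}$ is pure of weight $i$.

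\textbf{Reductions.} Since $X$ has potentially good reduction, choose a finite extension $K'/K_{\lambda}$ over which $X$ acquires good reduction. Replace $L$ by the compositum $LK'$; this is still a finite extension of $K'(\mu_{p^{\infty}})$. The hypothesis survives the enlargement, because $\rmH^{0}(LK', V) \supseteq \rmH^{0}(L, V) \neq 0$. We may also enlarge $L$ further to be Galois over $K'$. Then:
\begin{itemize}
\item By the crystalline comparison theorem (Faltings, Tsuji), $\rmH^{i}_{\et}(X_{\barK_{\lambda}}, \QQ_p)$ is crystalline as a $\Gal_{K'}$-representation. Hence so is its subquotient $V$, and so is every $\Gal_{K'}$-stable subquotient of $V$.
\item By Katz--Messing, together with the compatibility of the crystalline comparison with Frobenius, the linearized Frobenius $\varphi^{f}$ on $D_{\mathrm{cris}}(V)$ has eigenvalues that are Weil numbers of weight $i$. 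Here $q = p^{f}$ is the residue cardinality of $K'$. Purity passes to subquotients.
\end{itemize}

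\textbf{Step 1: isolate a character.} Set $W := V^{\Gal_L}$. This is a nonzero $\Gal_{K'}$-stable subspace, since $L/K'$ is Galois, and $\Gal_{K'}$ acts on it through $\Gal(L/K')$. This group contains $\Gal(L/K'(\mu_{p^{\infty}}))$ as a finite normal subgroup with quotient an open subgroup of $\ZZ_p^{\times}$; in particular it is virtually abelian. Passing to a finite extension $K''$ of $K'$ inside $L$, the action of $\Gal_{K''}$ on $W$ factors through an open subgroup of $\Gal(K'(\mu_{p^{\infty}})/K') \subseteq \ZZ_p^{\times}$, which is abelian. Extending scalars to $\barQQ_p$ and taking a composition factor, we obtain a one-dimensional crystalline character $\psi$ of $\Gal_{K''}$ that factors through the cyclotomic character, say $\psi = g \circ \chi_{\cyc}$ with $g$ a continuous character of an open subgroup of $\ZZ_p^{\times}$.

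\textbf{Step 2: classify $\psi$.} Write $\psi = \chi_{\cyc}^{n}\cdot \epsilon$, where $n$ is the Hodge--Tate weight of $\psi$ and $\epsilon := \psi\chi_{\cyc}^{-n}$. I will argue that $\epsilon$ has finite order, as follows.
\begin{itemize}
\item $\epsilon$ is crystalline with Hodge--Tate weight $0$, so it is unramified.
\item $\epsilon$ also factors through $\chi_{\cyc}$, i.e.\ through $\Gal(K''(\mu_{p^{\infty}})/K'')$. The maximal unramified subextension of $K''(\mu_{p^{\infty}})/K''$ is finite, because $K'' \cap \QQ_p^{\mathrm{ur}}$ is finite over $\QQ_p$.
\item Hence $\epsilon$ is both unramified and factors through a finite quotient, so it has finite order.
\end{itemize}
Therefore, after a further finite unramified extension, $\psi = \chi_{\cyc}^{n}$ for some integer $n$.

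\textbf{Step 3: compare weights.} On $D_{\mathrm{cris}}(\chi_{\cyc}^{n})$ the linearized Frobenius $\varphi^{f}$ acts by $q^{-n}$, a Weil number of weight $-2n$. On the other hand, $\psi$ is a subquotient of $V\otimes\barQQ_p$, so it is pure of weight $i$. Thus $i = -2n$, which is even. (Positivity of $i$ just forces $n<0$.)

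\textbf{Main obstacle.} I expect the delicate step to be the purity input combined with Step 2. Concretely, it requires:
\begin{itemize}
\item checking that the comparison isomorphism identifies $\varphi$ on $D_{\mathrm{cris}}$ with crystalline Frobenius on the special fiber, so that Katz--Messing applies after linearizing over the unramified part;
\item checking that weights behave well under subquotients and under the finite base changes used;
\item carrying out the reduction from $\Gal_L$-invariants to an honest character of a finite-index subgroup when $\Gal(L/K')$ is only virtually abelian.
\end{itemize}
If this route becomes cumbersome, I would instead cite Ozeki's argument \cite[Theorem 1.2]{ozeki2020} directly, as the paper does.
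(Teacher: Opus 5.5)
The paper does not prove this statement: it quotes Ozeki \cite[Theorem 1.2]{ozeki2020}, which is formulated for the Lubin--Tate tower of a uniformizer $\pi$ of a $p$-adic field $k$, specialized to $k=\QQ_p$ and $\pi=p$. Your argument is a correct, self-contained proof. After base change to a field $K'$ of good reduction, the crystalline comparison and purity of crystalline Frobenius force the Frobenius weights of $D_{\mathrm{cris}}(V)$ to equal $i$. A nonzero $\Gal_L$-fixed vector produces, up to a finite-order twist, a Tate twist $\QQ_p(n)$ inside $V\otimes\barQQ_p$, of weight $-2n$. Compared with the citation, your route makes the inputs explicit: the comparison theorem for a proper smooth model, purity, and Tate--Sen for characters. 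It also gives a sharper conclusion, namely that $\Gal_L$-invariants can only come from copies of $\QQ_p(-i/2)$ twisted by finite-order characters. The citation buys brevity and the more general Lubin--Tate setting.

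Three points should be tightened.

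\emph{Step 1.} The operative fact is not virtual abelianness. You need an open subgroup of $\Gal(L/K')$ meeting the finite normal subgroup $\Gal(L/K'(\mu_{p^{\infty}}))$ trivially; it exists because a profinite group has open subgroups avoiding any finite set of nontrivial elements. Such a subgroup injects into $\Gal(K'(\mu_{p^{\infty}})/K')$, and that is what makes $\psi$ factor through $\chi_{\cyc}$.

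\emph{Step 2.} For an $E$-valued character of a possibly ramified $K''$, ``the'' Hodge--Tate weight is a tuple indexed by embeddings. So you must justify parallel weight before writing $\psi=\chi_{\cyc}^{n}\epsilon$ with $\epsilon$ of weight $0$. The cleanest fix also shortens the step:
\begin{itemize}
\item $g$ agrees with $x\mapsto\exp(s\log x)$ on some $1+p^{m}\ZZ_p$, for some $s\in E$;
\item hence each $\sigma\circ\psi$ agrees with $\chi_{\cyc}^{\sigma(s)}$ on an open subgroup;
\item Tate--Sen then shows that the Hodge--Tate property forces every $\sigma(s)$, hence $s=n$, to be an integer;
\item so $\epsilon$ is trivial on the open subgroup $\Gal_{K''}\cap\chi_{\cyc}^{-1}(1+p^{m}\ZZ_p)$ and has finite order directly.
\end{itemize}
If you keep your version, note that finiteness of the residue field of $K''(\mu_{p^{\infty}})$ does not follow from finiteness of $K''\cap\QQ_p^{\mathrm{ur}}$. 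It follows from $[K''(\mu_{p^{n}}):\QQ_p(\mu_{p^{n}})]\le[K'':\QQ_p]$ together with total ramification of $\QQ_p(\mu_{p^{n}})/\QQ_p$.

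\emph{Purity.} Katz--Messing is stated for smooth projective varieties. For a general proper smooth $X$ you need the extension of crystalline purity to the proper case, via de Jong's alterations.
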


We then have the following proposition on the finiteness condition for modular forms.

\begin{proposition} \label{prop:verifylfmodularform}
Let $F_{\infty}/F$ be the cyclotomic $\ZZ_p$-extension of $F$. If $p$ does not divide $N$ and $k \geq 2$ is even, then $\rmH^{0}(L, V_{f}) = 0$, or equivalently $\rmH^{0}(L, A_{f})$ is finite, for any finite extension $L/F_{\infty, v}$, where $v$ is a place of $F_{\infty}$ lying over $p$.
\end{proposition}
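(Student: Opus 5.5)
The plan is to realize $V_f$ inside the étale cohomology of a Kuga--Sato variety and then apply Ozeki's Theorem \ref{thm:ozeki}.

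By Deligne's construction (Scholl's refinement for $\Gamma_0(N)$ with nebentypus), $V_f$, possibly after a finite extension of scalars, is a $\Gal_{\QQ}$-stable subquotient of $\rmH^{k-1}_{\et}(W_{k-2,\barQQ}, \QQ_p)\otimes_{\QQ_p} L$. Here $W_{k-2}$ is the (desingularized) Kuga--Sato variety of dimension $k-1$ over the modular curve of level $N$, or over an auxiliary level $N' = N\cdot(\text{small prime})$ chosen so that the moduli problem is representable. The essential point is that $p \nmid N$. Then $W_{k-2}$ is proper and smooth with good reduction at $p$, provided the auxiliary level is also prime to $p$. Hence, after base change to $K_\lambda := F_{v_0}$, where $v_0$ is the place of $F$ below $v$, the variety $X := W_{k-2}\times K_\lambda$ has good, hence potentially good, reduction. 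Restricting the $\Gal_\QQ$-action to the decomposition group $\Gal_{K_\lambda}$ shows that $V_f|_{\Gal_{K_\lambda}}$ is a $\Gal_{K_\lambda}$-stable subquotient of $\rmH^{i}_{\et}(X_{\barK_\lambda},\QQ_p)^{\oplus [L:\QQ_p]}$ with $i=k-1$. Since $\rmH^0$ is compatible with the direct sum, we may harmlessly pass to a subquotient of a single copy viewed as a $\QQ_p$-representation.

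Next, let $L'/F_{\infty,v}$ be finite; I rename the field in the statement to avoid a clash with the coefficient field. The cyclotomic $\ZZ_p$-extension satisfies $F_{\infty,v}\subseteq K_\lambda(\mu_{p^\infty})$. So $L'':=L'K_\lambda(\mu_{p^\infty})$ is a finite extension of $K_\lambda(\mu_{p^\infty})$ and contains $L'$, which gives $\rmH^0(L',V_f)\subseteq \rmH^0(L'',V_f)$. If the latter were nonzero, Theorem \ref{thm:ozeki} applied to $X$, $i=k-1$ and the subquotient $V_f$ would force $k-1$ to be even. This contradicts $k$ being even, so $\rmH^0(L',V_f)=0$.

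For the stated equivalence with finiteness of $\rmH^0(L',A_f)$, I would repeat the argument of the baby case in Theorem \ref{thm:greenberg}. $T_f(L')$ is an $\calO$-direct summand of $T_f$, and $A_f(L')$ is infinite iff $T_f(L')\neq 0$ iff $V_f(L')\neq 0$, using the inverse-limit argument for elements of order $p^n$.

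The main obstacle is the geometric input. One must make sure that $V_f$ (rather than its dual or a twist) really occurs as a subquotient of $\rmH^{k-1}$ of a smooth proper variety with good reduction at $p$ when $p\nmid N$. This requires choosing the auxiliary level prime to $p$ and using Scholl's projector, or Deligne's parabolic cohomology realized inside a compactified Kuga--Sato variety. Dualizing would only replace $\rmH^{k-1}$ by $\rmH^{k-1}(\frac{k}{2}-\cdot)$-type twists, which Ozeki's statement does not directly allow, so I would cite the construction in the form giving $V_f$ itself as a subquotient.
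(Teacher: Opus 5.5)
Your proposal is correct and is essentially the paper's proof: you realize $V_f$ as a subquotient of $\rmH^{k-1}_{\et}$ of a Kuga--Sato variety with good reduction at $p$ (using $p\nmid N$) and apply Theorem~\ref{thm:ozeki} with $i=k-1$ odd, and your compositum $L''=L'K_\lambda(\mu_{p^\infty})$ makes explicit a reduction the paper leaves implicit. Two small remarks: "compatible with direct sums" is not quite the reason you can pass from $\rmH^{k-1}\otimes_{\QQ_p}L$ to a single copy; instead, filter by partial sums, and the first graded piece with nonzero $\Gal_{L''}$-invariants is a subquotient of one copy; and your concern about duals and Tate twists is moot, since $\chi_{\cyc}$ is trivial on $\Gal_{L''}$ and so twisting does not change $\rmH^0(L'',\cdot)$.
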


\begin{proof}
It follows from the construction of $V_{f}$ by Deligne that $V_{f}$ appears as a subquotient of $\rmH^{k-1}_{\et}(\calW_{k-2, \barQQ}, \QQ_p)$, where $\calW_{k-2}$ is the \emph{Kuga-Sato variety} of weight $k-2$. It follows from \cite[Lemme 5.4]{deligne1971} that $\calW_{k-2}$ is a proper smooth variety over $K_{\lambda}$. It follows from \cite[Appendix of Brian Conrad]{bertolini2013} that $\calW_{k-2}$ has good reduction over $K_{\lambda}$. The proposition then follows from Theorem \ref{thm:ozeki} by noting that $k-1$ is an odd number and $p$ does not divide the level $N$ of $f$.
\end{proof}

Therefore, condition \dref{eq:lf} is verified. We are now ready to apply Theorems \ref{thm:verticalcontrol} and \ref{thm:maintheorem} to the arithmetic of cuspidal newforms. The following result combines them with Proposition \ref{prop:verifylfmodularform}.

\begin{theorem}[Main theorems for classical cuspidal newforms] \label{thm:appmodforms}
    Let $f \in S_{k}(\Gamma_{0}(N))$ be a cuspidal newform of weight $k \geq 2$ even and $p \nmid N$. Then we have the following results.
\begin{enumerate}[label = \rm (\arabic*)]
    \item \emph{(Vertical control theorem)} The Pontryagin dual of the restriction map
    \[
    \res_{\tilF/F_{\cyc}}: \Sel(f/F_{\cyc}) \rightarrow \Sel(f/\tilF)^{\Gal(\tilF/F_{\cyc})}
    \]
    is a pseudo-isomorphism of $\calO\lrbracket{\Gal(F_{\cyc}/F)}$-modules, with kernel and cokernel being finitely generated torsion $\calO$-modules.
    \item \emph{(Conjecture \ref{myconj:Bminus})} If $Y(f/F_{\cyc})$ is a finitely generated $\calO$-module, then $Y(f/\tilF)$ is a finitely generated $\calO\lrbracket{\Gal(\tilF/F_{\cyc})}$-module.
    \item \emph{(Conjecture \ref{myconj:Bplus})} If $Y(f/F_{\cyc})$ is a pseudonull $\calO\lrbracket{\Gal(F_{\cyc}/F)}$-module, then $Y(f/\tilF)$ is a pseudonull  $\calO\lrbracket{\Gal(\tilF/F)}$-module.
\end{enumerate}
\end{theorem}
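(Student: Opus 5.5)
The plan is to deduce Theorem \ref{thm:appmodforms} from Theorems \ref{thm:verticalcontrol} and \ref{thm:maintheorem} with $R = \calO$ and $F_{\infty} = F_{\cyc}$; the cyclotomic $\ZZ_p$-extension is cyclotomic-like. Two inputs are needed. First, $\calO$ is a complete discrete valuation ring, hence a complete Noetherian local Cohen--Macaulay ring with residual characteristic $p$, so the hypotheses on $R$ in Theorem \ref{thm:maintheorem}(2) and Proposition \ref{prop:fgselmer} are met. Second, we must verify the finiteness conditions \dref{eq:gf} and \dref{eq:lf} for $A = A_f$. For $R = \calO$, an $\calO$-module $A_f(L)$ is cofinitely generated and cotorsion exactly when it is finite. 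This is equivalent to $\rmH^{0}(L, V_f) = 0$ by the elementary equivalences in the proof of Theorem \ref{thm:greenberg}, whose argument works verbatim for the $\calO$-lattice $T_f$.

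For \dref{eq:lf}, let $v$ be a place of $F_{\cyc}$ above $p$ and $L/F_{\cyc,v}$ a finite extension inside $\tilF_w$. Then $L$ is a finite extension of the cyclotomic $\ZZ_p$-extension of $F_{v_0}$, hence contained in a finite extension of $F_{v_0}(\mu_{p^{\infty}})$. Proposition \ref{prop:verifylfmodularform}, which rests on Ozeki's Theorem \ref{thm:ozeki}, the good reduction of the Kuga--Sato variety $\calW_{k-2}$ at $p \nmid N$, and the oddness of $k-1$, gives $\rmH^{0}(L, V_f) = 0$. Hence $A_f(L)$ is finite. Here one must make sure that the place over $p$ and the base field in Proposition \ref{prop:verifylfmodularform} match the local fields $F_{v_0}$ appearing in \dref{eq:lf}. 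Restricting $\rho_f$ to $\Gal_{F_{v_0}}$ is harmless, because Ozeki's theorem is stated for any finite extension of $K_\lambda(\mu_{p^\infty})$.

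For \dref{eq:gf}, take $L/F_{\cyc}$ finite inside $\tilF$ and choose a place $w$ of $L$ above $p$. Restriction gives an injection $A_f(L) \hookrightarrow A_f(L_w)$, and $L_w$ is a finite extension of $F_{\cyc,v}$ inside $\tilF_{w_\infty}$. So $A_f(L)$ is finite by the local case; this is the remark in the paper that \dref{eq:lf} implies \dref{eq:gf}.

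With both conditions verified, the three parts follow directly. Part (1) is Theorem \ref{thm:verticalcontrol}, whose kernel and cokernel are finitely generated torsion $\calO$-modules. Parts (2) and (3) are Theorem \ref{thm:maintheorem}(1) and (2). The only step I expect to need real care is the local verification: checking that Proposition \ref{prop:verifylfmodularform} applies to every finite $p$-power-degree extension of $F_{\cyc,v}$ inside $\tilF_w$ over the relevant number field. The key point there is that such an $L$ lies in a finite extension of $F_{v_0}(\mu_{p^\infty})$, which is immediate.
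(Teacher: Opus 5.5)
Your proposal is correct and follows the paper's argument. The paper verifies \dref{eq:lf} via Proposition \ref{prop:verifylfmodularform} (Ozeki's theorem for the Kuga--Sato variety, using $p \nmid N$ and $k-1$ odd), deduces \dref{eq:gf} from \dref{eq:lf}, and then reads off the three parts from Theorems \ref{thm:verticalcontrol} and \ref{thm:maintheorem} with $R=\calO$ and $F_{\infty}=F_{\cyc}$. Your extra checks simply make explicit what the paper leaves implicit: that $\calO$ is a complete local Cohen--Macaulay ring, that finiteness of $A_f(L)$ is equivalent to $\rmH^{0}(L,V_f)=0$, and that the injection $A_f(L)\hookrightarrow A_f(L_w)$ gives \dref{eq:lf}$\Rightarrow$\dref{eq:gf}.
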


\subsection{Example: Hida families of $p$-ordinary cuspforms} In this section, we apply Theorem \ref{thm:maintheorem} to Hida families of $p$-ordinary cuspforms, for which Theorem \ref{thm:bigimai} plays an important role in verifying Greenberg's finiteness condition. We recall the basic setup of classical Hida theory following \cite[Section 2.6]{skinner2016}; the primary sources are Hida's works \cite{hida1986galois,hida1988measure}.

Retaining the notation of Section \ref{sec:crystalline}, let $f \in S_{k}(\Gamma_{0}(N))$ be a cuspidal newform that is $p$-ordinary; that is, $a_p(f) \in \calO^{\times}$. We write $N = p^r M$ with $p \nmid M$. It follows from \cite[Lemma 2.1.2]{skinner2016} that $r = 0$ or $1$. Let $\Lambda_{\ord} := \calO\lrbracket{X}$. Hida proved that there is a finite local $\Lambda_{\ord}$-domain $\II$ and a formal $q$-expansion
\[
\bff = \sum_{n=1}^{\infty} a_n q^{n} \in \II \lrbracket{q}, \, a_1 = 1,
\]
satisfying:
\begin{enumerate}[label = (H-\arabic*)]
    \item $\II = \Lambda_{\ord}[\{a_{\ell}: \ell \text{ is a prime number} \}]$;
    \item if $\phi: \II \rightarrow \barQQ_p$ is a continuous $\calO$-algebra homomorphism such that $\phi(1+X) = (1+p)^{k^{\prime}}$ with $k^{\prime} \equiv k \pmod{p-1}$, then $\sum_{n=1}^{\infty} \phi(a_n) q^n$ is the $q$-expansion of a \emph{$p$-stabilized} newform, in the sense that there is a newform $f_{\phi} \in S_{k^{\prime}}(\Gamma_0(M))$ and an embedding $\QQ(f_{\phi}) \hookrightarrow \barQQ_p$ such that $\phi(a_{\ell}) = a_{\ell}(f_{\phi})$ for all primes $\ell \neq p$, and $\phi(a_p)$ is the unit root of the polynomial $x^{2} - a_p(f_{\phi})x + p^{k^{\prime} - 1}$; \label{item:H2}
    \item there is a continuous $\calO$-algebra homomorphism $\phi_0: \II \rightarrow \calO$ such that $\phi_0(1+X) = (1+p)^{k}$ and $\phi_0(a_{\ell}) = a_{\ell}(f)$ for $\ell \neq p$ and $\phi_0(a_p)$ is the unit root of $x^2 - a_p(f)x+p^{k-1}$ if $r = 0$ and $\phi_0(a_p) = a_p(f)$ if $r = 1$.
\end{enumerate}
We call $\bff$ an \emph{$\II$-adic cuspidal Hida family} passing through $f$, and call the $\calO$-algebra homomorphism $\phi: \II \rightarrow \barQQ_p$ in \ref{item:H2} an \emph{arithmetic specialization} of $\bff$ of \emph{weight} $k$.

By enlarging $\calO$ when necessary, we may assume that $\calO$ is integrally closed in $\II$. In the remaining part of this section, we put the following assumption.
\begin{condition}
    \item[(irred)] The residual Galois representation $\barrho_f$ is irreducible. \label{eq:irred}
\end{condition}
Then there is a free $\II$-module $\TT$ of rank two and a continuous Galois representation
\[
\rho_{\bff}: \Gal_{\QQ} \rightarrow \Aut_{\II}(\TT)
\]
which is unramified at each $\ell \nmid pN$ and such that, for any such prime, $\tr(\rho_{\II}(\Frob(\ell))) = a_{\ell} \in \II$. In particular, when $\phi: \II \rightarrow \calO$ is $\phi_0$ or an arithmetic specialization of weight $k^{\prime}$, we see that
\[
T_{f_{\phi}} = \TT \otimes_{\II, \phi} \calO
\]
is a $\Gal_{\QQ}$-stable $\calO$-lattice in $\TT \otimes_{\II, \phi} L \simeq V_{f_{\phi}}$.

We shall also use the following assumption.
\begin{condition}
    \item[(even)] The weight of the newform $f$ is even. As a result, any arithmetic specialization of $\bff$ has even weight since its weight $k^{\prime}$ satisfies $k \equiv k^{\prime} \pmod{p-1}$ and $p$ is an odd prime number. \label{eq:even}
\end{condition}

Let $\AA := \TT \otimes_{\II} \II^{\vee}$. Then we have the following result.

\begin{proposition} \label{prop:hidabigimai}
With the notation above, suppose that conditions \dref{eq:irred} and \dref{eq:even} hold. Then $\AA(L)$ is a cotorsion $\II$-module for any finite extension $L/F_{\cyc,v}$, where $v$ is a place of $F_{\cyc}$ lying over $p$.
\end{proposition}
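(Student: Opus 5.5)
The idea is to deduce the statement from Theorem \ref{thm:bigimai}, or more directly from Theorem \ref{torsion irr}, by specializing at a single well-chosen prime of $\II$. Since $\II$ is a domain, $\Spec(\II)$ is irreducible and $\II$ satisfies $(S_1)$. So it suffices to find one prime $\wp$ of $\II$ for which $\AA[\wp](L)$ is a cofinitely generated cotorsion $\II/\wp$-module for every finite extension $L/F_{\cyc,v}$ of the relevant type.

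\textbf{Choice of $\wp$.} Take $\wp = \ker(\phi)$, where $\phi$ is an arithmetic specialization of some weight $k^{\prime}\ge 2$. We may take $\phi = \phi_0$ when $p\nmid N$. Otherwise we take any arithmetic $\phi$ with $k^{\prime}\equiv k \pmod{p-1}$ and $k^{\prime}\ge 2$; by \ref{item:H2}, $f_\phi$ is a newform of level $M$ prime to $p$, and by \dref{eq:even} its weight is even. After enlarging $\calO$, the image $\II/\wp$ is an order in a finite extension of $\QQ_p$, that is, a one-dimensional domain finite over $\calO$. Since $\TT$ is free, $\TT/\wp\TT$ is a Galois-stable lattice $T_{f_\phi}$, and $\AA[\wp] = \TT\otimes_{\II}(\II/\wp)^{\vee}$ is, up to a finite discrepancy coming from the normalization of $\II/\wp$, the module $A_{f_\phi}=V_{f_\phi}/T_{f_\phi}$.

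\textbf{Applying the earlier finiteness result.} Proposition \ref{prop:verifylfmodularform}, which rests on Ozeki's Theorem \ref{thm:ozeki} via the Kuga--Sato variety with good reduction at $p$ and odd degree $k^{\prime}-1$, gives $\rmH^0(L,V_{f_\phi})=0$. Hence $A_{f_\phi}(L)$ is finite for every finite $L/F_{\cyc,v}$, so it is cofinitely generated and cotorsion over $\II/\wp$. A finite module is cotorsion over the one-dimensional domain $\II/\wp$. The comparison between $\AA[\wp]$ and $A_{f_\phi}$ only changes things by finite or cotorsion pieces, because $\II/\wp\subset \calO$ has finite index.

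\textbf{Conclusion and the delicate step.} Dualizing gives $\AA[\wp](L)^{\vee} = (\AA^\vee)_{\Gal_L}/\wp$, which is finitely generated torsion over $\II/\wp$. Theorem \ref{torsion irr} then shows $(\AA^\vee)_{\Gal_L}$ is torsion over $\II$, that is, $\AA(L)$ is cotorsion; this is exactly the argument of Theorem \ref{thm:bigimai}. The step I expect to be the main obstacle is making the identification of $\AA[\wp]$ with $A_{f_\phi}$ precise when $\II/\wp$ is not integrally closed. This requires showing that the kernel and cokernel of $\TT\otimes(\II/\wp)^\vee \to \TT\otimes_{\phi}\calO^{\vee}$ are finite. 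If that turns out to be awkward, I would fall back on the invariance of the statement $\rmH^0(L,V)=0$ under passing to $V=\TT\otimes_{\II,\phi}\mathrm{Frac}(\II/\wp)$, since this directly forces finiteness of the $\Gal_L$-invariants in any lattice quotient.
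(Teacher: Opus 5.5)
Your proposal follows the paper's proof essentially step for step. You specialize at $\wp=\ker\phi$ for an arithmetic $\phi$ whose newform has even weight and level prime to $p$, get finiteness of the $\Gal_L$-invariants from Proposition \ref{prop:verifylfmodularform}, and conclude via Theorem \ref{thm:bigimai} (that is, Theorem \ref{torsion irr}); your fallback through $\rmH^0(L,\TT\otimes_{\II,\phi}\mathrm{Frac}(\II/\wp))=0$ is a clean justification of the identification $\AA[\wp]\simeq A_{f_\phi}$, which the paper simply asserts.

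One small fix: when $p\mid N$ you should choose $k'>2$, as the paper does (citing \cite[Lemma 2.1.2]{skinner2016}). A weight-$2$ specialization, such as $\phi_0$ itself, may be $p$-new, so its newform need not have level prime to $p$.
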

\begin{proof}
    Recall that $\II$ is an integral domain. It satisfies Serre's condition $(S_1)$, and $\Spec \II$ is irreducible. At any arithmetic specialization $\phi$ of $\bff$ of weight $k^{\prime} > 2$, by \ref{item:H2}, we obtain a newform $f_{\phi} \in S_{k^{\prime}}(\Gamma_0(M))$, where $M$ is the prime-to-$p$ part of $N$. \footnote{For the precise reason why $f_{\phi}$ has level prime to $p$, \cite[Lemma 2.1.2]{skinner2016} proves that, for any $p$-ordinary cuspidal newform $g \in S_{k}(\Gamma_1(Q))$, if $p \mid Q$, then $p \mid\mid Q$, $k = 2$, and $a_p(f) = \pm 1$.} Therefore, for any arithmetic specialization $\phi$ of $\bff$ of weight $k^{\prime} > 2$, the newform $f_{\phi}$ has level prime to $p$. Under assumption \dref{eq:even}, $k^{\prime}$ is even. It follows from Proposition \ref{prop:verifylfmodularform} that $A_{f_{\phi}}$ satisfies Greenberg's finiteness condition \dref{eq:lf}. We take $\wp := \ker(\phi)$. Then $\AA^{(\wp)} = A_{f_{\phi}}$, and the proposition follows from Theorem \ref{thm:bigimai}.
\end{proof}

Let $S$ be a finite set of places containing infinite places and finite places of $F$ lying over prime divisors of $Np$. We write the fine Selmer group $\Sel_{S}(\AA/\calL)$ as $\Sel_{S}(\bff/\calL)$ for simplicity, where $\calL/F$ is any algebraic extension inside $F_{S}$.

The following result combines Theorems \ref{thm:verticalcontrol} and \ref{thm:maintheorem} with Proposition \ref{prop:hidabigimai}.

\begin{theorem}[Main theorems for Hida families of $p$-ordinary cuspforms] \label{thm:apphida}
    With the notation above, assume that conditions \dref{eq:irred} and \dref{eq:even} are satisfied. Then we have the following results.
\begin{enumerate}[label = \rm (\arabic*)]
\item \emph{(Vertical control theorem)} The Pontryagin dual of the restriction map
    \[
    \res_{\tilF/F_{\cyc}}: \Sel(\bff/F_{\cyc}) \rightarrow \Sel(\bff/\tilF)^{\Gal(\tilF/F_{\cyc})}
    \]
    is a pseudo-isomorphism of $\II\lrbracket{\Gal(F_{\cyc}/F)}$-modules, with kernel and cokernel being finitely generated torsion $\II$-modules.
\item \emph{(Conjecture \ref{myconj:Bminus})} If $Y(\bff/F_{\cyc})$ is a finitely generated $\II$-module, then $Y(\bff/\tilF)$ is a finitely generated $\II\lrbracket{\Gal(\tilF/F_{\cyc})}$-module.
    \item \emph{(Conjecture \ref{myconj:Bplus})} If $\II$ is a Cohen--Macaulay ring and $Y(\bff/F_{\cyc})$ is a pseudonull $\II\lrbracket{\Gal(F_{\cyc}/F)}$-module, then $Y(\bff/\tilF)$ is a pseudonull  $\II\lrbracket{\Gal(\tilF/F)}$-module.
\end{enumerate}
\end{theorem}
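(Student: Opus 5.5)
The plan is to deduce Theorem \ref{thm:apphida} directly from Theorems \ref{thm:verticalcontrol} and \ref{thm:maintheorem}, applied with $R = \II$, $T = \TT$ and $A = \AA$, so the work is to check their hypotheses. First, $\II$ is a finite local $\Lambda_{\ord}$-domain. Since $\Lambda_{\ord} = \calO\lrbracket{X}$ is a complete Noetherian local ring with residual characteristic $p$ and $\II$ is finite over it, $\II$ is again a complete Noetherian local ring with residual characteristic $p$ and profinite topology. The standing assumption before Theorem \ref{thm:maintheorem} therefore holds. Also, $\TT$ is free of rank two over $\II$ with a continuous $\II$-linear action of $\Gal_{\QQ}$ that is unramified outside $S$ (after restricting to $\Gal_F$). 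Hence $\AA = \TT\otimes_{\II}\II^{\vee}$ fits the setup of Section 2. The extension $F_{\cyc}/F$ is cyclotomic-like, so Proposition \ref{prop:notdependonS} lets us drop $S$.

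The substantive step is verifying \dref{eq:gf} and \dref{eq:lf} for $\AA$ over $F_{\cyc}\subset\tilF$. Condition \dref{eq:lf} is exactly Proposition \ref{prop:hidabigimai}. Every finite extension $L_w/F_{\cyc,v}$ inside $\tilF_{w_\infty}$ is a finite extension of $F_{\cyc,v}$, so $\AA(L_w)$ is $\II$-cotorsion. Cofinite generation comes from Lemma \ref{cofg or cotorsion}, or directly from $\AA$ being cofinitely generated. For \dref{eq:gf}, I would use the remark that \dref{eq:lf} implies \dref{eq:gf}. If $L/F_{\cyc}$ is finite inside $\tilF$ and $w$ is a place above $p$, then $\AA(L)\subseteq \AA(L_w)$. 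Dually, $\AA(L)^{\vee}$ is a quotient of $\AA(L_w)^{\vee}$, and a quotient of a finitely generated torsion $\II$-module is again torsion. So both conditions hold. One point needs care: Proposition \ref{prop:hidabigimai} is stated with $F_{\cyc,v}$ for places over $p$ of the base field. I would make explicit that its proof, via Proposition \ref{prop:verifylfmodularform}, Ozeki's Theorem \ref{thm:ozeki} and Theorem \ref{thm:bigimai}, works for any finite extension of $F_{v}(\mu_{p^\infty})$. In particular it covers the local fields $L_w$ arising from $\tilF$.

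With the hypotheses in place, part (1) is Theorem \ref{thm:verticalcontrol} with $R=\II$. Part (2) is Theorem \ref{thm:maintheorem}(1), and part (3) is Theorem \ref{thm:maintheorem}(2), which uses the additional Cohen--Macaulay hypothesis on $\II$. That hypothesis is what makes $\II\lrbracket{T_1,\dots,T_d}$ Cohen--Macaulay in Theorem \ref{thm:descentpseudonull}.

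I expect the main obstacle to be the specialization step inside Proposition \ref{prop:hidabigimai}. One needs an arithmetic specialization $\phi$ of weight $k'>2$, $k'\equiv k \pmod{p-1}$. Its kernel $\wp$ should satisfy $\AA[\wp]\cong A_{f_\phi}$ with $f_\phi$ of level prime to $p$, so that Proposition \ref{prop:verifylfmodularform} applies. Such $\phi$ exist because the arithmetic points are Zariski dense in $\Spec\II$ and lie on its unique irreducible component. The identification $\AA[\wp] = (\TT/\wp\TT)\otimes\II^{\vee}[\wp]$ uses the freeness of $\TT$. There is one further subtlety: $\II/\wp$ may be a proper order in the ring of integers of the coefficient field. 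I would handle this by noting that torsion is insensitive to passing to the normalization, since the normalization is a finite extension of domains with the same fraction field. Everything else is a formal combination of earlier results.
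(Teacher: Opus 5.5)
Your proposal is correct and takes the same route as the paper. The paper's proof applies Theorems \ref{thm:verticalcontrol} and \ref{thm:maintheorem} with $R=\II$, using Proposition \ref{prop:hidabigimai} to verify the finiteness conditions (via Theorem \ref{thm:bigimai} at an arithmetic point of weight $k'>2$ and Proposition \ref{prop:verifylfmodularform}). Your extra details ($\AA(L)\subseteq\AA(L_w)$ giving \dref{eq:gf} from \dref{eq:lf}, and the order-versus-normalization point for $\II/\wp$) are sound, and your concern about the fields $L_w$ is unnecessary, since Proposition \ref{prop:hidabigimai} is already stated for every finite extension of $F_{\cyc,v}$.
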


\begin{remark} \label{rem:branchCM}
    Suppose $\II$ is a branch of the local component $\widetilde{\II}$ of Hida's ordinary Hecke algebra $\HH_{N}^{\ord}$ of tame level $N$. If the residual Galois representation $\barrho_{\bff}$ of the Hida family $\bff$ is irreducible, then $\widetilde{\II}$ is a Gorenstein ring (see \cite[Theorem 7.10, Remark 7.11]{ochiai2025iwasawa3} for concrete results and explanations) and hence Cohen--Macaulay. However, this does not imply that the branch $\II$ is a Gorenstein ring. Moreover, if $\II$ is Cohen--Macaulay, then $\widetilde{\II}$ need not be Cohen--Macaulay, and vice versa. Explicit examples can be found in \cite[Appendix A]{ochiai2025iwasawa3}. Nevertheless, the condition that the branch $\II$ is Cohen--Macaulay (in fact, isomorphic to a one-variable power series ring $\calO\lrbracket{X}$) holds in a fairly general setting; see \cite[Remark 8.11 (3)]{ochiai2025iwasawa3}.
\end{remark}

In future work, we hope to consider broader $p$-adic families of Galois representations.

\appendix

\section{On the coinvariant criterion for pseudonullity}
\label{sec:appA}
In this appendix, we make further remarks on the coinvariant criterion for pseudonullity (i.e. Theorem \ref{catenary}).

\subsection{Geometric interpretation} \label{sec:rmkgeom}
The geometric reason for the Cohen-Macaulay hypothesis in Theorem \ref{catenary} is as follows. Since
\[
\Supp(M/TM) = V(T) \cap \Supp(M),
\]
the hypothesis that $M/TM$ is pseudonull over $\Lambda/T\Lambda$ means that every irreducible component of $V(T) \cap \Supp(M)$ has codimension at least $2$ inside $V(T)$. Because $T$ is a regular element, $V(T)$ is an effective Cartier divisor, hence has codimension $1$ in $\Spec(\Lambda)$. Therefore $V(T) \cap \Supp(M)$ has codimension at least 3 in $\Spec(\Lambda)$.

To prove that $M$ is pseudonull, it suffices to show that $\Supp(M)$ has no irreducible component of codimension at most $1$. Suppose instead that $Y$ is an irreducible component of $\Supp(M)$ of codimension $1$. Since $T$ lies in the Jacobson radical of $\Lambda$, every closed point of $\Spec(\Lambda)$ lies in $V(T)$. As every irreducible closed subset of a Noetherian scheme contains a closed point, we have $Y \cap V(T) \neq \emptyset$.

If $\Lambda$ is Cohen--Macaulay, then intersecting a codimension-one irreducible closed subset $Y$ with the effective Cartier divisor $V(T)$ increases its codimension \emph{by exactly one} once the intersection is nonempty (while in general the intersection may further increase the codimension). Thus every irreducible component of $Y \cap V(T)$ has codimension $2$ in $\Spec(\Lambda)$, contradicting the fact that $V(T) \cap \Supp(M)$ has codimension at least $3$ in $\Spec(\Lambda)$. Therefore $\Supp(M)$ has no codimension-one components, and $M$ is pseudonull.

\subsection{The Cohen--Macaulay assumption}
In Theorem \ref{catenary}, the only place where we used the Cohen--Macaulay property of the coefficient ring $\Lambda$ is \eqref{eq:CM_condition}. A weaker sufficient condition is that $\Lambda$ be Noetherian, catenary, and locally equidimensional.
\begin{definition}
    Let $\Lambda$ be a commutative Noetherian ring. We say that $\Lambda$ is
    \begin{enumerate}
        \item \emph{catenary} if, for any pair of prime ideals $\frp \subseteq \frq$, there exists an integer bounding the lengths of all finite chains of prime ideals $\frp = \frp_0 \subset \frp_{1} \subset \cdots \subseteq \frp_{e} = \frq$, and all maximal such chains have the same length (see \cite[\href{https://stacks.math.columbia.edu/tag/00NI}{Tag 00NI}]{stacks-project});
        \item \emph{equidimensional} if all irreducible components of $\Spec(\Lambda)$ have the same dimension; and
        \item \emph{locally equidimensional} if, for every prime ideal $\frq$ of $\Lambda$, all irreducible components of $\Spec(\Lambda_{\frq})$ have the same dimension. Equivalently, for every minimal prime ideal $\frr$ of $\Lambda_{\frq}$, $\dim(\Lambda/\frr) = \dim(\Lambda_{\frq})$.
    \end{enumerate}
\end{definition}

We give the following improvement of Theorem \ref{catenary} under these weaker assumptions.

\begin{theorem}[Improved coinvariant criterion for pseudonullity]\label{catenary2}
    Let $\Lambda$ be a commutative Noetherian ring and assume that $\Lambda$ is catenary and locally equidimensional. Let $T$ be a regular element in $\Lambda$ and assume $T$ is in the Jacobson radical. Let $M$ be a finitely generated $\Lambda$-module. If $M/TM$ is a pseudonull $\Lambda/T\Lambda$-module, then $M$ is a pseudonull $\Lambda$-module.
\end{theorem}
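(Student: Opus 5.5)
The proof of Theorem \ref{catenary} carries over almost verbatim. The Cohen--Macaulay hypothesis was used only at \eqref{eq:CM_condition}, so the plan is to keep that proof and replace this one step by a height computation valid in catenary, locally equidimensional rings. Let $\frp$ be a prime of $\Lambda$ with $\Ht_{\Lambda}(\frp)\le 1$; we must show $M_{\frp}=0$.

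\emph{Case (1): $T\in\frp$.} Nothing changes. Inequality \eqref{eq:height} needs only that $T$ is regular, so $\Ht(\pi(\frp))\le 1$. Pseudonullity of $M/TM$ gives $M_{\frp}/TM_{\frp}=0$, and Nakayama's lemma over $\Lambda_{\frp}$ gives $M_{\frp}=0$.

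\emph{Case (2): $T\notin\frp$.} As before, $(T)+\frp$ is proper because $T$ lies in the Jacobson radical. Choose $\frq$ minimal over $(T)+\frp$. Krull's principal ideal theorem in the domain $\Lambda/\frp$, where $T$ is a nonzero nonunit, gives $\Ht(\frq/\frp)\le 1$. Once we know $\Ht_{\Lambda}(\frq)\le 2$, inequality \eqref{eq:height} gives $\Ht(\overline{\frq})\le 1$. Case (1) then yields $M_{\frq}=0$, and since $\frp\subseteq\frq$, also $M_{\frp}=0$.

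The remaining task, and the main obstacle, is the replacement for \eqref{eq:CM_condition}: the inequality
\[
\Ht_{\Lambda}(\frq)\le \Ht_{\Lambda}(\frp)+\Ht_{\Lambda/\frp}(\frq/\frp).
\]
Localize at $\frq$ and set $B:=\Lambda_{\frq}$, a local ring that is catenary and equidimensional by hypothesis. Pick a minimal prime $\frr\subseteq\frp$ of $\Lambda$ with $\Ht(\frp)=\dim(\Lambda_{\frp}/\frr\Lambda_{\frp})$. Then $\frr B$ is a minimal prime of $B$, and equidimensionality gives $\Ht(\frq)=\dim B=\dim B/\frr B$.

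The ring $B/\frr B$ is a catenary local domain. In such a ring, for any prime $P$ one has $\dim = \Ht(P)+\dim(\cdot/P)$. To justify this, take a maximal chain through $P$: catenarity makes all maximal chains between the zero ideal and the maximal ideal have the same length. The subtle point is that this common length equals the dimension, which holds because a maximal chain of length $\dim$ exists.

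Applying this with $P=\frp$ gives
\[
\Ht(\frq)=\Ht_{\Lambda/\frr}(\frp/\frr)+\Ht(\frq/\frp)\le \Ht(\frp)+\Ht(\frq/\frp)\le 2.
\]
The only care needed is to check that localization preserves catenarity, and that the equidimensionality hypothesis on $\Lambda_{\frq}$ is exactly what ensures every minimal prime under $\frp$ gives chains of full length $\dim B$.
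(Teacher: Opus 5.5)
Your proposal is correct and follows essentially the paper's route. The paper also reuses the proof of Theorem~\ref{catenary} unchanged and replaces \eqref{eq:CM_condition} by Proposition~\ref{prop:height-additivity}. It proves that proposition exactly as you do: localize at $\frq$, use equidimensionality to get $\dim(B/\frr)=\dim B$ for a minimal prime $\frr\subseteq\frp$, and split along the chain $\frr\subseteq\frp\subseteq\frm$ by catenarity of the domain $B/\frr$. The only difference is that the paper also shows $\Ht_{B/\frr}(\frp/\frr)$ is independent of $\frr$ and so obtains the full height equality, whereas you choose $\frr$ realizing $\Ht(\frp)$ and use only the inequality $\Ht_{\Lambda}(\frq)\le \Ht_{\Lambda}(\frp)+\Ht_{\Lambda/\frp}(\frq/\frp)$, which is all the argument needs.
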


We remark that a catenary, locally equidimensional Noetherian ring is not necessarily Cohen--Macaulay, so Theorem \ref{catenary2} strictly improves Theorem \ref{catenary}.

\begin{proof}
    The proof of Theorem \ref{catenary} carries over once we prove the displayed height equality \eqref{eq:CM_condition} under the assumption that $\Lambda$ is catenary and locally equidimensional. This is verified in Proposition \ref{prop:height-additivity}.
\end{proof}

\begin{proposition}\label{prop:height-additivity}
Let $A$ be a commutative Noetherian ring. Assume that $A$ is
catenary and locally equidimensional. Then, for every pair of prime
ideals $\frp\subseteq\frq$ of $A$, one has
\[
\Ht_{A}(\frq) = \Ht_{A}(\frp) + \Ht_{A/\frp} (\frq/\frp).
\]
\end{proposition}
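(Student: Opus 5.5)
We reduce to a local ring and then combine catenarity with local equidimensionality. Fix primes $\frp\subseteq\frq$ of $A$. All three heights in the claimed identity are unchanged when we pass to $A_{\frq}$:
\[
\Ht_A(\frq)=\dim A_{\frq},\qquad \Ht_A(\frp)=\Ht_{A_\frq}(\frp A_\frq),\qquad \Ht_{A/\frp}(\frq/\frp)=\dim (A_\frq/\frp A_\frq).
\]
Moreover, $A_{\frq}$ is again Noetherian, catenary and equidimensional, since localizations of catenary rings are catenary and local equidimensionality at $\frq$ is exactly equidimensionality of $A_\frq$. So we may assume that $A$ is local with maximal ideal $\frq$, catenary and equidimensional, and we must show $\dim A=\Ht(\frp)+\dim(A/\frp)$.

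The inequality $\geq$ is immediate. Splice a saturated chain of length $\Ht(\frp)$ ending at $\frp$ with a chain of length $\dim(A/\frp)$ from $\frp$ up to $\frq$; the result is a chain of length $\Ht(\frp)+\dim(A/\frp)$ in $A$.

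For $\leq$, choose a minimal prime $\frr\subseteq\frp$ with $\Ht(\frp/\frr)=\Ht(\frp)$. Such an $\frr$ exists because a maximal-length chain descending from $\frp$ ends at a minimal prime. By equidimensionality, $\dim(A/\frr)=\dim A$. Now work in the catenary local domain $A/\frr$. Any saturated chain from $\frr$ to $\frq$ passing through $\frp$ has length $\Ht(\frp/\frr)+\dim(A/\frp)$, provided we concatenate saturated chains realizing these two quantities.

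The main obstacle is showing that this length equals $\dim(A/\frr)$. Catenarity only says that all saturated chains between two fixed primes have the same length; it does not immediately say that every maximal chain in $A/\frr$ has length $\dim(A/\frr)$. The missing step is the following: in a catenary Noetherian local domain $D$, every maximal chain of primes has length $\dim D$. Every maximal chain starts at $0$ and ends at the maximal ideal, so catenarity gives exactly that all maximal chains have a common length. In particular this common length equals that of a longest chain, which is $\dim D$.

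Applying this to $D=A/\frr$, the concatenated saturated chain through $\frp$ has length $\dim(A/\frr)=\dim A$. Hence
\[
\Ht(\frp)+\dim(A/\frp)=\Ht(\frp/\frr)+\dim(A/\frp)=\dim A=\Ht(\frq),
\]
which gives the identity. We should double-check that the chains used are indeed saturated. We take chains of maximal length between the respective endpoints, and such chains are automatically saturated.
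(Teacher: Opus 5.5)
Your proposal is correct and follows essentially the same route as the paper: localize at $\frq$, pick a minimal prime $\frr\subseteq\frp$, use equidimensionality to get $\dim(A/\frr)=\dim A$, and apply catenarity in the local domain $A/\frr$ along a saturated chain through $\frp$. The differences are cosmetic. You choose $\frr$ realizing $\Ht(\frp)$ at the outset, whereas the paper shows $\Ht_{B/\frr}(\frp/\frr)=d-\dim(B/\frp)$ for every such $\frr$ and then takes the maximum. You also derive the height formula in a catenary local domain directly from the definition rather than citing Matsumura. Finally, your separate ``$\geq$'' paragraph is redundant, since the second argument already yields equality.
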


\begin{proof}
Localize $A$ at $\frq$, and write
\[
B=A_{\frq},\quad \frm=\frq A_{\frq},\quad \frp=\frp A_{\frq}.
\]
Since localization preserves catenarity (\cite[\href{https://stacks.math.columbia.edu/tag/0AUN}{Tag 0AUN}]{stacks-project}), $B$ is catenary, and $B/{\frr}$ is also catenary because quotients preserve catenarity (\cite[\href{https://stacks.math.columbia.edu/tag/00NK}{Tag 00NK}]{stacks-project}). By the local equidimensionality assumption, $B$ is equidimensional.

Put $d = \dim B$. Let $\frr\subseteq\frp$ be a minimal prime of $B$. Since $B$ is equidimensional, $\dim(B/\frr)=d$, or equivalently, $\Ht_{B/\frr} (\frm/\frr)=d$. Catenarity of the \emph{Noetherian domain} $B/\frr$ applied to the chain $\frr\subseteq\frp\subseteq\frm$ gives (see \cite[(14.B)]{MR575344}):
\[
d = \Ht_{B/\frr} (\frm/\frr) = \Ht_{B/\frr}(\frp/\frr) + \Ht_{B/\frp} (\frm/\frp).
\]
In particular,
\[
\Ht_{B/\frr}
   (\frp/\frr)
=
d-\dim(B/\frp).
\]
The right-hand side is independent of the choice of the minimal prime
$\frr\subseteq\frp$.

Every chain of prime ideals ending at $\frp$ can be extended downward to a minimal prime contained in $\frp$. It follows that
\[
\Ht_{B}(\frp) = \max_{\substack{\frr\in\operatorname{MinSpec}(B), \, \frr\subseteq\frp}} \Ht_{B/\frr} (\frp/\frr).
\]
Since all the terms on the right are equal by the preceding paragraph, we obtain
\[
\Ht_{B}(\frp) = d-\dim(B/\frp), 
\]
or equivalently,
\[
\dim B = \Ht_{B}(\frp) + \dim(B/\frp).
\]
We expand these three summands. The following identities follow from the definitions.
\begin{itemize}
    \item $\dim B=\dim A_{\frq} = \Ht_{A}(\frq)$,
    \item $\Ht_{B}(\frp) = \dim B_{\frp} = \dim A_{\frp} = \Ht_{A}(\frp)$, and 
    \item $\dim(B/\frp) = \dim\left((A/\frp)_{\frq/\frp}\right) = \Ht_{A/\frp}(\frq/\frp)$.
\end{itemize}
Substitution gives the required result.
\end{proof}

In applications of the main theorem of this paper, we hope to take the coefficient ring $R$ to be a Galois deformation ring. However, not all Galois deformation rings are Cohen--Macaulay. We therefore hope to relax the conditions in Theorems \ref{catenary} and \ref{catenary2}. In what follows, we present counterexamples involving the conditions in Theorems \ref{catenary} and \ref{catenary2}.

\subsubsection*{Counterexample: catenary condition is indispensable in Theorem \ref{catenary2}} \label{sec:counterexample}
Notice that a local domain is equidimensional and locally equidimensional.
In the Appendix of \cite{ogoma1980non}, Ogoma constructs a Noetherian local normal domain $T$ with $\dim T = 3$ and a height 1 prime ideal $ \frp$ such that $\dim (T/\frp)=1$. Hence
\[
\Ht_{T}(\frp) + \dim (T/\frp) < \dim T = 3, 
\]
which implies that $T$ is \emph{not} catenary.

Let $\frm$ be the maximal ideal of $T$. Take any element $r$ in $\frm \smallsetminus \frp$. Then $r$ is a regular element since $T$ is a domain, and $r$ is in the Jacobson radical of $T$ since $T$ is local. Since $r \not\in \frp$, any prime ideal of $T/\frp$ containing $ (r)+\frp/\frp$ is a nonzero prime. Since $T/\frp$ is one-dimensional and local with maximal ideal $\frm/\frp$, the only such prime containing $ (r)+\frp/\frp$ is its maximal ideal $\frm/\frp$. Hence, we have $V(\frp)\cap V(r)=\{\frm\} $. 

If we take $M=T/\frp$, following the manner of the geometric explanation in Section \ref{sec:rmkgeom}, we consider $\Supp(M) = V(\frp)$, which is an irreducible closed subset of $\Spec(T)$. Firstly, $M$ is not a pseudonull $T$-module: after localizing at the height-one prime $\frp$, we have $M_{\frp} = T_{\frp}/\frp T_{\frp} \neq 0$. However, $M/rM$ is a pseudonull $T/rT$ module. Indeed, $M/rM = T/(\frp + (r))T$, and hence 
\[
\Supp_{T}(M/rM) = V(\frp + (r)) = V(\frp) \cap V(r) = \{\frm\},
\]
and hence $\Supp_{T/rT}(M/rM) = \{\frm/(r)\}$. Since $T/rT$ is a local ring with maximal ideal $\frm/(r)$, the height of $\frm/(r)$ in $T/rT$ is at least $2$. Indeed, since $T$ is a local domain of dimension $3$,  \cite[Corollary 11.18]{atiyah1969introduction} tells us that $\dim(T/rT) = 2$ since $r$ is a regular element in $T$. Since $T/rT$ is a local ring, the dimension of the ring is, by definition, the height of its maximal ideal $\frm/(r)$. Therefore, the support of $M/rM$ in $\Spec(T/rT)$ is a singleton consisting of an ideal of $T/rT$ of height $2$, proving that $M/rM$ is indeed a pseudonull $T/rT$-module.

\subsubsection*{Counterexample: local equidimensionality is indispensable in Proposition \ref{prop:height-additivity}, as well as in Theorem \ref{catenary2}}

Catenarity says that saturated chains between a fixed pair of primes have a common length. Without an equidimensionality or biequidimensionality condition, it does not imply the absolute height additivity displayed in \eqref{eq:CM_condition}.

Let $k$ be a field, and consider $S=k \lrbracket{a,b,c,d}$ and its prime ideals $P = (a)$ and $Q = (b,c,d)$. Consider
\[
  R=S/(ab,ac,ad) = S/(P \cap Q),\quad \frp=(b,c,d)R, 
\]
and put $T=a+b$ and $M=R/\frp$. We divide the counterexample into the following steps.

(1) Since $P$ and $Q$ are incomparable prime ideals, the minimal primes of $R$ are exactly $\frp = (b,c,d) R$ and $\frq = (a)R$. The ring $R$ is therefore reduced. The ring $R$ is a complete Noetherian local ring with maximal ideal $\frm = (a,b,c,d)R$, and hence catenary.

(2) The two irreducible components of $\Spec(R)$ have different dimensions:
\[
R/\frq \simeq S/(a) \simeq k \lrbracket{b,c,d}, \, \text{so} \dim(R/\frq) = 3,
\]
while 
\[
R/\frp \simeq S/(b,c,d) \simeq k \lrbracket{a}, \, \text{so} \dim(R/\frp) = 1.
\]
It follows that $\dim R = 3$ but the components $R/\frp$ and $R/\frq$ have different dimensions. Thus $R$ is not equidimensional. Since $R = R_{\frm}$, this also shows that $R$ is not locally equidimensional.

(3) On the component $R/\frp$, there is a saturated chain $\frp \subsetneq \frm$ of ideals of $R$ with starting point $\frp$ of length $1$. On the other component $R/\frq$, there is a saturated chain 
\[
\frq \subsetneq (a,b)R \subsetneq (a,b,c)R \subsetneq \frm
\]
of ideals of $R$ with starting point $\frq$ of length $3$. This gives the precise failure of the absolute height additivity in Proposition \ref{prop:height-additivity}: 
\[ 
\Ht_{R}(\frm) = 3, \quad \Ht_{R}(\frp) = 0, \quad \Ht_{R/\frp}(\frm/\frp) = 1.
\]
Hence $\Ht_{R}(\frm) \neq \Ht_{R}(\frp) + \Ht_{R/\frp}(\frm/\frp)$. This is the central mechanism of the counterexample.

(4) Since $R$ is reduced and Noetherian, its zero divisors are precisely the union of its minimal primes (combining \cite[\href{https://stacks.math.columbia.edu/tag/00LD}{Tag 00LD}]{stacks-project} and \cite[\href{https://stacks.math.columbia.edu/tag/031R}{Tag 031R}]{stacks-project}). Since $a+b$ belongs to neither minimal prime, $T$ is a non-zero-divisor, and hence $T$ is regular.

(5) The prime $\frp$ is minimal, so $\Ht_{R}(\frp) = 0$. Since $\frp$ is minimal, the localization $M_\frp$ remains nonzero and therefore $M$ is not pseudonull over $R$. On the other hand, 
\[
R/TR = \dfrac{k \lrbracket{a,b,c,d}}{(ab,ac,ad, a+b)} \simeq \dfrac{k\lrbracket{a,c,d}}{(a^2,ac,ad)}
\]
by using $b = -a$. Since $\sqrt{(a^2,ac,ad)}=(a)$, and Krull dimension is unchanged upon passing to the radical, it follows that
\[
\dim(R/TR)=\dim\dfrac{k \lrbracket{a,c,d}}{(a^2,ac,ad)} = \dim\dfrac{k \lrbracket{a,c,d}}{(a)} = \dim k \lrbracket{c,d} = 2.
\]
The local ring $R/TR$ has dimension $2$, and the support of $M/TM \simeq k$ is its maximal ideal, of height $2$. Thus $M/TM$ is pseudonull over $R/TR$.

\section{On elliptic curves with potential multiplicative reduction}

In this appendix, we add details from Section \ref{sec:app_potential_multiplicative}, concerning arguments that are already well known to experts. We include them here to avoid interrupting the flow of the article.

\subsection{On the Tate uniformization} \label{app:tate}

Let $E$ be an elliptic curve over a number field $F$ and let $v$ be a place of $F$ above $p$ such that $E$ has split multiplicative reduction at $v$. In this section, we describe the map $E[p^{\infty}] \rightarrow Q$ in \eqref{eq:multiplicative} more explicitly. This description will be used in the proof of Proposition \ref{prop:kerneldelta}.

Throughout this section, for the Tate period $q_{E} \in \barF_{v}^{\times}$, we fix a compatible system of $p^{n}$-th roots of the Tate period $q_{E}$, namely 
\[
q_{E}^{1/p^{\infty}} := (q_{E}^{1/p}, q_{E}^{1/p^2}, \ldots, q_{E}^{1/p^{n}}, \ldots) \in (\barF_{v}^{\times})^{\ZZ}
\]
such that $(q_{E}^{1/p^{n+1}})^{p} = q_{E}^{1/p^{n}}$.

Under Tate's uniformization, let $P_{u} \in E[p^{\infty}]$ be represented by a class $[u] \in \barF_{v}^{\times}/q_{E}^{\ZZ}$ with $u \in \barF_{v}^{\times}$. Then $P_{u}$ has order dividing $p^{n}$ if and only if $u^{p^n} \in q_{E}^{\ZZ}$. Therefore, there exists an integer $m \in \ZZ$ such that $u^{p^{n}} = q_{E}^{m}$. Using our fixed compatible system $q_{E}^{1/p^{\infty}}$, we may write $u = \zeta q_{E}^{m/p^{n}}$ for some $p^{n}$-th root of unity $\zeta$. The factor $\zeta$ belongs to $\mu_{p^{\infty}}$ and hence disappears after passing to the quotient $Q := E[p^{\infty}]/\mu_{p^{\infty}}$. Consequently, the image of $P_{u}$ in $Q$ is actually the class $[q_{E}^{m/p^{n}}]$. Notice that this class only depends on
\[
\frac{m}{p^n}\in \frac{1}{p^n}\ZZ/\ZZ\subset\QQ_p/\ZZ_p,
\]
because replacing $m$ by $m+p^n$ changes $q_E^{m/p^n}$ by the factor
$q_E$, which is trivial in the Tate quotient $\barF_v^{\times} / q_E^\ZZ$. We therefore obtain a well-defined identification
\[
\iota_E: Q \xrightarrow{\sim} \QQ_p/\ZZ_p,
\]
which sends the class of the point represented by $q_E^{m/p^n}$ to
$m/p^n$.

Indeed, it suffices to verify that this description is independent of the choice of $n$. Suppose that the same point $P_u$ is also represented at level $n^{\prime}>n$, so that $u=\zeta^{\prime} q_E^{m^{\prime}/p^{n^{\prime}}}$ for some $\zeta^{\prime} \in\mu_{p^{n^{\prime}}}$. Then $q_E^{m/p^n-m^{\prime}/p^{n^{\prime}}} = \zeta^{\prime}\zeta^{-1} \in\mu_{p^\infty}$. Since $q_{E}$ has positive $v$-adic valuation, this forces $m/p^n = m^{\prime}/p^{n^{\prime}}$. Therefore, after passing to $Q$, $[q_{E}^{m/p^{n}}]$ and $[q_{E}^{m^{\prime}/p^{n^{\prime}}}]$ define the same element of $Q$ and determine the same element $m/p^{n} = m^{\prime}/p^{n^{\prime}}$ of $\QQ_p/\ZZ_p$.

Finally, the action of $\Gal_{F_v}$ on $Q$ is trivial. Indeed, for
$\sigma\in\Gal_{F_v}$, $\sigma(q_E^{m/p^n})$ is another $p^n$-th root of $q_E^m$, and hence differs from $q_E^{m/p^n}$ by an element of $\mu_{p^n}$. Therefore the Galois action becomes trivial after quotienting by $\mu_{p^\infty}$.

\subsection{On nonsplit multiplicative reduction}

In this section, we prove the following propositions used in Section \ref{sec:nonsplit}.

The following result may be well known to experts, but we have been unable to
find a reference in the literature. We therefore include a proof.

\begin{theorem}\label{thm:twist-elliptic}
Let \(K\) be a nonarchimedean local field with finite residue field \(k\) of
odd characteristic, and let \(E/K\) be an elliptic curve with potentially
multiplicative reduction. Then there exists a quadratic character
\[
    \psi\colon G_K \longrightarrow \{\pm 1\}
\]
such that the quadratic twist \(E^\psi\) has split multiplicative reduction.
More precisely:
\[
\begin{array}{c|c}
\text{reduction type of \(E/K\)} & \text{character \(\psi\)} \\ \hline
\text{split multiplicative} & \text{trivial} \\
\text{nonsplit multiplicative} & \text{nontrivial unramified quadratic} \\
\text{additive potentially multiplicative} & \text{ramified quadratic}.
\end{array}
\]
\end{theorem}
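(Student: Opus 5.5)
The plan is to work with a Weierstrass model and the invariants $c_4, c_6, \Delta, j$, using the standard characterization of split multiplicative reduction through the Tate curve. Since $E$ has potentially multiplicative reduction, $v_K(j)<0$. In particular $j\neq 0,1728$, so $c_4c_6\neq 0$ and every twist of $E$ is a quadratic twist. I will first recall Tate's theorem (e.g.\ \cite[Theorem V.5.3]{silverman1994advanced}): there is a unique $q\in K^\times$ with $v_K(q)>0$ and $j(E_q)=j(E)$. Moreover, $E\cong E_q$ over $K$ if and only if $-c_4/c_6$ is a square in $K^\times$ (equivalently, $\gamma(E):=-c_6/c_4 \in K^{\times}/K^{\times 2}$ is trivial), and in that case $E$ has split multiplicative reduction. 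In general, $E$ is the twist of $E_q$ by the quadratic character $\psi$ attached to $K(\sqrt{\gamma(E)})/K$. Because $\psi^2=1$, the twist $E^\psi$ is isomorphic to $E_q$, which has split multiplicative reduction. This settles existence, and the remaining work is to identify $\psi$ according to the reduction type.

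For the table, I will use the behaviour of the class $\gamma=-c_6/c_4$ modulo squares. Under a change of coordinates $c_4\mapsto u^4c_4$ and $c_6\mapsto u^6c_6$, the class $\gamma$ changes by $u^2$, so it is well defined in $K^\times/K^{\times 2}$. Twisting by $K(\sqrt{d})$ multiplies $(c_4,c_6)$ by $(d^2,d^3)$, so $\gamma$ is multiplied by $d$.

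The cases then go as follows.

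\begin{enumerate}
\item If $E$ has split multiplicative reduction, then $\gamma$ is a square, so $\psi=1$.
\item If $E$ has multiplicative reduction in a minimal model, then $v(c_4)=v(c_6)=0$ and $\gamma$ is a unit. The reduction is split exactly when the tangent slopes at the node are $K$-rational. Silverman's exercise identifies this condition with $\gamma$ being a square in the residue field $k$, and hence, by Hensel's lemma since $p$ is odd, with $\gamma$ being a square in $K$. In the nonsplit case, $\gamma$ is therefore a nonsquare unit, and $\psi$ is the unramified quadratic character.
\item If the reduction is additive but potentially multiplicative, then $\psi$ must be ramified. If $\psi$ were trivial or unramified, the twist would be by a unit, which does not change $v(\Delta_{\min})$ or the conductor. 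Then $E^\psi\cong E_q$ would have good or multiplicative reduction, contradicting additivity. Conversely, $\psi$ is nontrivial here, since otherwise $E\cong E_q$.
\end{enumerate}

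The main obstacle I expect is the residual computation in the nonsplit case, namely linking $\gamma \bmod \frm$ to the splitting of the node. I would handle it directly. Write the minimal equation with the node at $(0,0)$, so the tangent cone is $y^2+a_1xy-a_2x^2$. Its discriminant $a_1^2+4a_2=b_2$ satisfies $c_4\equiv b_2^2$ and $c_6\equiv -b_2^3 \pmod{\frm}$, because $b_4\equiv b_6\equiv 0$. Hence $\gamma\equiv b_2$, and the slopes are rational exactly when $b_2$ is a square in $k$. This computation uses that $p$ is odd.
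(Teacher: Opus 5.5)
Your proposal is correct and follows essentially the same route as the paper: Tate's criterion, twisting by the square class of $\gamma$, the unit/nonunit distinction in the nonsplit case, and invariance of the reduction type under an unramified twist in the additive case. The paper twists by $-c_6$, which equals $\gamma$ modulo squares only after its lemma that $c_4$ is a square; your normalization $\gamma=-c_6/c_4$ avoids that lemma. Your tangent-cone computation is correct but superfluous, since the contrapositive of ``$\gamma$ a square $\Rightarrow E\cong E_q$ is split'' already makes $\gamma$ a nonsquare unit in the nonsplit case, which is exactly how the paper argues.
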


We first recall a form of the Tate curve criterion
\cite[Theorem V.5.3]{silverman1994advanced}.

\begin{theorem}[Tate curve criterion]\label{thm:tate-curve}
Let \(v\) be the valuation on \(K\), and let \(E/K\) be an elliptic curve whose
\(j\)-invariant satisfies \(v(j(E))<0\). Let \(c_4\) and \(c_6\) be the usual
invariants associated with a Weierstrass equation for \(E\), and define
\[
    \gamma(E)=-\frac{c_4}{c_6}\in K^\times/(K^\times)^2.
\]
Then:
\begin{enumerate}
    \item There is a unique \(q\in \overline{K}^{\times}\) with \(v(q)>0\)
    such that \(E\) is isomorphic over \(\overline{K}\) to the Tate curve
    \(E_q\).
    \item The curve \(E\) has split multiplicative reduction over \(K\) if
    and only if \(\gamma(E)=1\) in \(K^\times/(K^\times)^2\).
\end{enumerate}
\end{theorem}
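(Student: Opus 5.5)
The statement to prove is the Tate curve criterion (Theorem \ref{thm:tate-curve}). It is a classical result, and the plan is to reduce it to the explicit Tate parametrization over a finite extension, plus Galois descent. Choose a Weierstrass model and set $\gamma=-c_4/c_6$. Since $v(j)<0$, the equation $j(E_q)=j(E)$ can be solved by inverting the power series $j(q)=q^{-1}+744+196884q+\cdots$. This series has integer coefficients, so inverting it gives $q\in K^{\times}$ with $v(q)=-v(j)>0$, and $q$ is unique because $q\mapsto j(q)$ is a bijection on the open unit disc minus $0$ onto $\{v(j)<0\}$. This is stronger than (1): $q$ already lies in $K$. Uniqueness over $\overline{K}$ holds for the same reason, since two Tate curves $E_q$ and $E_{q'}$ are $\overline{K}$-isomorphic iff $j(q)=j(q')$.

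For the isomorphism over $\overline{K}$, note that $E$ and $E_q$ have the same $j$-invariant, and $j\neq 0,1728$ because $v(j)<0$. Hence $E$ and $E_q$ are isomorphic over $\overline{K}$, and in fact are quadratic twists of each other over $K$, since $\mathrm{Aut}(E)=\{\pm1\}$. The twisting class is read off from the invariants: a $K$-model of $E_q$ has $c_4(E_q)=1+240\sum\sigma_3(n)q^n$ and $c_6(E_q)=-(1-504\sum\sigma_5(n)q^n)$. These give $\gamma(E_q)=-c_4/c_6\equiv 1$ modulo squares, because it is a $1$-unit and $p$ is odd. Under a twist by $d$, $(c_4,c_6)$ scales to $(d^2c_4,d^3c_6)$, so $\gamma$ scales by $d^{-1}$, i.e.\ by $d$ modulo squares. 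Therefore $E\simeq_K E_q$ exactly when $\gamma(E)\equiv1$, and in general $E$ is the twist of $E_q$ by $K(\sqrt{\gamma(E)})$.

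For (2), recall that $E_q$ itself has split multiplicative reduction: its model $y^2+xy=x^3+a_4(q)x+a_6(q)$ has $a_4,a_6\in q\mathcal O$ and reduces to $y^2+xy=x^3$, whose node has tangents rational over $k$. So if $\gamma(E)\equiv1$, then $E\simeq E_q$ and $E$ is split. Conversely, if $\gamma(E)=d\not\equiv1$, then $E$ is the nontrivial quadratic twist of $E_q$ by $K(\sqrt d)$. If this extension is unramified, the twist turns the nodal tangent slopes into a conjugate pair over the quadratic extension of $k$, giving nonsplit reduction. If it is ramified, the reduction becomes additive. In either case the reduction is not split.

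The main obstacle is checking carefully that twisting changes $\gamma$ by exactly $d$ modulo squares and computing its effect on the reduction type. I would do this concretely by writing the twisted model $dy^2=x^3-27c_4x-54c_6$ and examining the reduction of the node in each case; oddness of $p$ is used to clear the factors $2$ and $3$.
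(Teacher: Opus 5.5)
The paper gives no proof of this statement. It quotes it, in a slightly weakened form with $q\in\overline{K}^{\times}$ rather than $q\in K^{\times}$, from Silverman's \emph{Advanced Topics}, Theorem V.5.3. So there is no in-paper argument to compare against.

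Your proposal is correct and is the standard argument for this classical result:
\begin{enumerate}
\item Invert $j(q)=q^{-1}+744+\cdots$ through its integral power series to get a unique $q\in K^{\times}$ with $v(q)=-v(j)>0$.
\item Since $j\neq 0,1728$, we have $\mathrm{Aut}(E)=\{\pm1\}$, so $E$ is a quadratic twist of $E_q$ by a class in $K^{\times}/K^{\times 2}$.
\item Note that $\gamma(E_q)=(1+240s_3(q))/(1-504s_5(q))$ is a $1$-unit, hence a square for odd residue characteristic (the standing hypothesis of Theorem~\ref{thm:twist-elliptic}).
\item Twisting by $d$ multiplies $\gamma$ by $d^{-1}$, so the twisting class is exactly $\gamma(E)$.
\end{enumerate}

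What your route buys beyond the citation is its last step: computing the reduction of the twist of $E_q$ by a unit nonsquare (nonsplit multiplicative) versus an element of odd valuation (additive). That step proves the table of Theorem~\ref{thm:twist-elliptic} at the same time. The paper derives that table separately, by twisting by $-c_6(E)$ and appealing to Silverman VII.5.4.

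One repair is needed in how you carry out that verification. The model $dy^2=x^3-27c_4x-54c_6$ is not minimal when $p=3$, and there it reduces to a cusp for every $d$. So ``clearing the factors $2$ and $3$'' is not available for $p=3$, which the paper's setting allows.

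Instead, complete the square in $y^2+xy=x^3+a_4(q)x+a_6(q)$ only, which needs just $2\in\mathcal{O}^{\times}$. Then twist by $d$ with $v(d)\in\{0,1\}$ to get $Y^2=X^3+\tfrac{d}{4}X^2+d^2a_4(q)X+d^3a_6(q)$.
\begin{itemize}
\item This model is minimal because $v(c_4)=2v(d)<4$.
\item When $v(d)=0$, it reduces to $Y^2=X^3+\tfrac{\bar d}{4}X^2$. The node has tangents $Y=\pm\tfrac{\sqrt{\bar d}}{2}X$, which are not rational over $k$ since $\bar d\notin k^{\times 2}$. So the reduction is nonsplit multiplicative.
\item When $v(d)=1$, we have $v(c_4)=2>0$ and $v(\Delta)=6+v(q)>0$. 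So the reduction is additive.
\end{itemize}
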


\begin{remark}
The square class \(\gamma(E)\) is independent of the chosen Weierstrass
equation. Indeed, under a change of variables with scaling factor
\(\mu\in K^\times\), the invariants satisfy
\[
    \mu^4c_4'=c_4,
    \qquad
    \mu^6c_6'=c_6
\]
\cite[Chapter III, \S1]{silverman2009arithmetic}. Consequently,
\[
    -\frac{c_4'}{c_6'}
    =-\mu^2\frac{c_4}{c_6}
    =-\frac{c_4}{c_6}
    \quad\text{in }K^\times/(K^\times)^2.
\]
Thus a minimal Weierstrass equation is not required in the statement of
Theorem~\ref{thm:tate-curve}.
\end{remark}

\begin{remark}
By \cite[Lemma V.5.1 and Theorem V.5.3]{silverman1994advanced}, the inequality
\(v(j(E))<0\) holds if and only if \(E\) has potentially multiplicative
reduction.
\end{remark}

We now establish two lemmas needed for the proof of
Theorem~\ref{thm:twist-elliptic}.

\begin{lemma}\label{lem:c4-square}
If \(v(j(E))<0\), then \(c_4\) is a square in \(K^\times\).
\end{lemma}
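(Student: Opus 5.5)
The plan is to work directly with a Weierstrass equation and the relation $j = c_4^3/\Delta$, combined with $1728\Delta = c_4^3 - c_6^2$. Since $v(j(E))<0$, we have $c_4 \neq 0$. The goal is to show that $c_4$ becomes a square after an allowed change of variables. Because Lemma \ref{lem:c4-square} concerns the square class of $c_4$, and that class is invariant under scaling ($c_4' = \mu^{-4}c_4$ changes $c_4$ by a fourth power), we may fix any Weierstrass model, for instance a minimal one, and test whether $c_4 \in K^{\times 2}$.

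\textbf{Reduction to a unit computation.} From $j = c_4^3/\Delta$ and $1728\Delta = c_4^3 - c_6^2$ we get
\[
\frac{c_6^2}{c_4^3} = 1 - \frac{1728}{j}.
\]
Since $v(j)<0$ and $p$ is odd, so that $v(1728)\ge 0$, the right-hand side lies in $1+\mathfrak m_K$ and is a unit. As $p$ is odd, Hensel's lemma shows it is a square in $K^\times$. Therefore $c_4^3 = c_6^2 \cdot u^{-2}$ for some $u\in K^\times$, so $c_4^3$ is a square. Hence $c_4 = c_4^3/c_4^2$ is a square in $K^\times$.

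\textbf{Main obstacle.} The only delicate point is the residue characteristic $3$ case, where $1728 = 2^6 3^3$ is not a unit. It suffices that $v(1728/j) >0$, which holds because $v(1728)\ge 0$ and $-v(j)>0$, so $1-1728/j \equiv 1 \pmod{\mathfrak m_K}$. The Hensel argument only needs residue characteristic $\neq 2$, which is assumed. We also need $c_6\neq 0$; this holds since $c_6 = 0$ would force $j=1728$, contradicting $v(j)<0$.
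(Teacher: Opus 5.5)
Your proof is correct and follows the paper's argument essentially verbatim: both rewrite $c_6^2/c_4^3 = 1 - 1728/j \in 1+\mathfrak m_K$, use Hensel's lemma (odd residue characteristic) to see that this is a square $u^2$, and conclude that $c_4 = \bigl(c_6/(u c_4)\bigr)^2$ is a square. Your extra remarks on the invariance of the square class under change of model, on $c_6 \neq 0$, and on residue characteristic $3$ are harmless details that the paper leaves implicit.
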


\begin{proof}
Recall that
\[
    1728\Delta=c_4^3-c_6^2
    \qquad\text{and}\qquad
    j=\frac{c_4^3}{\Delta}.
\]
It follows that
\[
    \frac{c_6^2}{c_4^3}
    =1-1728\frac{\Delta}{c_4^3}
    =1-\frac{1728}{j}.
\]
Set \(t=1-1728/j\). Since \(v(j)<0\), we have
\(t\equiv 1\pmod{\mathfrak m_K}\), where \(\mathfrak m_K\) is the maximal
ideal of the valuation ring \(\mathcal O_K\). The polynomial
\(f(x)=x^2-t\) therefore has the simple root \(1\) modulo
\(\mathfrak m_K\), because the residue characteristic is odd. Hensel's lemma
then shows that \(t\) is a square in \(K^\times\).
Therefore \(c_4\) is a square in \(K^\times\).
\end{proof}

Lemma~\ref{lem:c4-square} shows that, in the Tate curve criterion, the square
class of \(\gamma(E)\) may be replaced by that of \(-c_6(E)\).

\begin{lemma}\label{lem:twist-invariants}
Let \(E/K\) be an elliptic curve, let \(d\in K^\times\), and let \(E^{(d)}\)
be the quadratic twist of \(E\) by \(d\). Then
\[
    c_4(E^{(d)})=d^2c_4(E), \qquad
    c_6(E^{(d)})=d^3c_6(E), \qquad
    \Delta(E^{(d)})=d^6\Delta(E).
\]
\end{lemma}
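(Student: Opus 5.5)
We verify the three formulas directly from the standard Weierstrass invariants. Since the residue characteristic is odd, we first put $E$ in the form
\[
y^2 = x^3 + a_2 x^2 + a_4 x + a_6
\]
(completing the square is permitted because $2$ is invertible in $K$). With $a_1=a_3=0$ we have
\[
b_2 = 4a_2,\quad b_4 = 2a_4,\quad b_6 = 4a_6,\quad b_8 = 4a_2a_6 - a_4^2 .
\]
The invariants are then $c_4 = b_2^2 - 24 b_4$, $c_6 = -b_2^3 + 36 b_2 b_4 - 216 b_6$, and $\Delta = -b_2^2 b_8 - 8 b_4^3 - 27 b_6^2 + 9 b_2 b_4 b_6$ (as in \cite[Chapter III, \S1]{silverman2009arithmetic}).

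Next we take the standard model of the quadratic twist by $d$:
\[
E^{(d)}: y^2 = x^3 + d a_2 x^2 + d^2 a_4 x + d^3 a_6 .
\]
This is isomorphic to $E$ over $K(\sqrt d)$ via $(x,y)\mapsto (dx, d^{3/2}y)$, and the Galois cocycle is the quadratic character attached to $d$. For this model $a_i$ is replaced by $d^{i/2}a_i$ for $i=2,4,6$. Hence $b_i$ is replaced by $d^{i/2}b_i$ for $i=2,4,6,8$, since each $b_i$ is isobaric of weight $i$ in the $a_j$ (with $a_j$ of weight $j$).

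The invariants $c_4$, $c_6$, $\Delta$ are isobaric polynomials of weights $4$, $6$, $12$ in the $b_i$, so they pick up the factors $d^2$, $d^3$, $d^6$ respectively. The only point needing care is that each monomial in the three displayed formulas really has the stated total weight, which is a routine check: for example $b_2^2b_8$ has weight $2+2+8=12$. This gives the three identities.

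If one prefers an argument independent of the chosen model, note that $c_4$, $c_6$, $\Delta$ change by $\mu^4$, $\mu^6$, $\mu^{12}$ under a change of variables. So any other Weierstrass model of $E^{(d)}$ satisfies the formulas up to replacing $d$ by $d\mu^{2}$. This is all that is used later, since only square classes of $c_6$ matter. In particular $-c_6(E^{(d)}) = d^3(-c_6(E)) \equiv d\cdot(-c_6(E))$ in $K^\times/(K^\times)^2$. This is the input needed, together with Lemma~\ref{lem:c4-square} and Theorem~\ref{thm:tate-curve}, to choose $d$ making $E^{(d)}$ split.
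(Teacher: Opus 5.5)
Your proof is correct and shares the paper's core idea: write down an explicit Weierstrass model of the twist, clear $d$ by rescaling $x \mapsto dx$ (and $y$ accordingly), and read off the invariants. Where you differ is in how the invariants are read off.

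The paper first passes to the short model $y^2 = x^3 - 27c_4x - 54c_6$. There $c_4$ and $c_6$ can be read directly from the coefficients, up to the constant factors $6^4$ and $6^6$, which cancel in the comparison. It then gets $\Delta$ from $1728\Delta = c_4^3 - c_6^2$. This is quicker to write, but it needs $6$ to be invertible in $K$. That is harmless in the application, since $K = F_v$ has characteristic $0$.

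You keep the $a_2$-term and use that $b_2, b_4, b_6, b_8$, $c_4$, $c_6$, $\Delta$ are isobaric of the indicated weights. This needs only $2$ to be invertible and gives $\Delta(E^{(d)}) = d^6\Delta(E)$ directly, without dividing by $1728$. So your argument also covers local fields of characteristic $3$, which the hypotheses of Theorem~\ref{thm:twist-elliptic} formally allow.

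Your closing remark is also correct: passing to another model of $E^{(d)}$ only replaces $d$ by $d\mu^2$. This is the same observation the paper makes, that only square classes matter.
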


\begin{proof}
 By \cite[Chapter III, \S1]{silverman2009arithmetic}, we may write the elliptic curve in
 the form $E : y^2=x^3-27c_4x-54c_6$. The twisted elliptic curve then has the form
    \[
    E^{(d)} : dy^2=x^3-27c_4x-54c_6.
    \]
After the change of variables $X=dx$ and $Y=d^2y$, we obtain
    \[
     E^{(d)} : Y^2=X^3-27d^2c_4X-54d^3c_6.
    \]
Hence, 
    \[
    c_4(E^{(d)})=d^2c_4(E), \, c_6(E^{(d)})=d^3c_6(E).
    \]
By the formula $ 1728\Delta=c_4^3-c_6^2$, we have 
    \[
    \Delta(E^{(d)})=d^6\Delta(E).
    \]
The lemma is therefore proved.
\end{proof}

We are now ready to prove Theorem \ref{thm:twist-elliptic}.

\begin{proof}[Proof of Theorem~\ref{thm:twist-elliptic}]
Suppose first that \(E\) has split multiplicative reduction. No twist is
needed, so we take \(\psi\) to be the trivial character.

Now suppose that \(E\) has nonsplit multiplicative reduction, and choose a
minimal Weierstrass equation. Then \(v(\Delta)>0\) and \(v(c_4)=0\); see
\cite[Chapter VII, Proposition 5.1]{silverman2009arithmetic}. The identity
\(1728\Delta=c_4^3-c_6^2\) gives \(v(c_6)=0\). By
Theorem~\ref{thm:tate-curve} and Lemma~\ref{lem:c4-square}, the element
\(d=-c_6(E)\) is a nonsquare unit. Lemma~\ref{lem:twist-invariants} gives
\[
    -c_6\bigl(E^{(d)}\bigr)
    =-d^3c_6(E)
    =c_6(E)^4,
\]
which is a square in \(K^\times\). Hence \(E^{(d)}\) has split
multiplicative reduction. Since \(d=-c_6(E)\) is a nonsquare unit, the
extension \(K(\sqrt d)/K\) is the unramified quadratic extension. Therefore,
the associated quadratic character
\[
    G_K \longrightarrow \operatorname{Gal}(K(\sqrt d)/K)
    \longrightarrow \{\pm1\}
\]
is nontrivial and unramified.

Suppose that \(E\) has additive potentially multiplicative reduction. By
\cite[Chapter VII, Proposition 5.1]{silverman2009arithmetic}, we know that
\(v(\Delta)>0\) and \(v(c_4)>0\). The formula
\(1728\Delta=c_4^3-c_6^2\) then shows that \(v(c_6)>0\). Hence,
\(-c_6\) is not a unit and is a nonsquare in \(\mathcal O_K\) by
Theorem~\ref{thm:tate-curve}. Similarly, we can twist the elliptic curve
\(E\) by \(d=-c_6(E)\), and \(E^{(d)}\) has split multiplicative reduction. By \cite[Chapter VII, Proposition 5.4]{silverman2009arithmetic}, additive reduction remains additive reduction after an unramified extension.
 Hence, the associated quadratic character
\[
    \Gal_K \longrightarrow \operatorname{Gal}(K(\sqrt d)/K)
    \longrightarrow \{\pm1\}
\]
is ramified.
\end{proof}

\begin{proposition} \label{prop:twisted_tate_sequence}
    With the notation of Theorem \ref{thm:twist-elliptic}, we have the following short exact sequence of $\Gal_{K}$-modules:
    \[
0 \rightarrow \mu_{p^{\infty}} \otimes \eta \rightarrow E[p^{\infty}] \rightarrow \QQ_p/\ZZ_p \otimes \eta \rightarrow 0
    \]
    Here $\Gal_{K}$ acts on $\mu_{p^\infty}\otimes\eta$ through the character $\chi_{\cyc}\eta$, while it acts on $\QQ_p/\ZZ_p \otimes \eta$ through $\eta$.
\end{proposition}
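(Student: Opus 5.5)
We obtain the sequence by twisting the split Tate sequence \eqref{eq:multiplicative} for $E^{\eta}$ by $\eta$. By Theorem \ref{thm:twist-elliptic}, there is a quadratic character $\eta = \psi$ of $\Gal_K$, cutting out $K(\sqrt d)$ with $d = -c_6(E)$ (or trivial in the split case), such that $E^{\eta} := E^{(d)}$ has split multiplicative reduction. Applying \eqref{eq:multiplicative} and the identification $\iota_{E^{\eta}}$ of Appendix \ref{app:tate} to $E^{\eta}$ gives a short exact sequence of $\Gal_K$-modules
\[
0 \rightarrow \mu_{p^{\infty}} \rightarrow E^{\eta}[p^{\infty}] \rightarrow \QQ_p/\ZZ_p \rightarrow 0,
\]
where the quotient carries the trivial action.

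The key step is to compare $E[p^{\infty}]$ with $E^{\eta}[p^{\infty}]$ as Galois modules. The twist comes with an isomorphism $\phi: E^{\eta} \xrightarrow{\sim} E$ defined over $K(\sqrt d)$. In the explicit models of Lemma \ref{lem:twist-invariants}, $\phi$ is $(X,Y) \mapsto (X/d, Y/d^{2}\cdot \sqrt d\,)$ after suitable normalization. It satisfies $\sigma(\phi) = [\eta(\sigma)]\circ\phi$ for $\sigma \in \Gal_K$, since the cocycle of the twist is $\sigma \mapsto \sigma(\sqrt d)/\sqrt d = \eta(\sigma) \in \{\pm 1\} = \Aut(E)\cap\{\pm1\}$. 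Consequently, on torsion points $\phi(\sigma P) = \eta(\sigma)\,\sigma(\phi(P))$. This says exactly that $\phi$ induces a $\Gal_K$-equivariant isomorphism
\[
E^{\eta}[p^{\infty}] \otimes \eta \xrightarrow{\sim} E[p^{\infty}],
\]
using $\eta^2 = 1$. The main care needed is the sign convention, namely checking $\sigma(\phi)=\phi\circ[\eta(\sigma)]$. This reduces to the action of $\sigma$ on $\sqrt d$ in the $Y$-coordinate, and $[-1](X,Y) = (X,-Y)$ on the short Weierstrass model.

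Finally, tensoring the split sequence with the rank-one module $\ZZ_p(\eta)$ over $\ZZ_p$ is exact, since $\ZZ_p(\eta)$ is free. It produces
\[
0 \rightarrow \mu_{p^{\infty}}\otimes\eta \rightarrow E^{\eta}[p^{\infty}]\otimes \eta \rightarrow \QQ_p/\ZZ_p\otimes\eta \rightarrow 0.
\]
On the first term $\Gal_K$ acts by $\chi_{\cyc}\eta$, and on the last term it acts by $\eta$. Composing the middle term with the isomorphism above yields the claimed sequence. I expect no substantial obstacle beyond verifying the twisting cocycle; everything else is formal.
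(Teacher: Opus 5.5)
Your proposal is correct and takes essentially the same route as the paper. Both proofs identify $E[p^{\infty}]$ with $E^{\eta}[p^{\infty}]\otimes\eta$ via the isomorphism $\phi$ defined over $K(\sqrt d)$, whose Galois cocycle is $\sigma\mapsto\eta(\sigma)$, and then twist the split Tate sequence for $E^{\eta}$ by $\eta$. The only cosmetic difference is that you write $\phi$ on the short Weierstrass models of Lemma \ref{lem:twist-invariants}, whereas the paper uses the model $dy^2=x^3+a_2x^2+a_4x+a_6$; this changes nothing.
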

\begin{proof}
    Write the Weierstrass equation of the elliptic curve $E$ as
    \[   
 E : y^2=x^3+a_2x^2+a_4x+a_6.\]
 It is easy to see that $[-1](x,y)=(x,-y)$.
 We know the elliptic curve $E^\eta$ is the elliptic curve $E$ twisted by $d=-c_6(E)$. We can write the Weierstrass equation of the elliptic curve $E^\eta$ as 
 \[
E^\eta=E^{(d)} : dy^2=x^3+a_2x^2+a_4x+a_6.
 \]
  Let $\phi : E^{(d)}\to E$ be the isomorphism over $\bar{K}$ given by $(x,y)\to (x,\sqrt{d}y)$. We can identify the $G_K$-module $E^\eta[p^\infty]$ with $E[p^\infty]$ through $\phi$.
  For any $\sigma\in G_K$, we see that
\[
\begin{aligned}
\sigma\circ\phi(x,y)
&=\sigma(x,\sqrt d\,y)
 =\bigl(\sigma(x),\sigma(\sqrt d)\sigma(y)\bigr)\\
&=\bigl(\sigma(x),(\eta(\sigma)\sqrt d)\,\sigma(y)\bigr)\\
&=\eta(\sigma)\cdot\bigl(\sigma(x),\sqrt d\,\sigma(y)\bigr)
 =\eta(\sigma)\cdot\phi\circ\sigma(x,y).
\end{aligned}
\]
Hence,
  \[
{}^\sigma\phi
 :=
\sigma\circ\phi\circ\sigma^{-1}=
\eta(\sigma)\phi. 
  \]
  Therefore, we have \[
  E^\eta[p^\infty]\cong E[p^\infty]\otimes \eta
  \] as a $G_K$-module. 
  Since $E^\eta$ has split multiplicative reduction, we have
  \[
      0 \rightarrow \mu_{p^{\infty}} \rightarrow E^{\eta}[p^{\infty}] \rightarrow \QQ_p/\ZZ_p \rightarrow 0.
  \]
  Twisting it by a quadratic character $\eta$, we have
  \[
      0 \rightarrow \mu_{p^{\infty}}\otimes \eta \rightarrow E[p^{\infty}] \rightarrow \QQ_p/\ZZ_p \otimes \eta \rightarrow 0,
  \]
  as desired.
\end{proof}

\printbibliography

\end{document}